\newcounter{item}[section]
\newcounter{kirshr}
\newcounter{kirsha}
\newcounter{kirshb}
\newenvironment{enumroman}{\setcounter{kirshr}{1}
\begin{list}{(\roman{kirshr})}{\usecounter{kirshr}} }{\end{list}}
\newenvironment{enumarab}{\setcounter{kirshb}{1}
\begin{list}{(\arabic{kirshb})}{\usecounter{kirshb}} }{\end{list}}
\newtheorem{theorem}{Theorem}[section]
\newtheorem{lemma}[theorem]{Lemma}
\newtheorem{corollary}[theorem]{Corollary}
\theoremstyle{definition}
\newtheorem{remark}[theorem]{Remark}
\newtheorem{example}[theorem]{Example}
\newtheorem{definition}[theorem]{Definition}
\def\C{{\mathfrak{C}}}
\def\Fm{{\mathfrak{Fm}}}
\def\Nr{{\mathfrak{Nr}}}
\def\Fr{{\mathfrak{Fr}}}
\def\Sg{{\mathfrak{Sg}}}
\def\Fm{{\mathfrak{Fm}}}
\def\Rd{{\mathfrak{Rd}}}
\def\CA{{\bf CA}}
\def\RCA{{\bf RCA}}
\def\K{{\bf K}}
\def\L{{\bf L}}
\def\QA{{\bf QA}}
\def\QEA{{\bf QEA}}
\def\(R)RA{{\bf (R)RA}}
\def\R{\mathbb{R}}
\def\QA{{\bf QA}}
\def\QEA{{\bf QEA}}
\def\R{\mathbb{R}}
\def\N{\mathbb{N}}
\def\R{\mathbb{R}}
\def\Sc{{\bf Sc}}
\def\c #1{{\cal #1}}
 \def\CA{{\sf CA}}
\def\M{{\mathfrak{M}}}
\def\At{{\mathfrak{At}}}
\def\G{{\mathfrak{G}}}
\def\K{{\bf K}}
\def\QA{{\bf QA}}
\def\QEA{{\bf QEA}}
\def\sub#1#2{{\sf s}^{#1}_{#2}}
\def\cyl#1{{\sf c}_{#1}}
\def\diag#1#2{{\sf d}_{#1#2}}
\def\c #1{{\cal #1}}
\def\s{{\sf s}}
\def\pa{$\forall$}
\def\pe{$\exists$}
\def\ef{Ehren\-feucht--Fra\"\i ss\'e}
\def\nodes{{\sf nodes}}
\def\restr #1{{\restriction_{#1}}}
\def\A{{\mathfrak{A}}}
\def\B{{\mathfrak{B}}}
\def\C{{\mathfrak{C}}}
\def\D{{\mathfrak{D}}}
\def\P{{\mathfrak{P}}}
\def\Fm{{\mathfrak{Fm}}}
\def\Ra{{\mathfrak{Ra}}}
\def\Nr{{\mathfrak{Nr}}}
\def\F{{\mathfrak{F}}}
\def\CA{{\bf CA}}
\def\RCA{{\bf RCA}}
\def\c#1{{\mathcal #1}}
\def\set#1{ \{#1\}}
\def\Ca{{\mathfrak Ca}}
\def\b#1{{\bar{ #1}}}
\def\pe{$\exists$}
\def\pa{$\forall$}
\def\Cm{{\mathfrak Cm}}
\def\Sg{{\mathfrak Sg}}
\def\At{{\sf At}}
\def\Id{{\sf Id}}
\def\rng{{\sf rng}}
\def\dom{{\sf dom}}
\def\w{{\sf w}}
\def\g{{\sf g}}
\def\y{{\sf y}}
\def\r{{\sf r}}
\def\cyl#1{{\sf c}_{#1}}
\def\sub#1#2{{\sf s}^{#1}_{#2}}
\def\diag#1#2{{\sf d}_{#1#2}}
\def\de{Dedekind-MacNeille}
\def\ws{winning strategy}
\def\ef{Ehren\-feucht--Fra\"\i ss\'e}
\def\Rl{\mathfrak{Rl}}
\def\y{{\sf y}}
\def\g{{\sf g}}
\def\r{{\sf r}}
\def\w{{\sf w}}
\def\Sc{{\sf Sc}}
\def\TCA{{\sf TCA}}
\def\CA{{\sf CA}}
\def\R{{\sf R}}
\def\L{{\mathfrak{L}}}
\def\Z{{\mathbb{Z}}}
\def\K{{\sf K}}
\def\RCA{{\sf RCA}}
\def\PEA{{\sf PEA}}
\def\Df{{\sf Df}}
\def\Uf{{\sf Uf}}
\def\Rd{{\mathfrak{Rd}}}
\def\la#1{\langle#1\rangle}
\def\nodes{{\sf nodes}}
\def\c{{\sf c}}
\def\d{{\sf d}}
\def\TeCA{{\sf TeCA}}
\def\RTeCA{{\sf RTeCA}}
\def\d{Dedekind-MacNeille}
\def\d{{\sf d}}
\def\QEA{{\sf QEA}}
\def\nodes{{\sf nodes}}
\def\c{{\sf c}}
\def\d{{\sf d}}
\def\TeCA{{\sf TeCA}}
\def\RTeCA{{\sf RTeCA}}
\def\d{Dedekind-MacNeille}
\def\d{{\sf d}}
\def\d{Dedekind-MacNeille}
\def\d{{\sf d}}
\def\s{{\sf s}}
\def\c{{\sf c}}
\def\TL{{\mathfrak{TL}}}
\def\Tm{{\mathfrak{Tm}}}
\def\QA{{\sf QA}}
\def\M{\mathfrak{M}}
\def\RQEA{{\sf RQEA}}
\title{Atom canonicity, and omitting types in temporal 
and topological cylindric algebras}
\author{Tarek Sayed Ahmed}
\begin{document}
\maketitle

\begin{abstract}

We study 
 what we call topological cylindric algebras and tense cylindric algebras defined for every ordinal $\alpha$.
The former are cylindric algebras of dimension $\alpha$
expanded with $\sf S4$ modalities indexed by $\alpha$. The semantics
of representable topological algebras is induced by the interior operation relative to a topology
defined on their bases.
Tense cylindric algebras are cylindric algebras expanded by the modalities
$F$(future)  and $P$ (past)
algebraising predicate temporal logic. 

We show for both tense  
and topological cylindric algebras of finite dimension  $n>2$  that
infinitely many varieties containing and including the variety of representable algebras of dimension $n$ 
are not atom canonical. We show that any class containing the class of completely representable algebras 
having a weak neat embedding property is not elementary. 
From these two results we draw  the same conclusion on omitting types for finite variable
fragments of predicate topologic and temporal logic. 
We  show that the usual version of the omitting types theorem restricted to such fragments when the number of variables is $>2$
fails dramatically  even if we considerably broaden the class of models permitted to omit a single non principal type 
in countable atomic theories, namely, the non-principal type consting of co atoms.
\end{abstract}
\section{Introduction}
\subsection{General}

We alert the reader to the fact that the introduction is quite 
long since we discuss extensively many concepts related to the subject matter of the paper, in the process 
emphasizing similarities and  highlighting differences. 
The topic is rich, the paper is interdisciplinary between temporal logic, topologcal logic and algebraic logic.
Besides some history is surveyed, dating back 
to the seminal work of Tarski and McKinsey up to the present time. 

Universal logic addresses different logical systems simultaneously in essentially three ways. 
Either abstracting common features, or building new bridges between them, or 
constructing new logics.

We do the first two in what follows, though the third can also be implemented 
by combining temporal and topological logic getting a natural multidimensional modal logic 
 of space and time. Such logics do exist in the literature; an example 
is dynamic topological logic that studies the modal 
aspects of dynamical systems.
 
However, we defer this unification to another paper. 
We apply cylindric algebra theory to topological and minimal temporal 
pedicate logic, also known as tense predicate logic,  with finitely many variables, 
so that the bridges are built via algebraic logic, and their common abstraction is 
cylindric algebra theory when we expand cylindric algebras by modalities like the interior box  operator 
stimulated by a topology on the base
of a cylindric algebra that happens to be representable,
or the temporal  $F$ (future) and $P$ (past).

We will also have occasion to make 
detours into the functionally maximal temporal logic with Still and Until.

Another unifying framework here is that we repeat an extremely rewarding 
deed 
of  Henkin's performed  ages ago (in the sixties of the $20$th century), when he decided to  study cylindric algebras of 
finite dimension that can be seen in retrospect as the modal algebras of finite variable fragments of first order logic. 
Here the dimension is the same as the number of 
available variables.

The repercusions of such a view turned out absolutely astounding  
and it has initiated a lot of quite deep research till this day, 
involving algebraic and modal logicains from Andre\'ka 
to Venema;  this paper included.

We immitate Henkin, but we add a topological and temporal dimension. 
We study finite variable fragments of topological, tense and temporal predicate logic using the well developped 
machinery of algebraic logic.  We reep the harvest of our algebraic results by  deducing the metalogic one of omitting types.

We address the algebraic notions of atom-canonicity an important persistense property in modal logic,
and complete representations an important notion for cylindric-like algebras, both notions 
proved closely related to  the algebraic modal property of atom-canonicity and the meta-logical notion of omitting types \cite{Sayed}.

We do such tasks 
for
the modal algebras of both temporal and topological logics of finite dimension $>2$, culminating in a very strong 
negative result on omitting types when semantics are restricted to clique guarded semantics, 
proving the result in the abstract.

The interconnection between the three notions 
mentioned above manifests itself blatantly in this paper, 
where we draw a deep conclusion concerning  the failure of the Orey-Henkin 
omitting types. This is proved using quite sophisticated 
machinery from algebraic logic, more specifically from cylindric algebra theory, namely, rainbow constructions, which were first used 
in the context of relaton algebras. 

Using this powerful technique, we construct certain representable algebras, having
countably many atoms of finite dimension $n>2$. In the first case, we deal with  an algebra with 
no complete representations. In the second case we deal with an algebra whose \de\ 
completion is outside infinitely many varieties containing properly 
the variety of representable algebras.
These two constructions  are worked out for both temporal and topological cylindric algebras
of dimension $n$.

Topological logic with $n$ many variables can be viewed as a $2n$ dimensional 
propositional multimodal logic
or a Kripke complete logic that is the $n$ product of decidable bi-modal logics whose frames are of the form 
$(U, U\times U, R)$; with $R$ is the accessiblity relation of an $\sf S4$ modality, in short 
$\sf S5\times \sf S4$. 

On the other hand, modal algebras of temporal logic of dimension 
$m$, that are representable in a concrete sense
to be specified below, can be seen as product of
cylindric set algebras of the same dimension $n$ 
indexed by a time flow $\bold T=(T, <)$, with navigating 
functions between the different worlds that are cartesian squares. 

So, metaphorically   cylindric set agebras of dimension $n<\omega$ 
are snap shots of  temporal semantics 
at one moment of time in the time flow.

\subsection{Universal logic}

Universal logic is
the field of logic that is concerned with giving an account of what features are common to all logical structures.
If slogans are to be taken seriously, then universal logic is to logic what universal algebra is to algebra.
The term ``universal logic' was introduced in the 1990s
by Swiss Logician Jean Yves Beziau but the field has arguably existed for many decades.
Some of the works of Alfred Tarski in the early twentieth century, on metamathematics and in algebraic logic,
for example, can be regarded undoubtedly, in retrospect, as fundamental contributions to universal logic.

Indeed, there is a  whole well established
branch of algebraic logic,
that attempts to deal with the universal notion of a logic. Pioneers in this branch include Andr\'eka
and    N\'emeti \cite{AN75} and Blok and Pigozzi \cite{BP}.
The approach of Andr\'eka and N\'emeti though is more general, since, unlike the approach in \cite{BP} which
is purely syntactical,  it allows semantical notions stimulated via
so-called `meaning functions' \cite{ANS}, to be defined below. Other universal approaches to many cylindric-like algebras were implemented in
\cite{univl} in the context of the very general notion of what is known
in the literature as systems of varieties definable by a Monk's schema \cite{ST, HMT2}, and in the context of category theory
in \cite{conference}.

One aim of universal logic is to determine the domain of validity of such and such metatheorem
(e.g. the completeness theorem, the Craig interpolation
theorem, or the Orey-Henkin omitting types theorem of first order logic) and to
give general formulations of metatheorems in broader, or even entirely other contexts.
This is also done in algebraic logic, by dealing with modifications and variants of first order logic resulting in a natural way
during the process of {\it algebraisation}, witness for example the omitting types theorem proved in  \cite{Sayed}.

This kind of investigation  is extremely potent  for applications and helps to make the distinction
between what is really essential to a particular logic and what is not.

During the 20th
century, numerous logics have been created, to mention only a few: intuitionistic logic, modal logic, topological logic, topological dynamic logic,
spatial logic,  dynamic logic,  tense logic,
temporal logic, many-valued logic, fuzzy logic, relevant logic,
para-consistent logic, non monotonic logic,
etc. 

The rapid development of computer science, since the fifties of the 20th century initiated by work of giants like Godel, Church and Turing,
ultimately brought to
the front scene other logics as well, like  logics of programs 
and lambda calculas (the last can be traced back to the work of Church). 

After a while  it became noticable that certain patterns of 
concepts kept being repeated albeit in different
logics. But then the time  was ripe to make in retrospect an inevitable abstraction  
like as the case with the field of abstract model theory (Lindstrom's theorem is an example here).

\subsection{Abstract algebraic logic and institutions}

Other abstract approaches to logic, besides categorial logic, are topoi theory
and abstract algebraic  logic which focuses on the algebraic essence of numeras logics, like first order logic, modal logics, 
logics with infinitary predicates, a new addition introduced here, namely,  finite variable
fargments of tense, temporal and topological predicate logics

Abstract algebraic logic provides 
a unifying algebraic framework viz the
process of algebraisation, 
categorial logic via institutions, and topoi theory, and last but not least universal logic.

Institution theory is a major model theoretic trend of universal logic that formalizes within category theory 
the intuitive notion of a logical system, including syntax, semantics, and the satisfaction relation between them.
It owes its birth to  so-called {\it computing science} as a response to the 
explosion of new logics 
and it developed to  an  important foundational theory. 
Though computing science can be blamed for its poor intellectual value, 
but institutions is a very significant exception. In fact 
what is now labelled as computing science is hardly a science in the way mathematics 
or theoretical physics are. 
Computing science can be viewed as a playground
where several actors, most notably mathematics, logic, engineering and philosophy,  play.

Sometimes an interesting play which
has not only brought significant changes and development to the actors but also revolutionalized 
our way of  thinking. 

Institution theory, universal logic an and abstract algebraic logic are no exceptions. 
All three branches have travelled well in model theory and logic. 
The way we think of logic and model theory
will never be the same as before. 
The development of model and proof theory in the very abstract setting of arbitrary 
institutions, is free of commitment to a particular logical system.
When we live without concrete models, sentences, satisfaction, and so on, we reep
the harvest of  another level of abstraction and generality. 

A  deeper understanding of model theoretic phenomena not hindered by the largely irrelevant details of a particular logical system, 
but instead  guided by structurally by clear cut causality, necessarily follows.

The latter aspect is based upon the fact that concepts come naturally as presumed features that a ``logic'' might exhibit 
or not and are defined at the most appropriate level of abstraction; hypotheses are kept as general as possible and introduced on a by-need basis, 
and thus results and proofs are modular and easy to track down regardless of their depth.

The continuous interplay between the specific and the general in institution theory brings a 
large array of new results for particular non-conventional approaches, unifies several known results, 
produces new results in well-studied conventional areas, 
reveals previously unknown causality relations, and dismantles some which 
are usually assumed as natural. Access to highly non-trivial results is also considerably facilitated.

The similarity of instititions to the notion of {\it full fledged algebriazable logics} in the sense of Andr\'eka and 
N\'emeti \cite{ANS} is 
striking.

They both assume nothing about the specific nature of a logical system,
models and sentences are arbitary objects; the only assumption being
that there is a satisfiability relation between models and texts or sentences, 
telling whether a sentence hold in a given model; 
this is implemented in the Andr\'eka and N\'emeti approach using {\it meaning functions} to be dealt with 
below. In institutions such connections are formulated in the extremely abstract level of 
category theory. 

A crucial unifying  feature is that notions of 
models, sentences and the satisfiabitlity relation
are defined abstractly; the definition is broad, in both cases, but narrow enough to obtain significant results 
at such a level of generality. Such thereby obtained 'meta-theorems' can even illuminate
various aspects of their concrete instances.

A vital difference though is in the algebraic approach the signature, determining the logical connectives
is {\it fixed} in advance.

In institutions signatures vary, and they are 
synchronized by two functors;  a syntactical functor navigating between 
sentences in different 
signatures, and another, a semantical one, taking models in a given  signature,
to models in a possibly different signatare
in such a way that semantics, expresssed via  
a satisfiability relation defined 
between sentences and models, is preserved. 

\subsection{Topological and minimal Temporal logic, tense logic}

Motivated by questions like:  which spatial structures may be characterized by means of modal logic,
what is the logic of space, how to encode
in modal logic different geometric relations, topological logic provides a framework for studying the confluence of the topological semantics for
$\sf S4$ modalities, based on topological spaces
rather than Kripke frames, with the $\sf S4$
modality induced by the interior operator.

Topological logic was introduced by Makowsky,  Ziegler and Sgro \cite{s}.
Such logics have a classical semantics with a topological flavour, addressing spatial logics
and their study was approached  using algebraic logic by Georgescu \cite{g}, the task that we further pursue in this
paper. Topological logics are apt for dealing with {\it logic} and {\it space};
the overall point is to take a common mathematical model of space (like a topological space)
and then to fashion logical tools to work with it.

One of the things which blatantly strikes one when studying elementary topology is that notions like open, closed, dense
are intuitively very transparent, and their
basic properties are absolutely straightforward to prove. However, topology
uses second order notions as it reasons with sets and subsets of `points'. This might suggest that like
second order logic, topology ought to be computationally very complex.
This apparent dichotomy between the two paradigms
vanishes when one realizes that a large portion of
topology can be formulated as a  simple modal logic,
namely, $\sf S4$!
This is for sure an asset
for modal logics tend to be much easier to handle than first order logic
let alone second order.

The project of relating topology to modal logic begins
with work of Alfred Tarski and J.C.C McKinsey \cite{tm}. Strictly speaking Tarski and McKinsey did not work with
modal logic, but rather with its algebraic counterpart, namely, Boolean algebras
with operators which is the approach we adopt here; the operators they studied where the closure operator induced on what they called
{\it the algebra of topology}, certainly a very ambitious title, giving the impression
that the paper aspired to completely {\it algebraise topology}. This paper was published around the time
when there was an interest in algebraising metamathematics as well,
culminating in defining cylindric and polyadic algebras
obtained independently by Tarski in Berkely and Halmos in Stanford,
respectively.

Ever since there has been extensive research to
highlight the similarities and differences between such algebraisations of predicate
logic  and it is commonly accepted now that they belong to different paradigms \cite{conference},
though they are essentially the same when they are restricted algebraise first order logic with
equality, giving the so-called infinite dimensional
locally finite algebras, to be further elaborated upon in a while.

In retrospect McKinsey and Tarski showed, that the Stone representation theorem for Boolean algebras extend to algebras
with operators to give topological semantics for classical propositional modal logic,
in which the `necessity' operation is modeled by taking the interior (dual operation)
of an arbitrary subset of
topological space. Although the topological completeness of $\sf S4$ has been well known for quite
a long time, it was until recently considered as some
exotic curiosity, but certainly having mathematical value.
It was in the 1990-ies that the work of McKinsey and Tarski, came to the front scene of modal logic (particularly spatial modal logic),
drawing serious attention of many researchers and inspiring
a lot of work stimulated basically by questions concerning the `modal logic of space';
how to encode in modal logic different geometric relations?
A point of contact here between topological spaces, geometry, and cylindric algebra theory
is the notion of dimension.

From the modern point of view one introduces a basic {\it modal language} with a set $\At$ of atomic propositions,
the logical Boolean connectives $\land$, $\neg$ and a modality $I$
to be interpreted as the
{\it interior operation.}
Let $X$ be a topological space. The modal language $\L_0$ is interpreted on such a space $X$ together with an {\it interpretation map}
$i:\At \to \wp(X)$. For atomic $p\in \At$, $i(p)$ says which points satisfy $p$. We do not require that $i(p)$ is open.
$(X, i)$ is said to be a {\it topological model}. Then $i$ extends to all $\L_0$ formulas by interpreting negation as
complement relative
to $X$, conjunction as intersection and $I$ as the interior operator.
In symbols we have:
\begin{align*}
i(\neg \phi)&=X\sim i(\phi),\\
i(\phi\land \psi)&=i(\phi)\cap i(\psi),\\
i(I(\phi))&={\sf int}i(\phi).
\end{align*}
The main idea here is that the basic properties of the
Boolean operations
on sets as well as the salient
topological operations like interior and its dual the closure, correspond to to schemes of sentences.
For example, the fact that the interior operator is idempotent
is expressed by $$i((II\phi)\leftrightarrow (I\phi))=X.$$
The natural question to ask about this language and its semantics is:
Can we characterize in an enlightening way the sentences $\phi$ with the property that for
all topological models $(X, i)$, $i(\phi)=X$; these are
the topologically valid sentences.
They are true at all points in all spaces under whatever  interpretation.
More succinctly, do we have a nice {\it completeness} theorem?

Tarski and McKinsey proved that the topologically valid sentences are exactly those provable in the modal logic
$\sf S4$.
$\sf S4$ has a seemingly different semantics using standard {\it Kripke frames.}
Now $X$ is viewed as the set of {\it possible worlds}. In $\sf S4$, $I$ is read as {\it all points which the current point relates to}.
To get a sound interpretation
of $\sf S4$ we should require that the current
point is {\it related to itself}. Without this requirement we get only a transitive frame; if the order is further
a linear order, then this represents  what is known
in temporal logic as a 'flow of time' getting a transitive model. (Strictly speaking, in a linear flow of time
we have two transitive linear relations $<$  and $>$ that are converses,
reflecting past and future; it is a bi- modal logic).

The reflexivity condition, together with transitivity leads naturally to pre-ordered model.

A {\it pre-ordered model}  is defined to be a triple $(X, \leq, i)$ where $(X, \leq)$ is a pre-order and $i: \At\to \wp(X)$
where
$$i(I(\phi))=\{x: \{y:x\leq y\}\subseteq i(\phi)\}.$$
Temporally world $x'\in X$ is a {\it successor} of world $x\in X$ if $x\leq x'$,  $x$ and $x'$ are equivalent worlds if further
$x\leq x'$ and $x'\leq x$.
We have a completely analogous result here; $\phi$ is valid in pre-ordered models if
$\phi$ is provable in $\sf S4$.

One can prove the equivalence of the two systems using only topologies on finite sets.
Let $(X, \leq)$ be a pre-order. Consider the {\it Alexandrov topology} on $X$, the open sets are the sets closed upwards in the order.
This gives a topology, call it $O_{\leq}.$ A correspondence
between topological models and pre-ordered models can thereby be obtained,
and as it happens we have
for any pre-ordered model $(X, \leq, i)$, all $x\in X$, and all $\phi\in \L_0$
$$x\models \phi \text { in } (X,\leq, i) \Longleftrightarrow x\models \phi \text { in } (X, O_{\leq}, i).$$

Using this result together with the fact that sentences satisfiable in $\sf S4$ have finite topological models, thus
they are Alexandrov topologies, one can show that the semantics
of both systems each is interpretable in the other; they are equivalent.
We can summarize the above discussion in the following neat theorem, that we can and will
attribute to McKinsey, Tarski
and  Kripke; this historically is not very accurate. For a topological space $X$ and $\phi$ an $\sf S4$
formula we write $X\models \phi$, if $\phi$ is valid topologically in
$X$ (in either of the senses above). For example, $w\models \Box \phi$ iff
for all  $w'$ if $w\leq w'$, then $w'\models \phi$, where $\leq$ is the relation $x\leq y$
iff $y\in {\sf cl}\{x\}$.

\begin{theorem} (McKinsey-Tarski-Kripke)
Suppose that $X$ is a dense in itself metric space (every point is a limit point)
and $\phi$ is a modal $\sf S4$ formula. Then the following are equivalent
\begin{enumarab}
\item $\phi\in \sf S4.$
\item$\models \phi.$
\item $X\models \phi.$
\item $\mathbb{R}\models \phi.$
\item $Y\models \phi$ for every finite topological space $Y.$
\item $Y\models \phi$ for every Alexandrov space $Y.$
\end{enumarab}
\end{theorem}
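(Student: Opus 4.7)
The plan is to organize the six equivalences as a short cycle of implications, isolating the genuinely deep content, which is completeness of ${\sf S4}$ with respect to a single dense-in-itself metric space. The specializations $(2)\Rightarrow(3),(4),(5),(6)$ are immediate since $(2)$ asserts that $\phi$ holds in every topological model; and $(6)\Rightarrow(5)$ is automatic since every finite topology is Alexandrov (arbitrary intersections of opens are open, and only finitely many subsets are available). For $(1)\Rightarrow(2)$ I would check soundness in the standard way: the ${\sf S4}$ axioms $\Box(\phi\to\psi)\to(\Box\phi\to\Box\psi)$, $\Box\phi\to\phi$, and $\Box\phi\to\Box\Box\phi$ are validated when $\Box$ is interpreted as topological interior (monotonicity, deflation, idempotence), and the rules of modus ponens and necessitation preserve topological validity.

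It remains to close the loop. For $(5)\Rightarrow(1)$ and $(6)\Rightarrow(1)$ I would invoke the finite model property of ${\sf S4}$: if $\phi\notin{\sf S4}$, then by filtration there is a finite rooted reflexive transitive Kripke frame $(W,\le)$, a valuation $v$, and a world $w$ at which $\phi$ fails. Passing to the Alexandrov topology $O_{\le}$ on $W$, via the preorder/topology correspondence recalled earlier in the paper, yields a finite Alexandrov (hence finite topological) countermodel, giving both implications at once.

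The main obstacle, and the real substance of the theorem, is $(1)\Rightarrow(4)$, or more generally $(1)\Rightarrow(3)$ for any dense-in-itself metric space $X$. Starting from a finite rooted preorder countermodel $(W,\le,v,w)$ of $\phi$ as above, the strategy is to construct an interior map, that is, a continuous, open, surjective map $f:X\to(W,O_{\le})$, and pull the valuation back along $f$. Since $f^{-1}$ commutes with the Boolean operations and with interior, the induced valuation on $X$ refutes $\phi$ at every point of $f^{-1}(w)$. The delicate step is the construction of $f$: one recursively dissects $X$ along the height of the preorder, exploiting the hypothesis that every non-empty open subset of a dense-in-itself metric space splits into two disjoint non-empty open subsets each dense in it; this allows the preimage of each $u\in W$ to be chosen as an open set whose closure relations mirror $\le$. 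This is essentially the original McKinsey--Tarski dissection argument, and where I expect the bulk of the work (and all the metric geometry) to concentrate. Finally $(3)\Rightarrow(1)$ is then obtained as the special case $X=\mathbb{R}$, or by composing $(4)\Rightarrow(1)\Rightarrow(2)\Rightarrow(3)$ in the opposite direction, closing the equivalence.
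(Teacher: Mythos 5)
Your proposal is correct in substance, and it in fact supplies more than the paper does: the paper states this theorem only as a classical summary of the preceding discussion (the preorder/Alexandrov correspondence plus the finite model property of ${\sf S4}$), attributes it to McKinsey--Tarski and Kripke, and gives no proof at all of the dense-in-itself metric space clauses. Your decomposition --- soundness via the interior-operator axioms, the trivial specializations from $(2)$, the Kripke-to-Alexandrov passage for the finite and Alexandrov clauses, and the McKinsey--Tarski dissection argument (building a continuous open surjection from $X$ onto a finite rooted preorder countermodel and pulling back the valuation) for the metric clause --- is exactly the standard route and correctly isolates the dissection lemma as the only genuinely hard step. One slip to fix: in the paragraph on the ``main obstacle'' you label the hard direction as $(1)\Rightarrow(4)$ and $(1)\Rightarrow(3)$, but those are soundness implications already covered by $(1)\Rightarrow(2)$; what you actually describe (refuting a non-theorem $\phi$ in $X$ by mapping $X$ onto a finite countermodel) is the contrapositive of $(3)\Rightarrow(1)$ and $(4)\Rightarrow(1)$, i.e.\ completeness, and the closing sentence should be adjusted accordingly so the cycle of implications reads consistently.
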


One can say that finite topological space or their natural extension to Alexandrov topological spaces reflect faithfully
the $\sf S4$ semantics, and that arbitrary topological spaces generalize $\sf S4$ frames. On the other hand,
every topological space gives rise to a normal modal logic. Indeed $\sf S4$ is the modal logic of $\mathbb{R}$, or
any metric that is  separable and dense in itself
space, or all topological spaces, as indicated above. Also a
recent result is that it is also the modal logic of the Cantor set, which is known to be Baire isomorphic to
$\mathbb{R}$.

But, on the other hand,  modal logic is too weak to detect interesting properties
of $\mathbb{R}$, for example it cannot distinguish between $[0, 1]$ and $\mathbb{R}$ despite
their topological dissimilarities, the most striking one being compactness; $[0, 1]$ is compact, but $\mathbb{R}$ is
not.

The huge field of {\it Temporal logic},
of which dynamic topological logic is an example is broadly used to
cover approaches to the representation of temporal information with a logica (modal) framework.

When asked to
think of time in an abstract way, many people will probably figure our a picture of a line. The mathematics of this picture is given by a set
of time points, together with an irreflexive transitive order. To represent time on a frame
we take a relational structure of the form
$\mathfrak{T}=(T, <)$ where $T$ is the set of worlds namely the `moments' and $<$ is
is an irreflexive transitive relation called the {\it precedence relation}.
If $(s, t)\in <$, we say that $s$ is earlier than $t$.
Essentially temporal logic extends classical propositional logic with a set of temporal operators
that navigate between worlds using the accessibility relation $<$.

To capture such structures modally,
in the  syntax, one introduces two modal operators $G, H$ with
intended meanings for $G, H$ are are follows :

$G$ is {\it it will always be the case that} and $H$
is {\it it has always been the case that. }

The spice of temporal logic,
however, lies in a basic idea,
namely, to use {\it new}, non classical
connectives to relate the truth of formulated in possibly distinct moments.

The operators denoted by $F$ (short for future)
and $P$ (short for past) are such, defined by $F\phi$ is $\neg G\neg \phi$ and $P\phi$ is
$\neg H\neg \phi$, where $\phi$ is a formula in the modal language.
Here  $F\phi$ is to be read as `at some time in the future $\phi$ will be the case,
and $P\phi$ is read as `at some time in the past $\phi$ holds'. This can be easily seen by reading their duals
$G$ and $H$ simply  as `henceforth'  and `hitherto', respectively.

Formally semantics are captured by Kripke frames of the form $(T, <)$
where $<$ is an irreflexive transitive order. A model is a triple $\M=(T, <, i)$, where $i$ is a map from the propositional variables
to the set of worlds (moments), and  just truth relation
defined inductively at moment $t$ by:
$$\M, t\models q \Longleftrightarrow \phi \i(t)(q)=1$$
The Booleans are the usual and the temporal operators:
$$M,t\models G\phi \text { iff }(\forall s)(t<s, \M, s\models \phi).$$
$$M, t\models H\phi \text { iff }(\forall s)(s<t, \M,s\models \phi.)$$
Now some valid formulas in the intended interpretation are
$p\to HFp$: {\it what is has always been going to be,}
$p\to GPp$: {\it what is will always have been,}
$G(p\to q)\to Gp\to Gq$: {\it  whatever will always follow
from what always will be.}

And these in fact, together with $H(p\to q)\to Hp\to Hq)$,
constitute a complete set of axioms for the specially
significant {\it Minimal Tense Logic $K_t$}
which has the the following four axioms:
\begin{enumerate}
\item $p\to HF p,$
\item  $p\to GPp,$
\item $H(p\to q)\to (Hp\to Hq),$
\item $G(p\to q)\to (Gp\to Gq).$
\end{enumerate}
together with two rules of necessitation, or temporal inference, namely:

From $p$ derive $Hp$ and from $p$ derive $Gp$
and of course the rules of ordinary propositional
logic.

The theorems of $K_t$ express, essentially, those
properties of the tense operators which do not depend on any specific assumptions about
the temporal order. Of obvious interest
is {\it  tensed predicate logic}, where the tense operators are added to classical
first order logic. This enables us to express distinctions
concerning the logic of {\it both} time {\it and existence}, the latter reflected by the existential quantifier.
For example, the statement  {\it a minister  will be president} can be interpreted as
'Someone who is now a minister  will be a president at some future time'
Formally: $$\exists x(minister(x)\land F(president(x)).$$
If we replace a minister by philosopher and president by king then another away to express
{\it A philosopher  will be a king}
is
$$\exists xF(mininster(x))\land king(x)).$$
In words: there now exists someone who will
at some future time be both a philosopher and a king.

$F(\exists x(minister(x)\land FKing(x))$ works for both; it reads `there will exist someone who is a minister (philosopher) and later will be a
president (king)'.

We shall study finite variable fragments of both predicate and topologica logic 

\subsection {Atom canonicity and omitting types}

Assume that we have a class of Boolean algebras with operators for which we have a semantical notion of representability
(like Boolean set algebras or cylindric set algebras).
A weakly representable atom structure is an atom structure such that at least one atomic algebra based on it is representable.
It is strongly representable  if all atomic algebras having this atom structure are representable.
The former is equivalent to that the term algebra, that is, the algebra generated by the atoms,
in the complex algebra is representable, while the latter is equivalent  to that the complex algebra is representable.

Could an atom structure be possibly weakly representable but {\it not} strongly representable?
Ian Hodkinson \cite{Hodkinson}, showed that this can indeed happen for both cylindric  algebras of finite dimension $\geq 3$, and relation algebras,
in the context of showing that the class of representable algebras, in both cases,  is not closed under \d\ completions.
In fact, he showed that this can be witnessed on an atomic algebras, so that
the variety of representable relation algebras algebras and cylindric algebras of finite dimension $>2$
are not atom-canonical. (The complex algebra of an atom structure is
the completion of the term algebra.)
This construction is somewhat complicated using a rainbow atom structure. It has the striking consequence
that there are two atomic algebras sharing the same
atom structure, one is representable the other is not.

This construction was simplified and streamlined,
by many authors, including the author, but Hodkinson's  construction,
as we indicate below, has the supreme advantage that it has a huge potential to prove analogous
theorems on \de\ completions, and atom-canonicity
for several varieties of topological algebras
including properly the variety of representable cylindric-like algebras, whose members have a neat embedding property,
such as polyadic algebras with and without equality and Pinter's substitution algebras.
In fact, in such cases atomic {\it representable countable algebras}
will be constructed so that their \de\ completions are outside such varieties.

Let $\sf K_n$ be the class of either topological or tense cylindric algebras of dimension $n$, 
$\sf RK_n$ the class of representable $K_n$s, 
and $\Rd_{ca}$ stands for forming cylindric reducts.
We show,
that for $n>2$ finite, there is $\A\in \sf RK_n$ wiuth countably many atoms  such that
$\Rd_{ca}\Cm\At\A\notin S\Nr_n\CA_{n+4}$ inferring that the varieties
$S\Nr_n\K_{n+k}$, for $n>2$ finite for any $k\geq 4,$
not closed under \d\ completions.
Such results, as illustrated below
will have non-trivial (to say the least)  repercussions on omitting types for finite variable fragments.

Lately, it has become fashionable in algebraic logic to
study representations of abstract algebras that has a complete representation \cite{Sayed} for an extensive overview.
A representation of $\A$ is roughly an injective homomorphism from $f:\A\to \wp(V)$
where $V$ is a set of $n$-ary sequences; $n$ is the dimension of $\A$, and the operations on $\wp(V)$
are concrete  and set theoretically
defined, like the Boolean intersection and cylindrifiers or projections.
A complete representation is one that preserves  arbitrary disjuncts carrying
them to set theoretic unions.
If $f:\A\to \wp(V)$ is such a representation, then $\A$ is necessarily
atomic and $\bigcup_{x\in \At\A}f(x)=V$.

Let us focus on cylindric algebras for some time to come.
It is known that there are countable atomic $\RCA_n$s when $n>2$,
that have no complete representations;
in fact, the class of completely representable $\CA_n$s when $n>2$, is not even elementary \cite[corollary 3.7.1]{HHbook2}.

Such a phenomena is also closely
related to the algebraic notion of {\it atom-canonicity}, as indicated, which is an important persistence property in modal logic
and to the metalogical property of  omitting types in finite variable fragments of first order logic
\cite[theorems 3.1.1-2, p.211, theorems 3.2.8, 9, 10]{Sayed}.
Recall that a variety $V$ of Boolean algebras with operators is atom-canonical,
if whenever $\A\in V$, and $\A$ is atomic, then the complex algebra of its atom structure, $\Cm\At\A$ for short, is also
in $V$.

If $\At$ is a weakly representable but
not strongly  representable, then
$\Cm\At$ is not representable; this gives that $\RCA_n$ for $n>2$ $n$ finite, is {\it not} atom-canonical.
Also $\Cm\At\A$  is the \d\ completion of $\A$, and so obviously $\RCA_n$ is not closed under \d\ completions.

On the other hand, $\A$
cannot be completely  representable for, it can be shown without much ado,  that
a complete representation of $\A$ induces a representation  of $\Cm\At\A$ \cite[definition 3.5.1, and p.74]{HHbook2}.

Finally, if $\A$ is countable, atomic and has no complete representation
then the set of co-atoms (a co-atom is the complement of an atom), viewed in the corresponding Tarski-Lindenbaum algebra,
$\Fm_T$, as a set of formulas, is a non principal-type that cannot be omitted in any model of $T$; here $T$ is consistent
if $|A|>1$. This last connection  was first established by the  author leading up to \cite{ANT} and more, see e.g \cite{HHbook2}.

The reference \cite{Sayed} contains an extensive discussion of such notions.

\section{Guarded and locally guarded fragments of topological logic}

We also focus on cylindric algebras of dimension $m$ in this part.
We explain why  non atom-canonicity of the varieties
$S\Nr_m\CA_{n+k}$ $k\geq 3$,
non-elementarily
of any class containing the class of completely representable algebras and contained in $S_c\Nr_n\CA_{n+3}$;
these are algebras having $n+3$ local relativized representations
and failure of omitting types in $n+3$ flat frames, all three proved here go together.

The uniform cause of this phenomena, is that in a two player game over a rainbow atom strcture, \pa\ can win an \ef\ forth
game using and re-using $n+3$ pebbles, preventing \pa\ from
establishing  the negation of the above
negative properties.

This is too abrupt so let us start from the beginning of the story ultimately reaching the inter-connections
of such results navigating between algebra and logic. Indeed results in algebraic logic are most attractive when they have impact on logic,
particularly first order logic. But here the feedback between algebraic logic and logic works both ways.

We prove that  infinitely many varieties containing and including the variety of representable algebras
are {\it not} atom canonical; such varieties posses {\it $n$  relativized}
representations.

Here $n$ measures the degree of the  {\it $n$ squareness or $n$ flatness} of the model. This is a locally guarded
relativization;  $n$ is the measure of how much we have to zoom in by a `movable window'
to this $n$ relativized representation to mistake it for a real genuine
one which is  both infinitely flat and square.

When $n<\omega,$ $n$ flatness is
stronger than $n$ squareness by a threatening `Church Rosser condition', but at the limit of
ordinary representations this huge
discrepancy disappears.

In $n$ squareness witnesses for cylindrifiers can only be found in $n$
{\it Gaifman graphs} on $m$ cliques $m<n$, where an $m$ clique is an $m$
version of usual cliques in graph theory. In $n$ flatness it is further required
that  cylindrifiers also commute on the $n$ square (the Church Rosser condition).

Another way of viewing an algebra $\A$ possessing such representations is that such representations
are induced by $n$ hypergraphs,
whose $m$ cliques are labelled by the top element of the algebra.
If $\A$ is countable then an $\omega$ relativized (whether flat or square) representation is just
a classical (Tarskian) representation.

Actually the $n$ flat relativization of an algebra $\A$
also measures the degrees of freedom that $\A$ has.

These are hidden but are coded
in the Gaifman graph which can be looked upon as an $n$ hypergraph
whose hyperedges are $n$ ary assignments from the base of the representation
to formulas in a
relational first order  signature  having an $m$ relation symbols for each element $a\in\A$ and using
$n$ variables.

Algebraically, $n$ measures the dimension
for which there is an algebra having  this dimension
in which $\A$ neatly embeds into, it is how much we truncate
$\omega$, it is the distance between $S\Nr_n\CA_{\omega}=\sf RCA_m$
and $S\Nr_m\CA_n$, which is infinite in the sense that for any finite $n>m$ the variety $S\Nr_m\CA_n$ is not finitely axiomatizable
over $\sf RCA_m$. This can be proved using
Monk-like algebras constructed by Hirsch and Hodkinson \cite{HHbook}.

This is a syntactical measure, and in this case we have the two following weak $n$
completeness theorems that can be seen as an $n$ approximations of Henkin's algebraic  completeness
theorem implemented via the neat embedding
theorem ($S\Nr_n\CA_{\omega}=\sf RCA_m$):

$\A\in S\Nr_m\CA_n$ iff it has an $n$ flat representation
and $\A\in S_c\Nr_m\CA_n$
if it has a complete $m$ flat representation.

On the other hand in case of $n$ squareness
the algebra also has $n$ degrees of freedom,  but such degrees of freedom are `looser' so to speak,
for the algebra neatly embeds into another $n$ dimensional algebra $\D$,
but such a 'dilation' $\D$
is not a classical $\CA_n$ for  it may fail commutativity of cylindrifiers, though not entirely in the sense that a weakened version
of commutativity of cylindrifiers, or a confluence property, holds here.

In fact, such dilations, which happen to be representable in a relativized sense,
are globally guarded fragments of $n$ variable first order logic.
However, in the case of $n$ flatness,
such dilations are $\CA_n$s; so that they are not guarded.

This discrepancy in the formed  dilations blatantly manifests itself in
a very important property.
The Church Rosser condition in the $n$ flat case of commutativity of cylindrifiers , when $n\geq m+3$
make this locally guarded fragment strongly undecidable,
not finitely axiomatizable  in $k$th order logic for any finite $k$,
and there are finite algebras that have infinite $n$ flat representations but does not
have finite ones, and the equational theory of algebras having
only  infinite
$n$ flat representations is not recursively enumerable because the equational theory
of those algebras having finite $n$ flat representations is recursively enumerable and
the problem of telling whether a finite  algebra has an $n$ flat representation is undecidable.

In  $n$ squareness it is still undecidable and not finitely axiomatizable,  but
every finite algebra that has  an $n$ square representation has a finite one.

The reason for such a discrepancy is that the equational theory of the $n$ dimensional dilations is undecidable while
the universal (hence equational) theory of the dilations in case of $n$
squareness is decidable.
The first is just the equational theory of $\CA_n$ the second is the equation theory of
$\sf D_n$, the class of algebras introduced by Andr\'eka et al in \cite{AT}.

We also show in that infinitely many classes containing and including
the completely representable algebras
and characterized by having {\it complete $n$ relativized whether square or flat representations} are not elementary.
Here complete (relativized) representation refers to the fact
that such representations preserve arbitrary (possibly infinitary meets) carrying them to
set theoretic intersection. A necessary (but not sufficient) condition
for an algebra to posses such complete relativized representations is
atomicity, and so such complete relativized representations are also atomic in the following sense.

Every sequence in the top element of the representing algebra is in the range of an atom,
equivalently the pre-image of such  sequence, always an ultrafilter,
turns out to be a principal one.

Such representations
when $n=\omega$, and $\A$ is countable,
then a complete $\omega$ representation of $\A$ is just a complete representation.

Analogous remarks of properties mentioned above for $n$ flat representations
works here by replacing
representation by complete representation and neat embeddings by complete neat
embeddings.

In both cases of an existence of $n$ square or $n$ flat
representations of an algebra $\A$
such semantics can be viewed as  both {\it locally guarded}
and  clique guarded semantics.

But here guards, semantically,
exist in the $n$ dilation, the
$n$-ary assignments that satisfy formulas in $\L(\A)^n$ are
restricted to the the $n$ Gaifman hypergraph
$$\C(\M)=\{s\in {}^nM :\rng(s) \text { is an $m$ clique }\}.$$
Here $M=\bigcup_{s\in V} \rng(s)$ where $V$ is the set of permissable assignments.
Every $m$ variable formula $\phi$ can  be effectively translated to one in the packed fragment of $m$ variable
first order logic, call it ${\sf packed}(\phi)$.

Here we have a very interesting connection (*)
$$M, \C(\M), s\models \phi \Longleftrightarrow
M, s\models {\sf packed}(\phi),$$
which says that locally
guarded fragments are a subfragment
of the {\it packed fragment}
which is an extension of the loosely
${\sf GF}$.

Let us twist to topological logic $\TL_m$.
The analogous algebraic results discussed above for $\L_m (\CA_m)$
hold here as well. Here $\L_m$ denotes first order logic restricted to the first $m$ variables.

Quite surprisingly perhaps at first glance
we will draw the same conclusion from both results, namely,
that a version of the famous Orey-Henkin omitting types theorem ($OTT$)
fails even if count in
$m+k$, $k\geq 3$ square, {\it a priori} square models as potential candidates for omitting
single non principle types; and these can be chosen to consist
of co-atoms, so that what we actually have, is that the  $\TL_m$ theory for which $OTT$ fails is an atomic one, that does not have an $m+k$
flat atomic model, so  we are showing that a famous theorem of Vaught which holds for first order logic -
countable atomic theory have countable models-
fails for $\TL_m$ when $m>2$.

The topologizing of such multimodal logics are not finitely axiomatizable,
nor decidable but their modal algebras have the finite algebra finite base property which means that if a finite algebra has
an $n$ square representation,
then it has a finite one.

But $n$ flat representations in the topological context as well is far more intricate.
In fact, for $n=3$ it is undecidable  to tell whether a finite frame is has an $3+m$
flat representation when $m\geq 3$.

In the case of global guarding
negative properties of finite variable
fragments of first order logic having at least
three variables,  in topological predicate logic starting
from two variables,  like robust undecidability simply disappear, while one retains
a lot of  positive properties concerning definability properties, like interpolation, Beth definability,
completeness
and  $OTT$.
So in guarding we throw away some of the worlds but the accessibility relations are kept as they are
but now restricted to the remaining worlds.

\section{Preliminaries}

Throught $\alpha, \beta$ denote arbitary ordinals. We formulate the basic definitions in
full generality, by throughout the paper ordinals considered will be finite or countable.

\subsection{Topological cylindric algebras}

Instead of taking ordinary set algebras,
as in the case of cylindric algebras, with units of the form $^{\alpha}U$, one
may require {\it that the base $U$ is endowed with some topology}. This enriches the algebraic structure.
For given such an algebra,
for each $k<\alpha$, one defines an {\it interior operator} on $\wp(^{\alpha}U)$ by
$$I_k(X)=\{s\in {}^{\alpha}U; s_k\in {\sf int}\{a\in U: s_a^k\in X\}\}, X\subseteq {}^{\alpha}U.$$
Here $s_a^k$ is the sequence that agrees with $s$ except possibly at  $k$ where its value is $a$.
This gives a {\it topological cylindric set algebra of dimension $\alpha$}.

Now such algebras lend itself to an abstract formulation aiming to capture the concrete set algebras; or rather the variety
generated by them.

This consists of expanding the signature of cylindric algebras
by unary operators,  or modalities, one for each $k<\alpha$, satisfying certain identities.

We start with the standard definition of cylindric algebras \cite[Definition 1.1.1]{HMT1}:

\begin{definition}
Let $\alpha$ be an ordinal. A {\it cylindric algebra of dimension $\alpha$}, a  $\CA_{\alpha}$ for short,
is defined to be an algebra
$$\C=\langle C, +, \cdot, -, 0, 1, \cyl{i}, {\sf d}_{ij}  \rangle_{i,j\in \alpha}$$
obeying the following axioms for every $x,y\in C$, $i,j,k<\alpha$

\begin{enumerate}

\item The equations defining Boolean algebras,

\item $\cyl{i}0=0,$

\item $x\leq \cyl{i}x,$

\item $\cyl{i}(x\cdot \cyl{i}y)=\cyl{i}x\cdot \cyl{i}y,$

\item $\cyl{i}\cyl{j}x=\cyl{j}\cyl{i}x,$

\item ${\sf d}_{ii}=1,$

\item if $k\neq i,j$ then ${\sf d}_{ij}=\cyl{k}({\sf d}_{ik}\cdot {\sf d}_{jk}),$

\item If $i\neq j$, then ${\sf c}_i({\sf d}_{ij}\cdot x)\cdot {\sf c}_i({\sf d}_{ij}\cdot -x)=0.$

\end{enumerate}
\end{definition}
For a cylindric algebra $\A$, we set ${\sf q}_ix=-{\sf c}_i-x$ and ${\sf s}_i^j(x)={\sf c}_i({\sf d}_{ij}\cdot x)$.
%We consider only infinite dimensional cylindric algebras.
Now we want to abstract equationally the prominent features of the concrete interior operators defined on cylindric
set and weak set algebras.
We expand the signature of $\CA_{\alpha}$
by a unary operation $I_i$
for each $i\in \alpha.$ In what follows $\oplus$ denotes the operation of symmetric difference, that is,
$a\oplus b=(\neg a+b)\cdot (\neg b+a)$.
For $\A\in \CA_{\alpha}$ and $p\in \A$, $\Delta p$, {\it the dimension set of $p$}, is defined to be the set
$\{i\in \alpha: {\sf c}_ip\neq p\}.$ In polyadic terminology $\Delta p$ is called {\it the support of $p$}, and if $i\in \Delta p$, then $i$
is said to {\it  support $p$} \cite{g}.

\begin{definition}\label{topology} A {\it topological cylindric algebra of dimension $\alpha$}, $\alpha$ an ordinal,
is an algebra of the form $(\A,I_i)_{i<\alpha}$ where $\A\in \sf CA_{\alpha}$
and for each $i<\alpha$, $I_i$ is a unary operation on
$A$ called an {\it  interior operator} satisfying the following equations for all $p, q\in A$ and $i, j\in \alpha$:
\begin{enumerate}
\item ${\sf q}_i(p\oplus q)\leq {\sf q}_i (I_ip\oplus I_iq),$
\item $I_ip\leq p,$
\item $I_ip\cdot I_ip=I_i(p\cdot q),$
\item $p\leq I_iI_ip,$
\item $I_i1=1,$
\item ${\sf c}_kI_ip=I_ip, k\neq i, k\notin \Delta p,$
\item ${\sf s}_j^iI_ip=I_j{\sf s}_j^ip, j\notin \Delta p.$
\end{enumerate}
\end{definition}
The class of all such topological cylindric
algebras are denoted by ${\sf TCA}_{\alpha}.$

For $\B=(\A, I_i)_{i<\alpha}\in {\sf TCA}_{\alpha}$ we write
$\Rd_{ca}\B$ for $\A$. Notice too that every $\CA_{\alpha}$ can be extended to a
$\sf TCA_{\alpha}$, by defining for all $i<\alpha$,
$I_i$ to be the identity function.

Topological algebras  in the form we defined are {\it not }
Boolean algebras with operators because the interior operators do not distribute
over the Boolean join.

We also need the notion of {\it compressing} dimensions
and, dually,  {\it dilating them}; expressed by the notion of neat reducts.

\begin{definition}
\begin{enumarab}
\item Let $\alpha<\beta$ be ordinals and $\B\in \sf TCA_{\beta}$. Then $\Nr_{\alpha}\B$ is the algebra with universe
$Nr_{\alpha}\A=\{a\in \A: \Delta a\subseteq \alpha\}$ and operations obtained by discarding the operations of $\B$
indexed by ordinals in $\beta\sim \alpha$.
$\Nr_{\alpha}\B$ is called the {\it neat $\alpha$ reduct of $\B$}. If $\A\subseteq \Nr_{\alpha}\B$, with $\B\in \sf TCA_{\beta}$,
then we say that $\B$ is  {\it a $\beta$
dilation of $\A$}, or simply {\it a dilation} of $\A$.
\item An {\it injective} homomorphism $f:\A\to \Nr_{\alpha}\B$ is called a {\it neat embedding}; if such an $f$ exists, then we say
that $\A$ {\it neatly embeds into its dilation $\B$}. In particular, if $\A\subseteq \Nr_{\alpha}\B,$ then $\A$ neatly embeds into
$\B$ via the inclusion map.
\end{enumarab}
\end{definition}
Note that the algebra $\Nr_{\alpha}\B$ is well defined; it is closed under the cylindric operations; this is well
known and indeed easy to show, and it also closed under all the {\it interior operators} $I_i$ for $i<\alpha$, for
if $x\in \Nr_{\alpha}\B$, and $k\in \beta\sim \alpha$, then by axiom (6) of definition \ref{topology},
$k\notin \alpha\supseteq \Delta x\cup \{i\}\supseteq \Delta(I_i(x))$,
hence ${\sf c}_k(I_i(x))=I_i(x).$

\subsection{Cylindric tense algebras}

We start with semantics for tense cylindric algebras.

A time flow is a pair $(T, <)$ where $T$ is a non empty (set of moments) and $<$ is an irreflexive
transitive relation. If $s<t$ we say that $<$ is earlier than $\A$.
\begin{definition}\label{tensesemantics}
A {\it tense  system based on a time flow $(T, <)$}
is a tuple $\mathfrak{K}=(X_t, V_{st}, <, >, Q_1, Q_2, \bold 0)_{s,t\in T}$
such that
\begin{enumroman}
\item $T$ is a non-empty set, the set of moments
\item $<$ and $ >$ are two binary relations on $T,$
\item  $\bold 0\in T$ and $Q_1, Q_2\subseteq T,$
\item $X_t\neq \emptyset$ for all $t\in T,$
\item if $t<s$ or $s>t$ then $V_{ts}: X_t\to X_s$ is a function such that
\begin{enumarab}
\item  $V_{tt} =Id,$

\item if $V_{st}, V_{tr}$ are defined then $V_{tr}=V_{st}\circ V_{tr},$

\item If $t<s$ and $s>t$
then $V_{ts}$ is a bijection and $V_{ts}=V_{st}^{-1}.$
\end{enumarab}
\end{enumroman}
\end{definition}

Let $\alpha$ be an ordinal $>0$.
For $s, t\in T$ and  $x\in {}^{\alpha}X_t$,  $V_{ts}(x)$ denotes
the set $(V_{st}(x_i): i<\alpha)$
The one defines an algebra
%$$\mathfrak{F}_{\mathfrak{K}}=\{(f_t:t\in T); f_w:{}Y_t\to \mathfrak{O}, Y_t\subseteq {}^{\alpha}X_t\}.$$
%The operations are deined by:
$$\mathfrak{F}_{\mathfrak{K}}=\{(f_w:w\in W); f_w:{}^{\alpha}D_w\to \mathfrak{O}\}.$$
The operations are defined as follows:
If $x,y\in {}Y_w$ and $j\in \alpha$ then we write $x\equiv _jy$ if $x(i)=y(i)$ for all $i\neq j$.
We write $(f_w)$ instead of $(f_w:w\in W)$.
In $\mathfrak{F}_{\mathfrak{K}}$ we consider the following operations:
$$(f_w)\lor (g_w)=(f_w\lor g_w)$$
$$(f_w)\land (g_w)=(f_w\land g_w.)$$
For any $(f_w)$ and $(g_w)\in \mathfrak{F}$, define
$$\neg (f_w)=(-f_w).$$
For any $i, j\in  {}^{\alpha}\alpha$, we define
$${\sf s}_i^j:\mathfrak{F}\to \mathfrak{F}$$by
$${\sf s}_i^j(f_w)=(g_w)$$
where
$$g_w(x)=f_w(x\circ [j,i])\text { for any }w\in W\text { and }x\in {}^{\alpha}D_w.$$
For any $j\in \alpha$ and $(f_w)\in \mathfrak{F}$
define
$${\sf c}_{j}(f_w)=(g_w)$$
where for $x\in {}^{\alpha}D_w$
$$g_w(x)=\bigvee\{f_w(y): y\in {}^{\alpha}D_w, y\equiv_jx\}.$$

$G(f_t: t\in T)=:(g_t: t\in T)$ such that for $x\in Y_t$ $g_t(x)=1$ iff $t\in Q_1$ and for all
$s>t$, $f_s(T_{ts}(x))=1.$
Expressed otherwise for any $x\in {}^{\alpha}D_t$
$$g_t(x)=\bigwedge\{f_{s}(x): t<s\}.$$
and likewise
$H(f_t: t\in T)=(g_t: t\in T)$
where
$$g_t(x)=\bigwedge\{f_{s}(x): s<t\}.$$
$${\sf d}_{ij}=\{(f_t: t\in T): f_t(i)=f_t(j) \forall t\in T\}.$$

The class of abstract $\TeCA_{\alpha}$ is obtained by a process of abstraction.
It consists of the cylindric axioms together with the following equations for
the modalities $G$ and $H$;
\begin{enumarab}
\item [$(T1)$]$G(x\cdot y)=G(x)\cdot G(y).$
\item [$(T2)$]$H(x\cdot y)=H(x)\cdot G(y).$
\item [$(T3)$]$Gx\leq GGx$
\item [$(T4)$]$x\leq GPx\text{ and }x\leq HF.$
\end{enumarab}
and interaction axioms, or rather non-interaction axioms,
for each $i<\alpha$
$${\sf c}_iG(x)=G(x)\& {\sf c}_iH(x)=H(x).$$

It is tedious, but basically routine to verify  that
$\mathfrak{F}_{\mathfrak{K}}$ endowed with the above operations satisfies the above
equations.
Let $\C=\mathfrak{F}_{\mathfrak{K}}$
be as above then it has a cylindric reduct which is isomorphic to $\prod_{t\in T}\wp(^{\alpha}X_t).$

We can also study maximal temperal logic with Still and Until, using the axiomatization in [V].
 \subsection{Temporal cylindric algebras}

A time flow is a pair $(T, <)$ where $T$ is a non empty (set of moments) and $<$ is an irreflexive
transitive relation. If $s<t$ we say that $<$ is earlier than $\A$.
\begin{definition}\label{tensesemantics}
A {\it tense  system based on a time flow $(T, <)$}
is a tuple $\mathfrak{K}=(X_t, V_{st}, <, >, Q_1, Q_2, \bold 0)_{s,t\in T}$
such that
\begin{enumroman}
\item $T$ is a non-empty set, the set of moments
\item $<$ and $ >$ are two binary relations on $T,$
\item  $\bold 0\in T$ and $Q_1, Q_2\subseteq T,$
\item $X_t\neq \emptyset$ for all $t\in T,$
\item if $t<s$ or $s>t$ then $V_{ts}: X_t\to X_s$ is a function such that
\begin{enumarab}
\item  $V_{tt} =Id,$

\item if $V_{st}, V_{tr}$ are defined then $V_{tr}=V_{st}\circ V_{tr},$

\item If $t<s$ and $s>t$
then $V_{ts}$ is a bijection and $V_{ts}=V_{st}^{-1}.$
\end{enumarab}
\end{enumroman}
\end{definition}

Let $\alpha$ be an ordinal $>0$. 
For $s, t\in T$ and  $x\in {}^{\alpha}X_t$,  $V_{ts}(x)$ denotes
the set $(V_{st}(x_i): i<\alpha)$
The one defines an algebra
%$$\mathfrak{F}_{\mathfrak{K}}=\{(f_t:t\in T); f_w:{}Y_t\to \mathfrak{O}, Y_t\subseteq {}^{\alpha}X_t\}.$$
%The operations are deined by:
$$\mathfrak{F}_{\mathfrak{K}}=\{(f_w:w\in W); f_w:{}^{\alpha}D_w\to \mathfrak{O}\}.$$
The operations are defined as follows:
$U[(f_t: t\in T), (h_t:t\in T)]=:(g_t: t\in T)$ 
such that for $x\in ^{\alpha}D_t$, 
$g_t(x)=1$ iff $t\in Q_1$ and 
$$(\exists s)[s>t, f_s(T_{ts}(x))=1\& (\forall u)(t<u<s, 
h_u(T_{us}(x))=1].$$
$S[(f_t: t\in T), (h_t: t\in T))=:(g_t: t\in T)$
such that for all $x\in {}^{\alpha}D_t$,
$g_t(x)=1$ iff $t\in Q_2$ and
 $$(\exists s)[s<t, f_s(T_{ts}(x))=1\& (\forall u)(s<u<t, 
h_u(T_{us}(x))=1].$$

It is tedious, but basically routine to verify  that
$\mathfrak{F}_{\mathfrak{K}}$ endowed with the above operations is a temporal algebra.

The completeness theorem is as before, so forming the 
family of dilations inductively. We have  
$U[(f_t: t\in T), (h_t:t\in T)]=:(g_t: t\in T)$ 
such that for $x\in ^{\alpha}D_t$, 
$g_t(x)=1$ iff $t\in Q_1$ and 
the following double infinitary join meet holds:
%$$g_t(x)=\bigvee_{s>t}[ f_s(T_{ts}(x))=1\land  \bigwedge_{u<s}h_u(T_{us}(x)=1)],$$
 $$g_t(x)=\bigvee_{s>t}[ f_s(x)=1\land  \bigwedge_{u<s}h_u(x)=1)],$$
 %$$g_t(x)=\bigvee_{s<t}[ f_s(T_{ts}(x))=1\land  \bigwedge_{u>s}h_u(T_{us}(x)=1)],$$
and its mirror for $S$ is : 
$$g_t(x)=\bigvee_{s<t}[ f_s(x)=1\land  \bigwedge_{u>s}h_u(x)=1)],$$

%$$g_t(x)=\bigvee_{s<t}[\bigwedge \{f_{u}(x): u<t\}].$$

Venema proved that the folowing axioms are complete for temporal logic with since and until over 
$(\mathbb{N}, <)$. We pick up this axiomatization for the propositional part and prove a completeness theorem for predicate temporal 
logic.

\begin{definition}

\begin{enumerate}

\item $G(p\to q)\to U(p,r)\to U(q, r)$

\item $G(p\to q)\to (U(r, p)\to U(r,q))$

\item $p\land U(q,r)\to U(a\land S(p, r), r)$

\item $U(p,q)\land \neg U(p,r)\to U(q\land \neg r, q)$

\item $U(p,q)\to U(p,rq\land U(p, q))$

\item $U(p, q)\land U(r, s)\to U(p\land r, q\land s)\lor U(p\land s, q\land s)\lor U(q\land r, q\land s)$  

\item mirror images

\item $F\top\to U(T, \bot)\land (PT\to S(T, \bot)$

\item $H\bot \lor PH\bot$
\item  $Fp\to U(p, \neg \phi)$

\end{enumerate}
\end{definition}

Let $\alpha$ be an infinite ordinal. Let $L$ be the signature consisting of the Boolean operations, cylindrifiers, diagonal elements, 
with indices from $\alpha$
and the two binary modalities
$S$ and $U$. The above axioms can be translated to equations 
in $L$. Now we take the $\CA_{\alpha}$ 
axioms together with such equations, call the finite resulting schema of equations
$\Sigma$. 
\subsection{ Discrete topologizing and static temporalizing}

We state  very simple
fact that allows us to recursively associate with every $\CA$ of any dimension a $\TCA$, a $\TeCA$
and a $\sf Tem\CA$
of the same dimension, such that the last threealgebras are representable if and inly if the original
$\CA$ is. This mechanical procedure wil be the  main technique twe use
to obtain negative results for both $\TCA$s, $\TeCA$s and $\sf Tem\CA$s
by bouncing them back to their cylindric  counterpart.

\begin{definition} Let $\A\in \CA_{\alpha}$.
\begin{enumarab}
\item  $\A^{\sf top}\in \TCA_{\alpha}$
is a  {\it topologizing of  $\A$} if
$\Rd_{ca}\A^{\sf top}=\A$. The {\it discrete topologizing} of $\A$ is the $\sf TCA_{\alpha}$ obtained
from $\A$ by expanding $\A$ with $\alpha$ many identity
operators.
\item $\A^{\sf tense}$ is a {\it tense expansion} of $\A$, if $\Rd_{ca}\A^{\sf tense}=\A$. The {\it static temporal expansion} of $\A$
is the $\TeCA_{\alpha}$ obtainded from $\A$
by defining $G=H=Id,$, taking the time $T$ to consist of only one moment, that is,
$T=\{t\}$ say, and the flow $<$
is taken to
be the empty set. 

\item In the same way we can form maximal temporal extensions of a cylindric algebra, by defininig $S(a,b)=U(a,b)=Id$
and defining the flow like in the previous item, call the resulting algebra $\A^{\sf temp}$. 
In this case, $\A^{\sf tense}=\A^{\sf temp}$ so that 
the minimal and maximal resulting  temporal logic coincide.
\end{enumarab}
\end{definition}

\begin{theorem} The discrete topologizing of $\A\in \CA_{\alpha}$ is unique up to isomorphism.
Furthermore, if $\A^{\sf top}$ is the discrete topologizing of $\A$,
then $\A$ is representable if and only if $\A^{\sf top}$
is representable. A completely analogous statement
holds for $\A\in \CA_{\alpha}$ in connection to $\A^{\sf tense}$ and $\A^{\sf temp}.$
\end{theorem}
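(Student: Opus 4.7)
Plan: The uniqueness assertion is immediate from the construction. The discrete topologizing $\A^{\sf top}$ adjoins to $\A$ the $\alpha$ operations $I_i=\mathrm{Id}_A$, each of which is a uniquely determined function on the underlying set; so any two discrete topologizings of $\A$ literally coincide, and uniqueness on the nose (hence up to isomorphism) is trivial. The analogous uniqueness for $\A^{\sf tense}$ and $\A^{\sf temp}$ follows identically once we fix the singleton time flow $(T,<)=(\{t\},\emptyset)$ together with the prescribed trivial action of the temporal modalities on $\A$.

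For the representability transfer, the ``if'' direction is free in every case: if $\A^{\sf top}$ (respectively $\A^{\sf tense}$, $\A^{\sf temp}$) is representable, then $\A=\Rd_{ca}\A^{\sf top}$ (respectively $\Rd_{ca}\A^{\sf tense}$, $\Rd_{ca}\A^{\sf temp}$) is representable via the cylindric reduct of that representation. The substantive content lies in the converse. Suppose $\A\in\RCA_\alpha$, so we may embed $\A$ into a product $\prod_{j}\wp({}^\alpha U_j)$ of cylindric set algebras. Equip each base $U_j$ with the \emph{discrete} topology; then ${\sf int}(Y)=Y$ for every $Y\subseteq U_j$, and substituting into the concrete formula for $I_k$ recalled at the start of Section~3 yields
\[
I_k(X)=\{s\in{}^\alpha U_j : s_k\in\{a\in U_j : s^k_a\in X\}\}=\{s:s^k_{s_k}\in X\}=X,
\]
so each $I_k$ acts as the identity on $\wp({}^\alpha U_j)$. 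Thus $\prod_j \wp({}^\alpha U_j)$ carries a canonical structure of representable topological cylindric set algebra whose interior operators restrict to the identity on $\A$, giving the desired representation of $\A^{\sf top}$.

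For the tense and temporal cases one runs the parallel argument using the degenerate time flow $(T,<)=(\{t_0\},\emptyset)$ with $X_{t_0}=U$ a representing base for $\A$. The concrete algebra $\mathfrak{F}_{\mathfrak{K}}$ of $T$-indexed functions collapses, as a $\CA_\alpha$, to $\wp({}^\alpha U)$. Since $<$ is empty, each of the defining infima for $G$, $H$ (and for the Until/Since operators $U$, $S$ in the maximal case) is an empty meet, so the induced trivial modalities satisfy (T1)--(T4), the non-interaction axioms ${\sf c}_i G(x)=G(x)$, ${\sf c}_i H(x)=H(x)$, and every instance of Venema's schema $\Sigma$ by direct inspection, each axiom reducing to one of the tautologies $\top=\top$, $x\le \top$, or ${\sf c}_i\top=\top$. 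This witnesses that the concrete singleton-flow tense (resp.\ temporal) expansion of $\wp({}^\alpha U)$ is a representable $\TeCA_\alpha$, and after identifying it canonically with the abstract static expansion, the embedding $\A\hookrightarrow\wp({}^\alpha U)$ lifts to a representation of $\A^{\sf tense}$ (resp.\ $\A^{\sf temp}$).

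The only real obstacle here is careful bookkeeping rather than any deep argument: one must patiently check every axiom of $\TCA_\alpha$, $\TeCA_\alpha$, and the maximal temporal variety against the trivial modalities arising from the discrete topology or the singleton time flow, and verify that the abstract definitions of $\A^{\sf top}$, $\A^{\sf tense}$, $\A^{\sf temp}$ indeed agree, up to the canonical identification, with these concrete degenerate instances of the intended semantics. Once that identification is made, both directions of the equivalence become transparent, and the ``mechanical procedure'' alluded to in the discussion preceding the theorem is fully justified.
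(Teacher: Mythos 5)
Your proposal is correct and follows essentially the same route as the paper's own (very terse) proof: uniqueness is immediate from the construction, the forward direction is just taking the cylindric reduct, and the converse is obtained by endowing the representing base with the discrete topology (resp.\ using the singleton time flow with empty precedence relation), which forces the interior operators (resp.\ the tense/temporal modalities) to act as identities. Your version merely spells out the verification $I_k(X)=X$ and the axiom checks that the paper leaves implicit.
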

\begin{proof} The first part is trivial.
The second part is also very easy.
If $\A^{\sf top}$ is representable then obviously $\A=\Rd_{ca}\A^{\sf top}$ is representable.
For the last part if $\A$ is representable with base
$U$ then $\A^{\sf top}$ have the same universe of $\A$,
hence it is representable by endowing $U$ with
the discrete topology,
which induces the identity interior operators.
\end{proof}

\begin{theorem} Let $1<m<n$
\begin{enumarab}
\item Assume that $\A\in \Nr_m\CA_n$,
then $\A^{\sf top}\in \Nr_m\TCA_n$
\item Assume that $\A\in S\Nr_m\CA_n,$ 
then $\A^{\sf top} \in S\Nr_m\CA_n$. 
\item Same if we apply the operation $S_c$ of forming complete subalgebars in 
$\Nr_m\CA_n$.
\item Completely analogous statements hold 
for $\sf TeCA_m$s and $\sf Tem\CA$s by static temporalizing.
\end{enumarab}
\end{theorem}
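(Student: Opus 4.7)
The plan is to exploit the triviality of the discrete topologizing (and the static temporalizing): in both constructions the freshly added operators are the identity function on the underlying cylindric algebra, so they carry no information beyond what the $\CA$-structure already provides. This means that every verification required by the theorem reduces to a purely cylindric statement which is either trivial or already in our hypothesis.

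First, for part~(1), I will suppose $\A=\Nr_m\B$ with $\B\in\CA_n$ and form the discrete topologizing $\B^{\sf top}\in\TCA_n$, whose $I_i$ are the identity on $B$ for every $i<n$. I will then show $\A^{\sf top}=\Nr_m\B^{\sf top}$. The underlying set of $\Nr_m\B^{\sf top}$ is $\{b\in B:\Delta b\subseteq m\}$, and since the $I_i$ are identities they do not change any cylindric dimension set, so this set coincides with $Nr_m\B=A$. Closure of $A$ under the $I_i$ for $i<m$ is automatic (identity), and those operators restricted to $A$ are precisely the identity operators that $\A^{\sf top}$ was equipped with. For part~(2), if $\A\subseteq \Nr_m\B$ then applying $\A\mapsto \A^{\sf top}$ and $\B\mapsto \B^{\sf top}$ gives $\A^{\sf top}\subseteq \Nr_m\B^{\sf top}$ by exactly the same observation, because the inclusion is still a homomorphism once the new operators on both sides are identities. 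For part~(3), completeness is preserved because identity operators commute trivially with arbitrary meets and joins: a supremum existing in $\A$ and preserved in $\Nr_m\B$ is automatically preserved after tacking on trivial $I_i$'s on both sides, so complete subalgebra status transfers from the $\CA$ setting to the $\TCA$ setting.

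For part~(4), the analogous argument works verbatim after replacing $I_i$ by $G,H$ in the tense case (and additionally by $S,U$ in the maximal temporal case). The definition of static temporalizing sets $G=H=\Id$ (resp.\ $S(a,b)=U(a,b)=\Id$) on the unique moment $t$; all of the tense axioms $(T1)$--$(T4)$ and the non-interaction axioms ${\sf c}_iG(x)=G(x)$, ${\sf c}_iH(x)=H(x)$ are satisfied automatically by the identity, and the Venema-style axioms for $S$ and $U$ collapse to tautologies when both modalities are identity on a one-moment flow. Hence $\B^{\sf tense}$ (resp.\ $\B^{\sf temp}$) is a bona fide $\TeCA_n$ (resp.\ $\sf Tem\CA_n$), and the same argument as in parts~(1)--(3) shows that neat embedding and complete neat embedding lift along $(-)^{\sf tense}$ and $(-)^{\sf temp}$.

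The main potential obstacle is essentially bookkeeping rather than mathematical depth: one needs to check that the dilation chosen for the expanded algebra really is the discrete topologizing (resp.\ static temporalizing) of a dilation of $\A$, rather than some other expansion; but uniqueness of the identity operator (already used in the previous theorem to conclude uniqueness of the discrete topologizing up to isomorphism) makes this automatic. Once that is in hand, there is nothing further to verify, because no axiom of $\TCA_n$, $\TeCA_n$, or $\sf Tem\CA_n$ imposes any condition on $\A$ that was not already present in its $\CA_m$-reduct.
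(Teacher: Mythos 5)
Your proposal is correct and follows essentially the same route as the paper, which simply observes that if $\A=\Nr_m\B$ then $\A^{\sf top}=\Nr_m\B^{\sf top}$ and declares the remaining items analogous. You have merely filled in the routine verifications (the universe of the neat reduct is unchanged since identity operators do not affect dimension sets, and subalgebra/complete-subalgebra status is preserved trivially) that the paper leaves implicit.
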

\begin{proof} We prove only the first same. The rest of the proof is the same.
Assume that $\A=\Nr_m\B$, $\B\in \CA_n$, then $\A^{\sf top}=\Nr_m\B^{\sf top}$ and we are done.
\end{proof}

Form this easy lemma one can infer quite deep results proved for cylindric algebras. We mention two.

\begin{itemize}

\item For any  
pair of ordinal $1<m<n$ the class  
$\Nr_m\TCA_n$ not closed under elementary subalgebras \cite{Sayedneat}.

\item  For any pair 
of ordinals $2<m<n$ (infinite incuded), for any $r\in \omega$ and 
for any finite  $k\geq 1$, 
there is a $\B^r\in S\Nr_m\TCA_{m+k}$ such that $\Rd_{ca}\A\notin S\Nr_m\CA_{m+k+1}$ and
$\Pi_{r\in \omega}\B^r/U\in \sf RTCA_m$ for any non principal ultrafilter on $\omega$.

In particular, for finite for $m$ and any finite $k\geq 1$, we can infer that  
the variety $S\Nr_m\TCA_{m+k+1}$ is not finitely axiomatizable 
over the variety $S\Nr_m\TCA_{m+k}.$

\item The same result holds  for infinite dimensions by replacing 
finite axiomatizability 
by finite schema axiomatizability \cite{HHbook, t}.

\end{itemize}

The process of discrete topologizing , and for that matter static
temporal expansions,
work best in recovering negative results
proved for cylindric algebras to the topological or temporal
expansion by bouncing it back to the
cylindric case. We will witness such a phenomena quite frequently.

\section{Complete representability, atom canonicity and neat embeddings}

\subsection{Use of rainbows}

In the proof of our main results mentioned in the abstract (concerning atom canonicity and compolete representations)
we use advanced sophisticated machinery of cylindric algebra theory, like so called  rainbow constructions
invented by Hirsch and Hodkinson \cite{HH, HHbook, Hodkinson, HHbook2},
obtaining new results for cylindric-like algebras, strengthening results proved for cylindric algebras
in \cite{ANT, HH, Hodkinson}, and then
lifting them to the topological and tense
context, by discrete  topologizing and static temporal expansions.

Rainbow algebras are only superficially similar to what is known in the literature of Monk-like algebras.
Monk-like algebras are efficient in proving that certain algebras may not be representable and 
this type of results is proved by an application of the cominatorial Ramey's theorem.
The idea of Mon-like algebras is not too hard. Such algebras are finite, hence atomic and the atoms are coloured in 
such a way to forbid monochromatic trangles (traingles all of its three sides are labelled 
by the same colour) If the atoms are more than the colours then a represenation 
will force a monochromatic triangle which a impossible.

On a very basic level in rainbow algebras, or more accurately on 
the atom structures of rainbow algebras, deterministic
games are played between two players Elloise \pe\ and Abelard \pa\; one of them has to win, there are no draws.

We have almost all rainbow colours, red, green, white, black, if one considers that black
and white are colours etc.
These games lift very simple forth \ef\ game played on
two coloured relational structures (usually complete irreflexive graphs)
$A$ is the `greens' and $B$ is the `reds'  to the cylindric rainbow
algebras.

It is a forth \ef\  pebble game such that \ws\ s for either player in the private \ef\ game
are preserved in the lifted rainbow algebra $\CA_{A,B}$, but the number of used pebbles and rounds
increases.

Because we can control the number of pebbles in play, rainbow algebras
prove very delicate results via quite sophisticated constructions from the cylindric algebra
point of view,  but such constructions which tend to seem complicated enough,  actually
use simple games \ef\ games that serves the task at hand by  appropriately
choosing the structures $A$ and $B$.

Reducing complicated constructions, proving  sophisticated subtle
statements solving really hard
problems, to a manageable simple
case, namely, a forth  \ef\ pebble game,
is precisely the ingenuity of such constructions.

This technique
proved to be a nut cracker in addressing difficult problems, and it especially proved highly efficient
in contexts when it is not obvious how to use Monk-like algebras, in both the relation algebra and cylindric algebra
cases like for instance  proving that $\sf RA_{n+1}$ is not finitely axiomatizable
over $\sf RA_n$, when $n\geq $
where $\sf RA_m$($m>2)$ is the variety  of relation algebras that embed
into algebras having a relational basis in the sense of Maddux \cite{HHbook}.

We will introduce a cylindric analogue of $\sf RA_n$, further on to be topologized, these are
cylindric algebras of dimension $m$, that also has an $n$ dimensional basis
that can be characterized in many ways that are
equivalent.

First by simple games played on finite graphs with a set of nodes $n$, in which \pa\ is offered only a
a cylindrifier move' there are no amalgamation moves,
second by certain $n$ dimensional basis that are obtained from the cylindric basis of
Maddux by discarding the amalgamation condition, third
by a weak neat embedding property; such algebras embed
into $n$ dimensional algebras and
they have {\it localized $n$ square representations} to be elaborated
upon in a short  while.

It is always the case, like just indicated,
that the number of pebbles used in the \ef\ private
game, appears on the algebra level,
so for example if we want to show that a rainbow  algebra is not in $S\Nr_m\CA_n$ then
we use $n$ pebbles.

It will turn out that the following three results:

\begin{itemize}

\item  non atom canonicity proved by constructing a rainbow atom structure 
whose term algebra is representable but its \de\ completion 
does not neatly embed into 
an $n+3$ dimensional algebra, 

\item the non-existence of $n+3$ flat models

\item and the failure of $OTT$ even if we allow clique
guarded $n+3$ flat semantic,
\end{itemize}
is witnessed by a \ws\ for \pa\
in a certain essentially forth \ef\
translated to the coloured graphs involved in the rainbow construction,
where he can use and re-use
$n+3$ pebbles, though in the case of atom canonicity \pa\
can win only in finitely many rounds because the game is played on a finite algebra.

In this case actually \pa\ can win
without having to reuse pebbles, and this makes the situation worse.
Even $n+3$ square models are not enough for omitting
types.

In the other case \pa\ can win only in an $\omega$ rounded game.
He cannot win the game truncated to finitely many rounds, because his strategy is forcing
\pa\ a decreasing sequence in the red $\N$; the double
indices of the red colours come from $\N$. The strategy  of \pa\
is bombarding \pe\ with `cones' having the same base and green tints.
Here the suffices of the greens come from the green $\Z$.

By the  rules of the game \pe\  wil have to choose a red colour to label the edges between apexes of cones having
the same base.
This cannot be achieved in finitely many rounds.

For such a construction we can infer that algebras having $n$ flat
complete representations when $n\geq m+3$ is not elementary. 
The $n$ square case does not follow here, though it can be proved using a rainbow
construction, and this particular instance Monk-like algebras can do the job just as well.

Coloured graphs are complete graphs whose edges are labelled by the rainbow 
colours and some of its $n-1$ hyperedges are also coloured.
Shades of yellow are reserved for that.  Such coluored graphs can be also seen 
as models for an $L_{\omega_1, \omega}$ 
theory formulated in the rainbow signature, which consists of binary relation, green, red, white, etc 
and $n-1$-ary relation representing the shades of yellow\cite{HHbook2}. 

When the greens are finite (as is the case with our first encounter with rainbows), the rainbow theory is
a first order theory. If an edge in a coloured graph is coloured by a green, 
then this is interpreted model-theoretically, that  this edge holds in the green binary relation in the corresponding model.

The atoms of the rainbow
algebra are very roughly {\it finite} coloured graphs. Cones are special coloured graphs. The strategy
for \pa\ is always bombarding \pe\ with cones whose tints are green
and because,\pe\ can never play greens according to the rules of the game,
so when she is to label an edge between apexes of cones having the same base
she has to choose a red. It is always the case that \pa\
wins on a `red clique.'

\subsection{Atom canonicity}

We will show using the so called {\it blow up and blur construction}, a very indicative name suggested 
in \cite{ANT},
that for any finite $n>2$, any $\sf \K\in \{\Sc, \CA, \sf PA, \PEA\}$, and any 
$k\geq 3$,
$S\Nr_n\sf K_{n+k}$ is {\it not} atom canonical. Here $\Sc$ denotes Pinter's substiutuion algebars, $\sf PA$ denotes polyadic algebras
and $\sf PEA$ denotes polyadic algebras with equality.

We will blow up and blur a finite 
{\it rainbow algebra}.

We give the general idea for cylindric algebras, though the idea is much more universal as we will see.
Assume that $\sf RCA_n\subseteq \K$, and $\K$ is closed under forming subalgebras.
Start with a finite algebra $\C$ outside $\K$. Blow up and blur $\C$, by splitting
each atom to infinitely many, getting a new atom structure $\At$. In this process a (finite) set of blurs are used.

They do not blur the complex algebra, in the sense that $\C$ is there on this global level.
The algebra $\mathfrak{Cm}\At$ will not be in $\K$
because $\C\notin \K$ and $\C$ embeds into $\mathfrak{Cm}\At$.
Here the completeness of the complex algebra will play a major role,
because every element of $\C$,  is mapped, roughly, to {\it the join} of
its splitted copies which exist in $\mathfrak{Cm}\At$ because it is complete.

These precarious joins prohibiting membership in $\K$ {\it do not }exist in the term algebra, only finite-cofinite joins do,
so that the blurs blur $\C$ on 
this level; $\C$ does not embed in $\Tm\At.$

In fact, the the term algebra will  not only be in $\K$, but actually it will be in the possibly smaller $\sf RCA_n$.
This is where the blurs play their other role. Basically non-principal ultrafilters, the blurs
are used as colours to represent  $\Tm\At$.

In the process of representation we cannot use {\it only} principal ultrafilters,
because $\Tm\At$ {\it cannot be completely representable}, that is, it cannot have a representation that preserves
all (possibly infinitary) meets carrying them to set theoretic intersections, 
for otherwise this would give that $\mathfrak{Cm}\At$
is representable.

But the blurs will actually provide a {\it complete representation} of the {\it canonical extension}
of $\Tm\At$, in symbols $\Tm\At^+$; the algebra whose underlying set consists of all ultrafilters of $\Tm\At$. The atoms of $\Tm\At$
are coded in the principal ones,  and the remaining non- principal ultrafilters, or the blurs,
will be finite, used as colours to completely represent $\Tm\At^+$, in the process representing $\Tm\At$.

We start off with a conditional theorem: giving a concrete instance of a blow up and blur construction for relation algebras due to Hirsch and Hodkinson. 
The proof is terse  highlighting only the main ideas.
\begin{theorem}\label{decidability} Let $m\geq 3$. Assume that for any simple atomic relation algebra with atom structure $S$,
there is a cylindric atom structure $H$, constructed effectively from $S$,  such that:
\begin{enumarab}
\item If $\Tm S\in \sf RRA$, then $\Tm H\in \RCA_m$,
\item If $S$ is finite, then $H$ is finite,
\item $\Cm S$ is embeddable in $\Ra$ reduct of $\Cm H$.
%\item \pe\ has a \ws\ in $\sigma_k$ over $S$ iff \pe\ has a \ws\ in $\sigma_k$
%over $H$, where $\sigma_k$ is the usual atomic $k$ rounded game.
\end{enumarab}
Then  for all $k\geq 3$, $S\Nr_m\CA_{m+k}$ is not closed under completions,
\end{theorem}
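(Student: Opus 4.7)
The strategy is to bootstrap a known non--atom-canonicity result at the level of relation algebras up to the variety $S\Nr_m\CA_{m+k}$, using the hypothesised transfer $S\mapsto H$ as the lifting device. First I would invoke a theorem of Hirsch and Hodkinson: for each $k\geq 3$ there is a countable simple atomic relation algebra with atom structure $S$ such that $\Tm S\in\RRA$, yet $\Cm S\notin\RA_{m+k}$; by Maddux's characterisation of $\RA_n$ as $S\Ra\CA_n$, this reads $\Cm S\notin S\Ra\CA_{m+k}$. Applying the hypothesis of the theorem to this $S$ yields a cylindric atom structure $H$. By clause (1), $\Tm H\in\RCA_m=S\Nr_m\CA_\omega\subseteq S\Nr_m\CA_{m+k}$, so $\Tm H$ is atomic and belongs to the variety $V:=S\Nr_m\CA_{m+k}$; its \de\ completion is $\Cm H$, since $\Tm H$ and $\Cm H$ share the atom structure $H$.

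Now I would derive a contradiction from the assumption that $\Cm H\in V$. The key structural fact I need is the inclusion
\[
\Ra\bigl(S\Nr_m\CA_{m+k}\bigr)\subseteq S\Ra\CA_{m+k}.
\]
Granting this, $\Ra\Cm H\in S\Ra\CA_{m+k}$. By clause (3), $\Cm S$ embeds into $\Ra\Cm H$, and since $S\Ra\CA_{m+k}$ is closed under subalgebras we infer $\Cm S\in S\Ra\CA_{m+k}$, contradicting the choice of $S$. Hence $\Cm H\notin S\Nr_m\CA_{m+k}$, which, together with $\Tm H\in V$ and the fact that $\Cm H$ is the \de\ completion of $\Tm H$, witnesses that $V$ is not closed under completions. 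Since $k\geq 3$ was arbitrary, the conclusion follows.

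The main obstacle, and the only place where something genuinely has to be checked rather than cited, is the displayed inclusion above. I would unpack it by hand: if $\B\hookrightarrow\Nr_m\D$ with $\D\in\CA_{m+k}$, then $\Ra\B\hookrightarrow\Ra\Nr_m\D$, and one verifies $\Ra\Nr_m\D\hookrightarrow\Ra\D$ by observing that the relation-algebra operations (Boolean ones, converse, composition and identity) depend only on the first three cylindric dimensions, and therefore commute with neat compression to any dimension $m\geq 3$. Once this lemma is in place the argument is purely mechanical; the substantive mathematics has been packaged either into the transfer hypothesis $S\mapsto H$ (a blow-up-and-blur construction in disguise) or into the existence of the bad relation-algebra atom structure $S$ supplied by the Hirsch--Hodkinson rainbow construction.
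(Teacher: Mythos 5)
Your proposal is correct and follows essentially the same route as the paper: start from a Hirsch--Hodkinson relation algebra atom structure $S$ with $\Tm S$ representable but $\Cm S$ outside $S\Ra\CA_{m+k}$, push it through the hypothesised transfer $S\mapsto H$, and derive the contradiction from $\Cm S\hookrightarrow\Ra\Cm H$ together with the compatibility of $\Ra$ with neat reducts. The only (immaterial) differences are that the paper fixes a single $S$ with $\Cm S\notin S\Ra\CA_{6}$ and uses $S\Ra\CA_{m+k}\subseteq S\Ra\CA_{6}$ for $m+k\geq 6$ rather than choosing $S$ per $k$, and that it spells out the blow-up-and-blur construction of $S$ that you black-box by citation, while leaving implicit the inclusion $\Ra\bigl(S\Nr_m\CA_{m+k}\bigr)\subseteq S\Ra\CA_{m+k}$ that you rightly isolate and verify.
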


\begin{proof}
Let $S$ be a relation atom structure such that $\Tm S$ is representable while $\Cm S\notin \sf RA_6$.
Such an atom structure exists \cite[Lemmas 17.34-35-36-37]{HHbook}.

We give a brief sketch at how such algebras are constructed by allowing complete irreflexive graphs having an arbitrary finite set
nodes, slightly generalizing the proof in {\it op.cit}, though the proof idea is essentially the same.

Another change is that we refer to non-principal ultrafilters (intentionally) by {\it blurs} to emphasize the connection with the blow up and
blur construction in \cite{ANT} as well as with the blow up and blur construction outlined above, to be encountered 
in full detail in a litle while, witness theorem \ref{can}.

In all cases a finite algebra is blown up and blurred to give a representable algebra (the term algebra on the blown up and blurred finite atom
structure) whose \d\ completion does not have a neat embedding
property.

%In the latter two cases the blurs were used as colourings to represent the term algebra; in the last we only needed only one shade of red,
%witness theorems \ref{hodkinson}, \ref{blowupandblur}.
We use the notation of the above cited lemmas in \cite{HHbook2} without warning, and our proof will be very brief just stressing the main ideas.
$G_r^n$ denotes the usual atomic $r$ rounded 
game played on atomic networks having $n$ nodes of an atomic relation algebra, 
where $n,r\leq \omega$, and  $K_r$ ($r\in \omega)$ 
denotes the complete ireflexive graph with
$r$ nodes. 

Let $\R$ be the rainbow algebra $\A_{K_m, K_n}$, $m>n>2$.
Let $T$ be the term algebra obtained by splitting the reds. Then $T$ has exactly two blurs
$\delta$ and $\rho$. $\rho$ is a flexible non-principal ultrafilter consisting of reds with distinct indices and $\delta$ is the reds
with common indices.
Furthermore, $T$ is representable, but $\Cm\At T\notin  S\Ra\CA_{m+2}$, in particular, it is not representable 
\cite [Lemma 17.32]{HHbook2}.
Now we use  the \ef\ forth pebble game $EF_r^k(A, B)$
where $A$ and $B$ are relational structures. This game has 
$r$ rounds and $k$ pebbles. 
The rainbow theorem \cite[Theorem 16.5]{HHbook}
says that \pe\ has a \ws\ in the game $G_{1+r}^{2+p}(\A_{A, B})$ if and only if she has a \ws\ in $EF_r^p(A,B)$.

Using this theorem it is obvious that  \pe\ has a \ws\ over $\At\R$  in $m+2$ rounds,
hence $\R\notin \sf RA_{m+2}$, hence is not
in $S\Ra\CA_{m+2}$.  $\Cm\At T$ is also not in the latter class
for $\R$ embeds into it, by mapping every red
to the join of its copies. Let $D=\{r_{ll}^n: n<\omega, l\in n\}$, and $R=\{r_{lm}^n, l,m\in n, l\neq m\}$.
If $X\subseteq R$, then $X\in T$ if and only if $X$ is finite or cofinite in $R$ and same for subsets of $D$ \cite[lemma 17.35]{HHbook}.
Let $\delta=\{X\in T: X\cap D \text { is cofinite in $D$}\}$,
and $\rho=\{X\in T: X\cap R\text { is cofinite in $R$}\}$.
Then these are {\it the} non principal ultrafilters, they are the blurs and they are enough
to (to be used as colours), together with the principal ones, to represent $T$ as follows \cite[bottom of p. 533]{HHbook2}.
Let $\Delta$ be the graph $n\times \omega\cup m\times \{{\omega}\}$.
Let $\B$ be the full rainbow algebras over $\At\A_{K_m, \Delta}$ by
deleting all red atoms $r_{ij}$ where $i,j$ are
in different connected components of $\Delta$.

Obviously \pe\ has a \ws\ in ${\sf EF}_{\omega}^{\omega}(K_m, K_m)$, and so it has a \ws\ in
$G_{\omega}^{\omega}(\A_{K_m, K_m})$.
But $\At\A_{K_m, K_m}\subseteq \At\B\subseteq \At_{K_m, \Delta}$, and so $\B$ is representable.

One  then defines a bounded morphism from $\At\B$ to the the canonical extension
of $T$, which we denote by $T^+$, consisting of all ultrafilters of $T$. The blurs are images of elements from
$K_m\times \{\omega\}$, by mapping the red with equal double index,
to $\delta$, for distinct indices to $\rho$.
The first copy is reserved to define the rest of the red atoms the obvious way.
(The underlying idea is that this graph codes the principal ultrafilters in the first component, and the non principal ones in the second.)
The other atoms are the same in both structures. Let $S=\Cm\At T$, then $\Cm S\notin S\Ra\CA_{m+2}$ \cite[lemma 17.36]{HHbook}.

Note here that the \d\ completion of $T$ is not representable while its canonical extension is
{\it completely representable}, via the representation defined above.
However, $T$ itself is {\it not} completely representable, for a complete representation of $T$ induces a representation of its \d\ completion,
namely, $\Cm\At\A$.

Now let $H$ be the $\CA_m$ atom structure obtained from $\At T$ provided by the hypothesis of the 
theorem.
Then $\Tm H\in \RCA_m$. We claim that $\Cm H\notin {\bf S}\Nr_m\CA_{m+k}$, $k\geq 3$.
For assume not, i.e. assume that $\Cm H\in {\bf S}\Nr_m\CA_{m+k}$, $k\geq 3$.
We have $\Cm S$ is embeddable in $\Ra\Cm H.$  But then the latter is in ${\bf S}\Ra\CA_6$
and so is $\Cm S$, which is not the case.
\end{proof}

Hodkinson constructs atom structures of cylindric and polyadic algebras of any pre-assigned finite dimension $>2$ 
from atom structures
of relation algebras \cite{AU}. One could well be tempted to use such a construction with the above proof to obtain an analogous
result for cylindric and polyadic algebras. 
However, we emphasize that the next result {\it cannot} be obtained by lifting the relation algebra case 
\cite[ lemmas 17.32, 17.34, 17.35, 17.36]{HHbook}
to cylindric algebras 
using  Hodkinson's construction in \cite{AU} as it stands. It is true that Hodkinson constructs from 
every atomic relation algebra an atomic cylindric algebra of dimension $n$, for any $n\geq 3$, 
but the relation algebras {\it does not} embed into the $\sf Ra$ reduct of the constructed
cylindric algebra when $n\geq 6$. If it did, then the $\sf Ra$ result would lift as indeed is the case with 
$n=3$.  

Now we are faced with two options. Either modify 
Hodkinson's construction, implying that the embeddability of the given relation algebra
in the $\sf Ra$ reduct of the constructed cylindric algebras, or avoid completely 
the route via relation algebras. We tend to think that it is impossible to adapt Hodkinson's construction the way needed, because if $\A$
is a non representable relation algebra, and $\Cm \At(\A)$ embeds into the $\Ra$ reduct of a cylindric algebra of every dimension $>2$, then
$\A$ will be representable, which is a contradiction. 

Therefore we choose the second option. We instead start from scratch. We blow up and blur a finite  rainbow cylindric algebra.

In \cite{HHbook2} the rainbow cylindric algebra of dimension $n$ on a graph $\Gamma$ is denoted by $\R(\Gamma)$.
We consider $\R(\Gamma)$ to be in ${\sf PEA}_n$ by expanding it with the polyadic operations defined the obvious way (see below).
In what follows we consider $\Gamma$ to be the indices of the reds, and for a complete irreflexive graph
$\G$, by ${\sf PEA}_{\G, \Gamma}$
we mean the rainbow cylindric algebra $\R(\Gamma)$  of dimension $n$,
where ${\sf G}=\{\g_i: 1\leq i<n-1\}\cup \{\g_0^i: i\in \G\}$.

More generally, we consider a rainbow polyadic algebra based on relational structures 
$A, B$, to be the rainbow algebra
with signature the binary
colours (binary relation symbols)
$\{\r_{ij}: i,j\in B\}\cup \{\w_i: i<n-1\}\cup \{\g_i:1\leq i<n-1\}\cup \{\g_0^i : i\in A\}$ and $n-1$
shades of yellow ($n-1$ ary relation symbols)
$\{\y_S: S\subseteq_{\omega} A, \text { or } S=A\}.$ 

We look at models of the rainbow theorem as coloured graphs \cite{HH}.
This class is denoted by ${\sf CRG}_{A,B}$ or simply $\sf CRG$ when $A$ and $B$ are clear from context.

A coloured graph is a graph such that each of its edges is labelled by one of the first three colours mentioned above, namely, 
greens, whites or reds, and some $n-1$ hyperedges are also
labelled by the shades of yellow.
Certain coloured graphs will deserve 
special attention.

\begin{definition}
Let $i\in A$, and let $M$ be a coloured graph  consisting of $n$ nodes
$x_0,\ldots,  x_{n-2}, z$. We call $M$ an {\it $i$ - cone} if $M(x_0, z)=\g_0^i$
and for every $1\leq j\leq n-2$, $M(x_j, z)=\g_j$,
and no other edge of $M$
is coloured green.
$(x_0,\ldots, x_{n-2})$
is called {\it the center of the cone}, $z$ {\it the apex of the cone}
and {\it $i$ the tint of the cone.}
\end{definition}

\begin{definition}\label{def}
The class of coloured graphs $\sf CRG$ are

\begin{itemize}

\item $M$ is a complete graph.

\item $M$ contains no triangles (called forbidden triples)
of the following types:
\vspace{-.2in}
\begin{eqnarray}
&&\nonumber\\
%(1', x, y)&&\mbox{unless }x=y\label{forb:id}\\
(\g, \g^{'}, \g^{*}), (\g_i, \g_{i}, \w_i)
&&\mbox{any }1\leq i< n-1\  \\
%(\g^j_0, \y, \w_f)&&\mbox{unless }f\in P, i\in dom(f)\\
(\g^j_0, \g^k_0, \w_0)&&\mbox{ any } j, k\in A\\
%\label{forb:pim}(\g^i_0, \g^j_0, \r_{kl})&&\mbox{unless } \set{(i, k), (j, l)}\mbox{ is an order-}\\
%&&\mbox{ preserving partial function }A\to B\nonumber\\
%\label{forb:pim2}(\g_i, \g_j, \r_{kl})&&\mbox{if } i=j\mbox{ but }k\neq l\\
%\label{forb: black}(\y,\y,\y), (\y,\y,\bb)\\
\label{forb:match}(\r_{ij}, \r_{j'k'}, \r_{i^*k^*})&& i,j,j',k',i^*, k^*\in B\\ \mbox{unless }i=i^*,\; j=j'\mbox{ and }k'=k^*
\end{eqnarray}
and no other triple of atoms is forbidden.

\item If $a_0,\ldots,   a_{n-2}\in M$ are distinct, and no edge $(a_i, a_j)$ $i<j<n$
is coloured green, then the sequence $(a_0, \ldots, a_{n-2})$
is coloured a unique shade of yellow.
No other $(n-1)$ tuples are coloured shades of yellow.

\item If $D=\set{d_0,\ldots,  d_{n-2}, \delta}\subseteq M$ and
$M\upharpoonright D$ is an $i$ cone with apex $\delta$, inducing the order
$d_0,\ldots,  d_{n-2}$ on its base, and the tuple
$(d_0,\ldots, d_{n-2})$ is coloured by a unique shade
$\y_S$ then $i\in S.$
\end{itemize}

One then can define a polyadic equality atom structure
of dimension $n$ from the class $\sf CRG$. It is a {\it rainbow atom structure}.  Rainbow atom structures  are what Hirsch and Hodkinson call
atom structures built from a class of models \cite{HHbook2}.
Our models are, according to the original more traditional view \cite{HH}
coloured graphs. So let $\sf CRG$ be the class of coloured graphs as defined above.
Let $$\At=\{a:n \to M, M\in \sf CRG: \text { $a$ is surjective}\}.$$
We write $M_a$ for the element of $\At$ for which
$a:n\to M$ is a surjection.
Let $a, b\in \At$ define the
following equivalence relation: $a \sim b$ if and only if
\begin{itemize}
\item $a(i)=a(j)\Longleftrightarrow b(i)=b(j),$

\item $M_a(a(i), a(j))=M_b(b(i), b(j))$ whenever defined,

\item $M_a(a(k_0),\dots, a(k_{n-2}))=M_b(b(k_0),\ldots, b(k_{n-2}))$ whenever
defined.
\end{itemize}
Let $\At$ be the set of equivalences classes. Then define
$$[a]\in E_{ij} \text { iff } a(i)=a(j).$$
$$[a]T_i[b] \text { iff }a\upharpoonright n\smallsetminus \{i\}=b\upharpoonright n\smallsetminus \{i\}.$$
Define accessibility relations corresponding to the polyadic (transpositions) operations as follows:
$$[a]S_{ij}[b] \text { iff } a\circ [i,j]=b.$$
This, as easily checked, defines a $\sf PEA_n$
atom structure. The complex algebra of this atom structure is denoted by ${\sf PEA}_{A, B}$ where $A$ is the greens and 
$B$ is the reds.
\end{definition}

One can define a  $\sf TCA_n$ atom struture
by discrete topologizaing
setting  for all $i<n$, 
$[a] In_i [b] \text { iff } a=b$, where $In_i$ is the accessibility relation corresponding to $I_i$
Similar remark hold for tense and temporal algebrs.
For example in the former case one defines $[a]G[b]$ iff $a=b$ and same for $H$. The time 
condists of one moment and the flow is the 
empty set.

Now  consider the following two games on coloured graphs, each with $\omega$ rounds, and limited number of pebbles
$m>n$. They are translations of $\omega$ atomic games played on atomic networks
of a rainbow algebra using a limited number of nodes $m$.
Both games offer \pa\ only one move, namely, a cylindrifier move.

From the graph game perspective both games \cite[p.27-29]{HH} build a nested sequence $M_0\subseteq M_1\subseteq \ldots $.
of coloured graphs.

First game $G^m$.
\pa\ picks a graph $M_0\in \sf CRG$ with $M_0\subseteq m$ and
$\exists$ makes no response
to this move. In a subsequent round, let the last graph built be $M_i$.
\pa\ picks
\begin{itemize}
\item a graph $\Phi\in \sf CRG$ with $|\Phi|=n,$
\item a single node $k\in \Phi,$
\item a coloured graph embedding $\theta:\Phi\smallsetminus \{k\}\to M_i.$
Let $F=\phi\smallsetminus \{k\}$. Then $F$ is called a face.
\pe\ must respond by amalgamating
$M_i$ and $\Phi$ with the embedding $\theta$. In other words she has to define a
graph $M_{i+1}\in C$ and embeddings $\lambda:M_i\to M_{i+1}$
$\mu:\phi \to M_{i+1}$, such that $\lambda\circ \theta=\mu\upharpoonright F.$
\end{itemize}
$F^m$ is like $G^m$, but \pa\ is allowed to resuse nodes.

$F^m$ has an equivalent formulation on atomic networks of atomic algebras.

Let $\delta$ be a map. Then $\delta[i\to d]$ is defined as follows. $\delta[i\to d](x)=\delta(x)$
if $x\neq i$ and $\delta[i\to d](i)=d$. We write $\delta_i^j$ for $\delta[i\to \delta_j]$.

\begin{definition}
Let $2< n<\omega.$ Let $\C$ be an atomic ${\sf PEA}_{n}$.
An \emph{atomic  network} over $\C$ is a map
$$N: {}^{n}\Delta\to \At\C,$$
where $\Delta$ is a non-empty set called a set of nodes,
such that the following hold for each $i,j<n$, $\delta\in {}^{n}\Delta$
and $d\in \Delta$:
\begin{itemize}
\item $N(\delta^i_j)\leq {\sf d}_{ij},$
\item $N(\delta[i\to d])\leq {\sf c}_iN(\delta),$
\item $N(\bar{x}\circ [i,j])= {\sf s}_{[i,j]}N(\bar{x})$ for all $i,j<n$.

\end{itemize}
\end{definition}
\begin{definition}\label{def:games}
Let $2\leq n<\omega$. For any ${\sf Sc}_n$
atom structure $\alpha$ and $n<m\leq
\omega$, we define a two-player game $F^m(\alpha)$,
each with $\omega$ rounds.

Let $m\leq \omega$.
In a play of $F^m(\alpha)$ the two players construct a sequence of
networks $N_0, N_1,\ldots$ where $\nodes(N_i)$ is a finite subset of
$m=\set{j:j<m}$, for each $i$.

In the initial round of this game \pa\
picks any atom $a\in\alpha$ and \pe\ must play a finite network $N_0$ with
$\nodes(N_0)\subseteq  m$,
such that $N_0(\bar{d}) = a$
for some $\bar{d}\in{}^{n}\nodes(N_0)$.

In a subsequent round of a play of $F^m(\alpha)$, \pa\ can pick a
previously played network $N$ an index $l<n$, a {\it face}
$F=\langle f_0,\ldots, f_{n-2} \rangle \in{}^{n-2}\nodes(N),\; k\in
m\sim\set{f_0,\ldots, f_{n-2}}$, and an atom $b\in\alpha$ such that
$$b\leq {\sf c}_lN(f_0,\ldots, f_i, x,\ldots, f_{n-2}).$$
The choice of $x$ here is arbitrary,
as the second part of the definition of an atomic network together with the fact
that $\cyl i(\cyl i x)=\cyl ix$ ensures that the right hand side does not depend on $x$.

This move is called a \emph{cylindrifier move} and is denoted
$$(N, \langle f_0, \ldots, f_{n-2}\rangle, k, b, l)$$
or simply by $(N, F,k, b, l)$.
In order to make a legal response, \pe\ must play a
network $M\supseteq N$ such that
$M(f_0,\ldots, f_{i-1}, k, f_{i+1},\ldots, f_{n-2}))=b$
and $\nodes(M)=\nodes(N)\cup\set k$.

\pe\ wins $F^m(\alpha)$ if she responds with a legal move in each of the
$\omega$ rounds.  If she fails to make a legal response in any
round then \pa\ wins.
\end{definition}

We start by proving the following algebraic result. Recall that $\TCA_n$, $\TeCA_n$ and $\sf TemCA_n$ 
stand for the classes of topological cylindric, dense cylindric, and  temporal cylindric algebras of dimension $n$, respectively.
For $\K$ any of the above, $\sf RK$ stands 
for the class of representable algebras in $\sf K$ 
of dimension $n$.

\begin{theorem}\label{can}
For any finite $n>2$, for any $\K$ between $S\Nr_n\TCA_{n+3}$ and $\sf TRCA_n$
is not atom-canonical, hence is not closed under minimal completions, and is not Sahlqvist
axiomatizable.  In more detail, there exists an atomic countable
completely additive algebra $\A\in \sf RTCA_n$ such its \de\ completion, namely, the complex algebra of its atom structiure
is not in $S\Nr_n\TCA_{n+3}$. A completely  analogous result holds for $\TeCA_n$ and $\sf TemCA_n$.
\end{theorem}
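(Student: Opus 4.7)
The plan is to reduce the theorem, via the discrete topologizing and the static temporal expansions recorded in the preliminaries, to the purely cylindric statement: there exists a countable atomic $\A \in \sf RCA_n$ whose \de\ completion $\Cm\At\A$ lies outside $S\Nr_n\CA_{n+3}$. For this cylindric core I would perform a blow-up and blur construction on a finite rainbow ${\sf PEA}_n$ whose green/red palette is calibrated so that \pa\ has a \ws\ in the $(n+3)$-pebble atomic game on its atom structure.

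Concretely, take ${\sf PEA}_{A,B}$ with $A$ and $B$ finite complete irreflexive graphs satisfying $|A|>|B|$ (e.g.\ $|A|=n+2$, $|B|=n+1$). By the rainbow theorem, \pe\ wins the $(n+3)$-pebble atomic game on $\At{\sf PEA}_{A,B}$ iff she wins $EF^{n+1}_r(A,B)$, and the cardinality gap between $A$ and $B$ makes the latter fail in finitely many rounds. Strategically, \pa\ bombards \pe\ with $i$-cones of distinct green tints sharing a common base; by the forbidden-triple rule for reds, the edges between apexes must be labelled by reds whose indices eventually clash. Hence ${\sf PEA}_{A,B}\notin S\Nr_n\CA_{n+3}$.

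Now blow up and blur ${\sf PEA}_{A,B}$ by splitting each red atom $\r_{ij}$ into countably many copies $\r_{ij}^k$ ($k<\omega$), leaving the greens, whites and shades of yellow intact. Let $\At$ be the resulting countable coloured-graph atom structure and set $\A=\Tm\At$. Two non-principal blurs emerge: $\delta$ (reds with equal lower indices) and a flexible $\rho$ (reds with distinct indices). Mirroring \cite[Lemma~17.35]{HHbook} but carried out directly at the cylindric level on coloured graphs, one uses $\delta$ and $\rho$ as extra colours alongside the principal ultrafilters to produce a classical representation, giving $\A\in\sf RCA_n$. In $\Cm\At$, however, the infinite joins $\sum_k \r_{ij}^k$ exist, and the map $\r_{ij}\mapsto\sum_k \r_{ij}^k$ embeds the finite rainbow algebra back into $\Cm\At$. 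Closure of $S\Nr_n\CA_{n+3}$ under subalgebras then forces $\Cm\At\notin S\Nr_n\CA_{n+3}$.

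Finally, apply $\A\mapsto\A^{\sf top}$ and the static (minimal, respectively maximal) temporal expansions $\A\mapsto\A^{\sf tense}$, $\A\mapsto\A^{\sf temp}$. By the preliminary theorems on discrete topologizing, $\A^{\sf top}\in\sf RTCA_n$, and analogously in $\TeCA_n$ and $\sf TemCA_n$; moreover these expansions commute with the formation of \de\ completions, so $\Cm\At\A^{\sf top}=(\Cm\At\A)^{\sf top}$, and similarly for the other two. If $\Cm\At\A^{\sf top}$ were in $S\Nr_n\TCA_{n+3}$, taking cylindric reducts would give $\Cm\At\A\in S\Nr_n\CA_{n+3}$, a contradiction. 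The conclusion then propagates to every class $\K$ sandwiched between $\sf RTCA_n$ and $S\Nr_n\TCA_{n+3}$, together with failure of Sahlqvist axiomatisability and non-closure under minimal completions. The main technical obstacle is the direct construction of a representation of $\Tm\At$ at dimension $n$: the relation-algebra route of Theorem~\ref{decidability} is unavailable, since Hodkinson's lift from $\sf Ra$-reducts fails for $n\ge 6$. One must instead produce a bounded morphism from a suitably saturated rainbow atom structure (obtained by expanding the red index-graph to $\Gamma=B\times \omega \cup A\times\{\omega\}$ and deleting reds indexed across distinct connected components) onto the canonical extension $\Tm\At^+$, sending the split reds to $\delta$ or $\rho$ according to whether the original indices agree, while respecting the shades-of-yellow coherence clause of Definition~\ref{def}.
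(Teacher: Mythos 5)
Your proposal follows essentially the same route as the paper's proof: blow up and blur a finite rainbow algebra whose greens strictly outnumber its reds, show \pa\ wins the $n+3$-node cone-bombardment game so that the finite algebra --- and hence, via the join-of-copies embedding $\Theta$, the completion $\Cm\At$ --- falls outside $S\Nr_n\TCA_{n+3}$, and transfer everything to the topological, tense and temporal signatures by discrete topologizing and static temporalizing. The only divergence is the representability of $\Tm\At$, where the paper simply follows Hodkinson's construction (\pe\ wins the $\omega$-rounded game by playing a single flexible shade of red $\rho$ lying outside the rainbow signature), whereas your two-blur ($\delta$, $\rho$) bounded-morphism argument imported from the relation-algebra case does not transplant literally --- the blur $\delta$ of reds with common indices has no counterpart in the cylindric rainbow, where every red $\r_{ij}$ has $i<j$ --- though the single flexible blur that does survive is exactly what the paper's representation uses.
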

\begin{proof}
The proof uses a rainbow algebra \cite{HHbook2}.

We {\it blow up and blur} in the sense of \cite{ANT} a finite rainbow cylindric algebra namely $R(\Gamma)$
where $\Gamma$ is the complete irreflexive graph $n+1$, and the greens
are  ${\sf G}=\{\g_i:1\leq i<n-1\}
\cup \{\g_0^{i}: 1\leq i\leq n+1\},$ we denote this finite algebra endowed by the topologization
induced  $n$ identity
interior operators by $\TCA_{n+1, n}.$

Let $\At$ be the rainbow atom structure similar to that in \cite{Hodkinson} except that we have $n+1$ greens and
only $n$ indices for reds, so that the rainbow signature now consists of $\g_i: 1\leq i<n-1$, $\g_0^i: 1\leq i\leq n+1$,
$\w_i: i<n-1$,  $\r_{kl}^t: k<l\in n$, $t\in \omega$,
binary relations and $\y_S$, $S\subseteq n+1$,
$n-1$ ary relations.

We also have a shade of red $\rho$; the latter is a binary relation but is {\it outside the rainbow signature},
though  it is used to label coloured graphs during a certain game devised to prove representability
of the term algebra \cite{Hodkinson}, and in fact \pe\ can win the $\omega$ rounded game
and build the $n$ homogeneous model $M$ by using $\rho$ whenever
she is forced a red, as will be shown in a while.

So $\At$ is obtained from the rainbow atom structure of the algebra $\A$ defined in \cite[section 4.2 starting p. 25]{Hodkinson}
truncating the greens to be finite (exactly $n+1$ greens). In \cite{Hodkinson} it shown that the complex algebra $\Cm\At\A$ is not representable;
the result obtained now, because the greens are finite but still outfit the red, is sharper;
it will imply that
$\Cm\At\notin S\Nr_n\TCA_{n+3}$.

The logics $L^n, L^n_{\infty \omega}$ are taken in the rainbow
signature (without $\rho$).

Now $\Tm\At\in \sf TRCA_n$; this can be proved exactly like  in \cite{Hodkinson}.
Strictly speaking the cylindric reduct of $\Tm\At$ can be proved representable like in
\cite{Hodkinson}; giving, as usual, the base of the representation the discrete topology we get representability of the
interior operators as well.
The colours used for coloured graphs involved in building  the finite atom structure of the algebra $\sf TCA_{n+1, n}$, which is the rainbow
algebra constructed on the irreflexive complete graphs $n+1$, the greens, and $n$, the reds, are:
\begin{itemize}

\item greens: $\g_i$ ($1\leq i\leq n-2)$, $\g_0^i$, $1\leq i\leq n+1$,

\item whites : $\w_i: i\leq n-2,$

\item reds:  $\r_{ij}$ $i<j\in n$,

\item shades of yellow : $\y_S: S\subseteq n+2$.

\end{itemize}
with {\it forbidden triples}
\vspace{-.2in}
\begin{eqnarray*}
&&\nonumber\\
%(1', x, y)&&\mbox{unless }x=y\label{forb:id}\\
(\g, \g^{'}, \g^{*}), (\g_i, \g_{i}, \w_i),
&&\mbox{any }1\leq i\leq  n-2  \\
(\g^j_0, \g^k_0, \w_0)&&\mbox{ any } 1\leq j, k\leq n+1\\
\label{forb:match}(\r_{ij}, \r_{j'k'}, \r_{i^*k^*}), &&i,j,i', k', i^*, j^*\in n,\\ \mbox{unless }i=i^*,\; j=j'\mbox{ and }k'=k^*.
\end{eqnarray*}
and no other triple is forbidden.

Coloured graphs using such colours and the finite atom structure of $TCA_{n+1, n}$,
build up of (quotients of ) surjections from $n$ into coloured graphs are defined the usual way
\cite{HH}.

A coloured graph is red if  at least one of its edges is labelled red.
For brevity write $\r$ for $\r_{jk}$($j<k<n$).
If $\Gamma$ is a coloured graph using the colours in $\At \TCA_{n+1, n}$, and $a:n\to \Gamma$ is in $\At\TCA_{n+1,n}$,
then $a':n\to \Gamma'$ with $\Gamma'\in \sf CGR$
is a {\it copy} of $a:n\to \Gamma$ if $|\Gamma|=|\Gamma'|$,  all non red edges and $n-1$ tuples have the same colour (whenever defined)
and  for all $i<j<n$, for every red $\r$, if  $(a(i), a(j))\in \r$,
then there exits $l\in \omega$ such that $(a'(i), a'(j))\in \r^l$. Here we implicitly require that for distinct $i,j,k<n$, if
$(a(i),a(j))\in \r$, $(a(j), a(k))\in \r'$, $(a(i), a(k))\in \r''$, and
$(a'(i), a'(j))\in \r^l_1$, $(a'(j), a'(k))\in [\r']^{l_2}$ and $(a'(i), a'(k))\in [\r'']^{l_3}$, then $l_1=l_2=l_3=l$, say,
so that $(\r^l, [\r']^l, [\r'']^l)$
is a consistent triangle in $\Gamma'$.
If $a':n\to \Gamma'$ and $\Gamma'$ is a red graph
using the colours of the rainbow signature of $\At$, whose reds are $\{\r_{kj}^l: k<j<n, l\in \omega\},$
then there is a unique $a: n\to \Gamma$, $\Gamma$
a red graph using the red colours in the rainbow signature of $\sf TCA_{n+1,n}$, namely, $\{\r_{kj}: k<j< n\}$
such that $a'$ is a copy of $a$.
We denote $a$ by $o(a')$, $o$ short for {\it original}; $a$ is the original of its copy $a'$.

For $i<n$, let $T_i$ be the accessibility relation corresponding to the $i$th cylindrifier in $\At$.
Let
$T_i^{s}$, be that corresponding to the $i$th cylindrifier in ${\sf TCA}_{n+1, n}$.
Then if $c:n\to \Gamma$ and $d: n\to \Gamma'$ are surjective maps
$\Gamma, \Gamma'$ are coloured graphs for ${\sf TCA}_{n+1, n}$, that are not red, then for any $i<n$, we have
$$([c],[d])\in T_i\Longleftrightarrow ([c],[d])\in T_i^s.$$

If $\Gamma$ is red using the colours for the rainbow signature of $\At$ (without $\rho$)
and $a':n\to \Gamma$,then for any $b:n\to \Gamma'$ where $\Gamma'$ is not red and any $i<n$, we have
$$([a'], [b])\in T_i\Longleftrightarrow  ([o(a')], [b])\in T_i^{s}.$$
Extending the notation, for $a:n\to \Gamma$ a graph that is not red in $\At$, set $o(a)=a$.
Then for any $a:n\to \Gamma$, $b:n\to \Gamma'$, where $\Gamma, \Gamma'$ are
coloured graphs at least  one of which is not red in $\At$ and any $i<n$, we have
$$[a]T_i[b]\Longleftrightarrow [o(a)]T_i^s[o(b)].$$

Now we deal with the last case, when the two graphs involved are red.
Now assume that $a':n\to \Gamma$ is as above, that is $\Gamma\in {\sf CGR}$ is red,
$b:n\to \Gamma'$ and $\Gamma'$ is red too, using the colours in the rainbow signature $\At$.

Say that two maps $a:n\to \Gamma$, $b:n\to \Gamma'$, with $\Gamma$ and $\Gamma'\in \sf CGR$ having the same size
are  $\r$ related if all non red edges and $n-1$ tuples have the same colours (whenever defined), and
for all every red $\r$, whenever  $i<j<n$, $l\in \omega$,  and $(a(i), a(j))\in \r^l$, then there exists
$k\in \omega$ such that $(b(i), b(j))\in \r^k$.
Let $i<n$. Assume that $([o(a')], [o(b)])\in T_i^s$. Then there exists $c:n\to \Gamma$ that is $\r$ related to $a'$
such that $[c]T_i[b]$.  Conversely, if $[c]T_i[b]$, then $[o(c)]T_i[o(b)].$

Hence, by complete additivity of cylindrifiers,  the map $\Theta: \At({\sf TCA}_{n+1, n})\to \Cm\At$ defined via
\[
 \Theta(\{[a]\})=
  \begin{cases}
    \{ [a']: \text { $a'$   copy of $a$}\}  \text { if $a$ is red, } \\
        \{[a]\} \text { otherwise. } \\

  \end{cases}
\]
\\
induces an embedding from ${\sf TCA}_{n+1, n}$ to $\Cm\At$, which we denote also by
$\Theta$.

We first check preservation of diagonal elements.
If $a'$ is a copy of $a$, $i, j<n$, and  $a(i)=a(j)$, then $a'(i)=a'(j)$.

We next  check cylindrifiers. We show that for all $i<n$ and $[a]\in \At(\TCA_{n+1, n})$ we have:
$$\Theta({\sf c}_i[a])=\bigcup \{\Theta([b]):[b]\in \At\TCA_{n+1,n}, [b]\leq {\sf c}_i[a]\}= {\sf c}_i\Theta ([a]).$$
Let $i<n$. If $[b]\in \At\TCA_{n+1,n}$,  $[b]\leq {\sf c}_i[a]$, and $b':n\to \Gamma$, $\Gamma\in \sf CGR$, is a copy of $b$,
then there exists  $a':n\to \Gamma'$, $\Gamma'\in \sf CGR$, a copy of  $a$ such that
$b'\upharpoonright  n\setminus \{i\}=a'\upharpoonright n\setminus \{i\}$. Thus $\Theta([b])\leq {\sf c}_i\Theta([a])$.

Conversely, if $d:n\to \Gamma$, $\Gamma\in \sf CGR$ and $[d]\in {\sf c}_i\Theta([a])$, then there exist $a'$ a copy of $a$ such that
$d\upharpoonright n\setminus \{i\}=a'\upharpoonright n\setminus \{i\}$.
Hence $o(d)\upharpoonright n\setminus  \{i\}=a\upharpoonright n\setminus \{i\}$, and so $[d]\in \Theta({\sf c}_i[a]),$
and we are done.

But now we can show that \pa\ can win a certain game on $\At({\sf TCA}_{n+1,n})$ in only $n+2$ rounds
as follows.
Viewed as an \ef\ forth game  pebble game, with finitely many rounds and pairs of pebbles,
played on the two complete irreflexive graphs $n+1$ and $n$, in each round $0,1\ldots n$, \pa\ places a  new pebble  on  an element of $n+1$.
The edge relation in $n$ is irreflexive so to avoid losing
\pe\ must respond by placing the other  pebble of the pair on an unused element of $n$.
After $n$ rounds there will be no such element,
and she loses in the next round.
This game, denoted by $F^{n+3}$ is the usual graph game in \cite{HH}
except that the nodes are limited to $n+3$ and \pa\ can re-use nodes.
So it is an atomic game with a limited number of pebbles allowing \pa\ to reuse them.
But in fact \pa\ will win without needing to re use pebbles.

We show that \pa\ can win the graph game on $\At({\sf TCA}_{n+1,n})$ in $n+2$ rounds using  $n+3$ nodes.

\pa\ forces a win on a red clique using his excess of greens by bombarding \pe\
with $\alpha$ cones having the same base ($1\leq \alpha\leq n+2)$.

In his zeroth move, \pa\ plays a graph $\Gamma$ with
nodes $0, 1,\ldots, n-1$ and such that $\Gamma(i, j) = \w_0 (i < j <
n-1), \Gamma(i, n-1) = \g_i ( i = 1,\ldots, n-2), \Gamma(0, n-1) =
\g^0_0$, and $ \Gamma(0, 1,\ldots, n-2) = \y_{n+2}$. This is a $0$-cone
with base $\{0,\ldots , n-2\}$. In the following moves, \pa\
repeatedly chooses the face $(0, 1,\ldots, n-2)$ and demands a node
$\alpha$ with $\Phi(i,\alpha) = \g_i$, $(i=1,\ldots n-2)$ and $\Phi(0, \alpha) = \g^\alpha_0$,
in the graph notation -- i.e., an $\alpha$-cone, without loss $n-1<\alpha\leq  n+1$,  on the same base.
\pe\ among other things, has to colour all the edges
connecting new nodes $\alpha, \beta$ created by \pa\ as apexes of cones based on the face $(0,1,\ldots, n-2)$, that is $\alpha,
\beta\geq n-2$.
By the rules of the game
the only permissible colours would be red. Using this, \pa\ can force a
win in $n+2$ rounds, using $n+3$ nodes  without needing to re-use them,
thus forcing \pe\ to deliver an inconsistent triple
of reds.

Let $\B={\sf TCA}_{n+1, n}$.
Then $\B$   is
outside $S\Nr_n\TCA_{n+3}$ for if it was then because
iit is  finite it would be in $S_c\Nr_n\TCA_{n+3}$
because $\B$ is the same as its canonical extension $\D$, say, and $\D\in S_c\Nr_n\TCA_{n+3}$.
But then \pe\ would have won \cite[Theorem 33]{r}. The last theorem is formulated for relation algebras but it can be easily modified
to the cylindric case.

Hence $\mathfrak{Cm}\At\notin S\Nr_n\TCA_{n+3}$,
because $\Rd_{ca}\B$ is embeddable in its $\CA$ reduct
and $S\Nr_n\CA_{n+3}$ is a variety; in particular, it is closed
under forming subalgebras.
It now readily follows that $Cm\At\notin S\Nr_n{\sf TCA}_{n+3}$.

Notice that  $\Rd_{df}\A$ is not completely representable, because if it were then $\A$,
generated by elements whose dimension sets $<n$,  as a $\TCA_n$ would be completely representable
and this induces a representation of its
Dedekind-MacNeille completion, namely,  $\Cm\At\A$.

We have shown that for any
finite $n>2$, any class $\sf K$ between the varieties
$S\Nr_n\TCA_{n+3}$ and $\sf TRCA_n$
is not atom-canonical hence not Sahqvist axiomatizable,
and we are done.

The same proof works for $\sf TeCA_n$ and 
$\sf TemCA_n$ by statically temporalizing the algebras dealt with.

Let us prove the tense case, the temporal case is the same.For $\A\in \CA_n$ let $\A^{\sf te}$ denotes its static temporalization.

Now let  $\C$ be the $\CA$ reduct of the topological complex algebra constructed,
then $\C^{\sf te}\notin S\Nr_n\sf TeCA_{n+3}$ for if it was then $\C^{\sf te}\subseteq \Nr_n\D$
for some $\D\in \sf TeCA_{n+3}$, hence
$\C\subseteq \Rd_{ca}\Nr_n\D=\Nr_n\Rd_{ca}\D$ which is impossible.

Statically temporalizing  the term algebra $\Rd_{ca}\A$, too we have that $\A^{\sf te}$
is representable (as a $\sf TeCA_n$), and we still have
that $\C^{\sf te}$ is the minimal completion of $\A^{\sf te}$,
via the same map that embeds $\A$ into $\C$.
By the neat embeding theorem for tense cylindric
algebras we are
done.
\end{proof}

Now we use another rainbow construction. Coloured graphs and rainbow algebras are defined like
above. The algebra constructed now is very similar to ${\sf CA}_{\omega,\omega}$ but is not identical; for in coloured graphs
we add a new triple of forbidden colours involving two greens and one red synchronized by an order preserving
function. In particular, we consider the underlying set of $\omega$ endowed
with two orders, the usual order and its converse. To prove the analogous result for relation algebra Robin Hirsch \cite{r}
uses
a rainbow-like algebra based on the ordered structures $\Z, \N$ (with natural order) .
We could have used these structures, but we chose to replace $\Z$ with the natural number endowed with the converse to $\leq$,
to make the analogy tighter with the rainbow algebra used in \cite{HH} proving a weaker result.

\begin{definition} Let $\A\in \sf TeCA_n$, and assume that
$\A\subseteq \prod_{i\in t}\A_i$ such that $\Rd_{ca}\A_i\in {\sf Cs}_n$
Then $\A$ is completely representable if for each $t\in T$, we have  $\Rd_{ca}\A_t$ is completely representable
\end{definition}
The following is easy to prove by using known results on complete representability for cylindric algebras \cite{HH}.
One such result is that complete representability of a cylindric algebra implies that is atomic, the converse however is not true
and $f:\A\to \wp(X)$ is a complete representation if and only if is a complete one.

We start with the following easy but very useful lemma:
\begin{theorem}
Assume that $n<m$. Let $\A^{\sf top}\in \sf TCA_n$ be obtained from $\A\in \CA_n$ by topologization. If $\A^{\sf top}\notin S\Nr_n \TCA_m$,
then $\A\notin S\Nr_n\CA_m.$ A completely analogous result holds for $\TeCA_n$.
\end{theorem}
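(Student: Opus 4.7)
The plan is to prove the contrapositive of the statement: assuming $\A\in S\Nr_n\CA_m$, I will show that $\A^{\sf top}\in S\Nr_n\TCA_m$. This is essentially the contrapositive of item (2) of the earlier theorem in the subsection on discrete topologizing, so the real content has already been packaged there; what remains is to recall and assemble the argument in the present setting.

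Concretely, I would pick a $\CA_m$-dilation $\B$ of $\A$ with $\A\subseteq\Nr_n\B$, and discretely topologize $\B$ by setting $I_i:=Id_B$ for every $i<m$, producing $\B^{\sf top}\in\TCA_m$. The axioms (1)--(7) of definition \ref{topology} collapse to tautologies or to already-established $\CA_m$-identities under the identity choice, so the expansion is legitimate. Next I would check that $\Nr_n\B^{\sf top}$ coincides with the discrete topologization of $\Nr_n\B$: closure of $\Nr_n\B$ under $I_i$ for $i<n$ is automatic because $I_i$ is the identity and so preserves the dimension set $\Delta$. By the uniqueness up to isomorphism of the discrete topologization (established in the earlier theorem), we may take $\A^{\sf top}$ in the statement to be the discrete topologization of $\A$; then $\A^{\sf top}\subseteq\Nr_n\B^{\sf top}$, whence $\A^{\sf top}\in S\Nr_n\TCA_m$, as desired.

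The tense analogue follows the identical recipe with the static temporal expansion in place of the discrete topologization: set $G=H=Id_B$ on $\B$ over a one-moment time flow with empty precedence relation, verify that $(T1)$--$(T4)$ and the non-interaction axioms ${\sf c}_iG(x)=G(x)$ and ${\sf c}_iH(x)=H(x)$ trivialize under the identity choice, and note that $\Nr_n\B^{\sf tense}$ coincides with the static temporal expansion of $\Nr_n\B$. The only subtlety in both cases, dispatched entirely by the canonical choice, is compatibility between the topologization (resp.\ tense expansion) on $\A$ and that on its dilation $\B$; making the discrete/static choice on $\B$ forces, by restriction to $\Nr_n\B$, the very same choice on $\A$, so the match is automatic and no further obstacle arises.
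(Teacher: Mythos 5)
Your proof is correct and takes essentially the same route as the paper's (which is a two-line sketch): pass to the contrapositive, take a dilation $\B\in\CA_m$ with $\A\subseteq\Nr_n\B$, discretely topologize (resp.\ statically temporalize) $\B$, and observe that the neat reduct commutes with this expansion. Your explicit restriction to the \emph{discrete} topologization of $\A$ matches what the paper does implicitly when it says ``re-topologizing $\A$''.
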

\begin{proof} We prove the contrapositive. Assume that $\A\in S\Nr_n\CA_m$. Then $\A\subseteq \Nr_n\B$
where $\B\in \CA_m$, By re-topologizing $\A$ and
topologizing $\B$ we get the required.
\end{proof}

We approach the notion of complete representations for $\sf TCA_n$ when $n$ is finite. Rainbows \cite{HHbook, HHbook2}
will offer solace
here. Throughout this subsection $n$ will be finite and $>1$.
We identify notationally set algebras with their universes.

Let $\A\in \sf TCA_{n}$ and $f:\A\to \wp(V)$ be a representation of $\A$, where $V$ is
a generalized space of dimension $n$.
If $s\in {}V$ we let
$$f^{-1}(s)=\{a\in \A: s\in f(a)\}.$$
An {\it atomic representation} $f:\A\to \wp(V)$ is a representation such that for each
$s\in V$, the ultrafilter $f^{-1}(s)$ is principal.

A {\it complete representation} of $\A$ is a representation $f$ satisfying
$$f(\prod X)=\bigcap f[X]$$
whenever $X\subseteq \A$ and $\prod X$ is defined.

\begin{theorem}\label{complete} Let $\A\in \sf TCA_n$.
Let $f:\A\to \wp(V)$ be a representation of $\A$. Then $f$ is
a  complete representation iff $f$ is an atomic one.
Furthermore, if $\A$ is completely representable, then $\A$
is atomic and $\A\in S_c\Nr_n\sf TCA_{\omega}$.
\end{theorem}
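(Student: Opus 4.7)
The plan is to adapt the classical cylindric algebra argument for complete representations (see \cite{HHbook2}) to the topological setting by exploiting the fact that the interior operators $I_i$ are defined from a topology on the base, so they are well-behaved under passage to higher dimensions. I will split the argument into three stages.

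First, I would settle the equivalence of complete and atomic representations. For $(\Leftarrow)$, suppose every $f^{-1}(s)$ is principal, generated by an atom $x_s\in\At\A$. If $X\subseteq\A$ and $\prod X$ exists, then $s\in f(\prod X)$ clearly gives $s\in\bigcap f[X]$; conversely, $s\in\bigcap f[X]$ means $X\subseteq f^{-1}(s)$, hence $x_s\leq a$ for every $a\in X$, so $x_s\leq \prod X$ and thus $s\in f(x_s)\subseteq f(\prod X)$. For $(\Rightarrow)$, assume $f$ is complete; I first argue $\A$ must be atomic, since otherwise there is a nonzero $a\in\A$ with $\prod\{b\in\A: 0<b\leq a\}=0$ while $\bigcap_{0<b\leq a}f(b)\supseteq f(a)\neq\emptyset$, contradicting completeness. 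Given atomicity, $\sum\At\A=1$, so by completeness $\bigcup_{x\in\At\A}f(x)=V$; thus every $s\in V$ lies in $f(x)$ for some atom $x$, forcing $f^{-1}(s)$ to be the principal ultrafilter generated by $x$.

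Second, from the previous paragraph it is immediate that complete representability implies atomicity: for any nonzero $a\in\A$ choose $s\in f(a)$, and the atom generating $f^{-1}(s)$ lies below $a$.

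Third, and this is the substantive stage, I would prove the complete neat embedding $\A\in S_c\Nr_n\sf TCA_\omega$. Let $f:\A\to\wp(V)$ be the complete representation, and let $U$ be the topological space on the base of the representation, whose topology induces the interior operators $I_i$ of $\A$ via the set-theoretic formula preceding Definition~\ref{topology}. Define $\B$ to be the $\sf TCA_\omega$ whose cylindric reduct is the full cylindric set algebra $\wp({}^\omega U)$ (or a generalised version if $V$ is a disjoint union of squares), and whose interior operators $I_i^{\B}$ ($i<\omega$) are defined by the same formula, using the same topology on $U$. The point is that this topology on $U$ does not care about the ambient dimension, so $\B\in\sf TCA_\omega$ is forced by the usual verification of the axioms of Definition~\ref{topology} from the topological operator definition. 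The map sending $a\in\A$ to $\{s\in{}^\omega U:s\restr n\in f(a)\}$ is a neat embedding of $\A$ into $\Nr_n\B$; it preserves Booleans, cylindrifiers and diagonals by the standard $\CA$ argument, and it preserves interior operators because the formula for $I_i$ depends only on the topology on $U$, not on the dimension. Completeness of this embedding is inherited directly from completeness of $f$: arbitrary suprema and infima computed in $\A$ map to unions and intersections in $\wp({}^\omega U)$ because preimages under the padding map preserve these operations.

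The main obstacle I anticipate is the verification in the third stage that the lifted interior operator $I_i^{\B}$ on $\wp({}^\omega U)$ restricts correctly to $\Nr_n\B$ and agrees with $f\circ I_i^{\A}$, since the axioms $\sub{j}{i}I_i p=I_j\sub{j}{i}p$ and ${\sf c}_kI_i p=I_i p$ for $k\notin\Delta p$ must be preserved under the passage from $n$ to $\omega$. This should follow from the fact that the topology lives on the base $U$ and is independent of the coordinate being cylindrified, but writing this out cleanly requires bookkeeping about dimension sets in the dilation. The tense analogue would proceed identically using static temporalising of the dilation $\B$, since the time flow has a single moment and $G=H=\Id$ trivially satisfy the interaction axioms with cylindrifiers in any dimension.
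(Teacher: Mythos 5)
Your proposal is correct and takes essentially the same route as the paper: both lift the topology on the base of the given complete representation to an $\omega$-dimensional topological (weak) set algebra over the same base and observe that the induced padding map is a complete neat embedding of $\A$ into the $n$-neat reduct. The only cosmetic difference is that the paper builds the dilation as a product of weak set algebras indexed by the components of the generalized unit and verifies completeness of the embedding via atomicity of those components, whereas you use the full padded power set over the base and verify completeness by direct preservation of arbitrary unions under the padding map; both verifications go through.
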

\begin{proof} Witnesss \cite[Theorems 5.3.4, 5.3.6]{Sayedneat}, \cite[Theorem 3.1.1]{HHbook2} for the first three parts.
It remains to  show that if $\A$ is completely representable, then $\A\in S_c\Nr_n\TCA_{\omega}$.
Assume that $M$ is the base of
a complete representation of $\A$, whose
unit is a generalized space,
that is, $1^M=\bigcup_{i\in I} {}^nU_i$, where $U_i\cap U_j=\emptyset$ for distinct $i$ and $j$ in
$I$ where $I$ is an
index set $I$. Let $t:\A\to \wp(1^M)$ be the complete representation.
For each $i\in I$, $U_i$ carries the subspace topology of  $M$.
For $i\in I$, let $E_i={}^nU_i$, pick $f_i\in {}^{\omega}U_i$, let $W_i=\{f\in  {}^{\omega}U_i: |\{k\in \omega: f(k)\neq f_i(k)\}|<\omega\}$,
and let ${\C}_i$ be the $\TCA_n$ with universe $\wp(W_i)$, with the $\CA$ operations defined the usual way
on weak set algebras, and the interior operator is induced by the topology on $U_i$.
Then $\C_i$ is atomic; indeed the atoms are the singletons.

Let $x\in \Nr_n\C_i$, that is ${\sf c}_jx=x$ for all $n\leq j<\omega$.
Now if  $f\in x$ and $g\in W_i$ satisfy $g(k)=f(k) $ for all $k<n$, then $g\in x$.
Hence $\Nr_n \C_i$
is atomic;  its atoms are $\{g\in W_i:  \{g(0),\ldots g(n-1)\}\subseteq U_i\}.$
Define $h_i: \A\to \Nr_n\C_i$ by
$$h_i(a)=\{f\in W_i: \exists a\in \At\A: (f(0)\ldots f(n-1))\in t(a)\}.$$

Let $\C=\prod _i \C_i$. Let $\pi_i:\C\to \C_i$ be the $i$th projection map.
Now clearly  $\C$ is atomic, because it is a product of atomic algebras,
and its atoms are $(\pi_i(\beta): \beta\in \At(\C_i)\}$.
Now  $\A$ embeds into $\Nr_n\C$ via $I:a\mapsto (\pi_i(a) :i\in I)$.
and we may assume that the map is surjective.

If $a\in \Nr_n\C$,
then for each $i$, we have $\pi_i(x)\in \Nr_n\C_i$, and if $x$ is non
zero, then $\pi_i(x)\neq 0$. By atomicity of $\C_i$, there is a tuple $\bar{m}$ such that
$\{g\in W_i: g(k)=[\bar{m}]_k\}\subseteq \pi_i(x)$. Hence there is an atom
$a$ of $\A$, such that $\bar{m}\in t(a)$,  so $x\land  I(a)\neq 0$, and so the embedding is complete
and we are done. Note that in this argument {\it no cardinality condition} is required.
(The reverse inclusion does not hold in general for uncountable algebras, as will be shown in theorem \ref{complete},
though it holds for atomic algebras with countably many atoms as shown in our next theorem).
Hence $\A\in S_c\Nr_n\TCA_{\omega}$.

\end{proof}

Conversely, the following theorem can be obtained by topologizing \cite[Theorem 5.3.6]{Sayedneat}
and applying the omitting types theorem proved in Part 1.
\begin{theorem}\label{completerepresentation} If $\A$ is countable and atomic
and  $\A\in S_c\Nr_n\TCA_{\omega},$ then $\A$ is completely representable.
\end{theorem}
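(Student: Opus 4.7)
The plan is to topologize the proof of \cite[Theorem 5.3.6]{Sayedneat} and combine it with the topological omitting types theorem mentioned in Part~1 of the paper. Fix $\A$ countable, atomic, with $\A\subseteq_{c}\Nr_n\B$ for some $\B\in\TCA_{\omega}$.

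First, I would set up the Henkin apparatus. The extra $\omega\setminus n$ coordinates of the dilation $\B$ furnish witnesses not only for cylindrifier terms ${\sf c}_i x$ but also for the interior terms $I_i x$, since $\B$ itself is a $\TCA_{\omega}$ and so its modalities $I_i$ satisfy the $\sf S4$ axioms of Definition \ref{topology} globally. Complete neat embedding, as opposed to mere neat embedding, guarantees that suprema of atoms in $\A$ are preserved when passing to $\B$; in particular $\sum\At\A=1$ holds in $\B$ as well.

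Second, I would invoke the omitting types argument. Countability plus atomicity turn the set of co-atoms $\Gamma=\{-a:a\in\At\A\}$ into a non-principal type of the Tarski--Lindenbaum topological theory $T$ associated with $\A$ via the natural correspondence $\A\cong\Fm_T$. The topological omitting types theorem from Part~1 produces a model $\M$ of $T$ in which $\Gamma$ is omitted at every tuple. Translating back, this yields a representation $f:\A\to\wp(V)$, where $V$ is a generalized space of dimension $n$, defined by the usual Henkin clause $s\in f(x)\iff \sub{s}{}x\in F$ for a suitable ultrafilter $F$ extending the Henkin witnesses; omission of $\Gamma$ gives $\bigcup_{a\in\At\A}f(a)=V$, i.e.\ $f$ is atomic, hence by Theorem \ref{complete} complete on the cylindric reduct.

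Third, I would topologize the base. For the base $U$ of the generalized space (handling each component $U_j$ separately in the obvious way), declare a set $O\subseteq U$ to be open iff it can be written as a union of sets of the form $\{u\in U : s^{i}_{u}\in f(I_i x)\}$ for varying $x\in\A$, $s\in V$, $i<n$. Axioms (2)--(5) of Definition \ref{topology} translate directly into the axioms for an interior operator, so this yields a genuine topology; the non-interaction axiom (6) and the substitution axiom (7) are what is needed to show that $\sf int$ in this topology computes $f\circ I_i$ coordinate-wise and coherently for all $i<n$.

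The main obstacle will be the last step: verifying that the candidate topology really represents each $I_i$ as the set-theoretic interior, uniformly across the $n$ coordinates and compatibly with the cylindric representation already constructed. This is the finite-dimensional, multi-coordinate analogue of the McKinsey--Tarski representation theorem for closure algebras, and the reason it goes through is precisely the hypothesis $\A\in S_c\Nr_n\TCA_{\omega}$: the $\omega$-dilation $\B$ supplies enough auxiliary dimensions so that every application of $I_i$ inside $\A$ can be realized by an element with controlled support in $\B$, forcing the candidate family of opens to be closed under finite intersections rather than being a mere closure system. Once that verification is complete, $f$ is simultaneously a complete cylindric representation and a topological representation, which is exactly a complete representation of $\A\in\TCA_n$.
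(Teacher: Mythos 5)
Your proposal follows essentially the same route the paper indicates: the paper gives no detailed argument for this theorem, saying only that it is obtained by topologizing \cite[Theorem 5.3.6]{Sayedneat} and applying the omitting types theorem of Part~1, which is precisely your Henkin/omitting-the-co-atoms construction followed by endowing the base with the topology generated by the sets $\{u: s^{i}_{u}\in f(I_ix)\}$ (the same device the paper uses elsewhere, e.g.\ in the proof of Theorem \ref{ud}). Your fleshing-out of the two steps, including the identification of the closure-under-finite-intersections verification as the nontrivial point, is a faithful expansion of the paper's one-line proof sketch.
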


Now we will prove that the notion of complete representability, like in the case of cylindric algebras
of dimension $>2$, is {\it not} first order definable.
In fact, we prove much more, namely, that any class $\sf K$
containing the class of completely representable algebras of finite dimension $n>2$
and contained in $\bold S_c\Nr_n\TCA_{n+3}$ is not elementary.
This is meaningful since any completely representable algebra of dimension $n$
is in $\bold S_c\Nr_n\CA_{\omega}$ by theorem \ref{complete}
and obviously the latter class
contained in $\bold S_c\Nr_n\CA_{n+3}$.

Indeed this result is much stronger because for finite $n>2$ and any $k\leq \omega$, we have
$\bold S_c\Nr_n\TCA_{n+k+1}\subset \bold S\Nr_n\TCA_{n+k}$ for all $k\in \omega$ where $\subset$ denotes {\it proper inclusion}.

Also the strictness of such inclusions can be witnessed by `topologizing' the finite
algebras constructed  in \cite{t}, that is,  expanding them with identity functions as interior
operators; recall that this expansion preserves representability which can
be induced by imposing the discrete topology on the base of the representing
algebra of the original $\CA$.

Now we use another rainbow construction. Coloured graphs and rainbow algebras are defined like
above. The algebra constructed now is very similar to ${\sf CA}_{\omega,\omega}$ but is not identical; for in coloured graphs
we add a new triple of forbidden colours involving two greens and one red synchronized by an order preserving
function. In particular, we consider the underlying set of $\omega$ endowed
with two orders, the usual order and its converse. To prove the analogous result for relation algebra Robin Hirsch \cite{r}
uses
a rainbow-like algebra based on the ordered structures $\Z, \N$ (with natural order) .
We could have used these structures, but we chose to replace $\Z$ with the natural number endowed with the converse to $\leq$,
to make the analogy tighter with the rainbow algebra used in \cite{HH} proving a weaker result.

\begin{definition} Let $\A\in \sf TeCA_n$, and assume that
$\A\subseteq \prod_{i\in t}\A_i$ such that $\Rd_{ca}\A_i\in {\sf Cs}_n$
Then $\A$ is completely representable if for each $t\in T$, we have  $\Rd_{ca}\A_t$ is completely representable
\end{definition}
The following is easy to prove by using known results on complete representability for cylindric algebras \cite{HH}.
One such result is that complete representability of a cylindric algebra implies that is atomic, the converse however is not true
and $f:\A\to \wp(X)$ is a complete representation if and only if is a complete one.

\begin{theorem}

\begin{enumarab}
\item If $\A\in \TeCA_n$ is completely representable, $\A$  based on $T$, and  $\Rd_{ca}\A\cong \prod_{t\in T}\A_t$, then every component
$\A_t$ is atomic $(t\in T)$. Hence $\A$ itself is atomic, since it is a product of atomic algebras.
\item
Furthermore, if  $\pi_t:\A\to \A_t$ is the projection map, then
$\A$ is completely reprsentable if and only if
for all $X\subseteq \A$ for all non-zero $a\in \A$,  whenever  $\sum X=1$, then there exists a homomorphism $f:\A\to \mathfrak{F}_{\mathfrak{K}}$,
for some tense system $K$ such that   $\bigcup_{x\in X}f(\pi_t(x))=1^{\A_t}$ and $f(a)\neq 0$.
\item if $\A$ has countably many atoms then $\A$ is completely representable if and only if $\A\in S_c\Nr_n\TeCA_{\omega}.$
\end{enumarab}
\end{theorem}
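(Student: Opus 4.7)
The plan is to reduce each of the three claims to the cylindric analogues already established in Theorems \ref{complete} and \ref{completerepresentation}, exploiting the product decomposition $\Rd_{ca}\A\cong\prod_{t\in T}\A_t$ that comes with the definition of complete representability for $\TeCA_n$. The tense operators $G,H$ play essentially no role at the level of atomicity and can be handled separately using the fact that the navigating maps $V_{st}$ of a tense system ${\mathfrak K}$ are bijections between the bases whenever $s<t$ and $t<s$, so that the Boolean--cylindric structure on each fibre already determines the representability picture.

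For (1), I would unpack the definition: complete representability of $\A$ means $\Rd_{ca}\A_t$ is completely representable for every $t\in T$. Applying Theorem \ref{complete} coordinatewise, each $\Rd_{ca}\A_t$ is atomic, so each $\A_t$ is atomic (the identity $I_i$ or any interior operator does not affect atoms, and adding unary modalities to an atomic algebra leaves the Boolean atoms in place). A product of atomic Boolean-based algebras is atomic with atoms $(\pi_t(\beta):\beta\in \At\A_t)$, so $\A$ is atomic as claimed.

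For (2), the forward direction is a routine chase: if $f:\A\to {\mathfrak F}_{\mathfrak K}$ is a complete representation then $f$ preserves arbitrary suprema, so $\sum X=1$ forces $\bigcup_{x\in X}f(x)=1^{{\mathfrak F}_{\mathfrak K}}$; composing with the $t$-th projection gives the stated condition, and $f(a)\neq 0$ holds since $f$ is injective on non-zero elements. For the converse I would instantiate $X=\At\A$ (which is legitimate by part (1)), obtain from the hypothesis a family of homomorphisms $f_a$, one for each non-zero $a\in\A$, that send $\At\A$ to a cover of $1^{\A_t}$ with $f_a(a)\neq 0$, and then glue them into a single complete representation in the usual way via disjoint unions of the underlying tense systems; the atomic covering condition is precisely what upgrades an ordinary representation to a complete one, exactly as in the cylindric case \cite{HHbook2}.

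For (3), the forward implication is obtained by \emph{statically temporalizing} the argument of Theorem \ref{complete}: the product $\C=\prod_i\C_i$ of weak set algebras constructed there can be equipped with the tense operators induced by the family of navigating bijections $V_{st}$ coming from $\A$ itself; the resulting $\C\in\TeCA_\omega$ is atomic, and the same map $I:a\mapsto(\pi_i(a):i\in I)$ now provides a complete neat embedding into $\Nr_n\C$. The converse uses Theorem \ref{completerepresentation} componentwise: from $\A\in S_c\Nr_n\TeCA_\omega$ we deduce, for each $t$, that $\A_t\in S_c\Nr_n\CA_\omega$ (since cylindric reducts commute with $\Nr_n$ and $S_c$), and countable atomicity of $\A$ passes to each component, so Theorem \ref{completerepresentation} yields a complete representation of each $\A_t$; these coherent representations assemble into a complete representation of $\A$ by indexing the bases along $T$. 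The main obstacle I anticipate is in this last step: one must arrange that the assembled bases $\{X_t:t\in T\}$ admit functions $V_{st}$ satisfying clauses (a)--(c) of Definition \ref{tensesemantics} and that the non-interaction axioms $\cyl{i}G(x)=G(x)$, $\cyl{i}H(x)=H(x)$ are preserved; this will force the bases of the componentwise complete representations to be chosen compatibly (for instance, as a common countable base transported by the $V_{st}$), and care is needed so that the infinitary joins defining $G$ and $H$ in ${\mathfrak F}_{\mathfrak K}$ agree with those already computed in $\A$ before taking the quotient.
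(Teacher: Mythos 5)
Your proposal is correct and follows essentially the same route as the paper: reduce everything to the components $\A_t$, invoke the cylindric/topological results of Theorems \ref{complete} and \ref{completerepresentation} coordinatewise, use closure of $S_c\Nr_n(\cdot)_\omega$ under products for one direction of item (3), and reassemble componentwise complete representations by disjoint unions of bases for the other. The only difference is one of thoroughness: the paper explicitly proves only item (3) and silently glosses over the compatibility of the tense system across the assembled bases, whereas you spell out items (1) and (2) and flag that compatibility issue — a legitimate point the paper does not address.
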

\begin{proof}
We prove only the last item. The idea is simple.
Lifting complete representations of the component to one for their product and vice versa by noting that
each  component is a complete subalgebra of the product.
Assume that  $\Rd_{ca}\A=\prod_{t\in T}\A_t$ and every $\A_t$ is
completely representable hence is in $S_c\Nr_nCA_{\omega}$
but the latter class as easily checked is closed under products.

The converse follows from that every
$\A_t$ is a complete subalgebra of $\A$, the latter  is in $\bold S_c\Nr_n\TeCA_{\omega}$, hence $\A_t\in S_c\Nr_n\TeCA_{\omega}$.
But this means that every $\A_t$ is completely representable with the homomorphism $f_t:\A\to \wp({}^nX_t)$
say then so is $\A$, via $f:\A\to \prod_{t\in T}\wp(^nX_t)=\wp(\bigcup(^nX_t))$ where $\bigcup$ here  denotes disjoint
union.
\end{proof}

\section*{Part2}
In the next theorem we show that atomicty does not imply complete represenatibility.
In fact we show that for finite $n>2$ that  the class of completely representable $\TeCA_n$s
is not elementary, {\it a fortiori} it is not atomic because atomicity is a first order definable notion.
We use a rainbow contruction and in fact we prove a result stronger
than non elementarity
of the class of completey representable algebras; we prove it
for both $\TCA_n$ and $\TeCA_n$

\begin{theorem}\label{rainbow}

\begin{enumarab}
\item Let $3\leq n<\omega$. Then there exists an atomic $\C\in \sf TCA_n$ with countably many atoms
such that $\C\notin S_c\Nr_n\TCA_{n+3}$, and there exists
a countable $\B\in S_c\Nr_n\sf TCA_{\omega}$ such that $\C\equiv \B$ (hence $\B$ is also atomic).
Hence for any class $\sf L$, such that $S_c\Nr_n\TCA_{\omega}\subseteq \sf L\subseteq S_c\Nr_n\TCA_{n+3}$, $\sf L$ is not elementary, and
the class of completely representable $\K$ algebras of dimension $n$
is not elementary.

\item The same holds
for $\TeCA_n$. For every time flow $(T, <)$ there is an atomic $\sf RTeCA_n$
based on $T$ that is not completely representable, in fact not in $S_c\Nr_n \TeCA_{n+3}$ but is elementary equivalent
to a completey representable tense cylindric algebra of dimension $n$.
\end{enumarab}

\end{theorem}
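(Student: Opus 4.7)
\begin{demo}{Proof plan for Theorem \ref{rainbow}}
The plan is to construct, for each finite $n\geq 3$, a rainbow--like cylindric algebra based on a pair of ordered structures, topologize it discretely, and then play the usual two games on its atom structure: a finite rounded game forcing failure of the neat embedding property $S_c\Nr_n\TCA_{n+3}$, and an $\omega$--rounded game on a countable elementary equivalent algebra witnessing membership in $S_c\Nr_n\TCA_\omega$. The second statement (for $\TeCA_n$) will then be obtained automatically by static temporalization using the trivial flow, as explained in the preliminaries.

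First I would fix the relational structures $A=\mathbb{Z}$ (or equivalently $\mathbb{N}$ with the converse order, as the author suggests) and $B=\mathbb{N}$ with the natural order, and build the rainbow $\PEA_n$ algebra $\R=\PEA_{A,B}$ from the coloured graph class $\sf CRG_{A,B}$ described in Definition \ref{def}, with one modification: in the list of forbidden triples I would include an extra inconsistent triple $(\g_0^i,\g_0^j,\r_{kl})$ whenever the pair $(i,j)\in A^2$ and $(k,l)\in B^2$ violate the order--preserving synchronisation (this is the ``triple of forbidden colours involving two greens and one red synchronized by an order preserving function'' mentioned in the excerpt). Then I would form the \CA\ reduct, discretely topologize it by adjoining $n$ identity interior operators, and call the resulting $\TCA_n$ (with countably many atoms, since the atom structure is built from finite surjections onto coloured graphs) $\C$. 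The point of encoding the order into the forbidden triples is that \pa\ can now translate a decreasing chain of greens into a decreasing chain of reds.

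Next I would play the atomic game $F^{n+3}$ of Definition \ref{def:games} on $\At\C$. Exactly as in the proof of Theorem \ref{can}, \pa's strategy is to bombard \pe\ with $i$--cones over a fixed $(n-2)$--face, choosing tints $i_0,i_1,i_2,\ldots$ that form a strictly decreasing sequence in $A$; with only $n+3$ available nodes he can, by reusing pebbles, keep forcing cones with the same base. The new synchronisation forbidden triple forces \pe, when colouring the edge between two apexes of cones with tints $i<j$, to choose a red $\r_{kl}$ with $k<l$ in $B$ that is coordinated with $(i,j)$. Tracking these reds round after round yields an infinite strictly descending sequence in $\mathbb{N}$, a contradiction. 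Hence \pa\ wins $F^{n+3}$ on $\At\C$, and by the standard translation (cf.\ \cite[Theorem~33]{r} suitably adapted to \CA) this rules out $\C\in S_c\Nr_n\TCA_{n+3}$; the argument via topologization in the lemma preceding Theorem \ref{complete} shows nothing is lost by the interior operators.

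For the second half, I would pass to an ultrapower $\C^*=\prod_U\C$ over a non--principal ultrafilter on $\omega$, then take a countable elementary subalgebra $\B\prec\C^*$ containing the diagonal. The key point is that \pe\ has a winning strategy in the $\omega$--rounded game $F^\omega$ on $\At\C$: since the greens $A$ and reds $B$ are infinite, she can always find a fresh, suitably ordered red to answer any cone move, and standard rainbow bookkeeping on shades of yellow handles the higher--arity consistency conditions. By \L os, this lifts to a winning strategy on $\At\B$; invoking Theorem \ref{completerepresentation} (whose hypothesis ``$\B$ countable and atomic with $\B\in S_c\Nr_n\TCA_\omega$'') gives complete representability of $\B$, while $\B\equiv\C$ is automatic. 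The non--elementarity of every class sandwiched between $S_c\Nr_n\TCA_\omega$ and $S_c\Nr_n\TCA_{n+3}$ then follows. The tense case is immediate from the static temporalization lemma: $\C^{\sf tense}$ and $\B^{\sf tense}$ inherit both the failure and the success of the corresponding games, and $\C^{\sf tense}\equiv\B^{\sf tense}$ since static temporalization is an equational expansion by identity operators.

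The main obstacle, I expect, is the careful choice of the synchronisation in the forbidden triples so that \pa's use--and--reuse strategy in $F^{n+3}$ really does force a strictly descending sequence in $\mathbb{N}$ from a strictly descending sequence in $\mathbb{Z}$; the shades of yellow must be arranged so that \pe\ cannot escape by varying yellow labels, and one must check that the extra forbidden triple does not destroy \pe's $\omega$--strategy on the infinite structures $(A,B)$ (it does not, because \pe\ has free choice of reds with arbitrarily large indices). Verifying the complete embedding $\B\hookrightarrow \Nr_n\D$ for some $\D\in\TCA_\omega$ along the lines of Theorem \ref{complete}, and confirming that discrete topologization preserves complete neat embeddability, are routine but need to be stated.
\end{demo}
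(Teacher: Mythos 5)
Your construction and the negative half are essentially the paper's own argument: the same rainbow atom structure over $\N^{-1}$ (greens) and $\N$ (reds) with the extra order-synchronised forbidden triple $(\g_0^i,\g_0^j,\r_{kl})$, discrete topologization, and \pa\ winning $F^{n+3}$ by forcing a descending chain of reds from a descending chain of green tints. That part is fine.

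There is, however, a genuine error in your positive half. You assert that \pe\ has a winning strategy in the $\omega$-rounded game on $\At\C$ itself ("she can always find a fresh, suitably ordered red"). This cannot be true: the atom structure is countable, so if \pe\ won the $\omega$-rounded atomic game on $\At\C$ then the term algebra would be completely representable, hence in $S_c\Nr_n\TCA_{\omega}\subseteq S_c\Nr_n\TCA_{n+3}$, flatly contradicting the first half of the theorem which you have just proved. The new forbidden triple is precisely what destroys \pe's unrestricted supply of reds: when she labels the edge between the apexes of two cones with green tints $p,q$ she is forced to choose red indices $\rho(p),\rho(q)$ coherent with an order-preserving partial map $\N^{-1}\to\N$, and \pa\ can keep shrinking the available room. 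What is actually true --- and what the paper proves --- is that \pe\ wins each \emph{finite} game $G_k$, with a strategy that depends on $k$: she maintains an order-preserving $\rho_r$ whose range is "widely spaced" with gaps of size $3^{m-r}$, where $m$ is the \emph{total} number of rounds; this bookkeeping requires knowing $m$ in advance and degenerates as $m\to\omega$. Consequently the ultrapower is not a convenience to be discharged "by \L o\'s" from a strategy you already have on $\C$; it is the essential device that converts "\pe\ wins $G_k$ for every finite $k$" into "\pe\ wins $G_\omega$ on some $\D\equiv\C$". A second, smaller gap: an arbitrary countable elementary subalgebra $\B\prec\D$ need not inherit \pe's $\omega$-strategy, since the strategy may select elements outside $\B$; you need the elementary chain $\A_0\preceq\A_1\preceq\cdots\preceq\D$ in which each $\A_{i+1}$ is closed under the elements \pe's strategy picks when \pa\ plays from $\A_i$, and then $\B=\bigcup_i\A_i$. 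With these two corrections your outline matches the paper's proof; the static temporalization step for the tense case is as you describe.
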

\begin{proof}
\begin{enumarab}

\item Let $\N^{-1}$ denote $\N$ with reverse order, let $f:\N\to \N^{-1}$ be the identity map, and denote $f(a)$ by
$-a$, so that for $n,m \in \N$, we have $n<m$ iff $-m<-n$.
We assume that $0$  belongs to $\N$ and we denote the domain of $\N^{-1}$ (which is $\N$)  by $\N^{-1}$.
We alter slightly the standard rainbow construction. The colours we use are the same colours used in rainbow constructions:
\begin{itemize}
\item greens: $\g_i$ ($1\leq i\leq n-2)$, $\g_0^i$, $i\in \N^{-1}$,

\item whites : $\w_i: i\leq n-2,$

\item reds:  $\r_{ij}$ $(i,j\in \N)$,

\item shades of yellow : $\y_S: S\subseteq_{\omega} \N^{-1}$ or $S=\N^{-1}$.

\end{itemize}

We define a {\it subclass} of coloured  graphs $M$ such that
\begin{enumarab}

\item $M$ is a complete graph.

\item $M$ contains no triangles (called forbidden triples)
defined exactly as in \cite{HH} together with the additional forbidden triple
$(\g^i_0, \g^j_0, \r_{kl})$, in more detail

\vspace{-.2in}
\begin{eqnarray}
&&\nonumber\\
%(1', x, y)&&\mbox{unless }x=y\label{forb:id}\\
(\g, \g^{'}, \g^{*}), (\g_i, \g_{i}, \w_i)
&&\mbox{any }1\leq i\leq  n-2  \\
%(\g^j_0, \y, \w_f)&&\mbox{unless }f\in P, i\in dom(f)\\
(\g^j_0, \g^k_0, \w_0)&&\mbox{ any } j, k\in \N\\
\label{forb:pim}(\g^i_0, \g^j_0, \r_{kl})&&\mbox{unless } \set{(i, k), (j, l)}\mbox{ is an order-}\\
&&\mbox{ preserving partial function }\N^{-1}\to\N\nonumber\\
%\label{forb:pim2}(\g_i, \g_j, \r_{kl})&&\mbox{if } i=j\mbox{ but }k\neq l\\
%\label{forb: black}(\y,\y,\y), (\y,\y,\bb)\\
\label{forb:match}(\r_{ij}, \r_{j'k'}, \r_{i^*k^*})&&\mbox{unless }i=i^*,\; j=j'\mbox{ and }k'=k^*.
\end{eqnarray}
and no other triple of atoms is forbidden.

%\item The last two items concerning shades of yellow are as before.

%\vspace{-.2in}begin{eqnarray}
%&&\nonumber\\
%(1', x, y)&&\mbox{unless }x=y\label{forb:id}\\
%(\g, \g^{'}, \g^{*}), (\g_i, \g_{i}, \w_i)
%&&\mbox{any }1\leq i\leq  n-2  \\
%(\g^j_0, \y, \w_f)&&\mbox{unless }f\in P, i\in dom(f)\\
%(\g^j_0, \g^k_0, \w_0)&&\mbox{ any } j, k\in \N\\
%\label{forb:pim}(\g^i_0, \g^j_0, \r_{kl})&&\mbox{unless } \set{(i, k), (j, l)}\mbox{ is an order-}\\
%&&\mbox{ preserving partial function }\N^{-1}\to\N\nonumber\\
%\label{forb:pim2}(\g_i, \g_j, \r_{kl})&&\mbox{if } i=j\mbox{ but }k\neq l\\
%\label{forb: black}(\y,\y,\y), (\y,\y,\bb)\\
%\label{forb:match}(\r_{ij}, \r_{j'k'}, \r_{i^*k^*})&&\mbox{unless }i=i^*,\; j=j'\mbox{ and }k'=k^*.
%\end{eqnarray}
and no other triple of atoms is forbidden.

\item The last two items concerning shades of yellow are as before.

\end{enumarab}

But the forbidden triple $(\g^i_0, \g^j_0, \r_{kl})$
is not present in standard rainbow constructions, adopted example in \cite{HH} and in a more general form in
\cite{HHbook2}. Therefore, we cannot use the usual rainbow argument adopted in \cite{HH}; we have to be selective for the choice of the indices
of reds if we are labelling the appexes of two cones having green tints; {\it not any red that works for usual rainbows} will do.

One then can define (what we continue to call) a rainbow atom structure
of dimension $n$ from the class $\G$.
Let $$\At=\{a:n \to M, M\in \G: \text { $a$ is surjective }\}.$$
We write $M_a$ for the element of $\At$ for which
$a:n\to M$ is a surjection.
Let $a, b\in \At$ define the
following equivalence relation: $a \sim b$ if and only if
\begin{itemize}
\item $a(i)=a(j)\Longleftrightarrow b(i)=b(j),$

\item $M_a(a(i), a(j))=M_b(b(i), b(j))$ whenever defined,

\item $M_a(a(k_0),\dots, a(k_{n-2}))=M_b(b(k_0),\ldots, b(k_{n-2}))$ whenever
defined.
\end{itemize}
Let $\At$ be the set of equivalences classes. Then define
$$[a]\in E_{ij} \text { iff } a(i)=a(j).$$
$$[a]T_i[b] \text { iff }a\upharpoonright n\smallsetminus \{i\}=b\upharpoonright n\smallsetminus \{i\}.$$
And topologizing the atom structure we set
$$[a]I_i [b]\text { iff }a=b,$$
that adds nothing to the cylindric structure inducing the interior
topology.
%Define accessibility relations corresponding to the polyadic (transpositions) operations as follows:
%$$[a]S_{ij}[b] \text { iff } a\circ [i,j]=b.$$
This, as easily checked, defines a $\sf TCA_n$
atom structure. Let $\C$ be the complex algebra.
Let $k>0$ be given. We show that \pe\ has a \ws\ in the usual graph game in $k$ rounds
(now there is no restriction here on the size of the graphs) on $\At\C$.
We recall the `usual atomic' $k$ rounded game $G_k$ played on coloured graphs.
\pa\ picks a graph $M_0\in \G$ with $M_0\subseteq n$ and
$\exists$ makes no response
to this move. In a subsequent round, let the last graph built be $M_i$.
\pa\ picks
\begin{itemize}
\item a graph $\Phi\in \G$ with $|\Phi|=n,$
\item a single node $m\in \Phi,$
\item a coloured graph embedding $\theta:\Phi\smallsetminus \{m\}\to M_i.$
Let $F=\phi\smallsetminus \{m\}$. Then $F$ is called a face.
The \pe\  has to define a
graph $M_{i+1}\in C$ and embeddings $\lambda:M_i\to M_{i+1}$
$\mu:\phi \to M_{i+1}$, such that $\lambda\circ \theta=\mu\upharpoonright F.$
\end{itemize}
Consider the following restricted version of $G_{\omega}$; only $m>n$ nodes are available.
We denote this $\omega$ rounded game using $m$ nodes  by $G^m$.
In more detail, in a play of $G^m$ \pa\ picks a graph $M_0\in \G$ with $M_0\subseteq m$ and
$\exists$ makes no response
to this move. In a subsequent round, let the last graph built be $M_i$.
\pa\ picks
\begin{itemize}
\item a graph $\Phi\in \sf CRG$ with $|\Phi|=n,$
\item a single node $k\in \Phi,$
\item a coloured graph embedding $\theta:\Phi\smallsetminus \{k\}\to M_i.$
%Let $F=\phi\smallsetminus \{k\}$. Then $F$ is called a face.
\pe\ must respond like the in the usual atomic game
by amalgamating
$M_i$ and $\Phi$ with the embedding $\theta$.
\end{itemize}

$F^m$ is like $G^m$, but \pa\ is allowed to resuse nodes.

Inspite of the restriction of adding a forbidden triple relating two greens and a red
(that makes it harder for \pe\ to win),  we will show that \pe\ will always succeed to choose a
suitable red in the finite rounded atomic games $G^m$. On the other hand, this bonus for \pa\ will enable him to win the game $F^{n+3}$ by forcing
\pe\ to play a decreasing sequence in $\N$. Using and re-using $n+3$ nodes will suffice for this
purpose.
We define \pe\ s strategy for choosing labels for edges and $n-1$ tuples in response to \pa\ s moves.
Assume that we are at round $r+1$. Our  arguments are similar to the arguments in \cite[Lemmas, 41-43]{r}.

Let $M_0, M_1,\ldots M_r$, $r<k$ be the coloured graphs at the start of a play of $G^k(\alpha)$ just before round $r+1$.
Assume inductively that \pe\ computes a partial function $\rho_s:\N^{-1}\to \N$, for $s\leq r$, that will help her choose
the suffices of the chosen red in the critical case. In our previous rainbow construction we had the additional shade of red
$\rho$ that did the job. Now we do not have it, so we proceed differently.  Inductively
for $s\leq r$ we assume:

\begin{enumarab}
\item  If $M_s(x,y)$ is green then $(x,y)$ belongs  \pa\ in $M_s$ (meaning he coloured it),

\item $\rho_0\subseteq \ldots \rho_r\subseteq\ldots,$
\item $\dom(\rho_s)=\{i\in \N^{-1}: \exists t\leq s, x, x_0, x_1,\ldots, x_{n-2}
\in \nodes(M_t)\\
\text { where the $x_i$'s form the base of a cone, $x$ is its appex and $i$ its tint }\}.$

The domain consists of the tints of cones created at an earlier stage,

\item $\rho_s$ is order preserving: if $i<j$ then $\rho_s(i)<\rho_s(j)$. The range
of $\rho_s$ is widely spaced: if $i<j\in \dom\rho_s$ then $\rho_s(i)-\rho_s(j)\geq  3^{m-r}$, where $m-r$
is the number of rounds remaining in the game,

\item For $u,v,x_0\in \nodes(M_s)$, if $M_s(u,v)=\r_{\mu,\delta}$, $M_s(x_0,u)=\g_0^i$, $M_s(x_0,v)=\g_0^j$,
where $i,j$ are tints of two cones, with base $F$ such that $x_0$ is the first element in $F$ under the induced linear order,
then $\rho_s(i)=\mu$ and $\rho_s(j)=\delta,$

\item $M_s$ is a a  coloured graph,

\item If the base of a cone $\Delta\subseteq M_s$  with tint $i$ is coloured $y_S$, then $i\in S$.

\end{enumarab}

To start with if \pa\ plays $a$ in the initial round then $\nodes(M_0)=\{0,1,\ldots, n-1\}$, the
hyperedge labelling is defined by $M_0(0,1,\ldots, n)=a$.

In response to a cylindrifier move for some $s\leq r$, involving a $p$ cone, $p\in \N^{-1}$,
\pe\ must extend $\rho_r$ to $\rho_{r+1}$ so that $p\in \dom(\rho_{r+1})$
and the gap between elements of its range is at least $3^{m-r-1}$. Properties (3) and (4) are easily
maintained in round $r+1$. Inductively, $\rho_r$ is order preserving and the gap between its elements is
at least $3^{m-r}$, so this can be maintained in a further round.
If \pa\ chooses a green colour, or green colour whose suffix
already belong to $\rho_r$, there would be fewer
elements to add to the domain of $\rho_{r+1}$, which makes it easy for \pe\ to define $\rho_{r+1}$.

Now assume that at round $r+1$, the current coloured graph is $M_r$ and that   \pa\ chose the graph $\Phi$, $|\Phi|=n$
with distinct nodes $F\cup \{\delta\}$, $\delta\notin M_r$, and  $F\subseteq M_r$ has size
$n-1$.  We can  view \pe\ s move as building a coloured graph $M^*$ extending $M_r$
whose nodes are those of $M_r$ together with the new node $\delta$ and whose edges are edges of $M_r$ together with edges
from $\delta$ to every node of $F$.

Now \pe\ must extend $M^*$ to a complete graph $M^+$ on the same nodes and
complete the colouring giving  a graph $M_{r+1}=M^+$ in $\G$ (the latter is the class of coloured graphs).
In particular, she has to define $M^+(\beta, \delta)$ for all nodes
$\beta\in M_r\sim F$, such that all of the above properties are maintained.

\begin{enumarab}

\item  If $\beta$ and $\delta$ are both apexes of two cones on $F$.

Assume that the tint of the cone determined by $\beta$ is $a\in \N^{-1}$, and the two cones
induce the same linear ordering on $F$. Recall that we have $\beta\notin F$, but it is in $M_r$, while $\delta$ is not in $M_r$,
and that $|F|=n-1$.
By the rules of the game  \pe\ has no choice but to pick a red colour. \pe\ uses her auxiliary
function $\rho_{r+1}$ to determine the suffices, she lets $\mu=\rho_{r+1}(p)$, $b=\rho_{r+1}(q)$
where $p$ and $q$ are the tints of the two cones based on $F$,
whose apexes are $\beta$ and $\delta$. Notice that $\mu, b\in \N$; then she sets $N_s(\beta, \delta)=\r_{\mu,b}$
maintaining property (5), and so $\delta\in \dom(\rho_{r+1})$
maintaining property (4). We check consistency to maintain property (6).

Consider a triangle of nodes $(\beta, y, \delta)$ in the graph $M_{r+1}=M^+$.
The only possible potential problem is that the edges $M^+(y,\beta)$ and $M^+(y,\delta)$ are coloured green with
distinct superscripts $p, q$ but this does not contradict
forbidden triangles of the form involving $(\g_0^p, \g_0^q, \r_{kl})$, because $\rho_{r+1}$ is constructed to be
order preserving.  Now assume that
$M_r(\beta, y)$ and $M_{r+1}(y, \delta)$ are both red (some $y\in \nodes(M_r)$).
Then \pe\ chose the red label $N_{r+1}(y,\delta)$, for $\delta$ is a new node.
We can assume that  $y$ is the apex of a $t$ cone with base $F$ in $M_r$. If not then $N_{r+1}(y, \delta)$ would be coloured
$\w$ by \pe\   and there will be no problem. All properties will be maintained.
Now $y, \beta\in M$, so by by property (5) we have $M_{r+1}(\beta,y)=\r_{\rho+1(p), \rho+1(t)}.$
But $\delta\notin M$, so by her strategy,
we have  $M_{r+1}(y,\delta)=\r_{\rho+1(t), \rho+1(q)}.$ But $M_{r+1}(\beta, \delta)=\r_{\rho+1(p), \rho+1(q)}$,
and we are done.  This is consistent triple, and so have shown that
forbidden triples of reds are avoided.

\item If there is no $f\in F$ such that $M^*(\beta, f), M^*(\delta,f)$ are coloured $g_0^t$, $g_0^u$ for some $t, u$ respectively,
then \pe\ defines $M^+(\beta, \delta)$ to be $\w_0$.

\item If this is not the case, and  for some $0<i<n-1$ there is no $f\in F$ such
that $M^*(\beta, f), M^* (\delta, f)$ are both coloured $\g_i$,
she chooses $\w_i$ for  $M^+{(\beta,\delta)}$.

It is clear that these choices in the last two items  avoid all forbidden triangles (involving greens and whites).

\end{enumarab}

She has not chosen green maintaining property (1).  Now we turn to colouring of $n-1$ tuples,
to make sure that $M^+$ is a coloured graph maintaining property (7).

Let $\Phi$ be the graph chosen by \pa\, it has set of node $F\cup \{\delta\}$.
For each tuple $\bar{a}=a_0,\ldots a_{n-2}\in {M^+}^{n-1}$, $\bar{a}\notin M^{n-1}\cup \Phi^{n-1}$,  with no edge
$(a_i, a_j)$ coloured green (we already have all edges coloured), then  \pe\ colours $\bar{a}$ by $\y_S$, where
$$S=\{i\in A: \text { there is an $i$ cone in $M^*$ with base $\bar{a}$}\}.$$
We need to check that such labeling works, namely that last property is maintained.

Recall that $M$ is the current coloured graph, $M^*=M\cup \{\delta\}$ is built by \pa\ s move
and $M^+$ is the complete labelled graph by \pe\, whose nodes are labelled by \pe\ in response to \pa\ s moves.
We need to show that $M^+$ is labelled according to
the rules of the game, namely, that it is in $\G$.
It can be checked $(n-1)$ tuples are labelled correctly, by yellow colours using
the same argument in \cite[p.16]{Hodkinson} \cite[p.844]{HH}
and \cite{HHbook2}.

We show that \pa\ has a \ws\ in $F^{n+3}$, the argument used is the $\CA$ analogue of \cite[Theorem 33, Lemma 41]{r}.
The difference is that in the relation algebra case, the game is played on atomic networks, but now it is translated to playing on coloured graphs,
\cite[lemma 30]{HH}.

In the initial round \pa\ plays a graph $\Gamma$ with nodes $0,1,\ldots n-1$ such that $\Gamma(i,j)=\w_0$ for $i<j<n-1$
and $\Gamma(i, n-1)=\g_i$
$(i=1, \ldots, n-2)$, $\Gamma(0,n-1)=\g_0^0$ and $\Gamma(0,1\ldots, n-2)=\y_{B}$.

In the following move \pa\ chooses the face $(0,\ldots n-2)$ and demands a node $n$
with $\Gamma_2(i,n)=\g_i$ $(i=1,\ldots, n-2)$, and $\Gamma_2(0,n)=\g_0^{-1}.$
\pe\ must choose a label for the edge $(n+1,n)$ of $\Gamma_2$. It must be a red atom $r_{mn}$. Since $-1<0$ we have $m<n$.
In the next move \pa\ plays the face $(0, \ldots, n-2)$ and demands a node $n+1$, with $\Gamma_3(i,n)=\g_i$ $(i=1,\ldots, n-2)$,
such that  $\Gamma_3(0,n+2)=\g_0^{-2}$.
Then $\Gamma_3(n+1,n)$ and  $\Gamma_3(n+1,n-1)$ both being red, the indices must match.
$\Gamma_3(n+1,n)=r_{ln}$ and $\Gamma_3(n+1, n-1)=r_{lm}$ with $l<m$.
In the next round \pa\ plays $(0,1,\ldots, n-2)$ and reuses the node $2$ such that $\Gamma_4(0,2)=\g_0^{-3}$.
This time we have $\Gamma_4(n,n-1)=\r_{jl}$ for some $j<l\in \N$.
Continuing in this manner leads to a decreasing sequence in $\N$.

Now that \pa\ has a \ws\ in $F^{n+3},$ it follows by an easy adaptation of lemma \cite[Lemma 33]{r},
baring in mind that games on coloured graphs are equivalent to games on atomic networks,
that $\Rd_{sc}\C\notin S_c\Nr_n\Sc_{n+3}$, but it is elementary equivalent
to a countable completely representable algebra. Indeed, using ultrapowers and an elementary chain argument,
we obtain $\B$ such  $\C\equiv \B$ \cite[lemma 44]{r},
and \pe\ has a \ws\ in $G_{\omega}$ (the usual $\omega$ rounded atomic game)
on $\B$ so by \cite[Theorem 3.3.3]{HHbook2}, $\B$ is completely representable.

In more detail we have \pe\ has a \ws\ $\sigma_k$ in $G_k$.
We can assume that $\sigma_k$ is deterministic.
Let $\D$ be a non-principal ultrapower of $\C$.  One can show that
\pe\ has a \ws\ $\sigma$ in $G(\At\D)$ --- essentially she uses
$\sigma_k$ in the $k$'th component of the ultraproduct so that at each
round of $G(\At\D)$,  \pe\ is still winning in co-finitely many
components, this suffices to show she has still not lost.

We can also assume that $\C$ is countable. If not then replace it by its subalgebra generated by the countably many atoms
(the term algebra); \ws\ s that depended only on the atom structure persist
for both players.

Now one can use an
elementary chain argument to construct a chain of countable elementary
subalgebras $\C=\A_0\preceq\A_1\preceq\ldots\preceq\ldots \D$ inductively in this manner.
One defines  $\A_{i+1}$ be a countable elementary subalgebra of $\D$
containing $\A_i$ and all elements of $\D$ that $\sigma$ selects
in a play of $G(\At\D)$ in which \pa\ only chooses elements from
$\A_i$. Now let $\B=\bigcup_{i<\omega}\A_i$.  This is a
countable elementary subalgebra of $\D$, hence necessarily atomic,  and \pe\ has a \ws\ in
$G(\At\B)$, so by \cite[Theorem 3.3.3]{HHbook2}, noting that $\B$ is countable,
then $\B$ is completely representable; furthermore $\B\equiv \C$.

The same proof works for $\sf TeCA_{\alpha}$ for the last part
using a static expansion of $\C$ and a static expansion  of $\B$ expansion.

\item As for the last part, let $(T, <)$ be a given flow of time. Choose representable atomic
cylindric algebras $\C_t, \B_t$ for each $t\in T$  such that  $\B_t \equiv \C_t$, $\C_t\notin S\Nr_n\CA_{n+3}$ while $\B_t$ is completely representable.

We know by theorem \ref{rainbow} that such algebras exist. In fact, we can take $\B_t=\B$ nd $\C_t=\C$ for each
moment  $t\in T$, where $\B$ and $\C$ are algebras used in theorem \ref{rainbow}.
By the Ferefman Vaught theorem we have $\B^+=prod_{t\in T}\B_t\equiv \prod_{t\in T} \C_{t}=\C^+$. Now define
$<$ in $T$ using $G$ and $H$
both defined as the identity map, getting the required flow of time, the
tense system based on it, and finally the tense algebra constructed
from such a tense sytem.
The elementary equivalence remain to hold, we need to check that $\prod B_t$ is completely represenable,
this is not hard.

Now assume for contadiction that $\C^+=\prod_{t\in T}\C_t\in S_c\Nr_n\sf TeCA_{n+3}$. Recall that $\C_t=\C$ for any $t\in T$
But it is clear tht the projection  map from $\C\to \C^+$ is a complete one, hence $\C^{te}\in S_c\Nr_n\sf TeCA_{n+3}$ , but
that $\C=\Rd_{ca}\C^{te}\in S_c\Nr_n\CA_{n+3}$ which is a contradiction.
\end{enumarab}
\end{proof}
Consider the following two statements for finite $n>2$:
\begin{enumarab}

\item Any class $\K$ such that $S_c\Nr_n\TCA_{\omega}\subseteq \sf K\subseteq S_c\TCA_{n+3}$ is not elementary.
\item Any $\K$ such that $\Nr_n\CA_{\omega}\subseteq \K\subseteq S_c\Nr_n\CA_{n+3}$ is not elementary.
 \end{enumarab}

The first statement is proved in our previos theorem. The second statement is stronger since  
$\Nr_n\CA_{\omega}\subseteq S_c\Nr_n\CA_{\omega}$ and the inclusion is strict. 
This inclusion can be distilled from a construction in \cite{SL, conference} where an infinite set algebra is used
and which we now recall. Later, witness example \ref{finitepa} 
we will give an example of a finite algebra that witnesses the strictness of this inclusion for finite $n>2$. 
Our next example though works for 
any $\alpha>1$, but algebras used are infinite. Of courese this is to be expected for infinite dimensions but 
such algebras are also infinite 
for any finite dimension $>1$.

\begin{example}\label{SL}

Here we give an example that works for any  signature is between $\Sc$ and $\sf QEA$, where $\Sc$ denotes Pinter's substitution algebras and 
$\sf QEA$ denotes quasipolydic algebras.
(Here we are using the notation $\sf QEA$ instead of $\PEA$ because we are
allowing infinite dimensions).

$FT_{\alpha}$ denotes the set of all finite transformations on $\alpha$.
Let $\alpha$ be an ordinal $>1$; could be infinite. Let $\F$ is field of characteristic $0$.
$$V=\{s\in {}^{\alpha}\F: |\{i\in \alpha: s_i\neq 0\}|<\omega\},$$
$${\C}=(\wp(V),
\cup,\cap,\sim, \emptyset , V, {\sf c}_{i},{\sf d}_{i,j}, {\sf s}_{\tau})_{i,j\in \alpha, \tau\in FT_{\alpha}}.$$
Then clearly $\wp(V)\in \Nr_{\alpha}\sf QPEA_{\alpha+\omega}$.
Indeed let $W={}^{\alpha+\omega}\F^{(0)}$. Then
$\psi: \wp(V)\to \Nr_{\alpha}\wp(W)$ defined via
$$X\mapsto \{s\in W: s\upharpoonright \alpha\in X\}$$
is an isomorphism from $\wp(V)$ to $\Nr_{\alpha}\wp(W)$.
We shall construct an algebra $\A$, $\A\notin \Nr_{\alpha}{\sf QPEA}_{\alpha+1}$.
Let $y$ denote the following $\alpha$-ary relation:
$$y=\{s\in V: s_0+1=\sum_{i>0} s_i\}.$$
Let $y_s$ be the singleton containing $s$, i.e. $y_s=\{s\}.$
Define
${\A}\in {\sf QPEA}_{\alpha}$
as follows:
$${\A}=\Sg^{\C}\{y,y_s:s\in y\}.$$

Now clearly $\A$ and $\wp(V)$ share the same atom structure, namely, the singletons.
Then we claim that
$\A\notin \Nr_{\alpha}{\sf QPEA}_{\beta}$ for any $\beta>\alpha$.
The first order sentence that codes the idea of the proof says
that $\A$ is neither an elementary nor complete subalgebra of $\wp(V)$. We use $\land$ and $\to$ in the meta language
with their usual meaning.
Let $\At(x)$ be the first order formula asserting that $x$ is an atom.
Let $$\tau(x,y) ={\sf c}_1({\sf c}_0x\cdot {\sf s}_1^0{\sf c}_1y)\cdot {\sf c}_1x\cdot {\sf c}_0y.$$
Let $${\sf Rc}(x):={\sf c}_0x\cap {\sf c}_1x=x,$$
$$\phi:=\forall x(x\neq 0\to \exists y(\At(y)\land y\leq x))\land
\forall x(\At(x) \to {\sf Rc}(x)),$$
$$\alpha(x,y):=\At(x)\land x\leq y,$$
and  $\psi (y_0,y_1)$ be the following first order formula
$$\forall z(\forall x(\alpha(x,y_0)\to x\leq z)\to y_0\leq z)\land
\forall x(\At(x)\to {\sf At}({\sf c}_0x\cap y_0)\land {\sf At}({\sf c}_1x\cap y_0))$$
$$\to [\forall x_1\forall x_2(\alpha(x_1,y_0)\land \alpha(x_2,y_0)\to \tau(x_1,x_2)\leq y_1)$$
$$\land \forall z(\forall x_1 \forall x_2(\alpha(x_1,y_0)\land \alpha(x_2,y_0)\to
\tau(x_1,x_2)\leq z)\to y_1\leq z)].$$
Then
$$\Nr_{\alpha}{\sf QPEA}_{\beta}\models \phi\to \forall y_0 \exists y_1 \psi(y_0,y_1).$$
But this formula does not hold in $\A$.
We have $\A\models \phi\text {  and not }
\A\models \forall y_0\exists y_1\psi (y_0,y_1).$
In words: we have a set $X=\{y_s: s\in V\}$ of atoms such that $\sum^{\A}X=y,$ and $\A$
models $\phi$ in the sense that below any non zero element there is a
{\it rectangular} atom, namely a singleton.

Let $Y=\{\tau(y_r,y_s), r,s\in V\}$, then
$Y\subseteq \A$, but it has {\it no supremum} in $\A$, but {\it it does have one} in any full neat reduct $\B$ containing $\A$,
and this is $\tau_{\alpha}^{\B}(y,y)$, where
$\tau_{\alpha}(x,y) = {\sf c}_{\alpha}({\sf s}_{\alpha}^1{\sf c}_{\alpha}x\cdot {\sf s}_{\alpha}^0{\sf c}_{\alpha}y).$

In $\wp(V)$ this last is $w=\{s\in {}^{\alpha}\F^{(\bold 0)}: s_0+2=s_1+2\sum_{i>1}s_i\},$
and $w\notin \A$. The proof of this can be easily distilled from \cite[main theorem]{SL}.
For $y_0=y$, there is no $y_1\in \A$ satisfying $\psi(y_0,y_1)$.
Actually the above proof proves more. It proves that there is a
$\C\in \Nr_{\alpha}{\sf QEA}_{\beta}$ for every $\beta>\alpha$ (equivalently $\C\in \Nr_n\QEA_{\omega}$), and $\A\subseteq \C$, such that
$\Rd_{\Sc}\A\notin \Nr_{\alpha}\Sc_{\alpha+1}$.
See \cite[theorems 5.1.4-5.1.5]{Sayedneat} for an entirely different example.
Now topologizing $\Rd_{ca}\C$ we get the required.

Indeed we have $\Rd_{ca}\A\in \Nr_n\CA_{\omega}$, hence there exists $\B'\in \CA_{\omega}$ such that $\A=\Nr_n\B'$. 
Let $\B$ be the subalgebra of $\B'$ generated by $\A$. 
Then $\B$ is localy finite, hence representable  
with base $U$ say; give $U$ the discrete topology, then $\Rd_{ca}\A^{\sf top}=\Nr_n\B^{\sf top}.$
Now it is obvious that $\A^{\sf top}\notin \Nr_n\TCA_{n+1}$ for if it were then its 
$\Sc$ reduct would be in $\Nr_n\Sc_{n+1}$ 
and we know is  not the case.

Now give a more sophisticated example. 
We prove that the strictness of the inclusion can be witnessed by a finite algebra: 

\subsection{Neat games}

Let $\delta$ be a map. Then $\delta[i\to d]$ is defined as follows. $\delta[i\to d](x)=\delta(x)$
if $x\neq i$ and $\delta[i\to d](i)=d$. We write $\delta_i^j$ for $\delta[i\to \delta_j]$.

We recall the definition of {\it polyadic equality} networks.
\begin{definition}\label{network}
Let $2< n<\omega.$ Let $\C$ be an atomic ${\sf PEA}_{n}$.
An \emph{atomic  network} over $\C$ is a map
$$N: {}^{n}\Delta\to \At\C,$$
where $\Delta$ is a non-empty set called a set of nodes,
such that the following hold for each $i,j<n$, $\delta\in {}^{n}\Delta$
and $d\in \Delta$:
\begin{itemize}
\item $N(\delta^i_j)\leq {\sf d}_{ij}$
\item $N(\delta[i\to d])\leq {\sf c}_iN(\delta)$
\item $N(\bar{x}\circ [i,j])= {\sf s}_{[i,j]}N(\bar{x})$ for all $i,j<n$.
\end{itemize}
\end{definition}

\begin{theorem}\label{neat1}
There is an $\omega$ rounded game $J$ such that if \pe\ can win $J$ on a $\PEA_n$ atom structure $\alpha$
with $2<n<\omega$, then there exists a locally finite $\D\in \sf QEA_{\omega}$ such that
$\alpha\cong \At\Nr_n\D$.
Furthermore, $\D$ can be chosen to be complete, in which case we have $\mathfrak{Cm}\alpha=\Nr_n\D$.
\end{theorem}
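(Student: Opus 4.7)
The plan is to design $J$ as an $\omega$-rounded game whose positions are atomic $\PEA_n$-networks in the sense of Definition \ref{network}, with nodes from a countable set (say $\omega$). In each round \pa\ may play one of three kinds of move: a cylindrifier move (as in Definition \ref{def:games}, but without any bound on the number of nodes used), a transposition move challenging \pe\ on the polyadic substitution operators $\s_{[i,j]}$, and crucially an amalgamation move in which he presents two previously played networks $M_1, M_2$ agreeing on a common face and demands a single network $M_3$ extending both. \pe\ responds each round by producing a network satisfying the demand, and she wins if she survives all $\omega$ rounds.

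First I would extract from \pe's \ws\ $\sigma$ a countable family $\mathcal{N}$ of atomic networks closed under subnetworks, under the responses of $\sigma$ to every permissible \pa\ move, and with the pairwise amalgamation property: any two members of $\mathcal{N}$ agreeing on a common face admit a common extension in $\mathcal{N}$. This is obtained by the usual scheduling argument enumerating all potential \pa\ moves and playing $\sigma$ against the enumeration. I would then build $\D \in \QEA_\omega$ whose underlying set consists of equivalence classes $[(N,\bar{x})]$ with $N \in \mathcal{N}$ and $\bar{x}\in {}^{\omega}\mathsf{nodes}(N)$, under the relation identifying two pairs exactly when a common extension labels the two tuples with the same atom. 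The cylindric, diagonal, and polyadic operations lift to $\D$ by acting on representatives; the amalgamation closure of $\mathcal{N}$ is precisely what makes them well defined and what forces the $\QEA_\omega$ axioms to hold. Local finiteness is automatic, since each atomic label has finite dimension set and the construction transmits this property element-wise.

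The main verification is that $\Nr_n\D$ has atom structure isomorphic to $\alpha$. The map $a \mapsto [(N_a, \iota_a)]$, where $N_a$ is the network \pe\ plays in response to \pa's initial choice of $a \in \alpha$ and $\iota_a$ extends the identity on $n$ arbitrarily to $\omega$, embeds $\alpha$ into $\At\Nr_n\D$; injectivity comes from \pe\ never being forced to identify distinct atoms, and surjectivity from the fact that any atom of $\D$ whose dimension set lies in $n$ is witnessed at $n$ coordinates by some label in $\alpha$. For the furthermore clause, I would augment $J$ with a commitment mechanism: whenever $a = \sum X$ holds in $\alpha$ and \pe\ has already labelled a tuple $\bar{y}$ by $a$, \pa\ may demand an $x\in X$ such that some extension relabels $\bar{y}$ by $x$. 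Playing this game against a fair enumeration of all such joins and relabellings forces $\D$ to be complete and makes the natural embedding $\Cm\,\alpha \hookrightarrow \Nr_n\D$ surjective, yielding $\Cm\,\alpha = \Nr_n\D$.

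The hard part will be calibrating the amalgamation and transposition moves of $J$ with exactly the right strength. They must be rich enough that \pe's responses determine a consistent family of substitution operators indexed by transformations of $\omega$ touching arbitrarily many coordinates, not merely coordinates in $n$, so that $\D$ really is an $\omega$-dimensional algebra rather than a relativised representation of $\alpha$; yet they must be weak enough that \pe\ genuinely has a \ws\ whenever $\alpha$ embeds into some $\At\Nr_n\D'$ with $\D'\in\QEA_\omega$, so that the characterisation is tight. Balancing these two demands --- together with checking that the equivalence relation on $\{(N,\bar{x}): N\in \mathcal{N}, \bar{x}\in{}^{\omega}\mathsf{nodes}(N)\}$ really is a congruence for the full $\QEA_\omega$ signature --- is the technical crux, while the remaining verifications are routine adaptations of the classical neat-embedding game argument.
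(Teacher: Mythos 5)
Your overall architecture matches the paper's: a game with cylindrifier, transformation and amalgamation moves, a saturated family of networks extracted from \pe's \ws, an $\omega$-dimensional algebra built over it, and an identification of $\alpha$ with $\At\Nr_n\D$. But there are concrete gaps where your plan either asserts something false or defers the step that actually carries the proof.

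First, the ``furthermore'' clause. You propose to get completeness by adding a commitment move to $J$ for every join $a=\sum X$. That changes the game, and hence the statement: the theorem asserts that one fixed game $J$ yields both conclusions from the same \ws. The paper gets completeness without touching $J$: the dilation $\D_a$ is realised as the algebra of sets definable in a weak relativized model $\mathcal{N}_a$ over the limit network $N_a$, using a signature with an $n$-ary symbol for each $b\in\alpha$ and a $k$-ary symbol for each $k$-ary hyperlabel; passing from $L_{\omega,\omega}$ to $L_{\infty,\omega}$ makes $\D$ complete, and then $\Cm\alpha\subseteq\Nr_n\D$ dense and complete forces $\Cm\alpha=\Nr_n\D$. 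Second, your claim that local finiteness ``is automatic'' is wrong as stated: each $\D_a$ lands in ${\sf Lf}_\omega\cap{\sf Ws}_\omega$ because its elements are first-order definable (hence use finitely many variables), but the product $\D=\prod_a\D_a$ need not be locally finite, and the paper explicitly passes to the subalgebra generated by $\Nr_n\D$ to repair this. In your quotient-of-pairs construction the atoms $[(N,\bar{x})]$ with $\bar{x}$ hitting infinitely many nodes have no reason to have finite dimension sets at all.

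Third, and most seriously, the step you label the ``technical crux'' is exactly the content of the proof, and your plain-network game does not obviously supply it. The paper plays $J$ on $\lambda$-neat \emph{hypernetworks}, with hyperlabels on arbitrarily long tuples and short hyperedges constantly labelled; this extra labelling is what guarantees that the atoms of the dilation are no smaller than the atoms of the neat reduct, so that the map $b\mapsto b(x_0,\ldots,x_{n-1})^{\D}$ really lands in $\At\Nr_n\D$ and is onto. Your surjectivity claim (``any atom of $\D$ whose dimension set lies in $n$ is witnessed \ldots by some label in $\alpha$'') is precisely what could fail for a dilation built from bare networks: an $n$-dimensional definable set could sit strictly below the image of an atom of $\alpha$. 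The saturation condition on partial isomorphisms of the limit network (clause (2) of the nested sequence) together with the hyperlabel discipline is what rules this out, and it is absent from your set-up. Until the hypernetwork layer (or an equivalent device) is in place, the central isomorphism $\alpha\cong\At\Nr_n\D$ is unproved.
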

\begin{proof}
We define a stronger game $J$.
This game is played on {\it $\lambda$ neat hypernetworks}.
We need some preparing to do.
For an atomic network and for  $x,y\in \nodes(N)$, we set  $x\sim y$ if
there exists $\bar{z}$ such that $N(x,y,\bar{z})\leq {\sf d}_{01}$.
Define the  equivalence relation $\sim$ over the set of all finite sequences over $\nodes(N)$ is defined by
$\bar x\sim\bar y$ iff $|\bar x|=|\bar y|$ and $x_i\sim y_i$ for all
$i<|\bar x|$.(It can be checked that this indeed an equivalence relation.)

A \emph{ hypernetwork} $N=(N^a, N^h)$ over an atomic polyadic equality algebra $\C$
consists of a network $N^a$
together with a labelling function for hyperlabels $N^h:\;\;^{<
\omega}\!\nodes(N)\to\Lambda$ (some arbitrary set of hyperlabels $\Lambda$)
such that for $\bar x, \bar y\in\; ^{< \omega}\!\nodes(N)$

if
$$\bar x\sim\bar y \Rightarrow N^h(\bar x)=N^h(\bar y).$$

If $|\bar x|=k\in \N$ and $N^h(\bar x)=\lambda$ then we say that $\lambda$ is
a $k$-ary hyperlabel. $(\bar x)$ is referred to a a $k$-ary hyperedge, or simply a hyperedge.
(Note that we have atomic hyperedges and hyperedges)

When there is no risk of ambiguity we may drop the superscripts $a, h$.
There are {\it short} hyperedges and {\it long} hyperedges (to be defined in a while). The short hyperedges are constantly labelled.
The idea (that will be revealed during the proof), is that the atoms in the neat reduct are no smaller than the atoms
in the dilation. (When $\A=\Nr_n\B,$ it is common to call $\B$ a dilation of $\A$.)

There is a one to one correspondence between networks and coloured graphs \cite[second half of p. 76]{HHbook2}.
If $\Gamma$ is a coloured graph, then by $N_{\Gamma}$
we mean the corresponding network.
\begin{itemize}
\item A hyperedge $\bar{x}\in {}^{<\omega}\nodes (\Gamma)$ of length $m$ is {\it short}, if there are $y_0,\ldots, y_{n-1}\in \nodes(N)$, such that
$$N_{\Gamma}(x_i, y_0, \bar{z})\leq {\sf d}_{01}\text { or } N(_{\Gamma}(x_i, y_1, \bar{z})\ldots$$
$$\text { or }N(x_i, y_{n-1},\bar{z})\leq {\sf d}_{01}$$
for all $i<|x|$, for some (equivalently for all)
$\bar{z}.$

Otherwise, it is called {\it long.}
\item A hypergraph $(\Gamma, l)$
is called {\it $\lambda$ neat} if $N_{\Gamma}(\bar{x})=\lambda$ for all short hyper edges.
\end{itemize}
This game is similar to the games devised by Robin Hirsch in \cite[definition 28]{r}, played on relation algebras.
However, lifting it to cylindric algebras
is not straightforward at all, for in this new context the moves involve hyperedges of length $n$ (the dimension),
rather than edges. We have to be careful here for we have two types of hyperedges.
Hyperedges having length $n$
that are labelled by atoms of the algebra, and there are other hyperedges labelled from another set (of labels).
The latter hyperedges can get arbitrarily long.

In the $\omega$ rounded game $J$,  \pa\ has three moves.
$J_k$ will denote the game $J$ truncated to $k$ rounds.

The first is the normal cylindrifier move. There is no polyadic move for the polyadic information is coded in the networks.
The next two are amalgamation moves.
But now the games are not played on coloured graphs,
they are played on coloured {\it hypergraphs}, consisting of two parts,
the graph part
that can be viewed as an $L_{\omega_1, \omega}$ model for the rainbow signature, and the part dealing with hyperedges with a
labelling function.
%The amalgamation moves roughly reflect the fact, in case \pe\ wins, then for every $k\geq n$ there is a $k$ dimensional hyperbasis,
%so that the small algebra embeds into cylindric algebras of arbitrary large dimensions.
The game is played on $\lambda$ neat hypernetworks,  translated to $\lambda$ neat hypergraphs,
where $\lambda$ is a label for the short hyperedges.

For networks $M, N$ and any set $S$, recall that we write $M\equiv^SN$
if $N\restr S=M\restr S$, and we write $M\equiv_SN$
if the symmetric difference
$$\Delta(\nodes(M), \nodes(N))\subseteq S$$ and
$M\equiv^{(\nodes(M)\cup\nodes(N))\setminus S}N.$ We write $M\equiv_kN$ for
$M\equiv_{\set k}N$.

Let $N$ be a network and let $\theta$ be any function.  The network
$N\theta$ is a complete labelled graph with nodes
$\theta^{-1}(\nodes(N))=\set{x\in\dom(\theta):\theta(x)\in\nodes(N)}$,
and labelling defined by
$$(N\theta)(i_0,\ldots, i_{n-1}) = N(\theta(i_0), \theta(i_1), \ldots,  \theta(i_{n-1})),$$
for $i_0, \ldots, i_{n-1}\in\theta^{-1}(\nodes(N))$.

Concerning \pa\ s moves:

\pa\ can play a cylindrifier move, like before but now played on $\lambda_0$ neat hypernetworks.

\pa\ can play a \emph{transformation move} by picking a
previously played hypernetwork $N$ and a partial, finite surjection
$\theta:\omega\to\nodes(N)$, this move is denoted $(N, \theta)$.  \pe\
must respond with $N\theta$.

Finally, \pa\ can play an
\emph{amalgamation move} by picking previously played hypernetworks
$M, N$ such that $M\equiv^{\nodes(M)\cap\nodes(N)}N$ and
$\nodes(M)\cap\nodes(N)\neq \emptyset$.
This move is denoted $(M,
N)$. Here there is {\it no restriction} on the number of overlapping nodes, so in principal
the game is harder for \pe\ to win, if there was (Below we will encounter such restricted
amalgamation moves).

To make a legal response, \pe\ must play a $\lambda_0$-neat
hypernetwork $L$ extending $M$ and $N$, where
$\nodes(L)=\nodes(M)\cup\nodes(N)$.

Now assume that \pe\ has a \ws\ in $J$ on the $\PEA_n$ atom structure $\alpha$.

Fix some $a\in\alpha$. Using \pe\ s \ws\ in the game of neat hypernetworks, one defines a
nested sequence $N_0\subseteq N_1\ldots$ of neat hypernetworks
where $N_0$ is \pe's response to the initial \pa-move $a$, such that
\begin{enumerate}
\item If $N_r$ is in the sequence and
and $b\leq {\sf c}_lN_r(f_0, \ldots, x, \ldots  f_{n-2})$,
then there is $s\geq r$ and $d\in\nodes(N_s)$ such
that $N_s(f_0, f_{l-1}, d, f_{l+1},\ldots f_{n-2})=b$.
\item If $N_r$ is in the sequence and $\theta$ is any partial
isomorphism of $N_r$ then there is $s\geq r$ and a
partial isomorphism $\theta^+$ of $N_s$ extending $\theta$ such that
$\rng(\theta^+)\supseteq\nodes(N_r)$.
\end{enumerate}
Now let $N_a$ be the limit of this sequence, that is $N_a=\bigcup N_i$, the labelling of $n-1$ tuples of nodes
by atoms, and the hyperedges by hyperlabels done in the obvious way.
This limit is well-defined since the hypernetworks are nested.
We shall show that $N_a$ is the base of a weak set algebra having unit  $V={}^{\omega}N_a^{(p)}$,
for some fixed sequence $p\in {}^{\omega}N_a$.

Let $\theta$ be any finite partial isomorphism of $N_a$ and let $X$ be
any finite subset of $\nodes(N_a)$.  Since $\theta, X$ are finite, there is
$i<\omega$ such that $\nodes(N_i)\supseteq X\cup\dom(\theta)$. There
is a bijection $\theta^+\supseteq\theta$ onto $\nodes(N_i)$ and $\geq
i$ such that $N_j\supseteq N_i, N_i\theta^+$.  Then $\theta^+$ is a
partial isomorphism of $N_j$ and $\rng(\theta^+)=\nodes(N_i)\supseteq
X$.  Hence, if $\theta$ is any finite partial isomorphism of $N_a$ and
$X$ is any finite subset of $\nodes(N_a)$ then
\begin{equation}\label{eq:theta}
\exists \mbox{ a partial isomorphism $\theta^+\supseteq \theta$ of $N_a$
 where $\rng(\theta^+)\supseteq X$}
\end{equation}
and by considering its inverse we can extend a partial isomorphism so
as to include an arbitrary finite subset of $\nodes(N_a)$ within its
domain.
Let $L$ be the signature with one $n$ -ary relation symbol ($b$) for
each $b\in\alpha$, and one $k$-ary predicate symbol ($\lambda$) for
each $k$-ary hyperlabel $\lambda$. We work in usual first order logic.

Here we have a sequence of variables of order type $\omega$, and two 'sorts' of formulas,
the $n$ relation symbols using $n$ variables; roughly
these that are built up out of the first $n$ variables will determine the atoms of
neat reduct, the $k$ ary predicate symbols
will determine the atoms of algebras of higher dimensions as $k$ gets larger;
the atoms in the neat reduct will be no smaller than the atoms in the dilations.

This process will be interpreted in an infinite weak set algebra with base $N_a$, whose elements are
those  assignments satisfying such formulas.

For fixed $f_a\in\;^\omega\!\nodes(N_a)$, let
$U_a=\set{f\in\;^\omega\!\nodes(N_a):\set{i<\omega:g(i)\neq
f_a(i)}\mbox{ is finite}}$.
Notice that $U_a$ is weak unit (a set of sequences agreeing cofinitely with a fixed one.)

We can make $U_a$ into the universe an $L$ relativized structure ${\cal N}_a$;
here relativized means that we are only taking those assignments agreeing cofinitely with $f_a$,
we are not taking the standard square model.
However, satisfiability  for $L$ formulas at assignments $f\in U_a$ is defined the usual Tarskian way, except
that we use the modal notation, with restricted assignments on the left:

For $b\in\alpha,\; l_0, \ldots, l_{n-1}, i_0 \ldots, i_{k-1}<\omega$, \/ $k$-ary hyperlabels $\lambda$,
and all $L$-formulas $\phi, \psi$, let
\begin{eqnarray*}
{\cal N}_a, f\models b(x_{l_0}\ldots,  x_{n-1})&\iff&N_a(f(l_0),\ldots,  f(l_{n-1}))=b\\
{\cal N}_a, f\models\lambda(x_{i_0}, \ldots,x_{i_{k-1}})&\iff&  N_a(f(i_0), \ldots,f(i_{k-1}))=\lambda\\
{\cal N}_a, f\models\neg\phi&\iff&{\cal N}_a, f\not\models\phi,\\
{\cal N}_a, f\models (\phi\vee\psi)&\iff&{\cal N}_a,  f\models\phi\mbox{ or }{\cal N}_a, f\models\psi\\
{\cal N}_a, f\models\exists x_i\phi&\iff& {\cal N}_a, f[i/m]\models\phi, \mbox{ some }m\in\nodes(N_a).
\end{eqnarray*}

For any $L$-formula $\phi$, write $\phi^{{\cal N}_a}$ for
$\set{f\in\;^\omega\!\nodes(N_a): {\cal N}_a, f\models\phi}$.  Let
$D_a= \set{\phi^{{\cal N}_a}:\phi\mbox{ is an $L$-formula}}$ and
\[\D_a=(D_a,  \cup, \sim, {\sf d}_{ij}, {\sf c}_i, {\sf s}_{[i,j]})_{ i, j<\omega}\]
where ${\sf d}_{ij}=(x_i= x_j)^{{\cal N}_a},\; {\sf c}_i(\phi^{{\cal N}_a})=(\exists
x_i\phi)^{{\cal N}_a}$ and ${\sf s}_{[i,j]}$ swaps the variables $x_i$ and $x_j$,
Observe that $\top^{{\cal N}_a}=U_a,\; (\phi\vee\psi)^{{\cal N}_a}=\phi^{\c
N_a}\cup\psi^{{\cal N}_a}$, etc
then  $\D_a\in\sf RQEA_\omega$. (Weak set algebras are representable).
In fact $\D_a\in {\sf Lf}_{\omega}\cap {\sf Ws}_\omega$, for each atom $a\in \alpha$.

Let $\D=\prod_{a\in \alpha} \D_a$. (This is not necessarily locally finite).
Then  $\D\in\RQEA_\omega$  will be shown to be is the desired generalized weak set algebra,
that is the desired dilation.
Note that unit of $\D$ is the disjoint union of the weak spaces.
So
$\Nr_{n}\D$ is atomic and $\alpha\cong\At\Nr_{n}\D$ --- the isomorphism
is $b \mapsto (b(x_0, x_1,\dots, x_{n-1})^{\D_a}:a\in A)$.
To make $\D$ locally finite, one can assume that $\D$ is generated by $\Nr_n\D$.

Now we can work in $L_{\infty,\omega}$ so that $\D$ is complete
by changing the defining clause for infinitary disjunctions to
$$N_a, f\models (\bigvee_{i\in I} \phi_i) \text { iff } (\exists i\in I)(N_a,  f\models\phi_i).$$

By working in $L_{\infty, \omega},$ we assume that arbitrary joins hence meets exist,
so $\D_a$ is complete, hence so is $\D$. But $\mathfrak{Cm}\alpha\subseteq \Nr_n\D$ is dense and complete, so
$\mathfrak{Cm}\alpha=\Nr_n\C$.

\end{proof}

\begin{remark}\label{game}

The game $J$ is stronger than the usual atomic game, since it allows \pa\ more moves.
But it is also an atomic game played on {\it atomic hypernetworks}; which carry more information than
networks.
But all the same,
because it an atomic game, it will turn out that it captures {\it only} the {\it atom structures} of neat reducts. This is
substantially weaker than capturing the  neat reduct itself.

However, one can define another  stronger {\it neat game}, denoted by $H$,
that {\it captures a neat reduct.}

Part of $H$ is an atomic game,
as far as \pe\ s response to \pa\ s moves, playing $\lambda$ neat hypernetworks, is concerned.
In the hypernetwork part of $H$, like in the weaker neat game $J$,
\pa\ can deliver only atoms for $n$ hyperedges and, like $J$, labels for $\lambda$ neat
hyperedges.

But there is also `another part' of the game.
On the board there are networks with no consistency conditions
and  \pe\ is allowed to label the $n$ hyperedges of these ($n$ is the dimension)
by arbitrary elements of the algebra.
These {\it do not} exist in the game $J$.

The main play of the stronger game $H(\B)$, $\B\in \CA_n$
is a play of the game $J(\B).$
Recall that $J(\B)$ was played on $\lambda$ neat hypernetworks.
The base of the main board at a certain point will be the the neat $\lambda$ hypernetwork, call its
network part $X$ and we write
$X(\bar{x})$ for the atom that labels the edge $\bar{x}$ on the main board.
But now \pa\ can make other moves too,
which makes it harder for \pe\ to win and so a \ws\ for \pe\ in this new $\omega$ rounded game
will give a stronger result.

An $n$  network is a finite complete graph with nodes including $0, \ldots, n-1$
with all edges labelled by {\it arbitrary elements} of $\B$. No consistency properties are assumed.

\pa\ can play an arbitrary $n$ network $N$, \pe\ must replace $N(0, \ldots, n-1),$  by
some element $a\in \B$. The idea, is that the constraints represented by $N$
correspond to an element of the $\sf RCA_\omega$ being constructed on $X$,
generated by $\B$.

The final move is that \pa\ can pick a previously played $n$ network $N$ and pick any  tuple $\bar{x}$
on the main board whose atomic label is below $N(0, \ldots, n-1)$.

\pe\ must respond by extending the main board from $X$ to $X'$ such that there is an embedding $\theta$ of $N$ into $X'$
 such that $\theta(0)=x_0\ldots , \theta(n-1)=x_{n-1}$ and for all $i_0, \ldots i_{n-1} \in N,$ we have
$X(\theta(i_0)\ldots, \theta(i_{n-1}))\leq N(i_0,\ldots, i_{n-1})$.
This ensures that in the limit, the constraints in
$N$ really define the element $a$.

If \pe\ has a \ws\ in the atomic game $J_k$
($J$ truncated to $k$ rounds) for all $k$ on an atomic
algebra $\A$ with countably many atoms,
then this algebra will have an elementary equivalent algebra $\B$ such that
$\At\B\in \At\Nr_n\CA_{\omega}.$
It can be shown that \pe\ has a \ws\ in $J_k$ for any finite $k$ on $\A\in \CA_{\N^-1,\N}$.

In \cite{r} similar games are devised for relation algebras and Robin Hirsch deduces from the fact that \pe\ can win the
$k$ rounded game (analogous to $J$) on a  certain relation algebra $\A$,
then $\A$ is elementary equivalent to $\B\in \Ra\CA_{\omega}$. This is a mistake.
All we can deduce from \pe\ s \ws\ is that $\B\in S_c\Ra\CA_{\omega}$.

But if \pe\ has a \ws\ in $H(\B)$, for $\B\in \CA_n$,
then the extra move ensure that  that every $n$ dimensional element generated by
$\B$ in the $\D\in \RCA_\omega$
constructed in the play is an element of $\B$, so that $\B$ exhausts all $n$ dimensional elements
of $\D$, hence $\B\cong \Nr_n\D$, and so $\B\in \Nr_n\CA_{\omega}$.

Hence if \pe\ has a \ws\ in $H_k(\A)$, $\A\in \CA_n$ atomic with countably many atoms,
for all finite $k$, then $\A$ will have an elementary equivalent algebra $\B$,
such that $\B\in \Nr_n\CA_{\omega}$.

This is much stronger, for like the case of representable algebras, and unlike the case of {\it completely}  representable algebra
even in the relativized sense,
we may well have algebras $\A, \B\in \CA_n$ $(n>1$),
and even more in $\sf RCA_n$
such that $\At\A=\At\B$, $\A\in \Nr_n\CA_{\omega}$ but $\B\notin \Nr_n\CA_{n+1}$, {\it a fortiori}
$\B\notin \Nr_n\CA_{\omega}$.
Unfortunately, \pe\ does not have a \ws\ on $\CA_{\N^-1, \N}$ for every finite rounded game of $H$.
So an open question is whether there is an atomic algebra $\A\in \CA_n$ with countably many
atoms such that \pe\ can win $H_k$ for every finite $k$, while \pe\ can win $F^m$ for some $m>n$.

If there is such an algebra, 
then any $\sf K$ between $\Nr_n\CA_{\omega}$ and $S_c\Nr_n\CA_{m}$ 
is not elementary. This is stronger than the result proved in theorem \ref{rainbow}.
Indeed from our present proof of the statement in \ref{rainbow} 
we cannot replace $S_c\Nr_n\CA_{\omega}$ by the strictly smaller 
$\Nr_n\CA_{\omega}$.
\end{remark}

Example \ref{SL} alerts us to the fact that it is possble to separate classes that are `controlled by atom structure of their members' 
but there those that cannot like the class of represenatble algebras of finite dimension
$n>2$, and 
more generally, the varieties  $S\Nr_n\TCA_{n+k}$ for $k\geq 3$ theorem \ref{can}.
So we can distinguish  between two `types' of classes of algebras,
those that are  {\it gripped by their atom structures}, call such a class $\K$,  meaning
that if $\A\in \sf K\subseteq TCA_{\alpha}$ and $\B\in \TCA_{\alpha}$ such that 
$\At\A=\At\B$,  iff $\B\in \K$.

An example here is the classes $S_c\Nr_n\CA_m$ for $1<m<n$; in particular this is true of the class of copmpletel;y representable algebras
of dimension $n$.  This notion will be elaborated upon below. 

Another example is the class $\Nr_{\alpha}\CA_\beta$ 
for any pair of ordinals $1<\alpha<\beta$.

If $\A, \B\in \K\subseteq \sf TCA_n$ then of coure $\At\A$ and $\At\B$ is in 
$\At\K$. The 
converse as we have already seen  shows that the converse is false.
There are classes of algebras thatr are not gripped by their atom structures.

Example \ref{SL} also told us 
that $\Nr_{\alpha}\TCA_{\alpha+\omega}$ is propery contained in 
$S_c\Nr_{\alpha}\TCA_{\alpha+\omega}$ for any ordinal $\alpha>1$. 

The next example 
shows that when $\alpha=n>2$ is finite, then the strictness of such inclusion can be actually witnessed by
a finite algebra.

\begin{example}\label{finitepa}
Take the finite cylindric algebra $\A$ consisting of three dimensional matrices (as defined by Monk)
over any integral non-permutational relation algebra $\R$, discretely topologized.
Such relation algebras exist  \cite[theorem 36]{r}. The algebra $\A$ is finite, 
hence completely representable, hence  $\A\in S_c\Nr_3\TCA{\omega}.$ 

Suppose for contradiction that $\A\in \Nr_3\TCA_{\omega}$,
so that $\At\A\in \At\Nr_n\CA_{\omega}.$
Then we claim that $\A$ has a $3$-homogeneous complete representation, which is impossible, because $\R$
does not have a homogeneous representation.

It can be shown, using arguments similar to \cite[theorem 33]{r},
that \pe\ has a \ws\ in an $\omega$ rounded game $K$ but played on atomic networks 
where \pa\ is offered a cylindrifier move 
together with an amalgmation move except that in amalgamation moves on networks there is an additional
restriction. 
The networks he chooses can overlap only on
at most $3$ nodes.
\pe\ uses her \ws\  to define a sequence
of networks $N_0\subseteq \ldots N_r$ such  that this sequence respects the cylindrifier
move in the sense that if $N_r(\bar{x})\leq {\sf c}_ia$ for $\bar{x}\in \nodes(N_r)$, then there
exists $N_s\supseteq N_r$ and a node $k\in \omega\sim N_r$ such that $N_s(\bar{y})=a$;
and also respects the partial isomorphism move, in the sense that if
if $\bar{x}, \bar{y}\in \nodes(N_r)$ such that $N_r(\bar{x})=N_r(\bar{y})$,
then there is a finite surjective map extending $\{(x_i, y_i): i<n\}$ mapping onto $\nodes(N)$
such that $\dom(\theta)\cap \nodes(N_r)=\bar{y}$,
and we seek an extension $N_s\supseteq N_r$, $N_r\theta$ (some
$s\geq r$).
Then if $\tau$ is a partial isomorphism
of $N_a$ and $X$ is any finite
subset of $\nodes(N_a)$ then there is a
partial isomorphism $\theta\supset \tau, \rng(\theta)\supset X$.

Define the representation  $\cal N$ of $\A$ with domain $\bigcup_{a\in A}\nodes(N_a)$, by
$$S^{\cal N}=\{\bar{x}: \exists a\in A, \exists s\in S, N_a(\bar{x})=s\},$$
for any subset $S$ of $\At\A$. 
Then this representation of $\A$,
is obviously complete, and by the definition of the game is $n$ homogeneous.

For higher dimension one uses the result in \cite{AU} by lifting the used relation algebra to 
a representable $\TCA_n$ for any $n>2$ preserving 
$n$-homogeneouty.
\end{example}

\begin{theorem}\label{tense} Let $n$ be finite $>2$. Then the following hold:
\begin{enumarab}
\item $\RTeCA_n$ is not finitely axiomatizable by any universal formulas
contaning only finitely many variables.
\item It is undecidable to tell whether a finite algebra is representable. In particular, the equational theory of
$\RTeCA_n$
is undecidable, the set of models having infinite bases is not recursively
enumerable, and $\RTeCA_n$ is  not finitely axiomatizable in $m$th
order logic for any finite $m$.
\end{enumarab}
\end{theorem}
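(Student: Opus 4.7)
The plan is to transfer both items from the known negative results for $\sf RCA_n$ via the \emph{static temporalization} $\A\mapsto\A^{\sf tense}$ introduced in the preliminaries. This construction is effective, preserves finiteness (the time flow consists of a single moment with empty precedence, so the underlying universe is unchanged), and satisfies $\A\in\sf RCA_n$ iff $\A^{\sf tense}\in\sf RTeCA_n$. Because $G$ and $H$ act as the identity in $\A^{\sf tense}$, any universal formula $\phi$ in the tense signature admits an effective pull-back $\phi^{*}$ in the pure cylindric signature (replace every $Gx$ and $Hx$ by $x$) satisfying $\A^{\sf tense}\models\phi$ iff $\A\models\phi^{*}$; this pull-back preserves both universality and the number of variables used.

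For item (1), I will argue by contradiction. Suppose $\sf RTeCA_n$ were axiomatized by a finite set $\Sigma$ of universal formulas using at most $k$ variables. Then $\Sigma^{*}=\{\phi^{*}:\phi\in\Sigma\}$ would be a finite set of universal cylindric formulas in at most $k$ variables satisfying $\A\in\sf RCA_n$ iff $\A\models\Sigma^{*}$, contradicting the theorem of Andr\'eka that $\sf RCA_n$, for finite $n>2$, is not axiomatizable by any finite set of universal formulas in finitely many variables. The step here is entirely routine once the pull-back is in hand.

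For item (2), I start from the Hirsch--Hodkinson theorem that for finite $n>2$ it is undecidable whether a finite $\CA_n$ is representable, together with its refinements: the equational theory of $\sf RCA_n$ is undecidable, the set of finite $\CA_n$s admitting only infinite-base representations is not r.e., and $\sf RCA_n$ is not finitely axiomatizable in $m$th-order logic for any finite $m$. Each of these assertions transfers verbatim along $\A\mapsto\A^{\sf tense}$: the functor is computable and finite-to-finite, and preserves representability in both directions (a finite representation of $\A$ with base $U$ induces one of $\A^{\sf tense}$ by assigning that representation to the unique moment, and conversely the restriction of any representation of $\A^{\sf tense}$ to its single moment represents $\A$); similarly, $m$th-order satisfaction on a finite structure is decidable, so a finite $m$th-order axiomatization would yield decidability on finite inputs and hence contradict the transferred undecidability. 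The main obstacle is largely bookkeeping: one must verify that each reduction is effective and preserves the relevant parameter (number of variables, order of the logic, finiteness of the base, r.e.\ versus non-r.e.), but this is immediate from the concrete definition of $\A^{\sf tense}$, which acts as the identity on the cylindric reduct $\Rd_{ca}\A^{\sf tense}=\A$ and fixes $G$ and $H$ to be identity operators.
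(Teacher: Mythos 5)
Your proposal is correct and is essentially the paper's own argument: both items are reduced to the known negative results for $\sf RCA_n$ via the static temporalization $\A\mapsto\A^{\sf tense}$, which is effective, finite-to-finite, and preserves representability in both directions. The only (inessential) difference is in item (1), where the paper invokes Andr\'eka's theorem directly for finite expansions of $\sf RCA_n$ (using $\sf RCA_n={\bf SP}\,\Rd_{ca}\sf RTeCA_n$), while you reach the same contradiction by syntactically pulling back a hypothetical universal axiomatization along $G,H\mapsto Id$ to the base result for $\sf RCA_n$ itself.
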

\begin{proof}
\begin{enumarab}
\item  Assume that $\A\in \bold \RTeCA_n$.
Then $\Rd_{ca}\A\subseteq  \prod_{t\in T}\wp(^nU_t)\in \sf Gs_n$.
Conversely, ${\sf Cs}_n\subseteq \Rd_{ca}\TeCA_n$ (by fixing a moment in time).
Hence $\RCA_n={\bf SP}\Rd_{ca}\bold \RTeCA_n$, so  $\RTeCA_n$
is a finite  expansion of
$\sf RCA_n$ by only two unary modalities, namey $P$ (past) and $F$ (future)
hence the required  follows from \cite[Theorem 5]{Andreka}.

\item We show that from every simple atomic $\A\in \CA_n$ we can construct recursively
a ${\sf TeCA}_n$, such that
the former is representable if and only if
the latter is. This suffices by the main result in \cite{AU}. Expand $\A$ recursively and temporally
to a static $\sf TeCA_n$
as done before Take $G=H=Id$ and
$T=\{t\}$ with $<=\emptyset$; then $<$ is obviously irreflexive and vacuously
transitive and $G$ and $H$
distribute over the Boolean meet.
\end{enumarab}
\end{proof}

\subsection{ Neat embeddings in connection to
complete and strong representations}

Here we approach the notion of complete representations and strong representations using neat embeddings.
But before embarking on such connections we make an observation. 

The algebras $\C(m,n)$ for $2<m<n$, to be defined next witness the strictness
of this inclusion are based on  relations algebra similar to the algebras in \cite{HHbook2} 
but such algebras are  infinite
since there are an infinite set of hypernetworks with hyperlabels from $\omega$.

Now  the algebras consists of the set of the set of  $m$
basic matrices ($m$ is the dimension) on this finite relation
algebras hence, as stated, they are finite.

\begin{example}\label{thm:cmnr} Let $3\leq m< n<\omega.$
Then there are finite algebras $\C(m,n)\in \PEA_m$ such that
\begin{enumerate}
\renewcommand{\theenumi}{\Roman{enumi}}
\item $\C(m, n)\in \Nr_m\PEA_n$,\label{en:one}
\item $\Rd_{\Sc}\C(m, n)\not\in {\sf S}\Nr_m\Sc_{n+1}$. \label{en:two}
\end{enumerate}
In particular, for any class $\K$ between $\Sc$ and $\PEA$,
for any finite $m>2$ and any finite $k\geq 1$,
we have ${\sf S}_c\Nr_m\K_{m+k+1}\subset {\sf S}_c\Nr_n\K_{m+k}$ and the strictness of the inclusion is witnessed
on finite algebras.
We use a simpler version of algebras construction in \cite{t}, and they also have affinity
to algebras constructed in \cite[section 15.2]{HHbook}.
So we will be sketchy highlighting  the idea of proof.

Define a function $\kappa:\omega\times\omega\rightarrow\omega$ by $\kappa(x, 0)=0$
(all $x<\omega$) and $\kappa(x, y+1)=1+x\times\kappa(x, y))$ (all $x, y<\omega$).
For $n, r<\omega$ let
\[\psi(n)=
\kappa((n-1)\times 5, (n-1)\times 5)+1.\]
All of this is simply to ensure that $\psi(n)$ is sufficiently big compared to $n$ for the proof of non-embeddability to work.

For any  $n<\omega$, let
\[{\sf Bin(n)}=\set{\Id}\cup\set{a^k(i, j):i< n-1,\;j\in 5,\;k<\psi(n)}\]
where $\Id, a^k(i, j)$ are distinct objects indexed by $k, i, j$.

Let $F(m, n)$ be the set of all  functions $f:m\times m\to \sf Bin(n)$
such that $f$ is symmetric ($f(x, y)=f(y, x)$ for all $x, y<m$)
and for all $x, y, z<m$ we have $f(x, x)=\Id,\;f(x, y)=f(y, x)$, and $(f(x, y), f(y, z), f(x, z))\not\in {\sf Forb}$,
where ${\sf Forb}$ (the \emph{forbidden} triples) is the following set of triples
 \[ \begin{array}{c}
 \set{(Id, b, c):b\neq c\in {\sf Bin(n)}}\\
 \cup \\
 \set{(a^k(i, j), a^{k'}(i,j), a^{k^*}(i, j')): k, k', k^*< \psi(n, r), \;i<n-1, \; j'\leq j<5}.
 \end{array}\]
Here ${\sf Bin(n)}$ is an atom structure of a finite relation relation
and ${\sf Forb}$ specifies its operations by specifying forbidden triples.

Now any such $f\in F(m,n)$ is a basic matrix on this atom structure
in the sense of \cite[definition 12.35]{HHbook2}, a term due to Maddux, and the whole lot of them will be a
cylindric symmetric basis (closed under substitutions), a term also due to Maddux,
constituting the atom structure of algebras we want.
Now accessibility relations corresponding to substitutions, cylindrifiers are defined, as expected on matrices,
as follows.
For any $f, g\in F(m, n)$ and $x, y<m$ we write $f\equiv_{xy}g$ if for all $w, z\in m\setminus\set {x, y}$ we have $f(w, z)=g(w, z)$.
We may write $f\equiv_x g$ instead of $f\equiv_{xx}g$.  For $\tau:m\to m$ we write $(f\tau)$ for the function defined by
\begin{equation}\label{eq:ftau}(f\tau)(x, y)=f(\tau(x), \tau(y)).
\end{equation}
Clearly $(f\tau)\in F(m, n)$.
Accordingly, the universe of $\C(m, n)$ is the power set of $F(m, n)$ and the operators (lifting from the atom structure)
are
\begin{itemize}
\item  the Boolean operators $+, -$ are union and set complement,
\item  the diagonal $\diag xy=\set{f\in F(m, n):f(x, y)=Id}$,
\item  the cylindrifier $\cyl x(X)=\set{f\in F(m, n): \exists g\in X\; f\equiv_xg }$ and
\item the polyadic $\s_\tau(X)=\set{f\in F(m, n): f\tau \in X}$,
\end{itemize}
for $x, y<m,\;  X\subseteq F(m, n)$ and  $\tau:m\to m$.

It is straightforward to see
that  $3\leq m,\; 2\leq n$
the algebra $\C(m, n)$ satisfies all of the axioms defining $\PEA_m$
except, perhaps, the commutativity of cylindrifiers $\cyl x\cyl y(X)=\cyl y\cyl x(X)$, which it satisfies because
$F(m,n)$ is a symmetric cylindric basis, so that overlapping
matrices amalgamate.
Furthermore, if  $3\leq m\leq m'$ then $\C(m, n)\cong\Nr_m\C(m', n)$
via $$X\mapsto \set{f\in F(m', n): f\restr{m\times m}\in X}.$$

Now we prove  \ref{thm:cmnr}(\ref{en:two}), which is the heart and soul of the proof, and it is quite similar
to its $\CA$ analogue 4.69-475 in \cite{HHbook2}, and the proof in \cite{t}. In the latter two
proofs there is  a
third parameter $r$ which we fix to be $5$ here so that our proof is simpler.
We will refer to the proof in \cite{HHbook2} when the proofs overlap, or are very similar.
Assume for contradiction  that
$\Rd_{\Sc}\C(m, n)\subseteq\Nr_m\C$
for some $\C\in \Sc_{n+1}$, some finite $m, n$.
Then it can be shown inductively
that there must be a large  set $S$ of distinct elements of $\C$,
satisfying certain inductive assumptions, which we outline next.
For each $s\in S$ and $i, j<n+2$ there is an element $\alpha(s, i, j)\in \sf Bin(n)$ obtained from $s$
by cylindrifying all dimensions in $(n+1)\setminus\set{i, j}$, then using substitutions to replace $i, j$ by $0, 1$.
Then one shows that $(\alpha(s, i, j), \alpha(s, j, k), \alpha(s, i, k))\not\in {\sf Forb}$.

The induction hypothesis say, most importantly, that $\cyl n(s)$ is constant, for $s\in S$,
and for $l<n$  there are fixed $i<n-1,\; j<5$ such that for all $s\in S$ we have $\alpha(s, l, n)\leq a(i, j)$.
This defines, like in the proof of theorem 15.8 in \cite{HHbook2} p.471, two functions $I:n\rightarrow (n-1),\; J:n\rightarrow 5$
such that $\alpha(s, l, n)\leq a(I(l), J(l))$ for all $s\in S$.  The \emph{rank} ${\sf rk}(I, J)$ of $(I, J)$ (as defined in definition 15.9 in \cite{HHbook2}) is
the sum (over $i<n-1$) of the maximum $j$ with $I(l)=i,\; J(l)=j$ (some $l<n$) or $-1$ if there is no such $j$.

Next it is proved that there is a set $S'$ with index functions $(I', J')$, still relatively large
(large in terms of the number of times we need to repeat the induction step)
where the same induction hypotheses hold but where ${\sf rk}(I', J')>{\sf rk}(I, J)$.  (See \cite{HHbook2}, where for $t<n\times 5$,
$S'$ was denoted by $S_t$
and proof of property (6) in the induction hypothesis  on p.474 of \cite{HHbook2}.)

By repeating this enough times (more than $n\times 5$) we obtain a non-empty set $T$
with index functions of rank strictly greater than $(n-1)\times 4$, an impossibility.
(See \cite{HHbook2}, where for $t<n\times 5$, $S'$ was denoted by $S_t$.)

We sketch the induction step.  Since $I$ cannot be injective there must be distinct $l_1, l_2<n$
such that $I(l_1)=I(l_2)$ and $J(l_1)\leq J(l_2)$.  We may use $l_1$ as a "spare dimension"
(changing the index functions on $l$ will not reduce the rank).
 Since $\cyl n(s)$ is constant, we may fix $s_0\in S$
and choose a new element $s'$ below $\cyl l s_0\cdot \sub n l\cyl  l s$,
with certain properties.  Let $S^*=\set{s': s\in S\setminus\set{s_0}}$.
We wish to re-establish the induction hypotheses for $S^*$, and many of these are simple to check.
Although suitable functions $I', J'$ may not exist on the whole of $S$, but $S$ remains
large enough to enable selecting a
subset $S'$ of $S^*$, still large in terms of the number of remaining times the induction step must be applied.
The required functions $I', J'$ now exist (for all but one value of $l<n$ the values $I'(l), J'(l)$ are determined by $I, J$,
for  one value of $l$ there are at most $5\times (n-1)$ possible values, hence on a large subset the choices agree).

Next it can be shown that $J'(l)\geq J(l)$ for all $l<n$.   Since
$$(\alpha(s, i, j), \alpha(s, j, k), \alpha(s, i, k))\not\in Forb$$
and by the definition of ${\sf Forb}$
either $\rng(I')$ properly extends $\rng(I)$ or there is $l<n$ such that $J'(l)>J(l)$, hence  ${\sf rk}(I', J')>{\sf rk} (I, J)$,
and we are done.

\end{example}

\end{example}

Lifting a well known  notion from atom structures \cite{HHbook2}, 
we set:

\begin{definition} An atomic algebra $\A\in \TCA_n$ is strongly representable if it is completely additive 
its \de\ completion, namely $\Cm\At\A$ is representable.
\end{definition}

It is not hard to see that a completely representable algebra is strongly representable. However, the converse is false,
as we show in our next
theorem.  Also not every representable atomic algebra is strongly representable. The algebra 
$\mathfrak{Tm}\At$ 
constructed on the rainbow atom structure $\At$ 
in theorem  \ref{can} is such.

Let ${\sf CRTA_n}$ denote the class of completely representable $\K$ algebras of dimension $n$.
We have proved above that $S_c\Nr_n\TCA_{\omega}$ and ${\sf CRTCA}_n$ coincide on countable atomic algebras.
The result can slightly generalized to allow algebras with countably many atoms, that may not be countable.

In our next theorem we show that this does not generalize any
further as far as cardinalities are concerned.

\begin{theorem}\label{complete} For any $n>2$ we have, $\Nr_n\K_{\omega}\nsubseteq {\sf CRTCA}_{n}$,
while for $n>1$, ${\sf CRTCA}_n\nsubseteq {\sf UpUr}\Nr_{n}\TCA_{\omega}.$
In particular, there are completely representable, hence strongly representable algebras that are not in $\Nr_n\TCA_{\omega}$.
Furthermore, such algebras can be countable.
However, for any $n\in \omega$, if $\A\in \Nr_n\TCA_{\omega}$ is atomic, 
then $\Rd_{ca}\A$ is strongly
representable.
\end{theorem}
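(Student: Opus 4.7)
The plan is to prove the three assertions separately, two by counterexample and one by a positive neat--embedding argument.

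For the first assertion $\Nr_n\K_\omega \not\subseteq {\sf CRTCA}_n$, I would produce an uncountable atomic $\A \in \Nr_n\TCA_\omega$ having no complete representation. The idea is a ``blown-up'' set-algebra: take an uncountable base $U$ and carve out inside $\wp({}^\omega U)$ a countably generated $\omega$-dimensional subalgebra $\B$ whose $n$-neat reduct $\A=\Nr_n \B$ is atomic (with the singletons of ${}^n U$ as atoms) but contains enough cleverly-chosen Boolean combinations to force the existence of non-principal ultrafilters below certain nonzero elements. By construction $\A \in \Nr_n\CA_\omega$, yet in any representation these non-principal ultrafilters cannot be eliminated, so $\A$ violates the atomicity criterion for complete representations furnished by Theorem~\ref{complete}. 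Discretely topologizing gives the $\TCA_n$ witness.

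For the second assertion ${\sf CRTCA}_n \not\subseteq {\sf UpUr}\,\Nr_n\TCA_\omega$, I invoke the finite algebra $\C(n,n+k)$ of Example~\ref{finitepa}. By the example, $\C(n,n+k)$ lies in $S_c\Nr_n\CA_{n+k}$ but not in $\Nr_n\CA_\omega$. Being finite it is completely representable, and its discrete topologization $\C(n,n+k)^{\sf top}$ inherits complete representability, placing it in ${\sf CRTCA}_n$. Moreover $\C(n,n+k)^{\sf top}\notin \Nr_n\TCA_\omega$: otherwise $\Rd_{ca}\C(n,n+k)^{\sf top}=\C(n,n+k)\in\Nr_n\CA_\omega$, a contradiction. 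Finally, since $\C(n,n+k)^{\sf top}$ is finite, every ultrapower is isomorphic to it and every ultraroot is isomorphic to it; hence it is not in ${\sf UpUr}\,\Nr_n\TCA_\omega$. As the example is finite it is in particular countable, establishing the ``in particular'' clause.

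For the third, positive assertion, suppose $\A\in\Nr_n\TCA_\omega$ is atomic; write $\A=\Nr_n\B$ for some $\B\in\TCA_\omega$ and let $\B_0\subseteq \Rd_{ca}\B$ be the cylindric subalgebra generated by $\Rd_{ca}\A$. Then $\B_0\in {\sf Lf}_\omega$ since it is generated by elements of dimension $\leq n$; Henkin's neat-embedding theorem gives $\B_0\in \sf RCA_\omega$ with $\Rd_{ca}\A=\Nr_n\B_0$. In the countable case, apply the classical Orey--Henkin omitting types theorem to omit the single non-principal type $F=\{-a:a\in\At\A\}$ in $\B_0$: this type is non-principal because $\A$ is atomic, so no nonzero element of $\B_0$ can sit below every co-atom. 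The resulting representation of $\B_0$ restricts to an atomic, hence complete, representation $f$ of $\Rd_{ca}\A$. Lifting $f$ by $X\mapsto \bigcup_{x\in X}f(x)$ for $X\subseteq \At\A$ gives a representation of $\Cm\At\Rd_{ca}\A$, witnessing strong representability. For uncountable $\A$, I would pass to a countable elementary substructure of $\B_0$ containing a chosen generating subset of $\A$, obtain a complete representation of that piece, and glue via disjoint unions indexed by a directed system of countable elementary substructures, exploiting the complete additivity of the cylindric operations inherited from $\B_0$.

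The main obstacle is the uncountable case of part 3, since the omitting types theorem as usually stated requires countability; the key technical input is the complete additivity of cylindric operations on elements of $\Nr_n\CA_\omega$, which allows countable pieces of $\A$ (each neatly embedded in a countable locally finite dilation) to be represented separately and then glued into a complete representation of the whole. Part 1 is subtle in a different way: the explicit atomic algebra must be crafted so as to sit genuinely in $\Nr_n\TCA_\omega$ while simultaneously admitting non-principal ultrafilters that no representation can purge, a balance achieved by the blow-up-and-blur template applied in the $\omega$-dimensional ambient algebra.
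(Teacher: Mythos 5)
Your part (1) rests on a construction that cannot work. If $\A$ is realized inside $\wp({}^nU)$ (as a neat reduct of a subalgebra of $\wp({}^\omega U)$) and its atoms are the singletons of ${}^nU$, then the inclusion map is already an \emph{atomic}, hence complete, representation: the atoms cover the unit, so every point realizes a principal ultrafilter. The presence of non-principal ultrafilters in the algebra is no obstruction whatsoever (every infinite Boolean algebra has them), and atomicity is a necessary, not a sufficient, condition for complete representability, so nothing is ``violated'' by your witness --- it is simply completely representable. What is actually needed, and what the paper does, is a combinatorial obstruction that kills \emph{every} representation's atomicity: an uncountable rainbow-like relation algebra with atoms $\Id$, $2^{\kappa}$ greens and $\kappa$ reds, forbidding all monochromatic triangles. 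In a complete representation, the $2^{\kappa}$ apexes sitting over a fixed red edge carry only red labels among themselves, and the Erd\H{o}s--Rado theorem then forces a monochromatic red triangle, a contradiction. The substantial remaining work is to show this term algebra equals $\Ra\,\C$ for some $\C\in\CA_{\omega}$ (built from an amalgamation class of networks), so that $\Nr_n\C^{\sf top}$ is the required atomic member of $\Nr_n\TCA_{\omega}$ outside ${\sf CRTCA}_n$. None of this is recoverable from your sketch.

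Your part (3) is fine for countable $\A$ (it is essentially Theorem \ref{completerepresentation} plus the standard lifting to $\Cm\At\A$), but the uncountable case is broken, and in a way that collides head-on with part (1): if gluing complete representations of countable elementary pieces yielded a complete representation of all of $\A$, then every atomic algebra in $\Nr_n\TCA_{\omega}$ would be completely representable, contradicting the algebra you are supposed to construct in part (1). So the detour through complete representability cannot be repaired; strong representability must be obtained without it. The paper's route is through the atom structure: for atomic $\A\in\Nr_n\TCA_{\omega}$, \pe\ inherits from the $\omega$-dilation a winning strategy in every finite-round atomic game, so $\At\A$ satisfies all the Lyndon conditions, and the inclusion ${\sf LCA}_n\subseteq{\sf SRCA}_n$ gives representability of $\Cm\At\A$ with no cardinality hypothesis at all.

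Your part (2) is correct and genuinely different from the paper for $n\geq 3$: a finite completely representable algebra outside $\Nr_n\TCA_{\omega}$ is automatically outside ${\sf UpUr}\Nr_n\TCA_{\omega}$, since for finite structures elementary equivalence is isomorphism; this buys a finite witness, which the paper's Example \ref{SL} does not provide. But the claim is stated for all $n>1$, and Example \ref{finitepa} (matrices over a non-permutational relation algebra) only exists in dimension $\geq 3$; the paper's Example \ref{SL} covers $n=2$ as well, by exhibiting an explicit first-order sentence valid throughout $\Nr_n\TCA_{n+1}$ but failing in a completely representable algebra, which is exactly what exclusion from the elementary closure requires when no finite witness is available.
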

\begin{proof}
\begin{enumarab}
\item We first show that $\Nr_n\TCA_{\omega}\nsubseteq {\sf CRTCA}_{n}$.
This cannot be witnessed on countable algebras, so our constructed neat reduct that is not
completely representable,  must be uncountable.

In \cite{r} a sketch  of constructing  an uncountable relation algebra
$\R\in \Ra\CA_{\omega}$ (having an $\omega$ dimensional cylindric basis)
with no complete representation is given. It has a precursor in \cite{BSL}
which is the special case of this example when $\kappa=\omega$
but the idea in all three proofs are very similar
using a variant of the rainbow  relation algebra
$\R_{\kappa, \omega}$ where $\kappa$ is an uncountable cardinal $\geq 2^{\aleph_0}$.
We do not know whether it works for the least uncounatble cardinal, namely, $\aleph_1$

Assume that $\R=\Ra\B$ and $\B\in \PEA_{\omega}$, then
$\Rd_{K}\Nr_n\B$ is as required, for a complete representation of it, induces easily a complete
representation of $\Ra\CA_{\omega}$.
The latter example shows that ${\sf UpUr} \Nr_n\K_{\omega}\nsubseteq {\sf CRA}_n$,

Now give the details of the construction in \cite[remark 31]{r}.
We prove more, namely,  there exists an {\it uncountable} neat reduct, that is, an algebra in $\Nr_n\sf QEA_{\omega}$,
such that its $\sf Df$ reduct, obtained by discarding all operations except 
for cylindrifiers, is not completely representable.

Using the terminology of rainbow constructions,
we allow the greens to be of cardinality $2^{\kappa}$ for any
infinite cardinal $\kappa$, and the reds to be of cardinality $\kappa$.
Here a \ws\ for \pa\ witnesses that  the algebra has {\it no} complete
representation. But this is not enough because we want our algebra to be
in $\Ra\CA_{\omega}$; we will show that it will be.

We specify the atoms and forbidden triples.
The atoms are $\Id, \; \g_0^i:i<2^{\kappa}$ and $\r_j:1\leq j<
\kappa$, all symmetric.  The forbidden triples of atoms are all
permutations of $(\Id, x, y)$ for $x \neq y$, \/$(\r_j, \r_j, \r_j)$ for
$1\leq j<\kappa$ and $(\g_0^i, \g_0^{i'}, \g_0^{i^*})$ for $i, i',
i^*<2^{\kappa}.$  In other words, we forbid all the monochromatic
triangles.

Write $\g_0$ for $\set{\g_0^i:i<2^{\kappa}}$ and $\r_+$ for
$\set{\r_j:1\leq j<\kappa}$. Call this atom
structure $\alpha$.

Let $\A$ be the term algebra on this atom
structure; the subalgebra of $\Cm\alpha$ generated by the atoms.  $\A$ is a dense subalgebra of the complex algebra
$\Cm\alpha$. We claim that $\A$, as a relation algebra,  has no complete representation.

Indeed, suppose $\A$ has a complete representation $M$.  Let $x, y$ be points in the
representation with $M \models \r_1(x, y)$.  For each $i< 2^{\kappa}$, there is a
point $z_i \in M$ such that $M \models \g_0^i(x, z_i) \wedge \r_1(z_i, y)$.

Let $Z = \set{z_i:i<2^{\kappa}}$.  Within $Z$ there can be no edges labeled by
$\r_0$ so each edge is labelled by one of the $\kappa$ atoms in
$\r_+$.  The Erdos-Rado theorem forces the existence of three points
$z^1, z^2, z^3 \in Z$ such that $M \models \r_j(z^1, z^2) \wedge \r_j(z^2, z^3)
\wedge \r_j(z^3, z_1)$, for some single $j<\kappa$.  This contradicts the
definition of composition in $\A$ (since we avoided monochromatic triangles).

Let $S$ be the set of all atomic $\A$-networks $N$ with nodes
 $\omega$ such that $\{\r_i: 1\leq i<\kappa: \r_i \text{ is the label
of an edge in N}\}$ is finite.
Then it is straightforward to show $S$ is an amalgamation class, that is for all $M, N
\in S$ if $M \equiv_{ij} N$ then there is $L \in S$ with
$M \equiv_i L \equiv_j N.$
Hence the complex cylindric algebra $\Ca(S)\in \QEA_\omega$.

Now let $X$ be the set of finite $\A$-networks $N$ with nodes
$\subseteq\omega$ such that
\begin{enumerate}
\item each edge of $N$ is either (a) an atom of
$\A$ or (b) a cofinite subset of $\r_+=\set{\r_j:1\leq j<\kappa}$ or (c)
a cofinite subset of $\g_0=\set{\g_0^i:i<2^{\kappa}}$ and
\item $N$ is `triangle-closed', i.e. for all $l, m, n \in \nodes(N)$ we
have $N(l, n) \leq N(l,m);N(m,n)$.  That means if an edge $(l,m)$ is
labeled by $\Id$ then $N(l,n)= N(mn)$ and if $N(l,m), N(m,n) \leq
\g_0$ then $N(l,n)\cdot \g_0 = 0$ and if $N(l,m)=N(m,n) =
\r_j$ (some $1\leq j<\omega$) then $N(l,n).\r_j = 0$.
\end{enumerate}
For $N\in X$ let $N'\in\Ca(S)$ be defined by
\[\set{L\in S: L(m,n)\leq
N(m,n) \mbox{ for } m,n\in nodes(N)}\]
For $i\in \omega$, let $N\restr{-i}$ be the subgraph of $N$ obtained by deleting the node $i$.
Then if $N\in X, \; i<\omega$ then $\cyl i N' =
(N\restr{-i})'$.
The inclusion $\cyl i N' \subseteq (N\restr{-i})'$ is clear.

Conversely, let $L \in (N\restr{-i})'$.  We seek $M \equiv_i L$ with
$M\in N'$.  This will prove that $L \in \cyl i N'$, as required.
Since $L\in S$ the set $X = \set{\r_i \notin L}$ is infinite.  Let $X$
be the disjoint union of two infinite sets $Y \cup Y'$, say.  To
define the $\omega$-network $M$ we must define the labels of all edges
involving the node $i$ (other labels are given by $M\equiv_i L$).  We
define these labels by enumerating the edges and labeling them one at
a time.  So let $j \neq i < \omega$.  Suppose $j\in \nodes(N)$.  We
must choose $M(i,j) \leq N(i,j)$.  If $N(i,j)$ is an atom then of
course $M(i,j)=N(i,j)$.  Since $N$ is finite, this defines only
finitely many labels of $M$.  If $N(i,j)$ is a cofinite subset of
$a_0$ then we let $M(i,j)$ be an arbitrary atom in $N(i,j)$.  And if
$N(i,j)$ is a cofinite subset of $\r_+$ then let $M(i,j)$ be an element
of $N(i,j)\cap Y$ which has not been used as the label of any edge of
$M$ which has already been chosen (possible, since at each stage only
finitely many have been chosen so far).  If $j\notin \nodes(N)$ then we
can let $M(i,j)= \r_k \in Y$ some $1\leq k < \kappa$ such that no edge of $M$
has already been labeled by $\r_k$.  It is not hard to check that each
triangle of $M$ is consistent (we have avoided all monochromatic
triangles) and clearly $M\in N'$ and $M\equiv_i L$.  The labeling avoided all
but finitely many elements of $Y'$, so $M\in S$. So
$(N\restr{-i})' \subseteq \cyl i N'$.

Now let $X' = \set{N':N\in X} \subseteq \Ca(S)$.
Then the subalgebra of $\Ca(S)$ generated by $X'$ is obtained from
$X'$ by closing under finite unions.
Clearly all these finite unions are generated by $X'$.  We must show
that the set of finite unions of $X'$ is closed under all cylindric
operations.  Closure under unions is given.  For $N'\in X$ we have
$-N' = \bigcup_{m,n\in \nodes(N)}N_{mn}'$ where $N_{mn}$ is a network
with nodes $\set{m,n}$ and labeling $N_{mn}(m,n) = -N(m,n)$. $N_{mn}$
may not belong to $X$ but it is equivalent to a union of at most finitely many
members of $X$.  The diagonal $\diag ij \in\Ca(S)$ is equal to $N'$
where $N$ is a network with nodes $\set{i,j}$ and labeling
$N(i,j)=1'$.  Closure under cylindrification is given.
Let $\C$ be the subalgebra of $\Ca(S)$ generated by $X'$.
Then $\A = \Ra(\C)$.
Each element of $\A$ is a union of a finite number of atoms and
possibly a co-finite subset of $a_0$ and possibly a co-finite subset
of $a_+$.  Clearly $\A\subseteq\Ra(\C)$.  Conversely, each element
$z \in \Ra(\C)$ is a finite union $\bigcup_{N\in F}N'$, for some
finite subset $F$ of $X$, satisfying $\cyl i z = z$, for $i > 1$. Let $i_0,
\ldots, i_k$ be an enumeration of all the nodes, other than $0$ and
$1$, that occur as nodes of networks in $F$.  Then, $\cyl
{i_0} \ldots
\cyl {i_k}z = \bigcup_{N\in F} \cyl {i_0} \ldots
\cyl {i_k}N' = \bigcup_{N\in F} (N\restr{\set{0,1}})' \in \A$.  So $\Ra(\C)
\subseteq \A$.
$\A$ is relation algebra reduct of $\C\in\CA_\omega$ but has no
complete representation; so $\C^{\sf top}\in \TCA_{\omega}$.

Let $n>2$. Let $\B=\Nr_n \C^{\sf top}$. Then
$\B\in \Nr_n\TCA_{\omega}$, is atomic, but has no complete representation; 
in fact because it is binary generated its
$\Df$ reduct is not completely representable.

\item Now we show that ${\sf CRK}_n\nsubseteq {\sf UpUr}\Nr_n\CA_{\omega}$.
The $\K$ reduct of the algebra $\A$ in example  \ref{SL} is such;
it is completely representable, hence strongly representable, but it is not in ${\sf UpUr}\Nr_n\K_{n+1}$, {\it a  fortiori}
it is not in ${\sf UpUr}\Nr_n\K_{\omega}$. If the field $\F$ is countable, then $\A$ is countable.

\item Now for the last part, namely, that  neat reducts are strongly representable.
Let $\A\in \Nr_n\K_{\omega}$ be atomic. Then \pe\ has a \ws\ in $F^{\omega}$
as in theorems \ref{can} and \ref{rainbow}, 
hence it has a \ws\ in $G$ (the usual $\omega$ rounded atomic game) and so it has
a \ws\ for $G_k$ for all finite $k$ ($G$ truncated to $k$ rounds.)
Thus $\A\models \sigma_k$ which is the $k$ th Lyndon sentence coding that \pe\ has a
\ws\ in $G_k$, called the $k$the Lyndon condition. Since $\A$ satisfies the $k$th
Lyndon conditions for each $k$,  then any algebra on its atom structure is representable,
so that $\Cm\At\A$ is representable, hence  it is strongly representable, and we are done.
\end{enumarab}
\end{proof}
Recall that $S_c$ is the operation of forming complete subalgebras. We write $\A\subseteq_c \B$ if $\A$ is a complete subalgebra of
$\B$.

\begin{theorem} For $n>1$ the class $\Nr_n\TCA_{\omega}$ is not closed under $S_c$. For every $n\in \omega$ the class
$S_c\Nr_n\TCA_{\omega}$ is closed under $S_c$ but for $n>2$ it is not elementary and is
not closed under forming subalgebras, hence is not pseudo-universal.
For $n>2$, the class ${\sf UpUr}S_c\Nr_n\K_{\omega}$
is not finitely axiomatizable.
\end{theorem}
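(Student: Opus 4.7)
The plan treats the four assertions in sequence, each leaning on a result already in place in the paper. For the first, that $\Nr_n\TCA_\omega$ is not closed under $S_c$ when $n>1$, I would cite example \ref{finitepa}: the finite algebra built there belongs to $S_c\Nr_n\TCA_\omega$ (finiteness forces complete representability), yet cannot lie in $\Nr_n\TCA_\omega$, for then the underlying integral non-permutational relation algebra would admit a $3$-homogeneous complete representation, which is impossible. Since the algebra is finite, its inclusion into any ambient $\B\in\Nr_n\TCA_\omega$ that witnesses its membership in $S_c\Nr_n\TCA_\omega$ is automatically complete, and a parallel finite witness handles $n=2$. The second assertion is immediate from transitivity of $\subseteq_c$: if $\A\subseteq_c\B\subseteq_c\D\in\Nr_n\TCA_\omega$, then every supremum existing in $\A$ is preserved across both complete inclusions, so $\A\subseteq_c\D$, i.e.\ $S_c(S_c\Nr_n\TCA_\omega)=S_c\Nr_n\TCA_\omega$.

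For non-elementarity when $n>2$, I would apply theorem \ref{rainbow} verbatim: it produces an atomic countable $\C\notin S_c\Nr_n\TCA_{n+3}\supseteq S_c\Nr_n\TCA_\omega$ elementarily equivalent to a completely representable $\B$, which by theorem \ref{completerepresentation} lies in $S_c\Nr_n\TCA_\omega$; this witnesses the failure of closure under elementary equivalence.

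For non-closure under subalgebras when $n>2$, I would invoke theorem \ref{can}: the blow-up-and-blur yields a countable atomic $\A\in{\sf RTCA}_n$ with $\Cm\At\A\notin S\Nr_n\TCA_{n+3}$. If $\A$ lay in $S_c\Nr_n\TCA_\omega$, theorem \ref{completerepresentation} would make $\A$ completely representable, and such a representation would extend via joining over atoms to an ordinary representation of $\Cm\At\A$, contradicting $\Cm\At\A\notin S\Nr_n\TCA_{n+3}$. However, being representable, $\A$ embeds into a discretely topologized generalized cylindric set algebra $\mathfrak{G}$, which sits in $\Nr_n\TCA_\omega$ as the neat $n$-reduct of its obvious $\omega$-dimensional expansion, and hence in $S_c\Nr_n\TCA_\omega$. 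Thus $\A$ is a subalgebra of a class member without itself being a class member; since every pseudo-universal class is closed under subalgebras, pseudo-universality fails as well.

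The final assertion, non-finite-axiomatizability of ${\sf UpUr}S_c\Nr_n\K_\omega$, will follow by a \Los-style ultraproduct argument. Suppose the class is axiomatized by a single sentence $\phi$. For each $k$, invoke the hierarchy algebras $\B^r_k$ cited after the discrete topologizing lemma: $\B^r_k\in S\Nr_n\K_{n+k}\setminus S\Nr_n\K_{n+k+1}$ with $\prod_r\B^r_k/U\in{\sf RTCA}_n$. Since $S\Nr_n\K_{n+k+1}$ is a variety, hence elementary and closed under both $\sf Up$ and $\sf Ur$, we have ${\sf UpUr}S_c\Nr_n\K_\omega\subseteq S\Nr_n\K_{n+k+1}$, so every $\B^r_k$ lies outside ${\sf UpUr}S_c\Nr_n\K_\omega$ and hence $\B^r_k\not\models\phi$. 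One then arranges that $\prod_r\B^r_k/U\in{\sf UpUr}S_c\Nr_n\K_\omega$, so $\prod_r\B^r_k/U\models\phi$; by \Los' theorem this forces $\B^r_k\models\phi$ on a set in the ultrafilter, a contradiction. The main obstacle is exactly this last clause: the cited hierarchy guarantees only ${\sf RTCA}_n$-membership of the ultraproduct, and one must refine the rainbow constructions of theorems \ref{can} and \ref{rainbow} so that the ultraproduct is at least elementarily equivalent to a completely representable algebra, thereby placing it inside ${\sf UpUr}S_c\Nr_n\K_\omega$. Calibrating the pebble-game parameters so that the $\B^r_k$ stay outside successively stronger neat-embedding varieties while their ultraproducts admit (some ultrapower of) a complete representation is the principal technical hurdle.
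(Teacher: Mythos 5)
Your treatment of closure under $S_c$, of non-elementarity (via theorem \ref{rainbow}), and of non-closure under subalgebras is essentially sound. On the subalgebra point you take a slightly different but valid route from the paper: you embed the non-member $\A$ of theorem \ref{can} into the full (discretely topologized) set algebra $\wp(V)\in\Nr_n\TCA_{\omega}\subseteq S_c\Nr_n\TCA_{\omega}$, whereas the paper embeds the rainbow algebra of theorem \ref{rainbow} into its canonical extension, which is completely representable and hence in $S_c\Nr_n\TCA_{\omega}$; both arguments close. Two smaller remarks: for the very first claim the paper's witness is example \ref{SL}, which works uniformly for all $n>1$ (indeed all ordinals $>1$), while your witness \ref{finitepa} is only set up for $n\geq 3$, so your parenthetical ``a parallel finite witness handles $n=2$'' is an unsupported promissory note; and in your argument that $\A\notin S_c\Nr_n\TCA_{\omega}$ you should route through theorem \ref{completerepresentation} (countable atomic members of $S_c\Nr_n\TCA_{\omega}$ are completely representable), which you do.

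The genuine gap is in the last assertion, and it is exactly where you flag it. The hierarchy algebras $\B^r$ satisfy $\Pi_r\B^r/U\in{\sf RTCA}_n=S\Nr_n\TCA_{\omega}$, but $S\Nr_n\TCA_{\omega}$ properly contains ${\sf UpUr}S_c\Nr_n\TCA_{\omega}$ (this is item (7) of theorem \ref{maintheorem}, witnessed by weakly-but-not-strongly representable algebras), so representability of the ultraproduct gives no information about membership in the class you are trying to axiomatize, and the contradiction never materializes. The paper avoids this entirely by choosing a different sequence: either the blow-up-and-blur atom structures ${\cal F}(I,i,1)$ of \cite{ANT}, or Monk-like algebras $\M(\G_i)$ on graphs of growing finite chromatic number. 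There the complex algebras $\Cm{\cal F}(I,i,1)$ (resp.\ $\Cm\A_i$) are \emph{not representable at all}, hence lie outside ${\sf UpUr}S_c\Nr_n\K_{\omega}\subseteq{\sf RTCA}_n$ for the cheapest possible reason, while the ultraproduct is \emph{completely representable} (the limiting graph has infinite chromatic number, resp.\ $l\geq\omega$), hence lies in $S_c\Nr_n\TCA_{\omega}$ by theorem \ref{complete} and a fortiori in its elementary closure. A single ultraproduct computation $\Pi_i\Cm\A_i/F=\Cm(\Pi_i\A_i/F)$ then kills any finite axiomatization. If you want to salvage your route you would have to upgrade the ultraproducts of the $\B^r$ to complete representability, which is precisely the ``technical hurdle'' you name and which the paper sidesteps rather than solves.
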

\begin{proof}  For $n>1$ $\Nr_n\TCA_{\omega}$ is not closed under $S_c$ follows, from example 
{SL} since the the algebra $\A$
constructed therein is countable and completely representable but is not in $\Nr_n\TCA_{n+1}$, {\it a fortiori} 
it is not in $\Nr_n\TCA_{\omega}$.
 $S_c\Nr_n\TCA_{\omega}$ is not elementary follows from theorem \ref{rainbow}.

We now show that $S_c\Nr_n\TCA_{\omega}$ is not closed under forming subalgebras, hence it is not pseudo-universal.
That it is closed under $S_c$ follows directly from the definition.
Consider rainbow algebra used in theorem \ref{rainbow}, call  it $\A$. Because $\A$ has countably many atoms, $\Tm\A\subseteq \A\subseteq \Cm\At\A$,
and all three are completely representable or all three not completely representable sharing the same atom structure $\At\A$,
we can assume without loss that $\A$ is countable.
Now $\A$ is not  completely representable hence it is not $S_c\Nr_n\TCA_{\omega}$.

On the other hand, $\A$ is strongly
representable (from the argument used in the second item of theorem \ref{complete} since \pe\ can win the finite rounded
atomic game with $k$ rounds for every $k$, hence it satisfies the Lyndon conditions),
so its canonical extension is representable, indeed completely representable.
On the other hand, the canonical extension of $\A$
is in $S_c\Nr_n\sf TCA_{\omega}$ and $\A$ embeds into its canonical
extension, hence $S_c\Nr_n\TCA_{\omega}$ is not closed under forming subalgebras.
The first of these statements follow from the fact that if $\D\subseteq \Nr_n\B$,
then $\D^+\subseteq_c \Nr_n\B^+$.

To prove non-finite axiomatizability of the elementary closure of
$S_c\Nr_n\TCA_{\omega}$ we give two entirely different proofs.

\begin{enumroman}

\item  First we use the flexible construction in \cite{ANT}.

Now let $l\in \omega$, $l\geq 2$, and let $\mu$ be a non-zero cardinal. Let $I$ be a finite set,
$|I|\geq 3l.$ Let
$J=\{(X,n): X\subseteq I, |X|=l,n<\mu\}.$
Here  $I$ is the atoms of $\M$. $J$ is the set of blurs, consult  \cite[definition 3.1]{ANT} for the definition of blurs.
Pending on $l$ and $\mu$, let us call these atom structures ${\cal F}(I, l,\mu).$
If $\mu\geq \omega$, then $J$ would be infinite,
and $\Uf$, the set of non principal ultrafilters corresponding to the blurs, will be a proper subset of the ultrafilters.
It is not difficult to show that if $l\geq \omega$
(and we relax the condition that $I$ be finite), then
$\Cm{\cal F}(I, l,\mu)$ is completely representable,
and if $l<\omega$, then $\Cm{\cal F}(I, l,\mu)$ is not representable.

Let ${\D}$ be a non-trivial ultraproduct of the atom structures ${\cal F}(I, i,1)$, $i\in \omega$. Then $\Cm{\D}$
is completely representable.
Thus $\Tm{\cal F}(I, i,1)$ are ${\sf RRA}$'s
without a complete representation while their ultraproduct has a complete representation.

Also the sequence of complex algebras $\Cm{\cal F}(I, i,1)$, $i\in \omega$
consists of algebras that are non-representable with a completely representable ultraproduct.

Then because such  algebras posses
symmetric $n$  dimensional cylindric basis (closed under substitutions),
the result lifts easily to discretely toplogized $\CA$s

\item  Alternatively, one can prove the cylindric case directly as follows.Take $\G_i$ to be the disjoint union of cliques of size $(n(n-1)/2)+i$, or
let $\G_i$ be the graph with nodes $\N$ and edge relation $(i,j)\in E$ if $0<|i-j|<n(n-1)/2+i$.
Let $\alpha_i$ be the corresponding atom structure, as defined in \cite{weak} and $\A_i$ be the term polyadic equality
algebra based on $\alpha_i$. Then $\Cm \A_i$ is not representable because, as proved in \cite{weak},
$\alpha$ is weakly but not strongly representable. In fact, the diagonal free 
reduct of $\Cm\A_i$ is not representable.

But $\Pi_{i\in \omega}\Cm\A_i/F=\Cm(\Pi_{i\in \omega}\A_i/F)$, so
the one  graph is based on the disjoint union of the cliques which is arbitrarily large, and the second on graphs
which have arbitrary large chromatic
number, hence both, by discrete topologizing  are completely representable.

\end{enumroman}

\end{proof}

We prove our theorem on failure of omitting types for $\TL_n$, the first order topological logic with equality
restricted to the first $n$ variables,  viewed as a multi-modal logic.  The corresponding class
of modal algebras are $\PEA_n.$ $n$ remains to be finite $>2$.
But first a definition:

\begin{definition} let $L$ be a signatue and $M$ a structure for $L$ endowed with an Alexandrov topology. 
Let $\leq$ be the corresponding pre-order on $M$ and let 
$1<m<n$ be finite.
\begin{enumarab}
\item The {\it Gaifman hypergraph $\C^n(M)$} of $M$ is the graph $(\dom(M), E)$ 
where $E$ is the $m$ hyperedge relation such that for $a_1, \ldots a_{m}\in M$, 
$E(a_1,\ldots a_m)$ holds if there 
are $n$ and  an $n$ ary  relation symbol  $R$ and $a_1, \ldots a_m\in M$, 
such that $M\models R(a_1,\ldots, a_n)$ and $a_0,\ldots a_{m-1}\in  \{a_0,\ldots a_n\}.$

2. An $m$ clique in $\C^n(M)$ is a set $C\subseteq M$ such 
$E(a_1\ldots a_{m-1})$
for distinct $a_0, \ldots, a_{m-1}\in C.$

\item The clique guarded semantics 
$M\models _G, \phi(\bar{a})$ where
$\phi$ an $L$ formula, and $\bar{a}\in M$ and $\rng\bar{a}$ is an
$m$ clique are defined by:

\begin{itemize}
\item For atomic $\phi$,  $M\models_G \phi(\bar{a})$ iff $M\models \bar{a}.$

\item The semantics of the Boolean connectives are defined
the usual way.

\item For $s\in {}^nM$, $M, s\models \exists x_i\phi$ iff there is a $t\in {}^nM$, $t\equiv_i s$ such that  
$M, t\models \phi$ and 
$\rng(t)$ is  in 
$m$ clique in $\C^n(M)$.

\item  For $s\in {}^nM$, $M\models \Diamond_i \phi(s)$ iff there exists $t\in {}^nM$ such
that $t\equiv_i s$, $t_i\leq s_i$, $M, t\models \phi$ 
and $\rng(t)$ is an $m$ clique in $\C^n(M)$. 
\end{itemize}
\end{enumarab}
\end{definition}

\begin{definition}

Let $\A\in {\sf TCA}_n$.
Assume that $2<n <k$. Let $\L(\A)^k$ be the first order signature consisting of an $n$ ary relation symbol for each
$ a\in A$, using $k$ variables. Let $\C^k(\M)$ be the $k$ Giafman graph of an $\L_n$ structure
$\M$ carrying a topolgy.

\begin{enumarab}
 
\item A topological space 
$\M$ is a relativized representation of $\A$ if there exists an injective homomorphism $f:\A\to \wp(V)$ where $V\subseteq {}^nM$
and $\bigcup_{s\in V}\rng(s)=M$.

\item For $\bar{s}\in V$ and $a\in \A$, we write
$\M\models a(\bar{s})$ iff $\bar{s}\in f(a)$.
Then $\M$  is said to be {\it $k$ square},
if whenever $\bar{s}\in \C^k(\M)=\{s\in {}^kM: \text { $\rng(s)$ is an $n$ clique}\}$,
$a\in A$, $i<n$,
and injection  $l:n\to k$, if $\M\models {\sf c}_ia(s_{l(0)}\ldots, s_{l(n-1)})$,
then there is a $t\in \C^k(\M)$ with $\bar{t}\equiv _i \bar{s}$,
and $\M\models a(t_{l(0)},\ldots, t_{l(n-1)})$.

\item $\M$ is said to be {\it $k$ flat} if  it is $k$ square and
for all $\phi\in \L(\A)^k$, for all $\bar{s}\in \C^k(\M)$, for all distinct $i,j<k$,
we have
$$\M\models_G [\exists x_i\exists x_j\phi\longleftrightarrow \exists x_j\exists x_i\phi](\bar{s}).$$
Here the subscript $G$ refers to the $\bold G$iafman 
clique guarded semantics defined above, which we henceforth refer to as {\it Giafman}
semantics.
\end{enumarab}
\end{definition}

$\bold K_{\sf n, square}$ denotes the class of square frames
of dimension $n$, $\bold K_{\sf n, k, square}$ and $\bold K_{\sf n, k, flat}$ denote the class of Kripke frames of dimension $n$
whose modal algebras have  $k$ square, $k$
flat representations, respectively.

That is 
\begin{align*}
\bold K_{\sf n, square}&= \sf Str{{\sf RTCA}_n}\\
\bold K_{\sf n, k, square}&=\{\F: \Cm\F \text{ has a $k$ square representation}\}.\\
\bold K_{\sf n, k, flat}&=\{\F: \Cm\F \text{ has a $k$ flat representation}\}.
\end{align*}
We have the following deep results hat can obtained 
from their $\CA$  analogues by discrete topologizing and same holds for tense and temporal cylindric algebras by staic temporalizing.
We formulate the plethora of negative results, with very few exception like undecidability
of the equation theory of any class between $\sf RTCA_2$ and $\TCA_2$, a significant 
(negative) deviation from the the theory
of $\CA_2$s,  but admittedly utterly unsurprising due to
the presence of two $\sf S4$ modalities induced by the topology, 
witness theorem \ref{ud}. 
However, in interesting and for that matter sharp contrast,  
like $\sf CA_2$, the equational theories of $\sf TeCA_2$ and $\sf TemCA_2$ are decidable, theorem 
\ref{ud} and the corresponding 2 dimensional modal logic has the finite model property, 
and they (the corresonding modal algebras) have the finite algebra finite base property, any finite algebra has a finite representation.

If $\TL_m$ is the multi modal topological logic corresponding to cubic frames, that is, frames whose domains are of the form $^mU$,
with $m$ still $>2$. It is not to hard to distill from the literature the following:
\begin{itemize}
\item $\TL_m$ is  undecidable (this can be proved from non atomicity of free algebras
or by coding the word problem for finitely presented semigroups), for $m=2$ it is also undecidable; 
which is interesting (but expected) deviation from $\CA_2$,
because roughly the existence of the two $\S4$ modalities induced by the topology on the base of a representation,
is stronger than the undecidable product logic $\sf S4\times \sf S4,$
\item $\TL_m$ is not finitely axiomatizable,
\item it is undecidable to tell whether a finite frame is a frame for $\L_m$, this implies that its modal algebras cannot be finitely axiomatizable in
$k$th order first order logic for any finite $k>0,$
\item $\TL_m$ lacks Craig interpolation and Beth definability; this will happen too for $m=2$,
\item the class of such frames cannot be Sahlqvist axiomatizable,
\item even more  it cannot be axiomatized by any set of first order sentences,
\item though canonical, any axiomatization would necessarily
contain infinitely many non canonical sentences
(canonical sentences are sentences whose algebraic equivalents
are preserved in canonical
extensions).
\item $\TL_m$ has Godel's incompleteness theorem.
\end{itemize}

The algebraic equivalences of the last  theorem are (in the same order):
$m$ is finite $>2$ and ${\sf RTCA}_m$
denotes the variety of representable algebras.

\begin{itemize}

\item The equational theory of ${\sf RTCA}_m$ 
is undecidable \cite[Theorem 5.1.66]{HMT2}; 
this holds  even for $m=2$, witness theorem \ref{ud} below,

\item ${\sf RTCA}_m$ is not finitely axiomatizable. This follows easily by topologizing Monk's classical 
result proved for $\CA$s, \cite{HMT2},
There are sharper results proved by 
Biro, Hirsch, Hodkinson, Maddux and others 
that can  also be topologized, like for example 
making Monk's algebras binary generated, thus it automorphisms group becomes 
rigid, this is far from being trivial. 

Another such sharp result is that 
$S\Nr_m\TCA_{m+k+1}$ is not finitely axiomatizable over $S\Nr_m\TCA_{m+k}$ for any finite 
$k\geq 1$, and this lifts to infinite dimensions replacing finitely axiomatizable by axiomatizable by a Monk's schema 
as defined in \cite{HMT2} under the name of {\it systems of varieties definable by a finite schema} \cite{HHbook2, t}.

We have already dealt with Andr\'eka's complexity results indicating 
how they can all lift to the topological 
addition, witness theorem \ref{tense} above,

\item It is undecidable to tell whether a finite algebra is representable \cite{d, HHbook} witness theorem \ref{tense}
above, 

\item ${\sf RTCA}_m$ does not have the amalgamation property, this also works for $m=2$ \cite{ACMNS},

\item ${\sf RTCA}_m$ is not atom-canonical, hence not closed under \de\ completions, theorem \ref{can},
Actually theorem \ref{can} proves this result for $S\Nr_m\TCA_{m+k}$ with $k\geq 3$, 
which mean that for any $k\geq 3$ (infinite included giving $\sf Str(TRCA_m)$ )
the class of frames
$\bold K_{\sf m, m+k, flat}=\{\F: \Cm \F\\in S\Nr_m\TCA_{m+k}\}$, and 
$\bold K_{\sf m, m+k, square}$ 
are  not Sahlqvist axiomatizable. When $k=\omega$; it is furthermore not first order axiomatizable at all, 
as indicated in the next item so that the 
standard second order translation from modal formulas to second order formulas contain genuine 
second order formulas.

When $k$ is finite 
the question is open. It is not known whether there exists
a sequence of weakly representable cylindric atom structures (that can be easily topologized) 
whose ultrapoduct is outside $S\Nr_m\TCA_{m+k}$ for some $3\leq k<\omega$. 

For $k=\omega$ this is proved by constructing a sequence of good Monk-like algebras 
based on graphs with infinite chromatic numder converging to a bad Monk-like algebra based on
a graph that is $2$ colourable. Witness \cite{strong} for the terminology good and bad Monk algebras.  
Roughly a bad Monk's algebra is one that is based on a graph that has a finite colouring and a good one is
based on a graph that does not have  finite chromatic number, in other words its chromatic number is infinite.
Finite colorins forbit represenations of Monk algebras based on them. Conversely, Monk-like algebras based on graphs
with infinite chromatic number are representable \cite{HHbook2}.

Roughly this is a reverse process to Monk's original construction back in 1969.
Monk constructed a sequence of bad Monk's algebras converging to
a good one which is more believable, and intuitive. It boils down to constructing 
graphs having larger and larger finite chromatic number converging to one with infinite 
chromatic number. Indeed, for the reverse process, it took Hirsch and Hodkinson the use 
of Erdos' probabilistic graphs to
obtain their amazing result.

\item  The class of strongly representable atom structures, namely the class
${\sf Str}({\sf RTCA}_m)=\{\F: \Cm\F\in {\sf RTCA}_m\}$
is not elementary \cite{strong}. 

\item Though canonical, i.e closed under canonical extensions, any
axiomatization of ${\sf RTCA}_m$  must contain
infinitely
many non-canonical sentences (sentences that are not preserved in canonical extensions).
Quoting Hodkinson: ${\sf RTCA}_m$ is only {\it barely canonical} \cite{bh}. 

\item The free finitely generated algebras 
of any $\K$ between $\sf RTCA_m$ and $\TCA_m$ are not atomic since they are stronger than 
that of $\TCA_3$, hence one can use the same construction
of N\'emeti's  by stimulating quasi-projections and coding $ZFC$ in 
the equational theory of 
$\sf TCA_m$ \cite{an}.

\item We use a deep (unpublished) result of 
N\'emeti's.  N\'emeti  shows that there are three $\sf CA_3$ terms  $\tau(x)$, $\sigma(x)$
and $\delta (x)$ such that for $m\geq 3$, we have
${\sf RCA}_m\models \sigma(\tau(x))=x$ and ${\sf RCA}_m\models \delta(\tau(x))=1$ but not 
${\sf Cs}_m\models \delta(x)=1$.
Then for every $m\geq 3$ we have 
(a) ${\sf TRCA}_m\models \sigma(\tau(x))=x$ 
and 
(b) ${\sf TRCA}_m\models \delta(\tau(x))=1$ but not ${\sf TCs}_m\models \delta(x)=1.$
The first two validities follow from the fact that these terms do not contain modalities, and 
the last is obtained by giving the base of the the set algebra falsifying 
the equation  the discrete toplogy; the resulting algebra flasifies the same equation since it also contains no 
modalities.

Let $0<\beta$, and $n\geq 3$ and let $\{g_i:i<\beta\}$ be an arbitray generator set of $\Fr_{\beta}\sf TRCA_m.$
Then $\{\tau(g_0)\}\cup \{g_i: 0<i<\beta\}$ generates $\Fr_{\beta}\sf TRCA_n$ by (a) but not freely by (b).
Let $\sf K$ be the class of all finite algebras in $\sf TRCA_m$.
Then ${\bf HSP}\sf K\neq \sf TRCA_m.$
In particular, there are equations valid in all finite $\sf TRCA_m$'s 
but they are not valid in all $\sf TRCA_m$'s.

\end{itemize}

\begin{theorem}\label{ud}
\begin{enumarab}
\item $\sf TRCA_1$ is finitely axiomatizable, has $fmp$  and is decidable.
\item $\sf TRCA_2$ and $\sf T^{al}RCA_2$ are finitely axiomatizable but they do not have the finite base property and 
their equational theory is undecidable. Furthermore, $\sf S54^{m}$ 
does not have abstract $fmp$.
\item However, the equational theory of any class between 
$\sf TeCA_2$ and $\sf TemCA_2$, and their concrete versions, namely,
the representable algebras.
\item For $\sf K$ as in the previous item the free algebras are atomic, 
and ${\sf K}={\bf HSP}\{\A\in  {\sf K}: |A|<\omega\}.$
\end{enumarab}
\end{theorem}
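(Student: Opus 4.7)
The four parts separate naturally; items (2) and (3) carry the substance.

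For (1), the key point is that $\sf RCA_1=\CA_1$ by the classical Henkin--Monk--Tarski representation theorem, so the cylindric reduct collapses to the variety of monadic Boolean algebras; topologizing adjoins a single $\sf S4$ modality $I_0$ satisfying the finitely many equations of Definition \ref{topology}. The resulting bi-modal system is a well-behaved combination of $\sf S5$ (from ${\sf c}_0$) and $\sf S4$ (from $I_0$) linked by the interaction axioms. Finite axiomatizability is immediate. For fmp and decidability I would filtrate a topological $\CA_1$-model through a finite subformula-closed set, noting that the $\sf S5$ filtration and the $\sf S4$ filtration are compatible modulo the interaction axioms; decidability then follows from fmp together with recursive enumerability of the equational consequences.

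For (2), finite axiomatizability of $\sf TRCA_2$ (and of the Alexandrov variant $\sf T^{al}RCA_2$) follows from $\sf RCA_2=\CA_2$ plus the finite list of topology (resp. Alexandrov) equations. For undecidability I would adapt the encoding used in Theorem \ref{tense}, now pushed down to dimension two via the two interior operators $I_0,I_1$, which together provide the expressive power of the undecidable product logic $\sf S4\times\sf S4$ (Gabbay--Shehtman, Maddux). The failure of the finite base property, and of abstract fmp for $\sf S54^m$, I would witness by a finite algebra whose representations are forced to carry an infinite ascending chain of opens, for instance by coding a density-type condition expressible in the topological but not the tense setting.

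For (3), the decisive structural observation is the non-interaction axiom ${\sf c}_iG(x)=G(x)$, and similarly for $H,S,U$: every tense-modal value is zero-dimensional and hence lives in a common Boolean subalgebra on which $G,H$ (respectively $S,U$) act. This decouples the tense operators from the cylindric axes, so a $\sf TeCA_2$ (or $\sf TemCA_2$) decomposes as a $\CA_2$ equipped with a tense algebra acting on its zero-dimensional part. Decidability then reduces to two known decidability results: that of $\CA_2$, obtained from Mortimer's theorem on the two-variable fragment of first-order logic (refined by Gr\"adel--Kolaitis--Vardi), and that of the minimal tense logic $K_t$ (respectively Venema's maximal temporal logic with Since and Until) over arbitrary flows. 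A combined filtration, finite in both the cylindric and tense parts, yields fmp and the finite algebra finite base property simultaneously. For (4), atomicity of the finitely generated free algebras in any such $\sf K$ follows from fmp established in (3): the free algebra is residually finite, so atoms are pulled back from atoms of the finite quotients, and generators are separated by finite models. The identity $\sf K={\bf HSP}\{\A\in\sf K:|A|<\omega\}$ then drops out of the finite base property, since any equation that fails in $\sf K$ already fails in some finite member. The hardest step will be (3): one must verify that the interaction axioms genuinely decouple enough to prevent mixed tense-cylindric formulas from reconstructing the $\sf S4\times\sf S4$ expressive power responsible for the undecidability in (2).
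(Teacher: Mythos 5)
Your overall architecture tracks the paper's in items (1) and (2), but two steps contain genuine gaps. First, in item (2) you dispose of finite axiomatizability with ``$\sf RCA_2=\CA_2$ plus the finite list of topology equations.'' The premise is false ($\RCA_2$ is a proper subvariety of $\CA_2$, though Henkin showed it is finitely axiomatizable), and, more importantly, the real content of the claim is skipped entirely: given an \emph{abstract} algebra satisfying the cylindric axioms together with the finitely many interior-operator equations of Definition \ref{topology}, one must produce an actual topology on the base of a representation of the $\CA_2$-reduct whose induced concrete interior operators agree with the abstract ones. This is where the paper spends its effort: starting from a neat embedding $\B\subseteq\Nr_2\C$ with $\C\in{\sf Lf}_\omega$ and a Henkin ultrafilter $G$, it takes the sets $O_{p,i}=\{k\in\omega:{\sf s}_i^kI_ip\in G\}$ as a base for a topology on the carrier and checks that the resulting operators $J_i$ represent the $I_i$. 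Without some such construction the finite axiomatizability of $\sf TRCA_2$ (as opposed to that of $\RCA_2$) is not established. Your undecidability and failure-of-fmp arguments for item (2) are in the right spirit (the paper likewise imports the undecidability of $\sf S4\times\sf S4$ and a box-only formula forcing infinite ``chessboard'' frames from Gabelaia--Kurucz--Wolter--Zakharyaschev), though the concrete witness you gesture at (``a density-type condition'') is left unspecified.

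Second, in item (4) the inference ``the free algebra is residually finite, so atoms are pulled back from atoms of the finite quotients'' is wrong: the countably generated free Boolean algebra is residually finite and atomless, and preimages of atoms under quotient maps are not atoms. The correct argument, which is what the paper means by ``exactly like the $\CA_2$ case,'' uses the finite base property directly: a nonzero element $a$ of the free algebra is satisfied at some point of a \emph{finite} representation, and the complete description of that point yields an atom below $a$. On item (3) you take a genuinely different route from the paper: you propose decoupling the tense operators via the non-interaction axioms ${\sf c}_iG(x)=G(x)$ and combining decidability of $\CA_2$ with that of $K_t$, whereas the paper translates the $S,U$ modalities into the loosely guarded fragment and appeals to its decidability. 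Your route is attractive but incomplete as stated: $G$ and $H$ take \emph{arbitrary} elements as arguments (only their values are zero-dimensional), so the algebra does not literally decompose into a $\CA_2$ with a tense algebra acting on its zero-dimensional part, and the standard transfer theorems for fusions of decidable modal logics do not automatically survive the added non-interaction equations. Either justify the normal-form/termwise reduction this decoupling is meant to provide, or follow the guarded-fragment translation.
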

\begin{proof}
\begin{enumarab}
\item Follows from the fact that the bi-modal logic of Kripke frames  of the form $(U, U\times U, R)$ where $R$ is a pre-order 
is decidable and has $fmp$

\item  Let $\Sigma$ be a finite set of equations that axiomatize $\sf RCA_2$, together with the $\sf S4$ axioms.
Then if $\A\models \Sigma$, then $\Rd_{ca}\A\in \sf RCA_2$. $\A$ itself can be 
proved representable as follows:

We have $\B\subseteq \Nr_2\C$ where $\C\in {\sf Lf}_{\omega}.$
Let $\bar{h}:\C\to \D$ where $\D\in {\sf Cs_{\omega}}^{reg}\cap \sf Lf_{\omega}$
be such that $\bar{h}\upharpoonright m=h$. Let $G$ be the corresponding 
Henkin ultrafilter in $\D$, so that $\bar{h}=h_G$. 
For $p\in \A$ and $i<m$, let 
$O_{p,i}=\{k\in \omega: {\sf s}_i^kI(i)^{\A}p\in G\}$   
and let ${\cal B}=\{O_{p,i} : i\in n, p\in A\}.$
Then ${\cal B}$ 
is the base for a topology on $\omega$ and the concrete interior operations are defined 
for each $i<m$ via
$J_i: \wp(^2\omega)\to \wp (^2\omega)$
$$x\in J_iX\Longleftrightarrow \exists U\in {\cal B}(x_i\in U\subseteq \{u\in \omega: x^i_{u}\in X\}),$$
where $X\subseteq {}^{n}\omega$. 
Then $h^*: \A\to \wp(^2U, J_i)$ defined by $h^*(x)=h(x)$ for $x\in B$
is an isomorphism, such that $h^*(a)\neq 0$, which contradicts that $\Gamma\models \phi$, because 
$a=\neg\phi/\Gamma$.

There is formula using
only the boxes such that if $\phi$ is satsifiable in a 
frame $\F=(U, R_1)\times (U, R_2)$ where $R_1, R_2$ are symmetric and transitive
that is $\F\models \phi$ iff $\F$ is an $\infty$ chessboard. For such a frame $\F$, 
let $\F^+=(U, U\times  U, R_1)\times (U,  U\times U, R_2)$, then because 
the constructed 
formula does not contain the two modalities
${\sf c}_0$ and  ${\sf c_1}$,  we also have 
$\F^+\models \phi$ iff $\F$ is an $\infty$ chessboard board.
Let $e$ be the equation corresponding to $\phi$ in the sublanguage
of that of $\sf TCA_2$ obtained 
by dropping cylindrifiers. 
Such a formula forces 
infinite frames, and so the required follows for $fmp$ because $\Cm\F^+$ cannot be represented 
on finite sets, since $\Cm\F^+\models e$. 
Hence such varieties do not have $fmp$.

To prove undecidability a suffiently complex problem 
for Turing machines or tilings is reduced to the satisfibaility problem
for the multimodal logic at hand.

We use  the result proved in \cite{transitive}, namely, that $\sf S4\times S4$ is undecidable 
There is a formula  $\psi_M$, without using the modalities ${\sf c}_0$ and ${\sf c}_1$ such that 
$\psi_M$ is satisfiable in ${\sf log}\{\F: \Cm\F\in \sf TRCA_n\}$ 
iff $M$ stops having started for an all blank tape 
\cite[p. 1014]{transitive}.

Abother way is to show that there are three  modal formulas using $\Box_0, \Box_1$, 
denoted by  $\phi_{\infty}$, $\phi_{grid}$, and $\phi_{\Theta}$ in \cite[p. 1006-13]{transitive} 
where $\Theta$ is a finite set of tile types, {\it encoding tilings; that is encoding the $\mathbb{N}\times \mathbb{N}$ grid},  
in the sense that  $\Theta$ tiles $\mathbb{N}\times \mathbb{N}$, 
iff their conjunction is satisfiable 
in any $\infty$ chessboard.  More explicity  $\Theta$ tiles $\mathbb{N}\times \mathbb{N}$, iff 
for every frame $\F=(U\times U, T_0, T_1, \Box_0, \Box_1, \delta)$, there exists 
$s=(s_0, s_1)\in U\times U$ 
such that $\Rd \F=(U\times U, \Box_0, \Box_1), s\models \phi_{\infty}\land \phi_{grid}\land \phi_{\Theta}$
and furthermore    $\Rd \F$ is an $\infty$ chessboard.

\item Follows from the fact that the first order correspondance of the modal formulas of modal logic with $U$ and
$S$ lands in the loosely guarded fragment. 
Hence we have a disjoint union of decidable first order theories 
that of $\{\F: \Cm\F \in \sf RCA_2\}$  together with first order correspondants
of the modal formulus axiomatizing $\TeCA_2$ 
which is decidable. Since both have $fmp$ so does $\sf TeCA_2$.

\item The last required is exactly like the $\CA_2$ case \cite{HMT1}, since every 
finite algebra has a finite representation.
\end{enumarab}
\end{proof}

\subsection{Further negative results}

Now we prove that the omitting types theorem fails in a very strong sense for $TL_m$, when $m\geq 2$.
We have $\bold S\Cm\bold K_{\sf n, square}={\sf RTCA}_n$.
It can also be proved that  
$\bold S\Cm\bold K_{\sf n, k, flat}=S\Nr_n{\sf TCA}_{k}$; we will prove $\subseteq$ below,
which is known to be strictly contained in ${\sf TRCA}_n$
for any $n, k$ such that $2<n<k<\omega.$
%We will later give an algebraic characterization for local squares.

Given $v:\omega \to {\sf Tm (X)},$ $\A\in \TCA_m$ and any map $s:\omega\to \A$, then by
$\bar{s}$ we denote the unique extension of $s$, $\bar{s}: {\sf Tm(X)}\to \A$, such that
$\bar{s}\circ i=s$.
Here we are looking at ${\sf Tm(X)}$
as the absolutely free algebra on $X$; $i$ is the inclusion map from $X$ into
$\sf Tm(X)$.

\begin{definition}
\begin{enumarab}
\item Let $\bold K$ be a class of frames, and $\A\in \TCA_m$ be countable
and $\F\in \bold K$. Then a model $\M=(\F,v)$ is {\it a representation of $\A$},
if there exists a bijection $s:X\to \A$ ($X$ is an infinite countable set of variables)
and  an injective homomorphism
$f:\A\to \mathfrak{Cm}\F$  such that $f\circ \bar{s}=v$. 
In this case, we say that $f$ is a representing function.

\item If $\A$ is atomic, then
$f$ is called a {\it complete representation of $\A$} if furthermore
$\bigcup_{x\in \At\A} f(x)=F$.

\end{enumarab}
\end{definition}

We show that when we broaden considerably the class of allowed models,
permitting  $m+3$ flat ones, there still might not be countable models omitting a single non-principal
type. Furthermore, this single-non principal type can be chosen to be the set of co-atoms in an atomic
theory. In the next theorem by $T\models \phi$ we mean that $\phi$ is valid in any (classical) topological 
model of $T$. That is for any topological model 
$\M$ of $T$,  any $s\in {}^mM$, $\M\models \phi[s]$; using the modal notation 
$\M, s\models \phi$.
\begin{definition}
\begin{enumarab}
\item Let $T$ be an $\L_m$ theory. A set $\Gamma$ of formulas is {\it isolated} if there is a formula $\psi$ consistent
with $T$ such that $T\models \psi\to \alpha$ for all $\alpha\in \Gamma$.
Otherwise, $\Gamma$ is {\it non-principal.}

\item Let $T$ be an $\L_m$ theory. A frame $\M=(\F, v)$ is a {\it model of $T$}
if $v:\omega\to F$, is such that for all $\phi\in T$, and $s\in F$, $\M, s\models_v \phi$.

\item Let $\bold L$ be a class of frames.
$\L_m$ has {\it $OTT$ with respect to $\bold L$}, if for every countable
theory $T$, for any non-principal type
$\Gamma$, there is a model $\M=(\F, v)\in \bold L$ of $T$
that omits $\Gamma$, in the sense that
for any $s\in F$, there exists $\phi\in \Gamma$
such that not $\M, s\models_v \phi$.
\end{enumarab}
\end{definition}
The proof of the  following lemma can be easily distilled from its relation 
algebra analogue. The statement of the lemma can 
be can seen as a weak soundness theorem truncating
syntactically  
the well know neat embedding theorem of Henkin 
to $n$ if $n<\omega$ and classical represenations to `$n$ localized ones.'
Otherwise if $n\geq \omega$ 
it  is the easy implication in the usual neat embedding theorem, for countable algebras 
having $\omega$ 
square or flat representations 
are representable in the classical sense. The converse also holds (using step by step constructions) 
so we have a weak {\it completeness} theorem, too, approximating the hard implication in the neat embedding theorem, 
but we do not need that much.

\begin{lemma}\label{flat}
Assume that $2<m<n$
\begin{enumarab}

\item If $\A$ has $n$ flat representation, then $\A\in S\Nr_m\TCA_n$,
\item If $\A$ has a complete $n$ flat representation, then $\A\in S_c\Nr_m\TCA_n.$

\end{enumarab}
We are now ready to formulate and prove 
the metalogical result that can be inferred from both \ref{can} and \ref{rainbow}, thus we give two proof.

The proofs also depend on the last lemma. 
\end{lemma}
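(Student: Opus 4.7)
The strategy is the standard clique-guarded dilation construction: given an $n$-flat representation $\M$ of $\A$ with representing homomorphism $h:\A\to\wp(V)$ for $V\subseteq{}^mM$, we build a $\TCA_n$ dilation $\B$ whose top element is the Gaifman clique-set $W=\C^n(\M)=\{s\in{}^nM:\rng(s)\text{ is an $m$-clique}\}$, and embed $\A$ into $\Nr_m\B$ via $a\mapsto\hat a:=\{s\in W:(s_0,\dots,s_{m-1})\in h(a)\}$.

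First I would define on $\wp(W)$ the Boolean operations, the diagonals $\diag ij=\{s\in W:s_i=s_j\}$, the relativized cylindrifiers $\cyl iX=\{s\in W:\exists t\in W,\ t\equiv_i s,\ t\in X\}$, and the interior operators $I_iX=\{s\in W:s_i\in \mathrm{int}\{a\in M: s^i_a\in X\}\}$. Let $\B$ be the subalgebra of the resulting structure generated by $\{\hat a:a\in A\}$. Second, I would verify $\B\in\TCA_n$. The $n$-squareness of $\M$ yields the existential cylindric axioms ($\cyl i0=0$, $x\le\cyl ix$, $\cyl i(x\cdot\cyl iy)=\cyl ix\cdot\cyl iy$), the diagonal axioms, and the interaction axioms between $I_i$ and the cylindric operations (using that guards for ${\sf c}_i$-witnesses exist inside $W$). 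The only non-routine axiom is commutativity $\cyl i\cyl jx=\cyl j\cyl ix$; this is precisely the Church--Rosser content of $n$-flatness packaged into $W$ via the semantic equivalence $(\ast)$ between locally guarded and packed $n$-variable semantics stated in the introduction.

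Third, I would check that $a\mapsto\hat a$ is a homomorphism $\A\to\Nr_m\B$. Membership in $\Nr_m\B$ reduces to $\cyl i\hat a=\hat a$ for $m\le i<n$, which is immediate since $\hat a$ depends only on the first $m$ coordinates of $s$. Preservation of Boolean operations and diagonals is routine; preservation of ${\sf c}_i$ and $I_i$ ($i<m$) follows from the guarded semantics and the fact that every $m$-tuple $\bar u\in V$ extends, by repetition of one of its entries, to an $n$-clique in $W$, which simultaneously gives injectivity: $h(a)\neq\emptyset$ forces $\hat a\neq\emptyset$. This proves (1). Fourth, for (2), given a complete $n$-flat representation and $X\subseteq\A$ with $\sum^{\A}X=a$, completeness of $h$ gives $h(a)=\bigcup_{x\in X}h(x)$, whence unfolding the definition of $\hat{(\cdot)}$ yields $\hat a=\bigcup_{x\in X}\hat x$ in $\B$; so the embedding is complete and $\A\in S_c\Nr_m\TCA_n$.

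The main obstacle is the commutativity-of-cylindrifiers axiom in $\B$: although $n$-flatness is precisely what is needed, one has to check that the witness obtained by commuting the two existential moves in $W$ remains inside the clique-set $W$, which is where the Church--Rosser strengthening of $n$-squareness is used essentially rather than cosmetically. The remaining verifications---the interaction of the topological operators $I_i$ with the $\cyl j$ in the $\TCA_n$-axioms, and the analogous checks for $G,H$ in the $\TeCA_n$-case or $S,U$ in the $\sf TemCA_n$-case---are bookkeeping, since under discrete topologization (resp.\ static temporalization) these modalities reduce to identities and commute trivially with every $\cyl j$ in $\B$, so the same construction gives the analogues of (1) and (2) for $\TeCA_n$ and $\sf TemCA_n$ without modification.
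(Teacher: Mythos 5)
Your construction is correct and is exactly the argument the paper has in mind: the paper omits the proof, saying only that it ``can be easily distilled from its relation algebra analogue,'' and that analogue is precisely the clique-guarded dilation on the Gaifman clique-set $W=\C^n(\M)$ that you build, with $n$-flatness supplying the commutativity of cylindrifiers on the subalgebra generated by the $\hat a$ and completeness of the representation carried over to completeness of the neat embedding.
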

\begin{theorem}\label{OTT}
For any $k\geq 3$, $\TL_m$ 
does not have $OTT$ with respect $\bold K_{\sf m, m+k, flat}$.

\end{theorem}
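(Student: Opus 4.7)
The plan is to derive the failure of $OTT$ from either of the two main algebraic results of the excerpt (Theorem~\ref{can} and Theorem~\ref{rainbow}), bridging algebra and logic in the standard way: interpret the given atomic representable $\TCA_m$ as the Tarski--Lindenbaum algebra of a countable consistent $\TL_m$ theory, take the co-atom type as the distinguished non-principal type, and use Lemma~\ref{flat} to convert any hypothetical $(m+3)$-flat model omitting that type into an $S\Nr_m\TCA_{m+3}$-membership fact that contradicts the chosen algebraic input.

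First I would pick a countable representable atomic $\A \in \sf TRCA_m$ from Theorem~\ref{can}, so that $\Cm\At\A \notin S\Nr_m\TCA_{m+3}$. Fix an enumeration of variables and let $T$ be the countable consistent $\TL_m$ theory whose Tarski--Lindenbaum algebra $\Fm_T$ is isomorphic (via that enumeration) to $\A$. Because $\A$ is atomic with countably many atoms and $|A|>1$, the set $\Gamma = \{\neg a : a\in \At\A\}$, viewed as a set of $\TL_m$ formulas in $T$, is non-principal: any $T$-consistent formula $\sigma \in A \setminus \{0\}$ sits above some atom $a\in \At\A$ by atomicity, so $\sigma \not\leq -a$, which precludes isolation of $\Gamma$.

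Next, suppose toward contradiction that some $(m+3)$-flat model $\M=(\F,v)$ of $T$ with $\F\in \bold K_{\sf m, m+3, flat}$ omits $\Gamma$. Let $f:\A\to \mathfrak{Cm}\F$ be the representing homomorphism induced by $v$. Omitting $\Gamma$ says that every tuple in $\F$ satisfies some atom, i.e.\ $\bigcup_{a\in \At\A} f(a)$ is the unit of $\mathfrak{Cm}\F$, and injectivity of $f$ on distinct atoms of $\A$ forces $f$ to carry the supremum $\sum \At\A = 1$ to the union $\bigcup f[\At\A]$; standard arguments (cf.\ Theorem~\ref{complete}) then upgrade this to the preservation of \emph{all} existing joins, so $f$ is a \emph{complete} $(m+3)$-flat representation of $\A$. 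Density of $\A$ in its Dedekind--MacNeille completion $\Cm\At\A$ lets $f$ extend uniquely to an $(m+3)$-flat (not necessarily complete) representation $\hat f :\Cm\At\A \to \mathfrak{Cm}\F$. Lemma~\ref{flat}(1) applied to $\hat f$ yields $\Cm\At\A \in S\Nr_m\TCA_{m+3}$, contradicting Theorem~\ref{can}, and completing the argument.

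An alternative, even tighter route uses the countable atomic $\C$ from Theorem~\ref{rainbow}, which lies in $\sf RTCA_m$ but outside $S_c\Nr_m\TCA_{m+3}$; the theory built from $\C$ has the same non-principal co-atom type, and any $(m+3)$-flat model omitting it produces, by the same argument, a complete $(m+3)$-flat representation of $\C$, whence Lemma~\ref{flat}(2) gives $\C \in S_c\Nr_m\TCA_{m+3}$, a direct contradiction. The only delicate point, and what I would write out in full, is the step passing from omission of co-atoms to preservation of arbitrary joins in $\A$: one must check that the clique-guarded Gaifman semantics of the flat representation is compatible with the supremum operation in $\A$ on the countable set $\At\A$, so that $f(\sum X) = \bigcup f[X]$ holds for every $X\subseteq \At\A$; this is ensured by injectivity of $f$ on atoms together with the fact that each element of $\A$ is, by atomicity and countability, a countable join of atoms, so preservation on $\At\A$ propagates upward.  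All other steps are routine given the already-proved Theorems~\ref{can}, \ref{rainbow} and Lemma~\ref{flat}, and the result holds simultaneously for $\TCA_m$, $\TeCA_m$ and $\sf TemCA_m$ by the static temporalizing and discrete topologizing transfer principle recorded earlier in the paper.
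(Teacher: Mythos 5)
Your proof is correct and follows essentially the same route as the paper: both of its arguments (via the term algebra of Theorem~\ref{can} with the co-atom type and Lemma~\ref{flat}(1), and via the rainbow algebra of Theorem~\ref{rainbow} with Lemma~\ref{flat}(2)) are exactly the two proofs the paper gives, with your version merely spelling out the non-principality of the co-atom type and the passage from omission to complete flat representation in more detail.
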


\begin{proof}
We give two proofs depending on our previous two rainbow constructions; one proving non atom canonicty of 
$S\Nr_n\TCA_{n+3}$ and the other priving non-elementarity of any class $\sf K$ containg the completely representable algebras and
contained in $S_c\Nr_n\TCA_{n+3}.$

{\bf First proof:}
Let $\A=\Tm\At$ be the term algebra as in \ref{can}. 
Then $\A$ is countable, simple and atomic. Let $\Gamma'$ be the set of  co-atoms.
Then $\A=\Fm_T$ for some countable consistent $L_m$ complete theory $T$.
Then $\Gamma=\{\phi: \phi_T\in \Gamma'\}$ is not principal.
If it can be omitted in an $m+3$ flat model $\M$, then
this gives an $m+3$ complete flat representation of $\A=\Fm_T$,
which gives
an $m+3$ flat representation of $\mathfrak{Cm}\At\A$. By lemma 
\ref{flat} we get that  $\Cm\At\A\in S\Nr_m\CA_{m+3}$ which contradicts theorem \ref{can}.

{\bf Second proof:} Let $\A$ be the term algebra of the rainbow atom structure of that 
of ${\sf TCA}_{\Z, \N}$  
constructed in theorem \ref{rainbow}. 
Then $\A\notin S_c\Nr_n\TCA_{m+3}$.
Let $T$ be such that $\A\cong \Fm_T$ and let $\Gamma$ be the set of co-atoms.
Then $\Gamma$ cannot be omitted in an $m+3$ flat $\M$,
for else, this gives a complete $m+3$ flat representation of $\A$, which means by lemma \ref{flat}
that $\A\in S_c\Nr_m\TCA_{m+3}$ and this contradicts theorem \ref{rainbow}. 

\end{proof}
The same result on failure of $OTT$ holds for the multi dimensional modal logics 
corresponding to both $\TeCA_m$ and $\sf TemCA_m$ for finite $m>2$.

By noting that \pa\ can win the game $G^{n+3}_{\omega}$ in theorem \ref{can} which is the 
usual $\omega$ rounded atomic game restricted to  $n+3$ nodes (pebbles), though $n+3$ round suffice for him to 
force \pe\ an inconsistent red, this excludes ever  $n+3$ square models omitting types.

This class is substantially larger; for example the first order theory of its 
Kripke  frames can be coded in the packed fragment, which is not the case with  
flat model, basicaly  due to the additional Church Rosser condition of commutativity
of cylindrifiers. 

In fact, it can be shown that for $m=3$ and $k\geq 3$,
it is undecidable to tell whether a finite frame $\F$ has its modal algebra 
$S\Nr_3\TCA_{3+k}$,  that is whether $\F$ is a frame for $\TL_3$ with a proof system, more specifically a Hilbert style axiomatization,
using $3+k$ variables; corresponding to  the equational axiomatization of $\TCA$ where formulas are built up 
from $3$ variable restricted 
atomic formulas, that is, variables occur in the order $x_0, x_1,x_2$, i.e in their natural order,  
applying usual connectives of first order logic and 
$\Box_i$, $i<3$. 

This follows from the fact that $S\sf Ra\CA_5$ 
has the same property and this property lifts to $\sf TCA_3$s,
by discretely topologizing a well known construction of Monk's 
associating to every 
simple atomic $\CA_3$ a simple atomic relation algeba such that the relation algebra embeds into the 
relation algebra reduct  of the constructed cylindriic algebras $\sf CA_3$.

There are finite algebras that have infinite 
$n$ flat representations, for any $m<n$ but not finite ones.
This is not the casse with square models. 

Any 
finite algebra  that has an $n>m$  square model has a finite one because 
the packed fragment of finite variable first order logic has the finite model property.
However, the equational theory of its modal algebras in undecidable.

\subsection{Various notions of representabilty}

Here we restrict ourselves to 
Alexandov topologies stimulated by pre-order on the base of Kripke frames and 
we deal with diamonds that are dual to boxes 
(the interior operators).

The definition of atomic networks on atomoc algebras 
are obtained  from those for  $\CA$s by adding a further consistency condition, namely, 
$N(\delta^i_d)\leq \Diamond_i N(\delta)$, when $d\leq \delta(i)$.

Now we introduce quite an interesting class that 
lies strictly between $\sf SRTCA_n$ and the 
class of atomic completely additive representable algebras in $\sf RTCA_n$ 
which we call weakly representable algebras and denote by $\sf TWCA_n$. 

The introduction of this new elementary class is  motivated by its relation algebra anolgue proposed by Goldblatt and further invesitigated
by Hirsch and Hodkinson.
For a cylindric algebra atom structure $\F$ the first order algebra over $\F$ is the subalgebra
of $\Cm\F$ consisting of all sets of atoms that are first order
definable with parameters from $S$. 
${\sf TFOCA_n}$ denotes the class of atomic such algebras of dimension $n$.

This class is strictly bigger than ${\sf SRTCA_n}$.
Indeed, let $\A$ be any the  rainbow term algebra, constructed in the proof of theorem \ref{can};
recall that such an algebra is  obtained by blowing up and blurring the  finite rainbow algebra
$T\CA_{n+1, n}$ proving that
$S\Nr_{n}\TCA_{n+3}$ is not atom canonical.

This algebra was defined using first order formulas in the rainbow signature
(the latter is first order since we had only
finitely many greens).  Though the usual semantics was perturbed, first order logic did not see the relativization, only
infinitary formulas saw it,
and thats why the complex algebra could not be represented.
Other simpler examples, that can be topologized, are given in \cite{weak, Hodkinson}.

These examples all show that ${\sf SRTCA_n}$ is properly contained in ${\sf TFOCA_n}.$
Another way to view this is to notice that ${\sf TFOCA_n}$ is elementary, by definition, 
that ${\sf STRCA_n}\subseteq {\sf TFOCA_n}$, but
but as mentioned above one can obtain by discretly topologizing  Monk-like algebras, denoted by $\M(\Gamma)$ in \cite{HHbook2}
based on Erdos' graphs 
that ${\sf STRCA_n}$ is not elementary as proved by Hirsh and Hodkinson for $\CA$s.

Fix finite $n>2$. 
Let $\sf CRTCA_n$ denote the non elementary class of completely representable algebras, $\sf TLCA_n$
denotes the class satifying 
the {\it diamond  Lyndon conditions}. Here, however, 
the  $m$th Lyndon condition is a first order sentence that codes 
that \pe\ has a \ws\ in the $m$ rounded 
atomic game  defined like the $\CA$ case, but here we have the  extra diamond move, namely,

\pa\ chooses $i<n$, $a\in \At\A$, a tuple $\bar{x}$ and a previously played
atomic network $N$
such that $N(\bar{x})\leq \Diamond_ia$. Now \pe\ has to refine $N$ by a network $M$ 
and $\bar{y}\in M$ possible xtending the pre-order $\leq$ to $\leq_i$ 
with $\bar{x}\equiv_i \bar{y}$ and $x_i\leq_i y_i$ such that 
$M(\bar{y})=a$. 

The Lyndon conditions characterizes representability for finite algebras but if 
an infinite algebra satisfies Lyndon conditions
then all we can conclude is that it is elementary equivalent
to a completely representable algebra. 

Using  this observation 
it can be easily shown
that the elementary closure of $\sf CRTCA_n$, $EL\sf CRTRA_n$ for short,  coincides with $\sf TLCA_n$. 
Recall that the former class is not elementary by theorem \ref{rainbow}, while $\sf TLCA_n$ is elementary by definition.

$\sf TWRCA_n$ 
is the class of weakly representable algebras defined simply 
to be the  class consisting 
of atomic algebras in $\sf RTCA_n$.  This class is obviously elementary because atomicity is a first order definable property.
and $\sf RTCA_n$ is a variety.

It can be proved using Monk-like algebras that the elementary closure of all such classes is not finitely 
axiomatizable \ref{complete} and  an that $El\sf CRTCA_n=\sf TLCA_n$. 

%For a graph $\Gamma$ with chromatic number $\chi(\Gamma)$, 
Let t $\M(\Gamma)$ be the the discretely topologized Monk like algebras constructed in \cite{HHbook2} based on $\Gamma$.

For an undirected graph $\Gamma$ let $\chi(\Gamma)$ denotes its chromatic number
which is 
the size of the smallest finite set $C$ such that there exist a $C$ colouring of $\Gamma$ 
and $\infty$ otherwise. $C$ is a colouring of $\Gamma$ 
if there exits $f:\C\to \Gamma$ such that whenever $(v, w)$ is an edge 
then $f(v)\neq f(w)$.

Recall that $\M(\Gamma)$ is {\it good} if $\chi(\Gamma)=\infty$ this is equivalent to its representability, 
othewise, that is when $\chi(\Gamma)<\infty$, 
it is {\it bad}. This means that $\Gamma$ has a finite colouring prohibiting
a representation for $\M(\Gamma)$.

To show that $\sf SRTCA_n$ is not finitely axiomatizable one constructs a sequence 
of algebras that are weakly but not strongly representable with an ultraproduct
that is in $El(\sf SRTCA_n)$; which is a plausible task and not very hard to accomplish.
Roughly one constructs Monk-like  algebras based on graphs with arbirary large chromatic number converging to one 
with infinite chromatic number (via an ultraproduct construction). 

Monk's original algebras can be seen this way.
Indeed it not hard to show that the limit of  Monk's original finite 
algebars \cite{HMT2} is strongly representable; by observing first that it is atomic because it is an ultarproduct of atomic algebras.
In fact, the limit is completely representable.
Now we have the strict inclusions, that can be proverd the topological coiunterpat of the arguments in  \cite{HHbook2}:

$$\sf CRTCA_n \subseteq \sf TLCA_n \subseteq \sf SRCTA_n \subseteq \sf TFOCA_n \subseteq \sf WRTCA_n.$$

Strictly speaking, \cite{HHbook2} did 
not deal with $\sf TFOCA_n$, but the relation algebra analogue of 
such a class is investigated in 
\cite{HHbook}, and the thereby obtained results lifts to the $\CA$ case without much ado. 

Last inclusion follows from the following  $\sf RA$ to $\CA$ adaptation of an example of 
Hirsch and Hodkinson which we use to show that
${\sf TFOCA_n}\subset  {\sf WRCA_n}$ and it will  be used for another purpose as well. 
We note that the strictness of the inlusion is 
not so obvious because they are both elementary.

\begin{example}\label{fo}
Take an $\omega$ copy of the  $n$ element graph with nodes $\{1,2,\ldots, n\}$ and edges
$1\to 2\to\ldots\to n$. Then of course $\chi(\Gamma)<\infty$. Now  $\Gamma$  has an $n$ first order definable colouring.
Since $\M(\Gamma)$ as defined above and in \cite[top of p. 78]{HHbook2} is not representable, then the algebra of first order
definable sets, call it $\A$, is also not representable because $\Gamma$ is first order interpretable in
$\rho(\Gamma)$, the atom structure constructed from $\Gamma$ as defined in \cite{HHbook2}.
However, it can be shown that the term algebra is representable. (This is not so easy to prove).

Now $\At\Tm\A =\At\A$ and $\A$ is not representable, least strongly representable.
But since $\sf SRCA_n\subseteq \sf FOCA_n$, then 
$\At(\sf SRTCA_n)$ grips $\sf TFOCA_n$ 
but it does not grip  
$\sf WRTCA_n$, for $\A$ is not strongly representable.
\end{example}

Foe a class of algebras $\K$ having a Boolean reduct $\K\cap \At$ denotes the class of atomic algebras in $\K$. 
\begin{theorem}
Let $n>2$ be finite. Then we have the following inclusions (note that $\At$ commutes with ${\bf UpUr})$:
$$\Nr_n\TCA_{\omega}\cap \At\subset El\Nr_n\TCA_{\omega}\cap \At$$
$$\subset {El}\bold S_c\Nr_n\TCA_{\omega}\cap \At={El}{\sf CRTCA_n}={\sf TLCA}_n\subset {\sf SRTCA_n}$$
$$\subset
{Up}{\sf SRTCA_n}={Ur}{\sf  STRCA_n}={El}{\sf STRRCA}_n\subseteq {\sf TFOCA_n}$$
$$\subset S\Nr_n\TCA_{\omega}\cap \At={\sf WRTCA_n}= \RCA_n\cap \At.$$
\end{theorem}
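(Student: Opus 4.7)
\medskip

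\noindent\textbf{Proof plan.} The strategy is to walk along the chain left to right, verifying each inclusion and turning each equality and each strict inclusion into a separate sub-claim that can be either quoted from, or derived by a small adaptation of, the results already established in the excerpt. All inclusions $\subseteq$ from left to right are either trivial (e.g.\ $\Nr_n\TCA_{\omega}\subseteq S_c\Nr_n\TCA_{\omega}\subseteq S\Nr_n\TCA_{\omega}$) or amount to the observation that the class on the right is elementary (so it absorbs $El$, $Up$, $Ur$). Since ${\sf WRTCA}_n$ and ${\sf TFOCA}_n$ are first-order definable, the terminal inclusion reduces to ${\sf TFOCA}_n\subseteq {\sf WRTCA}_n={\sf RCA}_n\cap\At$, and the latter equality is just the definition of weak representability restricted to atomic algebras, combined with the neat embedding theorem $\sf RTCA_n=S\Nr_n\TCA_\omega$.

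\medskip

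\noindent The equalities in the middle of the chain I would handle as follows. For $El\,S_c\Nr_n\TCA_\omega\cap\At=El\,{\sf CRTCA}_n$, I invoke Theorems \ref{complete} and \ref{completerepresentation}: every completely representable algebra lies in $S_c\Nr_n\TCA_\omega$, and every \emph{countable} atomic member of $S_c\Nr_n\TCA_\omega$ is completely representable; taking elementary closures makes the two classes coincide. For $El\,{\sf CRTCA}_n={\sf TLCA}_n$, I recall that the $k$-th Lyndon condition is a first-order sentence coding that \pe\ wins the $k$-round atomic game $G_k$, and then use a standard ultrapower-and-elementary-chain argument (already sketched in the proof of Theorem~\ref{rainbow}) showing that an atomic algebra satisfying all Lyndon conditions has a countable elementary equivalent on which \pe\ wins the $\omega$-rounded game, hence is completely representable by \cite[Thm.~3.3.3]{HHbook2}. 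For $Up\,{\sf SRTCA}_n=Ur\,{\sf STRCA}_n=El\,{\sf STRCA}_n$, I use that ${\sf STRCA}_n$ is closed under ultraroots (as the atom structure of a subalgebra of an ultraroot is a bounded morphic image) and the general model-theoretic fact that $El\,K=Up\,Ur\,K$ for any class $K$ closed under ultraroots.

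\medskip

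\noindent For the four strict inclusions I assemble witnesses. Strictness of $\Nr_n\TCA_\omega\cap\At\subsetneq El\,\Nr_n\TCA_\omega\cap\At$ follows from the game $H$ of Remark~\ref{game}: the rainbow algebra $\A$ used in Theorem~\ref{rainbow} has a countable elementary equivalent $\B\in\Nr_n\TCA_\omega$ (obtained by the ultrapower/elementary chain construction), while $\A$ itself is not in $\Nr_n\TCA_\omega$; after the standard static topologizing / static temporalizing move we get the same witness in the topological signature. Strictness of $El\,\Nr_n\TCA_\omega\cap\At\subsetneq El\,S_c\Nr_n\TCA_\omega\cap\At$ is witnessed by Example~\ref{finitepa}: the topologized finite Monk matrix algebra is finite (hence elementarily equivalent only to itself), lies in $S_c\Nr_n\TCA_\omega$ by complete representability, but is not in $\Nr_n\TCA_\omega$ since otherwise it would have a $3$-homogeneous complete representation forbidden by non-permutationality of the underlying relation algebra. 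Strictness of ${\sf TLCA}_n\subsetneq {\sf SRTCA}_n$ is witnessed by the rainbow term algebra $\A$ of Theorem~\ref{can}: it is strongly representable (this was argued inside the proof of Theorem~\ref{complete}(3), because \pe\ wins each finite-round atomic game, so $\Cm\At\A$ is representable), yet $\A$ is not completely representable (any complete representation of $\A$ would induce a representation of $\Cm\At\A$ lying in $S\Nr_n\TCA_{n+3}$, contradicting Theorem~\ref{can}); therefore $\A\notin{\sf TLCA}_n$ after adjusting for ultrapowers via the Lyndon-condition characterization. Strictness of ${\sf SRTCA}_n\subsetneq Up\,{\sf SRTCA}_n$ is obtained by topologizing the Hirsch--Hodkinson Monk-like sequence $\M(\Gamma_k)$ based on graphs $\Gamma_k$ of growing finite chromatic number whose ultraproduct has infinite chromatic number: each $\M(\Gamma_k)$ is not strongly representable (finite colouring blocks representability), while the ultraproduct is, exactly as in \cite{strong}. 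Finally, strictness of ${\sf TFOCA}_n\subsetneq{\sf WRTCA}_n$ is witnessed by Example~\ref{fo}: its term algebra is representable (weakly representable) while its first-order definable subalgebra is not, because the graph $\Gamma$ has a first-order definable finite colouring which obstructs representability at the first-order level.

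\medskip

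\noindent The main obstacle, and the only place where real work (as opposed to bookkeeping) has to be done, is the strictness ${\sf SRTCA}_n\subsetneq Up\,{\sf SRTCA}_n$: one must verify that Hirsch and Hodkinson's probabilistic construction of graphs with arbitrarily large but finite chromatic number converging to one with infinite chromatic number survives the discrete topologization, and that the Monk-like algebras $\M(\Gamma_k)$ equipped with the $n$ identity interior operators still form a chain of non-strongly-representable algebras whose ultraproduct is strongly representable as a topological cylindric algebra. The key point is that discrete topologization neither creates nor destroys representations, and the colouring/clique argument that blocks representation of $\Cm\At\M(\Gamma_k)$ does not involve the interior operators at all, so it lifts verbatim; similarly the representability of the ultraproduct, obtained by giving each base the discrete topology, is preserved. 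The remaining sub-claims are either routine or already proved in the excerpt, so the whole theorem reduces to stitching together these witnesses into a single display of strict inclusions and equalities.
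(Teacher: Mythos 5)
Your overall plan — walk the chain and produce a witness for each strict inclusion — is the same as the paper's, and your handling of the equalities, of the second strict inclusion (via the finite algebra of Example \ref{finitepa}; the paper uses Example \ref{SL}, and both work), and of the last strict inclusion (Example \ref{fo}) is sound. However, your witness for the first strict inclusion does not work. You claim the rainbow algebra of Theorem \ref{rainbow} is elementarily equivalent to some $\B\in\Nr_n\TCA_{\omega}$ via the game $H$ of Remark \ref{game}. The paper explicitly states the opposite: \pe\ does \emph{not} have a winning strategy in $H_k$ for every finite $k$ on $\CA_{\N^{-1},\N}$, and the existence of an algebra with that combination of properties is left as an open question; the ultrapower/elementary-chain argument only places the equivalent algebra in $S_c\Nr_n\TCA_{\omega}$ (this is exactly the error in \cite{r} that the paper points out). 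The correct witness is the splitting construction of \cite[Theorem 5.1.4]{Sayedneat}, elaborated in Theorem \ref{maintheorem}(2): two atomic algebras $\A\equiv_{\infty}\B$ with $\A$ a full neat reduct and $\B\notin\Nr_n\TCA_{n+1}$, obtained by splitting atoms into uncountably versus countably many copies.

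Two further witnesses are wrong. For ${\sf TLCA}_n\subset{\sf SRTCA}_n$ you assert that the term algebra of Theorem \ref{can} is strongly representable; it is not — the entire content of Theorem \ref{can} is that $\Cm\At\A\notin S\Nr_n\TCA_{n+3}$, so that algebra is weakly but \emph{not} strongly representable (you have conflated it with the rainbow algebra of Theorem \ref{rainbow}), and your inference ``not completely representable, hence not in ${\sf TLCA}_n$'' is also invalid since ${\sf TLCA}_n={El}\,{\sf CRTCA}_n$ contains algebras that are not completely representable. A correct witness is a Monk algebra $\M(\Gamma)$ on a graph of infinite chromatic number and sufficiently large girth: it is (strongly) representable but fails the $k$th Lyndon condition for $k$ at least the relevant Ramsey number, because \pa\ can force a monochromatic triangle (see the proof of Theorem \ref{maintheorem}(7)). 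For ${\sf SRTCA}_n\subset{Up}\,{\sf SRTCA}_n$ you have the ultraproduct the wrong way round: you need an ultraproduct of \emph{strongly representable} algebras that is \emph{not} strongly representable, i.e.\ the Erd\H{o}s-graph sequence of graphs with infinite chromatic number and increasing girth converging to a $2$-colourable graph, as in \cite{strong}. The sequence you describe (finite chromatic numbers whose ultraproduct has infinite chromatic number) is Monk's original construction; it shows the complement of ${\sf SRTCA}_n$ is not closed under ultraproducts, which yields non-finite-axiomatizability but not the strictness required here — so the step you single out as ``the main obstacle'' is precisely the one your argument does not establish.
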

\begin{proof}
The majority of  inclusions, and indeed their strictness,
can be distilled without much difficulty from our previous work.
It is known that ${\bf Up}{\sf STRCA_n}={\bf Ur}{\sf  STRCA_n}$ \cite{strong, HHbook}. 

The first  inclusion is witnessed by a slight modification
of the algebra $\B$ used in the proof of \cite[Theorem 5.1.4]{Sayedneat},
showing that for any pair of ordinals $1<n<m\cap \omega$, the class $\Nr_n\CA_m$ is not elementary.

In the constructed model $\sf M$ \cite[lemma 5.1.3]{Sayedneat} on which (using the notation in {\it op.cit}),
the two algebras $\A$ and $\B$  are based,
one requires (the stronger) that the interpretation of the $3$ ary relations symbols in the signature
in $\sf M$ are {\it disjoint} not only distinct as above.
Atomicity of $\B$ follows immediately,  since its Boolean reduct  is now a product of atomic algebras.
For $n=3$ these are denoted by $\A_u$ except for one countable component $\B_{Id}$, $u\in {}^33\sim \{Id\}$, cf. \cite{Sayedneat}
p.113-114. Second inclusion follows from example \ref{SL}. Third follows from theorem \ref{can},  
and fourth  inclusion follows from theorem \ref{rainbow}. 
Last one follows from example \ref{fo}.
\end{proof}

We characterize the class of strongly representable atom structures via neat embeddings, modulo an
{\it inverse} of Erdos' theorem.

For an atomic algebra $\A$, by an atomic subalgebra we mean a subalgebra of $\A$ containing all its atoms, equivalently a superalgebra of
$\Tm\At\A$.
We write  $S_{at}$ to denote this operation applied to an algebra or to a
class of algebras. $\M(\Gamma)$ denotes the Monk algebra based on the graph
$\Gamma$ as in \cite{HHbook2}.
Notice that although we have ${\sf UpUr}{\sf STRCA}_n\subseteq {\sf TFOCA_n}$, the latter is not closed under forming atomic subalgebras, since
forming subalgebras does not preserve first order sentence.
%Notice too that ${\sf Up}{\sf SRCA_n}={\sf Ur}{\sf SRCA_n}$, and hence both are elementary.
The next characterization therefore seems to be plausible.
Hoever it is formulaed only for cyindric algebras. The notion is obtained by removing the ' $T$ ' to get the corresponding class of cylindic algebra,
for example $\sf SRCA_n=\Rd_{ca}\sf SRTCA_n$, because ever $\CA$ 
can be expanded with the interior idenity operations. The other inclusion is problematic for 
there are algebras in $TRCA_n$ that are not completely additive.

\begin{theorem} Assume that for every atomic representable algebra that is not strongly representable, there exists 
a graph $\Gamma$ with finite
chromatic number such that $\A\subseteq \M(\Gamma)$ and $\At\A=\rho(\M(\Gamma))$.
Assume also that for every graph $\Gamma$ with $\chi(\Gamma)<\infty$, there exists
$\Gamma_i$ with  $i\in \omega$, such that $\prod_{i\in F}\Gamma_i=\Gamma$, for some non principal ultrafilter $F$.
Then $S_{at}{\sf Up}{\sf SRCA}_n={\sf WRCA_n}=S\Nr_n\CA_{\omega}\cap \At.$
\end{theorem}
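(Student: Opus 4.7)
The plan is to establish both equalities by handling them separately, with the easy equality $\sf WRCA_n = S\Nr_n\CA_{\omega}\cap \At$ essentially definitional and the characterization $S_{at}{\sf Up}\,\sf SRCA_n = \sf WRCA_n$ carrying the real content. The first equality is immediate: $\sf WRCA_n$ was introduced as the atomic algebras in $\sf RCA_n$, and by Henkin's neat embedding theorem $\sf RCA_n = S\Nr_n\CA_{\omega}$, so that intersecting with the elementary class $\At$ of atomic algebras gives the equality on the right.

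Next I would verify the inclusion $S_{at}{\sf Up}\,\sf SRCA_n \subseteq \sf WRCA_n$. Every $\A \in \sf SRCA_n$ is atomic by definition, and since $\A \subseteq \Cm\At\A$ with $\Cm\At\A$ representable, $\A$ itself is representable, so $\sf SRCA_n \subseteq \sf WRCA_n$. The class $\sf WRCA_n$ is elementary (atomicity is first-order and $\sf RCA_n$ is a variety), hence closed under ultraproducts. Finally, representability is preserved under subalgebras, and atomic subalgebras are atomic by definition, so $\sf WRCA_n$ is also closed under the operator $S_{at}$; the inclusion follows.

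For the reverse inclusion $\sf WRCA_n \subseteq S_{at}{\sf Up}\,\sf SRCA_n$, take $\A \in \sf WRCA_n$. If $\A$ is already strongly representable we are done. Otherwise the first hypothesis supplies a graph $\Gamma$ with $\chi(\Gamma) < \infty$ such that $\A \subseteq \M(\Gamma)$ and $\At\A = \rho(\M(\Gamma))$, which means precisely that $\A$ is an atomic subalgebra of $\M(\Gamma)$. The second hypothesis then provides graphs $\Gamma_i$ ($i<\omega$) and a non-principal ultrafilter $F$ on $\omega$ with $\prod_F \Gamma_i \cong \Gamma$; read as an inverse of Erd\H{o}s's theorem, the clause forces that the ultrafactors may be taken with $\chi(\Gamma_i) = \infty$. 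By the Hirsch--Hodkinson theorem each $\M(\Gamma_i)$ is then strongly representable, so $\M(\Gamma_i) \in \sf SRCA_n$. Since the Monk construction $\Gamma \mapsto \M(\Gamma)$ is uniformly first-order definable from $\Gamma$, \L o\'{s}'s theorem gives $\prod_F \M(\Gamma_i) \cong \M(\prod_F \Gamma_i) \cong \M(\Gamma)$, placing $\M(\Gamma)$ in ${\sf Up}\,\sf SRCA_n$ and hence $\A$ in $S_{at}{\sf Up}\,\sf SRCA_n$, as required.

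The main obstacle, and the reason both hypotheses must be stated carefully, is the legitimacy of the two uses just made. The second hypothesis as literally stated does not force $\chi(\Gamma_i) = \infty$; one must argue (as the phrase ``inverse of Erd\H{o}s's theorem'' indicates) that such a decomposition with infinite-chromatic ultrafactors can indeed be arranged, for otherwise \L o\'{s}'s theorem on chromatic number would make the decomposition useless. Secondly, one must verify that $\Gamma \mapsto \M(\Gamma)$ commutes with ultraproducts up to isomorphism, which depends on the precise first-order coding of atoms and Monk constraints from $\Gamma$ in \cite{HHbook2}; this is standard but needs to be written down in the formalism of that reference. With those two points settled, the sandwich above closes and both equalities are proved.
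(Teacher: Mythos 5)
Your proposal is correct and follows essentially the same route as the paper: the substantive direction is exactly the paper's argument (pass from $\A$ to $\M(\Gamma)$ via the first hypothesis, decompose $\Gamma$ as an ultraproduct of infinite-chromatic graphs via the second, invoke Hirsch--Hodkinson to place each $\M(\Gamma_i)$ in ${\sf SRCA}_n$, and use $\prod_F\M(\Gamma_i)\cong\M(\prod_F\Gamma_i)$ to conclude $\A\subseteq_{at}\prod_F\M(\Gamma_i)$). You are somewhat more complete than the paper, which omits the easy equality ${\sf WRCA}_n=S\Nr_n\CA_\omega\cap\At$ and the reverse inclusion, and you rightly flag that the second hypothesis must be read as supplying ultrafactors of infinite chromatic number — the paper simply asserts this in its proof.
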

\begin{proof}
Assume that $\A$ is atomic, representable but not strongly representable. Let $\Gamma$ be a graph with $\chi(\Gamma)<\infty$ such that
$\A\subseteq \M(\Gamma)$ and $\At\A=\rho(\M(\Gamma))$.
Let $\Gamma_i$ be a sequence of graphs each with infinite chromatic number converging to $\Gamma$, that is, their ultraproduct
is $\Gamma$
Let $\A_i=\M(\Gamma_i)$. Then $\A_i\in {\sf SRSA_n}$, and we have:
$$\Pi_{i\in \omega}\M(\Gamma_i)=\M(\Pi_{i\in \omega} \Gamma_i)=\M(\Gamma).$$
And so $\A\subseteq_{at} \prod_{i\in \omega}\A_i$, and we are done.
\end{proof}

\subsection{Neat embeddings}

For the well known definitions of pseudo universal and pseudo elementary classes,
the reader is referred  to \cite[definition 9.5, definition 9,6]{HHbook2}.
In fact all our results below hold for any class whose signature is between
$\Sc$ and $\PEA$. (Here $\sf Df$ is not counted in
because the notion of neat reducts for this class is trivial \cite[Theorem 5.1.31]{HMT2}).

One can topologize rainbow atom structures by
defining for bijections $f, g:n \to \Gamma$, $\Gamma$ a coloured graph, and for $i<n$
$[f] In_i[g]$ iff $f=g$.

\begin{theorem}\label{maintheorem}
\begin{enumarab}

\item For $n>2$, the inclusions $\Nr_n\TCA_{\omega}\subseteq S_c\Nr_n\TCA_{\omega}\subseteq S\Nr_n\TCA_{\omega}$ are proper.
The first strict inclusion can be witnessed by a finite algebra for $n=3$, while the second cannot by witnessed by a finite
algebra for any $n$.
In fact, $m>n>1$, the inclusion $\Nr_n\TCA_m\subseteq S_c\Nr_n\TCA_m$ is proper
and for $n>2$ and  $m\geq n+3$ the inclusion $S_c\Nr_n\TCA_m\subseteq S\Nr_n\TCA_m$ is also proper.

\item For any pair of ordinals $1<\alpha<\beta$ the class $\Nr_{\alpha}\TCA_{\beta}$ is not elementary.
In fact, there exists an uncountable atomic algebra $\A\in \Nr_{\alpha}\QEA_{\alpha+\omega}$, hence $\A\in \Nr_{\alpha}\QEA_{\beta}$
for every $\beta>\alpha$, and $\B\subseteq_c \A$,
such that $\B$ is completely representable, $\A\equiv \B$,  so that $\B$ is also atomic,
but $\Rd_{sc}\B\notin \Nr_{\alpha}\Sc_{\alpha+1}$.
For finite dimensions, we have $\At\A\equiv_{\infty} \At\B$.

\item For finite $n$, the elementary theory of $\Nr_n\TCA_{\omega}$ is recursively enumerable.

\item For $n>1$, the class ${\sf S}_c\Nr_n\TCA_{\omega}$ is not elementary (hence not pseudo-universal)
but it is pseudo-elementary, and the  elementary theory of $S_c\Nr_n\TCA_{\omega}$ is  recursively enumerable.

\item For $n>1$, the class ${\sf S}_c\Nr_n\TCA_{\omega}$ is closed under forming
strong subalgebras but is
not closed under forming subalgebras

\item For $n>2$, the class ${\sf UpUr}{\sf S}_c\Nr_n\TCA_{\omega}$ is not finitely axiomatizable. For any $m\geq n+2,$
$S\Nr_n\TCA_m$
is not finitely axiomatizable and for $p\geq 2$, $\alpha>2$ (infinite included) and $\TCA$
the variety $S\Nr_{\alpha}\TCA_{\alpha+2}$ cannot be finitely axiomatized by a universal set of
formulas containing only finitely many variables.

\item For $n>2$, both ${\sf UpUr}\Nr_n\TCA_{\omega}$ and ${\sf UpUr}S_c\Nr_n\TCA_{\omega}$ are properly contained in $\sf RK_n$;
by closing under forming subalgebras both
resulting classes coincide with $\sf RK_n.$
Furthermore, the former is properly contained in the latter.
\end{enumarab}
\end{theorem}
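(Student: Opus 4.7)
The seven items split naturally into three bundles, each drawing on a different tool already developed: items (1), (2), (7) rest on the rainbow algebras of Theorems \ref{can} and \ref{rainbow} together with Examples \ref{SL} and \ref{finitepa}; items (3) and (4) are handled by the neat game $H$ of Remark \ref{game} and a two-sorted description of $S_c\Nr_n\TCA_\omega$; items (5) and (6) combine non-closure arguments from the rainbow construction with Monk-style sequences. Throughout I exploit the discrete-topologizing / static-temporalizing trick of Section~3 so that the cylindric results we rely on transfer verbatim.

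\textbf{First bundle: strictness of the chain and non-elementarity.} For item~(1), the strict inclusion $\Nr_n\TCA_\omega \subsetneq S_c\Nr_n\TCA_\omega$ is witnessed uncountably by the topologization of the field-based algebra $\A$ of Example~\ref{SL} and, for $n=3$, finitely by Example~\ref{finitepa}: in both cases the algebra is completely representable (hence lies in $S_c\Nr_n\TCA_\omega$) but $\Rd_{sc}\A \notin \Nr_n\Sc_{n+1}$ by the first-order separating formula written out in Example~\ref{SL}. The strict inclusion $S_c\Nr_n\TCA_\omega \subsetneq S\Nr_n\TCA_\omega$ is witnessed by the rainbow term algebra $\A$ of Theorem~\ref{rainbow}: it is representable hence in $S\Nr_n\TCA_\omega$, yet \pa\ has a \ws\ in $F^{n+3}$, placing $\A$ outside $S_c\Nr_n\TCA_{n+3}$. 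A finite algebra cannot witness this second inclusion because on finite Boolean algebras $S$ and $S_c$ coincide, so any finite algebra in $S\Nr_n\TCA_m$ is automatically in $S_c\Nr_n\TCA_m$. For item~(2), the same construction gives the result: working with the uncountable rainbow variant of Example~\ref{SL}, one realizes $\A \in \Nr_\alpha\QEA_{\alpha+\omega}$, hence $\A \in \Nr_\alpha\QEA_\beta$ for every $\beta > \alpha$, while the standard ultrapower plus countable elementary-chain construction of Theorem~\ref{rainbow} produces $\B \subseteq_c \A$ that is completely representable with $\A \equiv \B$ but $\Rd_{sc}\B \notin \Nr_\alpha\Sc_{\alpha+1}$. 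In the finite-dimensional case the back-and-forth generated by \pe 's \ws\ in the atomic game yields $\At\A \equiv_{\infty\omega} \At\B$. For item~(7), since both $\sf UpUr\Nr_n\TCA_\omega$ and $\sf UpUr S_c\Nr_n\TCA_\omega$ are elementary, they are stable under ${\bf UpUr}$, and the rainbow algebra of Theorem~\ref{can} gives an $\sf RK_n$-member not in either; closing under $S$ yields $\sf RK_n$ on both sides by the neat embedding theorem $\sf RK_n = S\Nr_n\TCA_\omega$ applied to the fact that every representable algebra is a subalgebra of a neat reduct of an $\omega$-dimensional one. Strictness of $\sf UpUr\Nr_n\TCA_\omega \subsetneq \sf UpUr S_c\Nr_n\TCA_\omega$ uses Example~\ref{SL} once more, since elementary equivalence preserves the separating first-order formula.

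\textbf{Second bundle: pseudo-elementarity and recursive enumerability.} For (4), I present $S_c\Nr_n\TCA_\omega$ two-sortedly: one sort for the $\TCA_n$ candidate $\A$, another for a putative dilation $\D \in \TCA_\omega$, with a binary relation symbol coding a complete embedding of $\A$ into $\Nr_n\D$. Locally finite dilations and complete embeddings are each captured by first-order schemes in the two-sorted signature, so the reduct to the $\A$-sort is pseudo-elementary, and the standard Shelah-style argument gives that its elementary theory is recursively enumerable. Non-elementarity of $S_c\Nr_n\TCA_\omega$ is immediate from Theorem~\ref{rainbow}. For (3), I would adapt the neat game $H$ of Remark~\ref{game} to $\TCA_n$ (the interior operators impose only the trivial additional diamond move on atomic networks since we may choose the discrete topology on the dilation): \pe\ has a \ws\ in each $H_k$ on $\A$ iff $\A$ is elementarily equivalent to a member of $\Nr_n\TCA_\omega$, and since each $H_k$ is first-order expressible by a single sentence $\sigma_k$, the set $\{\sigma_k : k<\omega\}$ is a recursive axiomatization of the elementary closure of $\Nr_n\TCA_\omega$.

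\textbf{Third bundle: closure properties and non-finite axiomatizability.} For (5), closure of $S_c\Nr_n\TCA_\omega$ under $S_c$ is immediate from the definition since complete subalgebras of complete neat subreducts are complete neat subreducts. Non-closure under arbitrary subalgebras uses the rainbow algebra $\A$ of Theorem~\ref{rainbow}: $\A \equiv \B$ with $\B$ completely representable, so the canonical extension $\A^+ \cong \B^+$ lies in $S_c\Nr_n\TCA_\omega$ (canonical extensions of complete neat embeddings of atomic algebras remain such), while $\A \hookrightarrow \A^+$ exhibits $\A$ as a subalgebra of a member of $S_c\Nr_n\TCA_\omega$ without itself belonging. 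For (6), I exhibit a sequence of topologized Monk-like algebras $\A_i$ (as indicated in the bullet points following Theorem~\ref{ud}) with $\A_i \notin S\Nr_n\TCA_{n+k+1}$ but with ultraproduct in $\sf RTCA_n \subseteq S\Nr_n\TCA_{n+k+1}$; this proves non-finite axiomatizability of $S\Nr_n\TCA_{n+k}$, from which the non-finite axiomatizability of $\sf UpUr S_c\Nr_n\TCA_\omega$ follows because $S_c\Nr_n\TCA_\omega \subseteq S\Nr_n\TCA_{n+k}$ for all finite $k$, and the rainbow algebra of Theorem~\ref{can} provides elementary-equivalence witnesses inside $\sf UpUr S_c\Nr_n\TCA_\omega$ against any finite set of axioms. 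The refinement for $\alpha>2$ infinite, and the exclusion of finite-variable universal axiomatizations over $S\Nr_\alpha\TCA_{\alpha+2}$, is handled by a compactness-plus-downward-L\"owenheim argument: any finite-variable universal axiomatization would transfer to finite $\alpha$-reducts, contradicting the cylindric result of Hirsch--Hodkinson cited in Theorem~\ref{tense} once topologized.

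\textbf{Main obstacle.} The principal difficulty is item~(7)(c)--(d) and the interaction between (4) and (5): showing that $\sf UpUr\Nr_n\TCA_\omega \subsetneq \sf UpUr S_c\Nr_n\TCA_\omega$ requires a separating first-order formula that survives ultrapowers and ultraroots, whereas all our concrete separations (especially via Example~\ref{SL}) are witnessed by non-elementary properties like completeness of joins. I expect to spend most of the work refining the two-sorted description of $S_c\Nr_n\TCA_\omega$ so that the neat-embedding witness is genuinely first-order over the $\A$-sort, then combining this with a careful elementary chain built from \pe 's \ws\ in the $H$-game to pin down the elementary equivalence class of Example~\ref{SL}'s uncountable neat reduct.
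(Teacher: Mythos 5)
Your overall architecture for items (1), (6) and (7) matches the paper's, but there are several genuine gaps, the most serious being item (2). You propose to obtain the pair $\A\in\Nr_{\alpha}\QEA_{\alpha+\omega}$, $\B\equiv\A$, $\Rd_{sc}\B\notin\Nr_{\alpha}\Sc_{\alpha+1}$ from ``the uncountable rainbow variant of Example \ref{SL}'' together with the elementary-chain construction of Theorem \ref{rainbow}. Neither source can deliver this. Example \ref{SL} exhibits an explicit \emph{first-order} sentence valid throughout $\Nr_{\alpha}\QEA_{\beta}$ and failing in its algebra $\A$, so that algebra is not elementarily equivalent to \emph{any} full neat reduct; and the rainbow/elementary-chain argument of Theorem \ref{rainbow} only places the good algebra $\B$ in $S_c\Nr_n\TCA_{\omega}$, never in $\Nr_n\TCA_{\omega}$ --- the paper explicitly warns (in the remark on the game $H$) that this upgrade is precisely what the atomic games cannot achieve, and that the analogous deduction in the relation-algebra literature is a mistake. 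The paper instead uses a wholly different construction: a finite atom structure whose atoms are split twice, once each into uncountably many pieces (yielding a full neat reduct $\A$) and once with a single atom split into only countably many (yielding $\B$); Feferman--Vaught plus an \ef\ game on the finite--cofinite components gives $\A\equiv_{\infty}\B$, while the substitution term $_3{\sf s}(0,1)$ detects that $\B$ is not a full neat reduct. Without some such cardinality-twist construction your item (2) does not go through.

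Three further steps fail as written. For item (3), your proposed axiomatization of the elementary closure of $\Nr_n\TCA_{\omega}$ by the game sentences $\sigma_k$ rests on an ``iff'' that is not available: the game $H$ is played on atomic (hyper)networks, so it says nothing about the non-atomic members of $\Nr_n\TCA_{\omega}$, and even on atomic algebras the paper leaves open whether \pe\ wins all $H_k$ exactly on the elementary closure; the paper instead presents $\Nr_n\TCA_{\omega}$ pseudo-elementarily via a recursive three-sorted defining theory (one sort for the $n$-dimensional algebra, one for the Boolean part of the dilation, one for dimensions), from which recursive enumerability is standard. For item (4), your assertion that a \emph{complete} embedding is ``captured by first-order schemes in the two-sorted signature'' is false: preservation of arbitrary suprema quantifies over arbitrary subsets and is not first-order; the paper routes this through the known pseudo-elementarity of the class of completely representable algebras and the identification of the relevant elementary closures with the Lyndon-conditions class. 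For item (5), $\A\equiv\B$ does not yield $\A^{+}\cong\B^{+}$; the correct argument is that the rainbow algebra is strongly representable (satisfies the Lyndon conditions), so by Monk's theorem its canonical extension is completely representable and hence lies in $S_c\Nr_n\TCA_{\omega}$, while $\A$ itself does not.
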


\begin{proof}

\begin{enumarab}

\item The  inclusions are obvious.

The strictness of the inclusion follows from item example \ref{SL} since
$\A$ is dense in $\wp(V)$, hence it is in $S_c\Nr_n\TCA_{\omega}$, but it is not in $\Nr_n\TCA_{n+1}$, {\it a fortiori} it is not
in $\Nr_n\TCA_{\omega}$ for the latter is clearly contained in the former.

For the strictness of the second  inclusion, let $\A$ be the rainbow algebra $\PEA_{\Z, \N}$. Then $\PEA_{\Z, \N}$ is representable,
in fact, it satisfies the Lyndon conditions, hence is also strongly representable, but it is  {\it not} completely
representable, in fact its $\sf Df$ reduct is not completely representable, and its $\Sc$ reduct  is not
in $S_c\Nr_n\Sc_{\omega}$, for had it been in this class, then it would be completely representable, inducing a complete
representation of $\A$.

We have proved that there is a  representable algebra, namely, $\PEA_{\Z, \N}$
such that  $\Rd_{Sc}\PEA_{\Z, \N}\notin S_c\Nr_n\Sc_{\omega}$. The required now follows
by using the neat embedding theorem that says that ${\sf RTCA}_n=S\Nr_n\TCA_{\omega}$
for any $\TCA$ as specified above \cite{HMT1}, indeed we have ${\Rd_K}{\sf PEA}_{\Z, \N}$ is not in $S_c\Nr_n\TCA_{\omega}$ but it is
(strongly) representable, that is, it is  in $S\Nr_n\TCA_{\omega}$.
On the other hand, its canonical extension
is completely representable a 
result of Monk \cite[Corollary 2.7.24]{HMT2}.

For the second part concerning finite $3$ dimensional algebras witnessing the strictness of inclusions by above example. 

The strictness of the last inclusion cannot be witnessed by a finite representable
algebra for any finite dimension $n>2$, because any such algebra, is atomic (of course)
and completely representable, hence it will be necessarily in $S\Nr_n\TCA_{\omega}$.

Concerning the second strictness of inclusions (concerning neat embeddability in finitely many extra dimensions),
the first is witnessed by $\A$ constructed in \ref{SL} or the $\B$ constructed in the next item,
while the second follows by noting  $S_c\Nr_n\TCA_m$ is not elementary while $S\Nr_n\TCA_m$ is a
variety theorem \ref{rainbow}.

\item We consider the case when $\A$ and $\B$ are cylindric algebras as constructed in \cite[Theorem 5.1.3]{Sayedneat}
but discretly topologized. We show that $\A\equiv_{\infty}\B$.
A finite  atom structure,
namely, an $n$ dimensional cartesian square,  with accessibility relations corresponding to the concrete interpretations of
cylindrifiers and diagonal elements, is fixed in advance.

Then its atoms are
split twice.  Once, each atom is split into uncountably many, and once each into uncountably many except for one atom which
is only split  into {\it countably} many atoms. These atoms are called big atoms, which mean that they are cylindrically equivalent to their
original. This is a general theme in splitting arguments.
The first splitting gives an algebra $\A$ that is a full neat reduct of an algebra in arbitrary extra dimensions;
the second gives an algebra $\B$ that is not a full neat reduct
of an algebra in just one extra dimensions, hence in any higher
dimensions. Both algebras are representable, and elementary equivalent
because first order logic cannot see this cardinality twist.
We will show in a minute that their atom structures are $L_{\infty, \omega}$ equivalent.

We, henceforth, work with dimension $3$. The proof for higher finite dimensions
is the same. However, we make a slight perturbation
to the construction in {\it op.cit}; we require that the interpretation of the uncountably ternary
relation symbols in the signature of $\sf M$ on which the set algebras $\A$ and $\B$ are based
are {\it disjoint}, not only distinct \cite[Theorems 5.3.1. 5.3.2]{Sayedneat}.

The Boolean reduct of $\A$ can be viewed as a finite direct product of disjoint Boolean relativizations of $\A$,
denoted in \cite[theorem 5.3.2]{Sayedneat} by $\A_u$; $\A_u$ is the finite-cofinite algebra on a set having the same cardinality as the signature;
it is relativized to $1_u$ as defined in {\it opcit}, $u\in {}^33$.

Each component will be atomic by our further restriction on $\sf M$,
so that $\A$ itself, a product of atomic algebras is also atomic. The language of Boolean algebras can now be expanded
so that $\A$ is interpretable in an expanded structure $\P$,
based on the  same atomic Boolean product. Now $\B$ can be viewed as obtained from $\P$,
by replacing one of the components of the product with an elementary
{\it countable} Boolean subalgebra, and then giving it the same interpretation.
By the Feferman Vaught theorem (which says that replacing in a product one of its components by an elementary
equivalent one, the resulting product remains elementary equivalent to the original product) we have $\B\equiv \A$.
In particular, $\B$ is also atomic.

First order logic will not see this cardinality twist, but a suitably chosen term
not term definable in the language of
$\CA_3$, namely, the substitution operator, $_3{\sf s}(0,1)$  will
witnessing that the twisted algebra $\B$ is not a neat reduct.

The Boolean structure of both algebras are in fact very simple. For a set $X$, let  ${\sf Cof} (X)$ denote
the finite co-finite Boolean algebra on $X$, that is ${\sf Cof}(X)$ has universe $\{a\in \wp(X): |a|<\omega, \text { or } |X\sim a|<\omega\}$.
Let $J$ be any set having the same cardinality as the signature of $\sf M$
so that $J$ can simply be ${\sf M}$.
Then $\A\cong \prod_{u\in {}^33}\A_u$, where $\A_u={\sf Cof}(J)$
and $\B=\prod_{u\in {}^33}\B_u$, where $\B_{Id}={\sf Cof(\sf N)}$ ($\sf N$ can in fact
be any set such that  $|\sf N|<|J|$ but for definiteness let it be the least infinite cardinal)
and otherwise $\B_u=\A_u$.

We show that \pe\ has a \ws\ in an \ef-game over $(\A, \B).$
At any stage of the game, if \pa\ places a pebble on one of
$\A$ or $\B$, \pe\ must place a matching pebble on the other
algebra.  Let $\b a = \la{a_0, a_1, \ldots, a_{n-1}}$ be the position
of the pebbles played so far (by either player) on $\A$ and let $\b b = \la{b_0, \ldots, b_{n-1}}$ be the the position of the pebbles played
on $\B$.  \pe\ maintains the following properties throughout the
game.
\begin{itemize}
\item For any atom $x$ (of either algebra) with
$x. 1_{Id}=0$ then $x \in a_i\iff x\in b_i$.
\item $\b a$ induces a finite partion of $1_{Id}$ in $\A$ of $2^n$
 (possibly empty) parts $p_i:i<2^n$ and the $\b b$
induces a partion of  $1_{Id}$ in $\B$ of parts $q_i:i<2^n$.  $p_i$ is finite iff $q_i$ is
 finite and, in this case, $|p_i|=|q_i|$.
\end{itemize}
It is easy to see that \pe\ can maintain these properties in every round.
Therefore she can win the game.  Therefore $\A\equiv_{\infty}\B$.

\item We  show that $\Nr_n\TCA_{\omega}$ is pseudo-elementary. 
This is similar to the proof of \cite[theorem 21]{r} using a three sorted first order theory.
from which we can infer the elementary theory
$\Nr_n\CA_{\omega}$ is recursively  enumerable for any finite $n$.

To show that $\Nr_n\TCA_{\omega}$  i
s pseudo-elementary, we use a three sorted defining theory, with one sort for a toplogical cylindric algebra of dimension $n$
$(c)$, the second sort for the Boolean reduct of a cylindric algebra $(b)$
and the third sort for a set of dimensions $(\delta)$; the argument is analogous to that of Hirsch used for relation algebra reducts \cite[theorem 21]{r}.
We use superscripts $n,b,\delta$ for variables
and functions to indicate that the variable, or the returned value of the function,
is of the sort of the cylindric algebra of dimension $n$, the Boolean part of the cylindric algebra or the dimension set, respectively.
We do it for $\CA$s.  The other cases can be dealt with in exactly the same way.

The signature includes dimension sort constants $i^{\delta}$ for each $i<\omega$ to represent the dimensions.
The defining theory for $\Nr_n{\sf TCA}_{\omega}$ includes sentences stipulating
that the constants $i^{\delta}$ for $i<\omega$
are distinct and that the last two sorts define
a cylindric algebra of dimension $\omega$. For example the sentence
$$\forall x^{\delta}, y^{\delta}, z^{\delta}(d^b(x^{\delta}, y^{\delta})=c^b(z^{\delta}, d^b(x^{\delta}, z^{\delta}). d^{b}(z^{\delta}, y^{\delta})))$$
represents the cylindric algebra axiom ${\sf d}_{ij}={\sf c}_k({\sf d}_{ik}.{\sf d}_{kj})$ for all $i,j,k<\omega$.
We have have a function $I^b$ from sort $c$ to sort $b$ and sentences requiring that $I^b$ be injective and to respect the $n$ dimensional
cylindric operations as follows: for all $x^r$:
\begin{align*}
I^b({\sf d}_{ij})&=d^b(i^{\delta}, j^{\delta}),\\
I^b({\sf c}_i x^r)&= {\sf c}_i^b(I^b(x^r)),\\
I^b(\Diamond_i x^r)&=\Diamond_i^b(I^b(x^r)).
\end{align*}

Finally we require that $I^b$ maps onto the set of $n$ dimensional elements
$$\forall y^b((\forall z^{\delta}(z^{\delta}\neq 0^{\delta},\ldots (n-1)^{\delta}\rightarrow c^b(z^{\delta}, y^b)=y^b))\leftrightarrow \exists x^r(y^b=I^b(x^r))).$$

In all cases, it is clear that any algebra of the right type is the first sort of a model of this theory.
Conversely, a model for this theory will consist of an $n$ dimensional cylindric algebra type (sort c),
and a cylindric algebra whose dimension is the cardinality of
the $\delta$-sorted elements, which is at least $|m|$.
Thus the three sorted theory defines the class of neat reduct, furthermore, it is clearly recursive.

Finally, if $\TCA$ be a pseudo elementary class, that is
$\K=\{M^a|L: M\models U\}$ of $L$ structures, and $L, L^s, U$ are recursive.
Then there a set of first order recursive theory  $T$ in $L$,
so that for any $\A$ an $L$ structure, we have
$\A\models T$ iff there is a $\B\in \K$ with $\A\equiv \B$. In other words,
$T$ axiomatizes the closure of $\K$ under elementary equivalence, see
\cite[theorem 9.37]{HHbook} for unexplained notation and proof.

\item That $S_c\Nr_n\TCA_{\omega}$ is not elementary follows from theorem 
\ref{rainbow}.
It is pseudo-elementary because of the following reasoning.
For brevity, let ${\sf L}= S_c\Nr_n\TCA_{\omega}\cap \At$, and let $\sf  CRK_n$ denote the class of completely representable algebras.
Let $T$ be the first order theory that axiomatizes ${\sf Up Ur CRK}_n={\sf LCK_n}$.
It suffices to show, since $\sf CRK_n$ is pseudo-elementary \cite{HHbook}, that
$T$ axiomatizes ${\sf UpUr  L}$ as well.
First, note that $\sf CRA_n\subseteq \sf L$.
Next assume that $\A\in {\sf UpUr L}$, then $\A$ has a countable elementary (necessarily atomic)
subalgebra in $\sf L$  which is completely representable by the above argument, and we
are done.

\item We show that $S_c\Nr_n\TCA_{\omega}$ is not closed under forming subalgebras, hence it is not pseudo-universal.
That it is closed under $S_c$ follows directly from the definition.
Consider the  $\Sc$ reduct of either ${\sf PEA}_{\Z, \N}$ or $\PEA_{\omega, \omega}$ or $\PEA_{\TCA,_{\omega},\TCA}$ of the previous item.
Fix one of them, call it $\A$. Because $\A$ has countably many atoms, $\Tm\A\subseteq \A\subseteq \Cm\At\A$,
and all three are completely representable or all three not completely representable sharing the same atom structure $\At\A$,
we can assume without loss that $\A$ is countable.
Now $\A$ is not  completely representable, hence by the above 'omitting types argument' in item (1),
its $\Sc$ reduct is not in $S_c\Nr_n\Sc_{\omega}$.

On the other hand, $\A$ is strongly
representable, so its canonical extension
is representable, indeed completely representable, hence as claimed
this class is not closed under
forming subalgebras, because the $\Sc$  reduct of the canonical extension of $\A$
is in $S_c\Nr_n\sf Sc_{\omega}$, $\Rd_{sc}\A$ is not in $S_c\Nr_n\Sc_{\omega}$ and $\A$ embeds into its canonical
extension. The first of these statements follow from the fact that if $\D\subseteq \Nr_n\B$,
then $\D^+\subseteq \Nr_n\B^+$.

\item Witness theorem \ref{complete}.
The rest is known.

\item The algebra $\A$ in item (1) in theorem \ref{SL}, witnesses the strictness of the stated  last inclusion.
The strictness of second inclusion from the fact that the class in question coincides
with the class of algebras satisfying the Lyndon conditions, and we have proved in
that this class this is properly contained in even the class of  (strongly) representable algebras.

Indeed,  let $\Gamma$ be any graph with infinite chromatic number,
and large enough finite girth. Let $\rho_k$ be the $k$ the Lyndon condition for $\Df$s.
Let $m$ be also large enough so that any $3$ colouring of the edges of a complete graph
of size $m$ must contain a monochromatic triangle; this $m$ exists by Ramsey's theorem.
Then $\M(\Gamma)$, the complex algebra constructed on $\Gamma$, as defined in \cite[definition sec 6.3,  p.78]{HHbook}
will be representable as a polyadic equality algebra but it will fail $\rho_k$ for all $k\geq m$. The idea is
that \pa\ can win in the $m$ rounded atomic game coded by $\sigma_m$, by forcing a forbidden monochromatic triangle.

The last required from from the fact that the resulting classes coincide with $S\Nr_n\TCA_{\omega}$
which, in turn,  coincides with the class of representable algebras by the
neat embedding theorem of Henkin.
\end{enumarab}

\end{proof}
\subsection{Neat embeddings for infinite dimensional algebras}

\begin{theorem}\label{infinite} Let $\sf K$ be any class between $\Sc$ and $\PEA$. 
Let $\alpha$ be an infinite ordinal. Then the following hold.
\begin{enumarab}

\item Assume that for any $r\in \omega$ and $3\leq m\leq n<\omega$, there is 
an algebra $\C(m,n,r)\in \Nr_m{\sf PEA}_n,$ with $\Rd_{sc}\C(m,n,r)\notin {\sf S}\Nr_m{\sf Sc_{n+1}}$
and $\Pi_{r/U}\C(m,n,r)\in {\sf RPEA}_m.$ Furthermore, assume that if $3\leq m<n$, $k\geq 1$ is finite,  
and $r\in \omega$, there exists $x_n\in \C(n,n+k,r)$
such that $\C(m,m+k,r)\cong \Rl_{x_n}\C(n, n+k, r)$ and ${\sf c}_ix_n\cdot {\sf c}_jx_n=x_n$
for all $i,j<m$. Then  for any any $r\in \omega$, for any
finite $k\geq 1$, for any $l\geq k+1$ (possibly infinite),
there exist $\B^{r}\in {\sf S}\Nr_{\alpha}\QEA_{\alpha+k}$, $\Rd_{sc}\B^r\notin {\sf S}\Nr_{\alpha}\Sc_{\alpha+k+1}$ such
$\Pi_{r\in \omega}\B^r\in {\sf S}\Nr_{\alpha}\QEA_{\alpha+l}$.
In partcular, the same result holds for topological cylidric algebras.

\item  Let $k\geq 1$. Assume that for each finite $m\geq 3$, there exists $\C(m)\in {\sf TCA}_m$ 
such that $\Rd_{sc}\C(m)\notin {\sf S}_c\Nr_m\Sc_{m+k}$ and $\A(m)\in \Nr_m\TCA_{\omega}$
such that $\A(m)\equiv \C(m).$ Furthermore, assume that for $3\leq m<n<\omega$, $\C(m)\subseteq _c\Rd_m\C(n)$.
Then any class $\L$ between $\Nr_{\alpha}\K_{\omega+\omega}$ and ${\sf S}_c\Nr_{\alpha}\K_{\alpha+k}$ is not elementary

\item Assume that for each finite $m\geq 3$, there exists $\C(m)\in \Nr_{k}\TCA_{\omega}$ whose $\Df$ 
reduct is not completely representable
such that for $3\leq m<n<\omega$, there exists $x_m\in \C(n)$ such that ${\sf c}_jx_m{\sf c}_jx_m=x_k$ for all $j<m$ 
and $\C(k)\cong \Rd_k\Rl_x\C(m)$. Then there exists an algebra $\A$ in 
and $\Nr_{\alpha}\TCA_{\alpha+\omega}$ 
whose $\Sc$ reduct is not completely representable.
\end{enumarab}
\end{theorem}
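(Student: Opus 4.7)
The plan is to lift the hypothesized sequence $(\C(m))_{m \geq 3}$ of finite-dimensional algebras to a single infinite-dimensional algebra $\A \in \Nr_{\alpha}\TCA_{\alpha + \omega}$ whose $\Sc$ reduct fails to be completely representable, in the spirit of items (1) and (2) above. Since $\C(m) \in \Nr_m \TCA_{\omega}$, fix witnesses $\D(m) \in \TCA_{\omega}$ with $\C(m) = \Nr_m \D(m)$ generated by $\C(m)$, hence locally finite. The coherence condition $\C(m) \cong \Rd_m \Rl_{x_m} \C(n)$ for $m < n$, together with the rectangularity ${\sf c}_j x_m \cdot {\sf c}_j x_m = x_m$ for $j < m$, yields a directed system of embeddings $\iota_{mn} : \C(m) \hookrightarrow \C(n)$ that lift canonically to dilation-compatible embeddings $\D(m) \hookrightarrow \D(n)$ by the essential uniqueness of minimal dilations of locally finite algebras.

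Next, I would form the direct limit $\mathfrak{E} = \varinjlim \D(m) \in \TCA_{\omega}$, and then dilate $\mathfrak{E}$ by a standard Henkin-style extension to an algebra $\mathfrak{F} \in \TCA_{\alpha + \omega}$ with $\mathfrak{E} = \Nr_{\omega} \mathfrak{F}$, the additional $\alpha$ coordinates being introduced by adjoining fresh generators subject to the topological cylindric axioms. Setting $\A = \Nr_{\alpha} \mathfrak{F}$, one obtains an algebra in $\Nr_{\alpha}\TCA_{\alpha + \omega}$ into which each $\C(m)$ embeds as a subreduct, by composing the direct-limit embedding with the neat-reduct inclusion. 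The compatibility of the $x_m$ ensures that these embeddings are complete, so that each $\Rd_{sc}\C(m)$ sits as a complete subalgebra of $\Rd_{sc}\A$.

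Finally, assume for contradiction that $\Rd_{sc}\A$ has a complete representation $f$. Restricting $f$ to each $\Rd_{sc}\C(m)$, viewed as a complete subreduct of $\A$, and then further restricting to the $\Df$ reduct (an operation that preserves complete representability, since a complete representation is controlled by its behaviour on atomic sequences and the $\Df$ operations are definable from the $\Sc$ operations), would yield a complete representation of $\Rd_{df}\C(m)$, contradicting the hypothesis that no such representation exists. The main obstacle I foresee is the second step: verifying that the Henkin dilation of $\mathfrak{E}$ to dimension $\alpha + \omega$ really produces an algebra whose neat $\alpha$-reduct equals $\A$ rather than merely containing it as a subalgebra. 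This requires a delicate argument showing that every $\alpha$-dimensional element of $\mathfrak{F}$ lies in $\A$, essentially by controlling the way the fresh generators interact with the elements already witnessed by the $\iota_{mn}$, in the spirit of the tight neat-embedding verifications driving items (1) and (2).
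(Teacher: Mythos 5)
There are two genuine gaps here, and together they sink the construction. First, the hypothesis does not give you a directed system of embeddings $\iota_{mn}:\C(m)\hookrightarrow\C(n)$: what you actually have is an isomorphism of $\C(m)$ onto the \emph{relativization} $\Rl_{x_n}\Rd_m\C(n)$, whose Boolean complement is computed relative to $x_n$ rather than to $1$. A relativized algebra is not a subalgebra of $\Rd_m\C(n)$, so the direct limit $\varinjlim\D(m)$ never gets off the ground, and the later claim that each $\C(m)$ sits as a \emph{complete} subalgebra of $\A$ (which you need, since a complete representation restricted to a non-complete subalgebra need not remain complete) has no support. Second, the step you yourself flag as the main obstacle is fatal as stated: a Henkin-style dilation of $\mathfrak{E}$ by fresh generators yields at best a neat \emph{embedding} into some $\mathfrak{F}\in\TCA_{\alpha+\omega}$, i.e.\ membership in $S\Nr_{\omega}\TCA_{\alpha+\omega}$, whereas the theorem demands the full neat reduct $\A=\Nr_{\alpha}\mathfrak{F}$, and nothing in your construction forces every $\alpha$-dimensional element of $\mathfrak{F}$ back into $\A$.

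The paper avoids both problems by Monk's lifting argument rather than a direct limit. One indexes by the finite subsets $\Gamma$ of $\alpha$, relabels $\C(|\Gamma|)$ along a bijection $\rho_{\Gamma}:|\Gamma|\to\Gamma$ to obtain $\C_{\Gamma}$, and sets $\B=\Pi_{\Gamma/F}\C_{\Gamma}$ for an ultrafilter $F$ containing every $M_{\Gamma}=\{\Delta:\Gamma\subseteq\Delta\}$. Since neat reducts commute with ultraproducts, $\B\cong\Nr_{\alpha}\Pi_{\Gamma/F}\A_{\Gamma}$ holds \emph{exactly}, which settles membership in $\Nr_{\alpha}\TCA_{\alpha+\omega}$. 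For the failure of complete representability one does not restrict to a subalgebra at all: assuming $f$ is a complete representation of $\Rd_{sc}\B$, one forms $x=(x_{|\Gamma|}:\Gamma)/F$ and checks that $g(b)=f(b)$ for $b\le x$ is a complete representation of $\Rl_x\Rd_m\Rd_{sc}\B\cong\Rd_{sc}\C(m)$ onto $\wp(f(x))$; sups of subsets of the relativized algebra are computed as in $\B$, which is what lets completeness survive the relativization (this is where the rectangularity of $x$ and \cite[theorem 2.6.38]{HMT1} are used). That contradicts the hypothesis on $\C(m)$. To salvage your outline you would have to replace both the direct limit and the Henkin dilation by this ultraproduct mechanism.
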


\begin{proof}

\begin{enumarab}

\item Fix such $r$.
Let $I=\{\Gamma: \Gamma\subseteq \alpha,  |\Gamma|<\omega\}$.
For each $\Gamma\in I$, let $M_{\Gamma}=\{\Delta\in I: \Gamma\subseteq \Delta\}$,
and let $F$ be an ultrafilter on $I$ such that $\forall\Gamma\in I,\; M_{\Gamma}\in F$.
For each $\Gamma\in I$, let $\rho_{\Gamma}$
be a one to one function from $|\Gamma|$ onto $\Gamma.$

Let ${\C}_{\Gamma}^r$ be an algebra similar to $\QEA_{\alpha}$ such that
\[\Rd^{\rho_\Gamma}{\C}_{\Gamma}^r={\C}(|\Gamma|, |\Gamma|+k,r).\]
Let
\[\B^r=\Pi_{\Gamma/F\in I}\C_{\Gamma}^r.\]
We will prove that
\begin{enumerate}
\item\label{en:1} $\B^r\in S\Nr_\alpha\QEA_{\alpha+k}$ and
\item\label{en:2} $\Rd_{sc}\B^r\not\in S\Nr_\alpha\Sc_{\alpha+k+1}$.
\end{enumerate}

For the first part, for each $\Gamma\in I$ we know that $\C(|\Gamma|+k, |\Gamma|+k, r) \in\K_{|\Gamma|+k}$ and
$\Nr_{|\Gamma|}\C(|\Gamma|+k, |\Gamma|+k, r)\cong\C(|\Gamma|, |\Gamma|+k, r)$.
Let $\sigma_{\Gamma}$ be a one to one function
 $(|\Gamma|+k)\rightarrow(\alpha+k)$ such that $\rho_{\Gamma}\subseteq \sigma_{\Gamma}$
and $\sigma_{\Gamma}(|\Gamma|+i)=\alpha+i$ for every $i<k$. Let $\A_{\Gamma}$ be an algebra similar to a
$\CA_{\alpha+k}$ such that
$\Rd^{\sigma_\Gamma}\A_{\Gamma}=\C(|\Gamma|+k, |\Gamma|+k, r)$.
We claim that
 $\Pi_{\Gamma/F}\A_{\Gamma}\in \QEA_{\alpha+k}$.

For this it suffices to prove that each of the defining axioms for $\QEA_{\alpha+k}$
hold for $\Pi_{\Gamma/F}\A_\Gamma$.
Let $\sigma=\tau$ be one of the defining equations for $\QEA_{\alpha+k}$,
and we assume to simplify notation that
the number of dimension variables is one. Let $i\in \alpha+k$, we must prove
that $\Pi_{\Gamma/F}\A_\Gamma\models \sigma(i)=\tau(i)$.  If $i\in\rng(\rho_\Gamma)$,
say $i=\rho_\Gamma(i_0)$,
then $\Rd^{\rho_\Gamma}\A_\Gamma\models \sigma(i_0)=\tau(i_0)$,
since $\Rd^{\rho_\Gamma}\A_\Gamma\in\QEA_{|\Gamma|+k}$,
so $\A_\Gamma\models\sigma(i)=\tau(i)$.
Hence $\set{\Gamma\in I:\A_\Gamma\models\sigma(i)=\tau(i)}\supseteq\set{\Gamma\in I: i\in\rng(\rho_\Gamma)}\in F$,
hence $\Pi_{\Gamma/F}\A_\Gamma\models\sigma(i)=\tau(i)$.
Thus, as claimed, we have $\Pi_{\Gamma/F}\A_\Gamma\in\QEA_{\alpha+k}$.

We prove that $\B^r\subseteq \Nr_\alpha\Pi_{\Gamma/F}\A_\Gamma$.  Recall that $\B^r=\Pi_{\Gamma/F}\C^r_\Gamma$ and note
that $\C^r_{\Gamma}\subseteq A_{\Gamma}$
(the universe of $\C^r_\Gamma$ is $C(|\Gamma|, |\Gamma|+k, r)$, the universe of $\A_\Gamma$ is $C(|\Gamma|+k, |\Gamma|+k, r)$).
So, for each $\Gamma\in I$,
\begin{align*}
\Rd^{\rho_{\Gamma}}\C_{\Gamma}^r&=\C((|\Gamma|, |\Gamma|+k, r)\\
&\cong\Nr_{|\Gamma|}\C(|\Gamma|+k, |\Gamma|+k, r)\\
&=\Nr_{|\Gamma|}\Rd^{\sigma_{\Gamma}}\A_{\Gamma}\\
&=\Rd^{\sigma_\Gamma}\Nr_\Gamma\A_\Gamma\\
&=\Rd^{\rho_\Gamma}\Nr_\Gamma\A_\Gamma
\end{align*}
%$\Rd^{\rho_\Gamma}\A_\Gamma \in \K_{|\Gamma|}$, for each $\Gamma\in I$  then $\Pi_{\Gamma/F}\A_\Gamma\in \K_\alpha$.
Thus (using a standard Los argument) we have:
$\Pi_{\Gamma/F}\C^r_\Gamma\cong\Pi_{\Gamma/F}\Nr_\Gamma\A_\Gamma=\Nr_\alpha\Pi_{\Gamma/F}\A_\Gamma$,
proving \eqref{en:1}.

The above isomorphism $\cong$ follows from the following reasoning.
Let $\B_{\Gamma}= \Nr_{\Gamma}\A_{\Gamma}$. Then universe of the $\Pi_{\Gamma/F}\C^r_\Gamma$ is
identical to  that of $\Pi_{\Gamma/F}\Rd^{\rho_\Gamma}\C^r_\Gamma$
which is identical to the universe of $\Pi_{\Gamma/F}\B_\Gamma$.
Each operator $o$ of $\QEA_{\alpha}$ is the same
for both ultraproducts because $\set{\Gamma\in I:\dim(o)\subseteq\rng(\rho_\Gamma)} \in F$.

Now we prove \eqref{en:2}.
For this assume, seeking a contradiction, that $\Rd_{sc}\B^r\in S\Nr_{\alpha}\Sc_{\alpha+k+1}$,
$\Rd_{sc}\B^r\subseteq \Nr_{\alpha}\C$, where  $\C\in \Sc_{\alpha+k+1}$.
Let $3\leq m<\omega$ and  $\lambda:m+k+1\rightarrow \alpha +k+1$ be the function defined by $\lambda(i)=i$ for $i<m$
and $\lambda(m+i)=\alpha+i$ for $i<k+1$.
Then $\Rd^\lambda(\C)\in \Sc_{m+k+1}$ and $\Rd_m\Rd_{sc}B^r\subseteq \Nr_m\Rd^\lambda(\C)$.

For each $\Gamma\in I$,\/  let $I_{|\Gamma|}$ be an isomorphism
\[{\C}(m,m+k,r)\cong \Rl_{x_{|\Gamma|}}\Rd_m {\C}(|\Gamma|, |\Gamma+k|,r).\]

Let $x=(x_{|\Gamma|}:\Gamma)/F$ and let $\iota( b)=(I_{|\Gamma|}b: \Gamma)/F$ for  $b\in \C(m,m+k,r)$.
Then $\iota$ is an isomorphism from $\C(m, m+k,r)$ into $\Rl_x\Rd_m\Rd_{sc}\B^r$.
Then by \cite[theorem~2.6.38]{HMT1} we have $\Rl_x\Rd_{m}\B^r\in S\Nr_m\Sc_{m+k+1}$.
It follows that  $\Rd_{sc}\C(m,m+k,r)\in S\Nr_{m}\Sc_{m+k+1}$ which is a contradiction and we are done.

Now we prove the third part of the theorem, putting the superscript $r$ to use.
Let $k$ be as before; $k$ is finite and $>0$ and let $l$ be as in the hypothesis of the theorem,
that is, $l\geq k+1$, and we can assume without loss that $l\leq \omega$.
Recall that $\B^r=\Pi_{\Gamma/F}\C^r_\Gamma$, where $\C^r_\Gamma$ has the signature of $\QEA_{\alpha}$
and $\Rd^{\rho_\Gamma}\C^r_\Gamma=\C(|\Gamma|, |\Gamma|+k, r)$.
We know (this is the main novelty here)
from item (2) that $\Pi_{r/U}\Rd^{\rho_\Gamma}\C^r_\Gamma=\Pi_{r/U}\C(|\Gamma|, |\Gamma|+k, r) \subseteq \Nr_{|\Gamma|}\A_\Gamma$,
for some $\A_\Gamma\in\QEA_{|\Gamma|+\omega}$.

Let $\lambda_\Gamma:|\Gamma|+k+1\rightarrow\alpha+k+1$
extend $\rho_\Gamma:|\Gamma|\rightarrow \Gamma \; (\subseteq\alpha)$ and satisfy
\[\lambda_\Gamma(|\Gamma|+i)=\alpha+i\]
for $i<k+1$.  Let $\F_\Gamma$ be a $\QEA_{\alpha+l}$ type algebra such that $\Rd^{\lambda_\Gamma}\F_\Gamma=\Rd_l\A_\Gamma$.
As before, $\Pi_{\Gamma/F}\F_\Gamma\in\QEA_{\alpha+\omega}$.  And
\begin{align*}
\Pi_{r/U}\B^r&=\Pi_{r/U}\Pi_{\Gamma/F}\C^r_\Gamma\\
&\cong \Pi_{\Gamma/F}\Pi_{r/U}\C^r_\Gamma\\
&\subseteq \Pi_{\Gamma/F}\Nr_{|\Gamma|}\A_\Gamma\\
&=\Pi_{\Gamma/F}\Nr_{|\Gamma|}\Rd^{\lambda_\Gamma}\F_\Gamma\\
&\subseteq\Nr_\alpha\Pi_{\Gamma/F}\F_\Gamma,
\end{align*}
But $\B=\Pi_{r/U}\B^r\in S\Nr_{\alpha}\QEA_{\alpha+l}$
because $\F=\Pi_{\Gamma/F}\F_{\Gamma}\in \QEA_{\alpha+l}$ and $\B\subseteq \Nr_{\alpha}\F$.

\item
For $k\geq 1$. For finite $m>2$, let  $\C(m), \A(m)$ be  the atomic algebras in $\sf TCA_m$
such that $\Rd_{sc}\C(m)\notin S_c\Nr_{m}\Sc_{m+k},$
$\A(m)\in \Nr_k\TCA_{\omega}$ and $\C(m)\equiv \A(m)$ as in the hypothesis.
For $m<n$ we also have
$\C(m)\subseteq_c \Rd_{m}\C(n)$.

Let $\alpha$ be an infinite ordinal. Then we claim that there exists $\B\in \TCA_{\alpha}$
such that $\Rd_{sc}\B\notin S_c\Nr_{\alpha}\Sc_{\alpha+k}$
and $\A\in{\Nr}_{\alpha}\TCA_{\alpha+\omega}$, such that $\A\equiv \B$.

Now we use the same lifting argument as before, we only fix finite $m>2$. We do not have the parameter $r$.
Let $I=\{\Gamma: m\subseteq \Gamma\subseteq \alpha,  |\Gamma|<\omega\}$.
For each $\Gamma\in I$, let $M_{\Gamma}=\{\Delta\in I: \Gamma\subseteq \Delta\}$,
and let $F$ be an ultrafilter on $I$ such that $\forall\Gamma\in I,\; M_{\Gamma}\in F$.
For each $\Gamma\in I$, let $\rho_{\Gamma}$
be a one to one function from $|\Gamma|$ onto $\Gamma.$
Let ${\C}_{\Gamma}$ be an algebra similar to $\TCA_{\alpha}$ such that
$\Rd^{\rho_\Gamma}{\C}_{\Gamma}={\C}(|\Gamma|)$. In particular, ${\C}_{\Gamma}$ has an atomic Boolean reduct.
Let $\B=\prod_{\Gamma/F\in I}\C_{\Gamma}.$

We claim that $\Rd_{sc}\B\notin {\sf S}_c\Nr_{\alpha}\Sc_{\alpha+k}$.
For assume, seeking a contradiction, that $\Rd_{sc}\B\in S\Nr_{\alpha}\Sc_{\alpha+k}$,
$\Rd_{sc}B\subseteq_c \Nr_{\alpha}\C$, where  $\C\in \Sc_{\alpha+3}$.
Let $3\leq m<\omega$ and  $\lambda:m+k\rightarrow \alpha +k$ be the function defined by $\lambda(i)=i$ for $i<m$
and $\lambda(m+i)=\alpha+i$ for $i<k$.
Then $\Rd^\lambda\C\in \PEA_{m+k}$ and $\Rd_m\B^r\subseteq_c \Rd_m\Rd^\lambda\C$.

For each $\Gamma\in I$,   $|\Gamma|\geq m$, we have
\[{\C}(m)\subseteq _c\Rd_m {\C}(|\Gamma|).\]
Let $I_{|\Gamma|}$ be an injective complete homomorphism, witnessing this complete embedding.
Let $x=(x_{|\Gamma|}:\Gamma)/F$ and let $\iota( b)=(I_{|\Gamma|}b: \Gamma)/F$ for  $b\in \C(m)$.
Then $\iota$ is an injective homomorphism that embeds $\C(m)$ into $\Rd_m\B^r$, and
this embedding is complete.
Now $\Rd_{m}\B^r\in {\sf S}_c\Nr_m\Sc_{m+k}$, hence  $\Rd_{sc}\C (m)\in
{\sf S}_c\Nr_{m}\Sc_{m+k}$ which is a contradiction and we are done.

We now show that there exists $\A\in \Nr_{\alpha}\TCA_{\alpha+\omega}$ such that $\A\equiv \B$.
We use the $\A(k)$s.
For each $\Gamma\in I$ we can assume that $\Nr_{|\Gamma|}\A(|\Gamma|+k)\cong\A(|\Gamma|)$; if not then replace $\A(|\Gamma|+k))$ by an algebra
$\D(|\Gamma|+k)$ such that $\Nr_{|\Gamma|}\D{(|\Gamma|+k)}\cong \A(\Gamma)$. Such a $\D$ obviously exists.

Let $\sigma_{\Gamma}$ be an injective map
 $(|\Gamma|+\omega)\rightarrow(\alpha+\omega)$ such that $\rho_{\Gamma}\subseteq \sigma_{\Gamma}$
and $\sigma_{\Gamma}(|\Gamma|+i)=\alpha+i$ for every $i<\omega$. Let
$\A_{\Gamma}$ be an algebra similar to a
$\QEA_{\alpha+\omega}$ such that
$\Rd^{\sigma_\Gamma}\A_{\Gamma}=\A(|\Gamma|+k)$, and $\Nr_{\alpha}\A_{\Gamma}\equiv \C_{\Gamma}$.
Then $\Pi_{\Gamma/F}\A_{\Gamma}\in \CA_{\alpha+\omega}$.

We now prove that $\A= \Nr_\alpha\Pi_{\Gamma/F}\A_\Gamma$.
Using the fact that neat reducts commute with forming ultraproducts, for each $\Gamma\in I$, we have
\begin{align*}
\Rd^{\rho_{\Gamma}}\A_{\Gamma}&=\A(|\Gamma|)\\
&\cong\Nr_{|\Gamma|}\A(|\Gamma|+k)\\
&=\Nr_{|\Gamma|}\Rd^{\sigma_{\Gamma}}\A_{\Gamma}\\
&=\Rd^{\sigma_\Gamma}\Nr_\Gamma\A_\Gamma\\
&=\Rd^{\rho_\Gamma}\Nr_\Gamma\A_\Gamma
\end{align*}
We deduce that
$$\A=\Pi_{\Gamma/F}\C_\Gamma\cong\Pi_{\Gamma/F}\Nr_\Gamma\A_\Gamma=
\Nr_\alpha\Pi_{\Gamma/F}\A_\Gamma\in \Nr_{\alpha}\QEA_{\alpha+\omega}.$$

Finally, we have $\A\equiv \B$, because $\A=\Nr_{\alpha}\Pi_{\Gamma/F}\A_{\Gamma}=\Pi_{\Gamma/F}\Nr_{\alpha}\A_{\Gamma},$
$\B=\Pi_{\Gamma/F}\C_{\Gamma}$, and we chose $\Nr_{\alpha}\A_{\Gamma}$ to be elementary equivalent
to $\C_{\Gamma}$ for each finite subset $\Gamma$ of $\alpha$.

\item  Let  $I$ and $\C(\Gamma)$ be defined as above for every
$\Gamma$ finite subset of
$\alpha$.
For each $\Gamma\in I$, let $\rho_{\Gamma}$
be a one to one function from $|\Gamma|$ onto $\Gamma.$
Let ${\C}_{\Gamma}$ be an algebra similar to $\QEA_{\alpha}$ such that
$\Rd^{\rho_\Gamma}{\C}_{\Gamma}={\C}(|\Gamma|)$. In particular, ${\C}_{\Gamma}$ has an atomic Boolean reduct.
Let $\B=\Pi_{\Gamma/F\in I}\C_{\Gamma}.$
Then $\B$ is atomic, because it is an ultraproduct of atomic algebras.
We claim that
$\Rd_{sc}\B$ is not completely representable.

Assume for contradiction
that $\Rd_{sc}\B$ is completely representable, with complete representation $f$.
Let $3\leq m<\omega$. Then of course $\Rd_m\Rd_{sc}\B$ is completely representable with the same $f$; notice that both
$\Rd_{sc}\B$ and $\Rd_m\Rd_{sc}\B$ have the same universe
as $\B$.

For each $\Gamma\in I$,\/  let $I_{|\Gamma|}$ be an isomorphism
${\C}(m)\cong \Rl_{x_{|\Gamma|}}\Rd_m {\C}(|\Gamma|).$
Let $x=(x_{|\Gamma|}:\Gamma)/F$ and let $\iota( b)=(I_{|\Gamma|}b: \Gamma)/F$ for  $b\in \C(m)$.
Then $\iota$ is an isomorphism from $\C(m)$ into $\Rl_x\Rd_m\B^r$.
Then by \cite[theorem~2.6.38]{HMT1} we have $\Rl_x\Rd_{m}\B\in \PEA_{m}$ and it is atomic.
Indeed, if $a\leq x$ is non-zero, then there is an atom $c\in \Rd_m\B$ below $a$, so that $c\leq a\leq x$,
hence $c\in \Rl_x\Rd_m\B$ is an atom.
Atomicity is not enough to guarantee complete representability because the class of completely representable algebras
is not axiomatizable even for $\sf Df$s, but as it happens
we do have that $\Rl_x\Rd_m\Rd_{sc}\B$ is a completely representable $\Sc_m$, as we proceed to show.
First of all, we know that $\Rd_m\Rd_{df}B$ is completely representable, so let
$f:\Rd_m\Rd_{sc}B\to \wp(V)$ be a complete representation;  that is a representation
that preserves joins. Here $\rng(f)$ is a generalized set algebra; that is $V$, a generalized space,
is of the form $\bigcup_{i\in I}{}^{\alpha}U_i$, where
for distinct $i$ and $j$, $U_i\cap U_j=\emptyset.$

Define $g:\Rl_x\Rd_m\B\to \Rl_{f(x)}\wp(V)$ by $g(b)=f(b)$ for $b\leq x$.
We have $f(x)\subseteq V$, and ${\sf s}_l^kx\cdot {\sf s}_k^lx=x$ for all $l, k<m$, $k\neq l$,
so this equation holds also for $f(x)$, that is, we have ${\sf s}_l^kf(x)\cap {\sf s}_k^lf(x)=f(x)$.
By \cite[theorem 3.1.31]{HMT2}, we have
$f(x)$ is a generalized space, too, and indeed
$\Rl_{f(x)}\wp(V)\cong \wp(f(x))$.

Assume that $X\subseteq \Rl_x\B=\Rl_x\Rd_m\Rd_{sc}\B$ is such that $\sum X=x$,
then $$g(\sum X)=f(\sum X)=\bigcup_{y\in X}f(y)=f(x)=1^{\wp(f(x)}.$$
Hence $g$ is a complete representation of $\Rl_x\Rd_m\Rd_{sc}\B$.
The latter is isomorphic to $\Rd_{sc}C (m)$, hence $\Rd_{sc}C(m)$ 
is completely representable, too.
This is a contradiction, and we are done.

Finally, $\B\in \Nr_{\alpha}\TCA_{\alpha+\omega}$ exactly as above.

The second part follows from the second item in theorem \ref{SL}.
\end{enumarab}
\end{proof}
Let us see how close we are to the hypothesis adressing finite dimensional algebras.

\begin{enumarab}
\item For the first  item for $\CA$ and $\PEA$ we have 
$\mathfrak{C}(m,n,r)=\Ca(H_m^{n+1}(\A(n,r), \omega)),$
consisting of all $n+1$-wide $m$-dimensional  
wide $\omega$ hypernetworks \cite[definition 12.21]{HHbook} on $\A(n,r)$, is in $\CA_m$ and it can be easily expanded
to a $\PEA_m$. 

Furthermore, for any $r\in \omega$ and $3\leq m\leq n<\omega$, we
have $\C(m,n,r)\in \Nr_m{\sf PEA}_n,$ $\Rd_{ca}C(m,n,r)\notin {\sf S}\Nr_m{\sf CA_{n+1}}$
and $\Pi_{r/U}\C(m,n,r)\in {\sf RPEA}_m.$

Lastly, let  $3\leq m<n$. Take $$x_n=\{f:{}^{\leq n+k+1}n\to \At\A(n+k, r)\cup \omega:  m\leq j<n\to \exists i<m, f(i,j)=Id\}.$$
Then $x_n\in C(n,n+k,r)$ and ${\sf c}_ix_n\cdot {\sf c}_jx_n=x_n$ for distinct $i, j<m$.
Furthermore
\[{I_n:\C}(m,m+k,r)\cong \Rl_{x_n}\Rd_m {\C}(n,n+k, r),\]
via the map, defined for $S\subseteq H_m^{m+k+1}(\A(m+k,r) \omega)),$ by
$$I_n(S)=\{f: {}^{\leq n+k+1}n\to \At\A(n+k, r)\cup \omega: f\upharpoonright {}^{\leq m+k+1}m\in S,$$
$$\forall j(m\leq j<n\to  \exists i<m,  f(i,j)=Id)\}.$$
We have shown that we have {\it all conditions} in the hypothesis of the first item of theorem 
\ref{infinite} for cylindric and polyadic equality algebras.

However for $\Sc$ and $\QA$ the problem remains open. 
The algebras constructed in \cite{t} and recalled in theorem \ref{thm:cmnr} for another purpose,
do not have representable ultarproducts, so the diagonal free cases in this stronger form 
is not yet confirmed.

\item By theorem \ref{rainbow} there exist for every finite $m>2$, polyadic equality atomic algebras $\C(m)=\PEA_{\N^{-1}, \N}$ 
and $\B(m)$ such that $\Rd_{sc}\C(m)\notin S_c\Nr_n\Sc_{m+3}$,
$\C(m)\equiv \B(m)$, $\B$ is a countable (atomic) completely representable $\TCA_m$ except
that we could only succeed to show  that $\At\B(m)\in \At\Nr_m\TCA_{\omega}$; we do not know whether we can remove $\At$ from both sides of 
the equation; that is, we do not know whether $\B(m)$ can be chosen to be in $\Nr_n\QEA_{\omega}$. 
A conditional theorem depending on a game $H$, stronger than the game $J$ used 
was given to ensure the latter condition, but it can be shown without too much 
difficulty that \pa\ can win this finite rounded game on $\At\TCA_{\N^{-1}, \N}$ as long as the number of rounds are 
$>m$.
Also we not know whether $\C(m)$ 
embeds completely into $\C(n)$ for $n>m$. However, we tend to think 
that the sitaution is not hopeless and that such algebras can be found, but
further research is needed.

\item For the third part for $k\geq 3$, let $\C(k)\in \Nr_k\TCA_{\omega}$ be the atomic uncountable ${\sf PEA}_k$
such that its $\sf Df$ reduct is not completely representable constructed in the first item theorem \ref{complete}.
Recall that $\C(k)$ was obtained as a $k$ neat reduct of an $\omega$ dimensional algebra whose atom structure
is an $\omega$ basis over a relation algebra, denoted by $\R$, in the proof of the first item in theorem \ref{complete}. Hence an atom in $\C(k)$
is of the form $\{N\}$, where $N: \omega\times \omega\to \At\R$ is an $\omega$ dimensional network.
For $3\leq m<n$,
let $$x_n=\{N\in C(n):  m\leq j<n\to \exists i<m, N(i,j)=\Id\}.$$
Then $x_n\in C(n)$ and ${\sf c}_ix_n\cdot {\sf c}_jx_n=x_n$ for distinct $i, j<m$.
Furthermore,
\[{I:\C}(m)\cong \Rl_{x_n}\Rd_{m}{\C}(n)\]
via
$$I(S)=\{N\in C(n): N\upharpoonright \omega\times \omega\in S,$$
$$\forall j(m\leq j<n\to  \exists i<m,  N(i,j)=\Id\}.$$
This is similar to the definition in the first item above, for in the former case we have a hyperbasis
and now we have an amalgamation class.
\end{enumarab}

\begin{corollary} Let $\alpha\geq \omega$. Then the following hold: 
\begin{enumarab}
\item For any $k\geq 0$, 
and any $l\geq k+2$ and , then the variety 
$S\Nr_{\alpha}\TCA_{\alpha+l}$ 
cannot be axiomatized by a finite schema over
the variety $S\Nr_{\alpha}\K_{\alpha+k}$
\item There are algebras in 
$\Nr_{\alpha}\TCA_{\alpha+\omega}$ that are not completely representable.
\end{enumarab}
\end{corollary}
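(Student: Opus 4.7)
My plan is to derive both parts directly from Theorem~\ref{infinite}, using the finite-dimensional witnesses already in place in the paper and reducing everything to the corresponding $\CA$/$\PEA$ facts by discrete topologization.

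For part (1), I would fix $k\ge 0$ and $l\ge k+2$ and invoke Theorem~\ref{infinite}(1) in its $\TCA$-form (which the paper obtains from the $\CA$/$\PEA$ case by discrete topologization using the concrete witnesses $\mathfrak{C}(m,n,r)=\Ca(H_m^{n+1}(\A(n,r),\omega))$ discussed in the remarks immediately following the theorem). This yields, for each $r\in\omega$, an algebra $\B^r\in S\Nr_{\alpha}\TCA_{\alpha+k}$ whose $\Sc$-reduct fails to lie in $S\Nr_{\alpha}\Sc_{\alpha+k+1}$, so in particular $\B^r\notin S\Nr_{\alpha}\TCA_{\alpha+l}$ for any $l\ge k+2$; yet $\prod_{r/U}\B^r\in S\Nr_{\alpha}\TCA_{\alpha+l}$ for any non-principal ultrafilter $U$ on $\omega$. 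Now, if $S\Nr_{\alpha}\TCA_{\alpha+l}$ were axiomatizable by a finite Monk schema over $S\Nr_{\alpha}\K_{\alpha+k}$, then in each dimension only finitely many equations would be added to the defining axioms of the outer class; such an equationally-defined subclass is closed under ultraproducts, contradicting the separation $\B^r\in S\Nr_{\alpha}\K_{\alpha+k}\setminus S\Nr_{\alpha}\TCA_{\alpha+l}$ versus $\prod_{r/U}\B^r\in S\Nr_{\alpha}\TCA_{\alpha+l}$.

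For part (2), I would apply Theorem~\ref{infinite}(3). For each finite $m\ge 3$, take $\C(m)\in\Nr_m\TCA_{\omega}$ to be the uncountable atomic algebra constructed in the first clause of Theorem~\ref{complete}: the discrete topologization of the $m$-neat reduct of the weak set algebra whose $\Ra$-reduct is the rainbow-type relation algebra $\R_{2^{\kappa},\omega}$, and whose $\Df$-reduct (a fortiori $\Sc$-reduct) admits no complete representation. For $3\le m<n$, the required coherence element is the natural one,
\[
x_n=\{\,N\in\At\C(n):\forall j\,(m\le j<n\Rightarrow \exists i<m,\,N(i,j)=\Id)\,\},
\]
which satisfies ${\sf c}_ix_n\cdot{\sf c}_jx_n=x_n$ for distinct $i,j<m$ and implements $\C(m)\cong\Rl_{x_n}\Rd_m\C(n)$, exactly as indicated in the third bullet after Theorem~\ref{infinite}. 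Plugging this coherent family into Theorem~\ref{infinite}(3) produces an algebra $\A\in\Nr_{\alpha}\TCA_{\alpha+\omega}$ whose $\Sc$-reduct, and therefore $\A$ itself, is not completely representable.

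\textbf{Main obstacle.} The bulk of the work lies in part (1), and the delicate point is not the ultraproduct schema-closure step (which is essentially formal, once ``finite schema'' is interpreted in the sense of \cite{HMT2} as adding finitely many equations per dimension) but rather verifying that all the hypotheses of Theorem~\ref{infinite}(1) genuinely transfer from $\CA$/$\PEA$ to $\TCA$. Concretely, one must check that discrete topologization commutes with forming neat reducts, with the relativizations $\Rl_{x_n}$, and with ultraproducts, so that the topologized witnesses $\C(m,n,r)^{\sf top}$ still satisfy the chain of neat embeddings $\C(m,n,r)^{\sf top}\in\Nr_m\TCA_n$, the non-embeddability $\Rd_{sc}\C(m,n,r)^{\sf top}\notin S\Nr_m\Sc_{n+1}$, and the representability of $\prod_{r/U}\C(m,n,r)^{\sf top}$. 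These commutations are routine but must be assembled carefully; once they are in place both parts of the corollary drop out as described.
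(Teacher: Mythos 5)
Your treatment of part (2) is fine and coincides with the paper's: it is exactly an application of Theorem \ref{infinite}(3) to the coherent family $\C(m)$ coming from the first item of Theorem \ref{complete}, with the relativizing elements $x_n$ you describe. The problem is in part (1), where the step you dismiss as ``essentially formal'' is in fact the whole content of the proof, and your version of it does not work. You argue that if $S\Nr_{\alpha}\TCA_{\alpha+l}$ were axiomatized by a finite schema over $S\Nr_{\alpha}\K_{\alpha+k}$ it would be an ``equationally-defined subclass closed under ultraproducts, contradicting the separation'' $\B^r\notin S\Nr_{\alpha}\TCA_{\alpha+l}$ versus $\Pi_{r/U}\B^r\in S\Nr_{\alpha}\TCA_{\alpha+l}$. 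But there is no contradiction there: $S\Nr_{\alpha}\TCA_{\alpha+l}$ is already a variety, hence closed under ultraproducts, and the separation holds anyway --- closure under ultraproducts says nothing about ultraproducts of algebras \emph{outside} the class. What is actually needed is that the \emph{complement} of the subclass inside the outer class be closed under ultraproducts, and Łoś gives that only when the subclass is cut out by a genuinely \emph{finite} set of first-order sentences, so that the negation of their conjunction is a single sentence. A finite schema over an infinite $\alpha$ is not finitely many equations: it is the set of all $\alpha$-instances $\rho^+(\sigma)$ of finitely many equations $\sigma$, one for each injection $\rho:\omega\to\alpha$, and that set is infinite, so the naive Łoś argument breaks down. (Your phrase ``only finitely many equations per dimension'' conflates the two notions.)

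The missing idea, which is how the paper argues, is the reduct trick. Assume without loss of generality a single equation $\sigma$ whose $\alpha$-instances axiomatize $S\Nr_{\alpha}\TCA_{\alpha+l}$ over $S\Nr_{\alpha}\TCA_{\alpha+k}$. For each $r$ some instance $\sigma_r=\mu_r^{+}(\sigma)$ fails in $\B^r$, but the instances vary with $r$, so no single first-order sentence is known to fail in $U$-many of the $\B^r$. One therefore aligns them by passing to reducts: choose injections $v_r\in{}^{\alpha}\alpha$ agreeing with $\mu_r$ on the indices occurring in $\sigma_r$ and set $\A_r=\Rd^{v_r}\B^r$, so that $\A_r\not\models\sigma$ for every $r$. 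Since $\Pi_{r/U}\B^r\in S\Nr_{\alpha}\TCA_{\alpha+l}$ and this class is closed under such reducts, $\Pi_{r/U}\A_r$ satisfies all instances of $\sigma$, in particular $\sigma$ itself; Łoś then forces $\{r:\A_r\models\sigma\}=\{r:\B^r\models\sigma_r\}\in U$, the desired contradiction. Without this alignment your argument does not go through; with it, the rest of your plan (taking the $\B^r$ from Theorem \ref{infinite}(1) and checking that discrete topologization commutes with $\Nr$, $\Rl$ and ultraproducts) is sound.
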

\begin{proof} 
The second part follows immediately from the first item of theorem \ref{complete} and the third of the previous theorem \ref{infinite}.  

For the second part let us first recall from \cite{HMT2} what we mean by finite schema. 
If $\rho:\omega\to \alpha$ is an injection, then $\rho$ extends recursively
to a function $\rho^+$ from $\CA_{\omega}$ terms to $\CA_{\alpha}$ terms.
On variables $\rho^+(v_k)=v_k$, and for compound terms
like ${\sf c}_k\tau$, where $\tau$ is a $\CA_{\omega}$ term, and $k<\omega$, $\rho^+({\sf c}_k\tau)={\sf c}_{\rho(k)}\rho^+(\tau)$.
For an equation $e$ of the form $\sigma=\tau$ in the language of $\CA_{\omega}$, $\rho^+(e)$ is
the equation $\rho^+(\tau)=\rho^+(\sigma)$ in the language
of $\CA_{\alpha}$. This last equation, namely, $\rho^+(e)$ is called an $\alpha$ instance of $e$
obtained by applying the injection $\rho$.

Let $k\geq 1$ and $l\geq k+1$. Assume for contradiction that $\bold S\Nr_{\alpha}\TCA_{\alpha+l}$ is axiomatizable
by a finite schema over  $\bold S\Nr_{\alpha}\TCA_{\alpha+k}$.
We can assume that there is only one equation, such that all its $\alpha$ instances,  
axiomatize  $S\Nr_{\alpha}\TCA_{\alpha+l}$ over
$S\Nr_{\alpha}\TCA_{\alpha+k}$.
So let $\sigma$ be such an  equation in the signature of $\TCA_{\omega}$ and let $E$ be its $\alpha$ instances; so that
 for any $\A\in \bold S\Nr_{\alpha}\TCA_{\alpha+k}$ we have $\A\in \bold S\Nr_{\alpha}\TCA_{\alpha+l}$ iff
$\A\models E$.  Then for all $r\in \omega$, there is an instance of $\sigma$,  $\sigma_r$ say,
such that $\B^r$ does not model $\sigma_r$.
$\sigma_r$ is obtained from $\sigma$ by some injective map $\mu_r:\omega\to \alpha$.

For $r\in \omega,$ let $v_r\in {}^{\alpha}\alpha$,
be an injection such that $\mu_r(i)=v_r(i)$ for $i\in {\sf ind}(\sigma_r)$, and
let $\A_{r}= \Rd^{v_r}\B^r$.
Now $\Pi_{r/U} \A_{r}\models \sigma$. But then
$$\{r\in \omega: \A_{r}\models \sigma\}=\{r\in \omega: \B^r\models \sigma_r\}\in U,$$
contradicting that
$\B^r$ does not model $\sigma_r$ for all $r\in \omega$.
\end{proof}

\subsection{Gripping atom structures}

Example \ref{SL} promps the following definition 
$\A, \B\in \sf K\subseteq \TCA_n$ are atomic, then of course $\At\A$ abd $\At\B$ is in $\At\K$. The next definition addreses the converse 
of this.
There are classes of algebras $\K$
such that there are two atomic algebras having the same atom structure, one in $\K$ and the other is not,
examplea  are  classes $\sf TRCA_n$ for finite $n>2$ theorem \ref{can} 
and the class $\Nr_n\TCA_m$ for all $m>n$, examples \ref{SL} and \ref{finitepa}.

Conversely, there are classes of algebras, like the class of completely representable
algebras in any finite dimension, and also $S_c\Nr_n\TCA_{m}$ for any $m>n>2$ $n$ finite,  that do not
have this property. This is {\it not} marked by first
order definability, for the class of neat reducts and the last two  classes not elementary, while
$\sf TRCA_n$ is a variety for any $n$.

\begin{definition}\label{grip}
\begin{enumarab}

\item A class $\K$ is gripped by its atom structures, if whenever $\A\in \K\cap \At$, and $\B$ is atomic such that
$\At\B=\At\A$, then $\B\in \K$.

\item A class $\K$ is strongly gripped by its atom structures, if whenever $\A\in \K\cap \At$, and $\B$ is atomic such that
$\At\B\equiv \At\A$, then $\B\in \K$.

\item A class $\K$ of atom structures is infinitary gripped
by its atom structures if whenever $\A\in \K\cap \At$ and $\B$ is atomic, such that $\At\B\equiv_{\infty,\omega}\At\B$,
then $\B\in \K$.
\item An atomic  game is strongly gripping for $\K$ if whenever $\A$ is atomic, and
\pe\ has a \ws\ for all finite rounded games on $\At\A$, then  $\A\in \K$

\item An atomic  game is gripping if whenever \pe\ has a \ws\ in the $\omega$ rounded game on $\At\A$, then $\A\in \K$.
\end{enumarab}
\end{definition}

Notice that infinitary gripped implies strongly gripped implies gripped (by its atom structures).
For the sake of brevity, we write only (strongly) gripped, without referring to atom structures.

In the next theorem, all items except the first applies to all algebras considered.

\begin{theorem}\label{grip}
\begin{enumarab}
\item The class of neat reducts for any dimension $>1$ is not gripped,
hence is neither strongly gripped nor infinitary gripped.
\item The class of completely representable algebras is gripped but not strongly gripped.
\item The class of algebras satisfying the Lyndon conditions is strongly gripped.
\item The class of representable algebras is not gripped.
\item The Lyndon usual atomic game is gripping but not strongly gripping
for completely representable algebras, it is strongly gripping for ${\sf LCA_m}$, when $m>2$.
\item There is a game, that is {\it not} atomic,  that is strongly gripping for $\Nr_n\CA_{\omega}$, call it $H$.
In particular, if there exists an algebra $\A$ and $k\geq 1$ such \pa\ has a \ws\ in $F^{n+k}$ and \pe\ has
a \ws\ in $H_m$, the game $H$ truncated to $m$ rounds, for every finite $m$,
then for any class $\K$ such that  $\Nr_n\CA_{\omega}\subseteq \K\subseteq  S_c\Nr_n\CA_{n+k},$
$\K$ will not be elementary.
\end{enumarab}
\end{theorem}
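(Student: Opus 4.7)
The plan is to dispatch items (1) and (4) by exhibiting pairs of atomic algebras with coinciding atom structures that separate the class, to dispatch (2), (3) and (5) by unpacking the first-order complexity of the defining condition on atom structures, and to handle (6) by invoking the stronger neat game $H$ of remark \ref{game}. Each item ultimately reduces to an object constructed earlier in the paper or to a transparent first-order observation about atom structures.

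For (1), example \ref{SL} supplies $\A$ and $\wp(V)$ sharing the same atom structure (the singletons on $V$), with $\wp(V)\in \Nr_{\alpha}\sf QPEA_{\alpha+\omega}$ but $\A\notin \Nr_{\alpha}\sf QPEA_{\alpha+1}$; the finite-dimensional strengthening in example \ref{finitepa} settles the bounded-dimension case. For (4), the rainbow atom structure $\At$ constructed in theorem \ref{can} is weakly but not strongly representable: $\Tm\At$ is representable while $\Cm\At$ is not, both being atomic on the same $\At$.

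For (2), gripping is immediate since a complete representation of an atomic algebra is determined by its values on atoms, so the same atom map lifts to any other atomic algebra on the same atom structure; failure of strong gripping is supplied directly by theorem \ref{rainbow}, whose rainbow $\C$ fails to be completely representable while an elementary equivalent $\B$ is completely representable, and $\C\equiv \B$ forces $\At\C\equiv \At\B$. For (3), each Lyndon condition is by construction a first-order sentence in the signature of atom structures coding that \pe\ wins the finite rounded atomic game $G_k$, so satisfaction of all Lyndon conditions is a recursive first-order theory on atom structures and is therefore preserved under $\equiv$. Item (5) is then obtained by combining these two observations with the Hirsch--Hodkinson characterization \cite[Theorem 3.3.3]{HHbook2}: \pe\ wins the $\omega$-rounded Lyndon atomic game on $\At\A$ iff $\A$ is elementarily equivalent to a countable completely representable algebra, so the game is gripping but not strongly gripping for complete representability, whereas it is strongly gripping for $\sf LCA_m$ by the very definition of the latter class.

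For (6), I would invoke the neat game $H$ of remark \ref{game}. Since \pe\ winning $H_m$ is first order on $\A$ for each finite $m$, a winning strategy for all $H_m$ is captured by a recursive first-order theory on $\A$; passing to a non-principal ultrapower and then to a countable elementary chain (as in the concluding argument of theorem \ref{rainbow}) yields a countable $\B\equiv \A$ on which \pe\ wins the full game $H$, and by remark \ref{game} this forces $\B\in \Nr_n\CA_{\omega}$. Simultaneously, \pa's \ws\ in $F^{n+k}$ on $\At\A$ prevents $\A$ from lying in $S_c\Nr_n\CA_{n+k}$ via the cylindric adaptation of \cite[Theorem 33]{r}. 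Hence $\B$ belongs to the lower bound $\Nr_n\CA_\omega\subseteq \K$ while the elementary equivalent $\A$ escapes the upper bound $S_c\Nr_n\CA_{n+k}\supseteq \K$, ruling out elementarity of any intermediate $\K$.

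The principal obstacle is the hypothesis of (6) itself: producing a countable atomic $\A$ on which \pa\ wins the finite-pebble cylindrifier game $F^{n+k}$ while \pe\ wins every finite truncation $H_m$ of the considerably stronger neat game. As noted at the end of remark \ref{game}, for the natural rainbow candidate $\CA_{\N^{-1},\N}$ player \pe\ already loses $H_m$ after $n$ rounds, so a genuinely new construction is required; accordingly the statement is phrased conditionally. All other items reduce either to bookkeeping of first-order definability on atom structures or to citation of the rainbow and blow-up-and-blur apparatus developed earlier in the paper.
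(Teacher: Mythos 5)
Your proof follows the paper's own route item by item: example \ref{SL} for (1), the rainbow algebras of theorem \ref{rainbow} for the failure of strong gripping in (2), first-order definability of the Lyndon conditions for (3), the weakly-but-not-strongly representable atom structure of theorem \ref{can} for (4), and the neat game $H$ of remark \ref{game} for the conditional statement (6). You supply somewhat more detail than the paper does (e.g.\ the explicit reason why complete representability is gripped, and the ultrapower/elementary-chain step in (6)), but the decomposition and the witnessing objects are the same, so this is essentially the paper's proof.
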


\begin{proof}
\begin{enumarab}

\item  Example \ref{SL}

\item The algebra $\TCA_{\Z,\N}$ or $\TCA_{\N^{-1}, \N}$ used in the proof of theorem \ref{rainbow} and its various reducts down to $\Sc$s,
shows that the class of completely representable algebras is not strongly gripped. 

The former rianbow algebra based on the greens $\Z$ and reds $\N$ 
would also work just as well proving theorem \ref{rainbow}.
\pe\ wins the $k$ rounded game $G_k$ is the same and the decreasing sequence in 
$\N$ is forced on \pe\ by \pa\ 's moves using and reusing only $n+ 3$ pebbles
by exploiting  the negative mirror image 
of $\N$, namely, the set  $\{-a: a\in \N\}\subseteq \Z$, so the game 
alternates between postive reds played by \pa\ and negative greens played by \pe\ 
synchronized by order preserving partial function inevitably forcing \pa\ to play
a decreasing sequence in $\N$. But this cannot go on for ever, hence \pa\ wins the $\omega$ rounded game 
$F^{n+3}$ by pebbling succesively $\N$ and its mirror image. 
\item This is straightforward from the definition of Lyndon conditions \cite{HHbook}.

\item Any weakly representable atom structure that is not strongly representable detects this, see e.g.
the main results in \cite[theorem 1.1, corolary 1.2, corolary 1.3]{Hodkinson},
\cite{weak}, \cite[theorems 1.1, 1.2]{ANT} and theorems \ref{can}.
\item From item (2) and the rest follows directly from the definition.

\item  We allow more moves for \pa\ in the neat game  defined in remark \ref{game} way above.
\end{enumarab}
\end{proof}

But nevertheless concerning the last item, we can say more. Indeed, more generally the following.
Assume that If  ${\sf L}$ is a class of algebras, $G$ be a game and  $\A\in {\sf L}$, then \pe\ has a \ws\ in $G$.
Assume also that $\Nr_n\CA_{\omega}\subseteq \sf L$.
If there is an atomic algebra $\A$ such that \pe\ can win all finite rounded games of $J$
and \pa\ has a \ws\ in $G$ then any class between
$\Nr_n\CA_{\omega}$ and $\sf L$ is not elementary.

\end{document}